\renewcommand{\maketitle}{
\begin{center}
\normalsize {\bfseries\@title} \\[0.3cm] 
\normalsize \begin{tabular}[t]{c}%
\@author
\end{tabular}
\end{center}
}
\definecolor{color1}{rgb}{0.22,0.45,0.70}
\definecolor{color2}{rgb}{0.9, 0.17, 0.31}
\definecolor{color3}{rgb}{0.8, 0.25, 0.33}
\definecolor{blue2}{rgb}{0.2,0.2,0.7}
\definecolor{green2}{rgb}{0.13, 0.7, 0.67}
\definecolor{green3}{rgb}{0.3,0.6,0.4}
 \definecolor{mygray}{gray}{0.90}
\definecolor{mylightgray}{gray}{0.90}
\definecolor{plum}{rgb}{0.56, 0.27, 0.52}
\newcommand{\C}{\mathbb{C}}
\newcommand{\N}{\mathbb{N}}
\newcommand{\E}{\mathbb{E}}
\newcommand{\F}{\mathcal{F}}
\newcommand{\Pa}{\mathcal{P}}
\newcommand{\Pro}{\mathbb{P}}
\newcommand{\R}{\mathbb{R}}
\newcommand{\dst}{\displaystyle}
\newcommand{\eps}{\varepsilon}
\newcommand{\Var}{\textrm{Var}}
\newcommand{\vsd}{\vspace{0.2cm}}
\newcommand{\vsu}{\vspace{0.1cm}}
\newcommand{\inte}{n\in\mathbb{N}}
\newcommand{\re}[1]{{\rm Re}\left[#1\right]} 
\newcommand{\im}[1]{{\rm Im}\left[#1\right]}
\newcommand{\bsx}{\boldsymbol{x}}
\newcommand{\bsxpar}{\bsx_\shortparallel}
\newcommand{\bsy}{\boldsymbol{y}}
\newcommand{\bsypar}{\bsy_\shortparallel}
\newcommand{\bsz}{\boldsymbol{z}}
\newcommand{\bszpar}{\bsz_\shortparallel}
\newcommand{\bsw}{\boldsymbol{w}}
\newcommand{\bsn}{\boldsymbol{n}}
\newcommand{\bsg}{\boldsymbol{g}}
\newcommand{\bsh}{\boldsymbol{h}}
\newcommand{\bsG}{\boldsymbol{G}}
\newcommand{\bstheta}{\boldsymbol{\theta}}
\newcommand{\ii}{\mathrm{i}}
\newcommand{\dd}{\mathrm{d}}
\newcommand{\Lchi}{\text{\large $\chi$}}
\newtheorem{definition}{Definition}
\newtheorem{remark}{Remark}
\newtheorem{hypothesis}{Hypothesis}
\newtheorem{theorem}{Theorem}
\newtheorem{proposition}[theorem]{Proposition}
\newtheorem{corollary}[theorem]{Corollary}
\newtheorem{lemma}[theorem]{Lemma}
\pgfplotsset{
    standard/.style={
        axis x line=middle,
        axis y line=middle,
        every axis x label/.style={at={(current axis.right of origin)}, anchor=south east},
        every axis y label/.style={at={(current axis.above origin)},anchor=north east}
    }
}
\title{\normalsize \bfseries SCATTERING FROM A THIN RANDOM COATING OF NANOPARTICLES:\\ THE DIRICHLET CASE}
\date{}
\author{Amandine Boucart$^{1,2}$, Sonia Fliss$^2$ and Laure Giovangigli$^2$}
\begin{document}
\thispagestyle{first}
\maketitle 

\begin{abstract}
We study the time-harmonic scattering by a heterogeneous object covered with a thin layer of randomly distributed sound-soft nanoparticles. The size of the particles, their distance between each other and the layer's thickness are all of the same order but small compared to the wavelength of the incident wave. Solving the Helmholtz equation in this context can be very costly and the simulation depends on the given distribution of particles. To circumvent this, we propose, via a multi-scale asymptotic expansion of the solution, an effective model where the layer of particles is replaced by an equivalent boundary condition. The coefficients that appear in this equivalent boundary condition depend on the solutions to corrector problems of Laplace type defined on unbounded random domains. Under the assumption that the particles are distributed given a stationary and mixing random point process, we prove that those problems admit a unique solution in the proper space. We then establish quantitative error estimates for the effective model and present numerical simulations that illustrate our theoretical results.
\end{abstract}




In this article we study the time-harmonic electromagnetic scattering by a heterogeneous object covered with a thin layer of randomly distributed sound-soft nanoparticles. The size of the particles, their spacing and the layer's thickness are of the order of a few nanometers and very small compared to the wavelength of the incident wave which is of the order of the centimeter. Two difficulties arise when trying to solve Maxwell's equation in this context : first, on account of the very different scales, the computational cost with a standard numerical method is prohibitive; second the distribution of particles for a given object is unknown. Our goal is to propose an effective model where the layer of particles is replaced by an equivalent boundary condition and whose solution approximates the original solution with an error that we can estimate.

Methods to build such effective models can be classified into two categories : methods based on optimization techniques and methods based on asymptotic techniques. The principle of the first class of methods is, given an effective boundary condition, to optimize the coefficients appearing in the condition so that the effective solution fits in a certain sense as much as possible a reference solution. Although the effective model is simple to implement, it is difficult to estimate its accuracy with respect to the physical parameters of the problem. See for instance \cite{hoppe2018impedance,stupfel2015implementation}.

Our work lies in the second class of methods. It is based on a multi-scale asymptotic expansion which allows us to take into account the fast variations of the physical solution close to the layer and its slower variations far from the layer. These techniques in the context of scattering problem by thin layers are well known when the thin layer is penetrable and modeled by a constant coefficient (see for instance  \cite{Bartoli2002,Haddar2002,Vial2003,Caloz2006}) or when the thin layer is periodic, \emph{i.e.} the distribution of the particles is supposed to be periodic in the direction of the interface (see for instance \cite{Abboud1996,Delourme2010,Delourme2012,Claeys2013,Marigo2016}). Note that in this case, the technique works whichever the nature of the particles is: penetrable, sound-soft or sound-hard. It seems that our work is the first one dealing with scattering problem by a thin layer with random properties. Let us however mention \cite{Kristensson} whose study is based on the "heuristic" quasi-crystalline approximation. Our work has of course lots of links with the literature on volume stochastic homogenenization (see for instance \cite{Gloria2019,DuerinckxGloria2017, DuerinckxGloria2019, DuerinckxGloria2021}). We call it "volume" in contrast with our problem which could be called "surface" stochastic homogenization. Indeed, in volume stochastic homogenization, the question is to deal with (in general elliptic) PDEs with (stationary and ergodic) random coefficients and to look for an effective PDE in general of the same nature but with constant coefficients. Our problem is different since the random medium is constituted only by a thin layer and we wish to replace the thin layer by an effective boundary condition, while the PDE outside the layer stays the same. One will find of course similarities in the analysis (for instance in order to obtain the quantitative error estimates) but some difficulties are really specific to surface homogenization. Let us mention that similar problems were treated in the context of Laplace equation \cite{Chechkin2009,Amirat2011} or Stokes equations \cite{Basson2006,GerardVaretMasmoudi,DalibardGerardVaret,ElJarroudi2019} in presence of random rough boundaries. The difficulty of dealing with a random distribution of particles instead of rough boundaries lies in, as we will see later, the derivation of Poincar\'e inequalities.

The problem that we consider in this paper is a simplification of the original application. 
First of all, we consider the time-harmonic acoustic scattering by an object. 
In other words, the unknown is a scalar function and the equation is Helmholtz's. 
When the dimension of the domain is $d=2$, this problem can be derived from 
the study of electromagnetic scattering by a 3D object that is invariant in one direction,
where the incident field is TE- or TM-polarized. More generally, this study is an important 
first step before addressing the more general  case of electromagnetism, on which we are working. 
Also, for further simplification, we neglect the complexity of the geometry of the object and assume 
that the object is planar. Moreover, in the applications, the object is covered by multple layers of material whose goal is to attenuate 
the radar cross section of the object. This multilayer medium is modeled and replaced by an impedance 
boundary condition that is imposed on the boundary of the object. Finally, we impose  Dirichlet 
boundary conditions on the particles. This corresponds, if we go back to the electromagnetic 
equivalent (see above) at $d=2$, to perfectly conducting particles and a TE-polarization of the field. 
Imposing Neumann boundary conditions constitutes another challenge from a theoretical point of view and will be the subject of a forthcoming paper.

The paper is organized as follows: The model is presented in Section \ref{sub:presentation}. We detail in particular the random setting with a classical stationary and ergodic assumption. Let us emphasize Hypothesis \nameref{hyp:mixing} that ensures that 
arbitrarily large portions of the layer without Dirichlet particles can exist but only with a small probability. 
We also establish well-posedness of the scattering problem. The formal asymptotic expansion is described
in Section \ref{sec:formalasympexp}. The proposed Ansatz involves so called far-field terms that approximate the solution far from the layer
and the so called near-field terms, depending on microscopic variables and the longitudinal macroscopic variables, that capture specifically the scattering phenomena near the layer. The near-field terms satisfy Laplace-type PDEs parametrized by the macroscopic variables, set in a random halfspace.
We study in Section \ref{sec:NF_pbs} the well-posedness of the near-field problems for which Hypothesis \nameref{hyp:mixing} 
is crucial. Imposing that the limit of the near-fields vanishes at infinity yields the boundary condition satisfied by each far-field term. The behavior at infinity of the near-fields is studied in Section
\ref{sec:limbehavior}. One can then derive an effective model (see Section \ref{sec:eff_model})
whose solution approaches the first two far-field terms. Quantitative error estimates are derived in Section \ref{sec:errest}. Those estimates are established under 
a new Hypothesis \nameref{hyp:Linfty}, that supposes that the distance from any point in the layer to the nearest particule is bounded. If we assume additionally that the particles' distribution process satifies a quantitative mixing assumption (see Hypothesis \nameref{hyp:mix}), we can impove the convergence rate of the effective model. A certain number of technical lemmas that are necessary to the proof of those improved estimates are proved in the appendices at the end of the paper. Finally, we present numerical simulations illustrating the theoretical results of the paper.

\paragraph*{Notations used throughout the paper.}
The integer $d=2$ or $3$ denotes the dimension.
Let $\bsx\coloneqq (x_1,x_2)$ and $\bsx\coloneqq (x_1,x_2,x_3)$ denote the Cartesian coordinate system in $\R^d$ respectively for $d= 2$ and $d=3$. For all  $\bsx\in\R^d$,
$\bsx_\shortparallel$ denotes the tangential components of $\bsx$, i.e. $\bsx_\shortparallel\coloneqq x_1$ when $d=2$ and $\bsx_\shortparallel\coloneqq (x_1,x_2)$ when $d=3$. 

The index $\omega$ indicates a dependency in the random distribution of the particles. We will specify the random setting that we consider in Section \ref{sub:random_setting}.

The positive real number $\eps>0$ denotes a small parameter, all the domains and functions that depend on $\eps$ are indexed by $\eps$.

For any measurable bounded domain $B$, the spatial average over $B$ is denoted by $\fint_{B}\coloneqq  {|B|}^{-1}\int_{B}$, where $|B|$ is the Lebesgue measure of $B$.

The centered cube of side $R$ of dimension $d-1$ (we insist on the dimension $d-1$) is denoted $\square_R\coloneqq (-R/2,R/2)^{d-1}$.
       \section{Presentation of the model}\label{sub:presentation}

\subsection{Domain of propagation}
Let us consider an infinite plane denoted $\Sigma_0\coloneqq \{x_d=0\}$ covered by a thin layer $\mathcal{L}_\eps\coloneqq   \R^{d-1}\times (0,\eps h)$ containing  a set, denoted $\vsu\Pa^\omega_\eps$, of randomly distributed particles of size $\eps$. Let $D_\eps^\omega\coloneqq  \R^{d-1}\times \R^+\setminus \overline{\Pa_\eps^\omega}$ be the domain above $\Sigma_0$ outside $\Pa_\eps^\omega$. 

 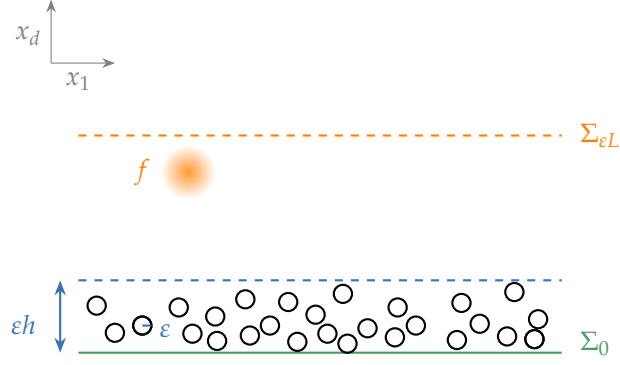
\begin{figure}
 \begin{center}
      \begin{tikzpicture}[scale=1.2]
    \draw[gray,-{Stealth}] (-3.5,3.2) -- (-2.8,3.2);
    \draw [gray] (-3.2,3.2) node[below] {$x_1$};
    \draw [gray,-{Stealth}] (-3.5,3.2) -- (-3.5,3.9);
    \draw [gray] (-3.5,3.5) node[left] {$x_d$}; 
    
  \fill[color=white, outer color=white, inner color =orange!80!white](-2, 2) circle(3mm);         
                                \draw[color=orange](-2.5,2) node{$f$}; 
    \draw[green3,thick] (-3.2,0) -- (2.1,0);  
        \draw[color1,thick,dashed] (-3.2,0.8) -- (2.1,0.8);  
                \draw[orange,thick,dashed] (-3.2,2.4) -- (2.1,2.4);  
       \draw [green3] (2.2,0.1) node[right] {$\Sigma_0$} ; 
            \draw [orange] (2.2,2.4) node[right] {$\Sigma_{\eps L}$} ; 
    \draw [thick] (-3,0.52) circle(0.1);
    \draw [thick] (-2.5,0.3) circle(0.1);
        \draw [thick] (-1.95,0.21) circle(0.1);
         \draw [thick] (-1.68,0.13) circle(0.1);
    \draw [thick] (-2.1,0.5) circle(0.1);
    \draw [thick] (-1.7,0.4) circle(0.1);
    \draw [thick] (-1.1,0.3) circle(0.1);
     \draw [thick] (-1.32,0.19) circle(0.1);
      \draw [thick] (-1.37,0.59) circle(0.1);
    \draw [thick] (-0.9,0.56) circle(0.1);
    \draw [thick] (-0.3,0.65) circle(0.1);
    \draw [thick] (0.3,0.5) circle(0.1);
    \draw [thick] (0.5,0.30) circle(0.1);
    \draw [thick] (1.0,0.55) circle(0.1);
    \draw [thick] (1.8,0.15) circle(0.1);
     \draw [thick] (-2.8,0.22) circle(0.1);
    \draw [thick] (-2.5,0.3) circle(0.1);
     \draw [thick] (-0.47,0.21) circle(0.1);
    \draw [thick] (-0.25,0.1) circle(0.1);
    \draw [thick] (-0.03,0.27) circle(0.1);
       \draw [thick] (1.2,0.32) circle(0.1);
        \draw [thick] (1.5,0.18) circle(0.1);
        \draw [thick] (1.58,0.67 ) circle(0.1);
            \draw [thick] (0.95,0.14) circle(0.1);
    \draw [thick] (1.8,0.15) circle(0.1);
        \draw [thick] (1.84,0.37) circle(0.1);
          \draw [thick] (-0.8,0.12) circle(0.1);
             \draw [thick] (-0.6,0.42) circle(0.1);
              \draw [thick] (0.27,0.17) circle(0.1);
    \draw [color=color1,-,thick] (-2.5,0.30) -- (-2.4,0.30); 
      \draw [color=color1,{Stealth[length=2mm]}-{Stealth[length=2mm]},thick] (-3.4,0) -- (-3.4,0.8); 
    \draw[color1] (-2.25,0.07) node[above]{$\varepsilon$} ;    
    \draw[color1] (-3.8,0.1) node[above]{$\varepsilon h$} ;
  \end{tikzpicture}  
 \end{center}
 \caption{Illustration of the geometry of the model}
 \end{figure}
For any $L>\eps h$, let us introduce 
$\Sigma_{L} \coloneqq   \{x_d=L\}$, $\mathcal{B}^{\omega}_{L}\coloneqq  \mathbb{R}^{d-1}\times(0,L) \setminus \overline{\mathcal{P}^\omega_\eps}$  the strip below $\Sigma_{L}$ where lies the set of particles (and which then depends on the particles' distribution), $\mathcal{B}^\infty_{L}\coloneqq  \mathbb{R}^{d-1}\times(L,+\infty)$ the half-space above $\Sigma_{L}$ that does not depend on the distribution and finally for $\eps h<L'<L$, $\mathcal{B}_{L',L}\coloneqq \mathcal{B}^\infty_{L'}\setminus \mathcal{B}^\infty_{L}$ the strip between $\Sigma_{L'}$ and $\Sigma_L$.

\subsection{Problem formulation}\label{sub:Pb}
For a given source function $f\in L^2(D_\eps^\omega)$ whose support is compact and lies far away from the layer, \textit{i.e.} there exists $L'<L\in(\eps h,+\infty)$ such that
\begin{equation}
	\label{eq:source}
 \textrm{supp} f \subset \mathcal{B}_{L',L} \quad\text{with}\quad \textrm{supp} f\;\text{compact};
\end{equation}
we look for the solution $u_\eps^\omega$ of the Helmholtz equation
\begin{equation}
	\label{eq:Helm}
	- \Delta u_\eps^\omega -k^2u_\eps^\omega=f\quad\text{ in } D^\omega_\eps 
\end{equation}
where $k>0$ is the wavenumber. The infinite plane models a multi-layer object through a Robin boundary condition so that
\begin{equation}
	\label{eq:Robin}
	-\partial_{x_d} u_\eps^\omega + \ii  k \gamma \  u_\eps^\omega = 0 \quad\text{ on } \ \Sigma_0,
\end{equation}
where the surface impedance coefficient $\gamma\in\C$ is such that \begin{equation}\label{eq:Regamma}\re{\gamma}>0.\end{equation}
At the boundary of the particles $\partial {\Pa}_\eps^\omega$, we impose to the field a homogeneous Dirichlet condition
\begin{equation}\label{eq:dir}
     u_\eps^\omega=0\quad \text{ on } \ \partial \Pa^\omega_\eps.
\end{equation}
Finally, the problem formulation has to be completed with a radiation condition. As in \cite{chandler2005existence,chandler2010variational} (where a Dirichlet boundary condition on the infinite hyper-plane is considered) and in \cite{Hu:2015,badenriess} (where an impedance boundary condition is considered), we make use of the so-called {\it upward propagating radiation condition} (UPRC). This outgoing wave condition is detailed in the following section. 

\subsection{Outgoing wave condition based on a Dirichlet-to-Neumann operator}\label{sub:outgoing}

To define the outgoing propagation radiation condition, we refer to \cite{chandler1999scattering,chandler2005existence} which consider 2D scattering by a rough Dirichlet surface (defined as the graph of a function) and \cite{badenriess} which considers the case of 2D or 3D scattering by a rough impedance surface.  

We introduce the fundamental solution of the Helmholtz equation in $\R^d$ given by
\[
  \begin{array}{|ll}
    \text{for}\; d=2,&\dst\Phi({\bf x},{\bf y})=\dst\frac{\ii}{4} H^{(1)}_0(k|\bsx-\bsy|)\\
    \text{for}\; d=3,&\dst\Phi({\bf x},{\bf y})=\frac{\exp{\big(\ii k|{\bsx}-\bsy|\big)}}{4\pi|\bsx-\bsy|}\; 
  \end{array}
 \quad \bsx,\bsy\in\R^d,\;\bsx\neq\bsy,
\]
where $H^{(1)}_0$ is the Hankel function of the first kind of order 0. The UPRC then states that
\begin{equation}\label{eq:UPRC}
	 u_\eps^\omega (\bsx)= 2\int_{\Sigma_L}\partial_{x_d} \Phi (\bsx, (\bsy_\shortparallel,L)) u_\eps^\omega\big|_{\Sigma_{L}} (\bsy_\shortparallel)d\bsy_\shortparallel,\quad \bsx\in \mathcal{B}_{L}^\infty.
\end{equation}
For this integral representation to be well-defined, it is not necessary for $u_\eps^\omega|_{\Sigma_{L}}$ to be in $L^2(\Sigma_{L})$. For example for $d=2$, $u_\eps^\omega|_{\Sigma_{L}}\in L^\infty(\Sigma_{L})$ is sufficient since $\partial_{x_d}\Phi = O({\bsy_\shortparallel^{-3/2}}) $. Nevertheless, in the case where $u_\eps^\omega|_{\Sigma_{L}}\in L^2(\Sigma_{L})$, which is the case as we shall see in the next section, \eqref{eq:UPRC} can be rewritten in terms of the Fourier transform of $u_\eps^\omega|_{\Sigma_{L}}$ as follows
\begin{equation}
  u^\omega_\eps(\bsx) \ = \ {(2\pi)^{-(d-1)/2}}\int_{\mathbb{R}^{d-1}} \widehat{\phi}_L(\boldsymbol{\zeta}) \  e^{\ii (x_d-L)\sqrt{k^2-|\boldsymbol{\zeta}|^2}} \ e^{i\bsx_\shortparallel\cdot \boldsymbol{\zeta}} \  \dd\boldsymbol{\zeta},\quad \bsx=(\bsx_\shortparallel, x_d)\in \mathcal{B}_{L}^\infty.
\end{equation}
where by convention $\sqrt{k^2-|\boldsymbol{\zeta}|^2}=\ii\sqrt{|\boldsymbol{\zeta}|^2-k^2}$ when $k< |\boldsymbol{\zeta}|$ and $$\widehat{\phi}_L(\boldsymbol{\zeta})\coloneqq (2\pi)^{-(d-1)/{2}}\int_{\mathbb{R}^{d-1}}\phi_L(\bsy_\shortparallel) \ e^{-\ii \bsy_\shortparallel\cdot\boldsymbol{\zeta}}\ \mathrm{d}\bsy_\shortparallel,\quad \boldsymbol{\zeta}\in \mathbb{R}^{d-1},$$ is the Fourier transform of $\phi_L\coloneqq u^\omega_\eps\big|_{\Sigma_{L}}$ (identifying $\Sigma_{L}$ and $\mathbb{R}^{d-1}$).

Let us recall the characterization of Sobolev spaces in terms of Fourier transforms: for $s>0$ 
\begin{equation}\label{eq:def_Hs}\begin{array}{ll}
\vsd&\dst  H^s\big(\Sigma_{ L}\big) \coloneqq \left \{ \varphi\in L^2\big(\Sigma_{L}\big),\  \big(|\boldsymbol{\zeta}|^2+1\big)^{s/2}\widehat{\varphi}\in L^2\big(\mathbb{R}^{d-1}\big)\right\},\\
\textrm{and}&\dst H^{-s}\big(\Sigma_{L} \big) \coloneqq  \Big( H^s\big(\Sigma_{L}\big)\Big)'=\left\{ \varphi\in\mathcal{S}'(\R^{d-1}), \ \Big(|\boldsymbol{\zeta}|^2+1 \Big)^{-s/2}\widehat{\varphi}\in L^2(\R^{d-1})\right\}.
\end{array}\end{equation}

\begin{definition}\label{definition_DtN_sol_ref_alea}
The Dirichlet-to-Neumann (DtN) operator $\Lambda^k:H^{\frac{1}{2}}\big(\Sigma_{L}\big)\to H^{-\frac{1}{2}}\big(\Sigma_{L}\big)$ is defined as
\begin{equation}\forall \varphi,\psi\in H^{\frac{1}{2}}\big(\Sigma_{L}\big), \quad\big\langle\Lambda^k \varphi,\psi \big\rangle_{\Sigma_{L}}= {(2\pi)^{-(d-1)/2}}\displaystyle  \int_{\mathbb{R}^{d-1}} \ii \sqrt{k^2-|\boldsymbol{\zeta}|^2}\ \widehat{\varphi}(\boldsymbol{\zeta}) \ \overline{\widehat{\psi}(\boldsymbol{\zeta})} \ \dd\boldsymbol{\zeta},\end{equation}
where here and in the sequel $\big\langle\cdot,\cdot \big\rangle_{\Sigma_{L}}$ denotes the sesquilinear duality product between $H^{-\frac{1}{2}}(\Sigma_{L})$ and $H^{\frac{1}{2}}(\Sigma_{L})$.
\end{definition}

The operator $\Lambda^k$ satisfies then the following properties (see for instance \cite{chandler2005existence}).

\begin{proposition}\label{prop:prop_operateur_DtN_alea}
The operator $\Lambda^k:H^{\frac{1}{2}}\big(\Sigma_{L}\big)\to H^{-\frac{1}{2}}\big(\Sigma_{L}\big)$ is a continuous operator such that
\begin{equation}\forall\varphi\in H^{\frac{1}{2}}\big(\Sigma_{L}\big),\quad
     \re{\big\langle\Lambda^k \varphi,\varphi  \big\rangle_{\Sigma_{L}} }\leq 0 \quad \text{and}\quad 
     \im{\big\langle\Lambda^k \varphi , \varphi \big\rangle_{\Sigma_{L}}} \geq 0.
\end{equation}
\end{proposition}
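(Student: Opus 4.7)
The plan is to work directly with the Fourier multiplier representation of $\Lambda^k$ and verify the three claims by pointwise analysis of the symbol $\lambda(\boldsymbol{\zeta}):=\ii\sqrt{k^2-|\boldsymbol{\zeta}|^2}$. The first step is to split the frequency space into the propagative region $\{|\boldsymbol{\zeta}|\leq k\}$ and the evanescent region $\{|\boldsymbol{\zeta}|>k\}$. On the former, the square root is real and nonnegative so $\lambda$ is purely imaginary with $\re{\lambda}=0$ and $\im{\lambda}\geq 0$. On the latter, the branch convention $\sqrt{k^2-|\boldsymbol{\zeta}|^2}=\ii\sqrt{|\boldsymbol{\zeta}|^2-k^2}$ forces $\lambda(\boldsymbol{\zeta})=-\sqrt{|\boldsymbol{\zeta}|^2-k^2}$ to be real with $\re{\lambda}\leq 0$ and $\im{\lambda}=0$. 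In both regions one therefore has pointwise $\re{\lambda(\boldsymbol{\zeta})}\leq 0$ and $\im{\lambda(\boldsymbol{\zeta})}\geq 0$.

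For the two inequalities of Proposition \ref{prop:prop_operateur_DtN_alea}, I would specialize the definition to $\psi=\varphi$ to obtain
\[\big\langle\Lambda^k\varphi,\varphi\big\rangle_{\Sigma_L}=(2\pi)^{-(d-1)/2}\int_{\mathbb{R}^{d-1}}\lambda(\boldsymbol{\zeta})\,|\widehat{\varphi}(\boldsymbol{\zeta})|^2\,\dd\boldsymbol{\zeta},\]
and then take real and imaginary parts under the integral. Since $|\widehat{\varphi}|^2\geq 0$ and the prefactor $(2\pi)^{-(d-1)/2}$ is positive, the pointwise signs of $\re{\lambda}$ and $\im{\lambda}$ established above transfer directly to the integral, yielding both desired inequalities.

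For continuity, I would use the elementary symbol estimate $|\lambda(\boldsymbol{\zeta})|^2=|k^2-|\boldsymbol{\zeta}|^2|\leq k^2+|\boldsymbol{\zeta}|^2\leq C_k(1+|\boldsymbol{\zeta}|^2)$, so that $|\lambda(\boldsymbol{\zeta})|\leq C_k^{1/2}(1+|\boldsymbol{\zeta}|^2)^{1/2}$. Splitting this weight evenly as $(1+|\boldsymbol{\zeta}|^2)^{1/4}\cdot(1+|\boldsymbol{\zeta}|^2)^{1/4}$ and applying Cauchy--Schwarz in $L^2(\mathbb{R}^{d-1})$ to the products with $\widehat{\varphi}$ and $\widehat{\psi}$ would give
\[\big|\big\langle\Lambda^k\varphi,\psi\big\rangle_{\Sigma_L}\big|\leq C\,\|\varphi\|_{H^{1/2}(\Sigma_L)}\,\|\psi\|_{H^{1/2}(\Sigma_L)},\]
where the Fourier characterization \eqref{eq:def_Hs} of $H^{1/2}$ is used to recognize each factor as the $H^{1/2}$-norm. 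Boundedness of $\Lambda^k:H^{1/2}(\Sigma_L)\to H^{-1/2}(\Sigma_L)$ then follows.

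There is no real obstacle: the whole argument reduces to the elementary pointwise behavior of the Fourier symbol $\lambda$. The only point deserving care is the handling of the branch convention on the evanescent region $|\boldsymbol{\zeta}|>k$, which is exactly what turns $\lambda$ into a real \emph{negative} quantity there and produces the dissipative character $\re{\langle\Lambda^k\varphi,\varphi\rangle_{\Sigma_L}}\leq 0$ that encodes the outgoing nature of the radiation condition.
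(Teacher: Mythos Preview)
Your proof is correct and is exactly the standard direct verification via the Fourier symbol; the paper itself does not give a proof of this proposition but simply refers the reader to \cite{chandler2005existence}, so there is nothing further to compare.
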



\subsection{Well-posedness of the scattering problem}

The problem given in Section \ref{sub:Pb} and \ref{sub:outgoing} can be rewritten as follows. 

Find $u_\eps^\omega$ in $H^1\big(\mathcal{B}^{\omega}_{L}\big)$ such that \begin{equation}\label{eq:P0eps}
\begin{array}{|ll}
    -\Delta u^\omega_\eps-k^2u^\omega_\eps = f \quad &\text{ in } \quad \mathcal{B}^{\omega}_{L} \ ,  \\ 
    -\partial_{x_d}u^\omega_\eps +\ii k \gamma u^\omega_\eps = 0&\text{ on } \quad \Sigma_0 \ ,  \\
      u_\eps^\omega  =  0 & \text{ on } \quad \partial\mathcal{P}_\eps^\omega,\\
     -\partial_{x_d} u^\omega_\eps + \Lambda^k u^\omega_\eps = 0 & \text{ on } \quad \Sigma_{L} \ ,
\end{array}
\end{equation}
Let $H^1_0(\mathcal{B}^{\omega}_{L})\coloneqq \Big\{u\in H^1\big(\mathcal{B}^{\omega}_{L}\big), \ u=0\;\text{on}\; \partial \mathcal{P}^\omega_\eps\Big\}$. The variational formulation associated with \eqref{eq:P0eps} is : 

Find $u^{\omega}_\eps \in H^1_0(\mathcal{B}^{\omega}_{L})$ such that 
\begin{equation}\tag{FV}\label{FV_alea_ref}
\forall v \in H^1_0(\mathcal{B}^{\omega}_{L}),\quad a_\eps^\omega\big(u^{\omega}_\eps,v \big) \ = \ l_\eps^\omega(v ) \ \ , \quad 
\end{equation}
with
\begin{equation}\label{eq:a_epsomega}
  \forall u,v \in H^1_0(\mathcal{B}^{\omega}_{L}),\quad a_\eps^\omega(u,v) \ \coloneqq  \ \int_{\mathcal{B}^{\omega}_{L}} \nabla u \cdot \nabla\overline{v} \ - \  k^2\int_{\mathcal{B}^{\omega}_{L}} u \ \overline{v}- \ \big<\Lambda^k u , v\big>_{\Sigma_{L}} \ + \ \ii k\gamma \int_{\Sigma_0}u \ \overline{v},
\end{equation}
and 
\begin{equation*}
  \forall v \in H^1_0(\mathcal{B}^{\omega}_{L}),\quad l_\eps^\omega(v) \ \coloneqq  \ \int_{\mathcal{B}^{\omega}_{L}} f \ \overline{v}. 
\end{equation*}

The standard arguments to prove that the problem is of Fredholm type, \emph{i.e.} that the operator associated with the sesquilinear form $a_\eps^\omega$ is a compact perturbation of a coercive operator, do not apply here because, on contrary to a periodic distribution of particles, the problem cannot be reformulated in a bounded domain. Let us mention that in \cite{chandler1997impedance}, the author  proves that the scattering problem by a 2D plane with a Robin boundary condition (with $\gamma\in L^\infty$ such that $\mathcal{R}e(\gamma)\geq \eta>0$) is well posed, using an integral equation reformulation of the problem . This result has been extended in \cite{zhang2003integral} to the case of a rough $C^{1,1}$-surface. 
In the case of a more general rough Lipschitz surface, the well-posedness has been proved in \cite{baden2020existence}. The proof in this last paper can be directly extended to our setting. 

%

\begin{theorem}{\cite[Lemma 15]{baden2020existence}}\label{thm:ref_alea}
   Let $\ell_\eps^\omega$ be in $[H^1_0(\mathcal{B}^{\omega}_{L})]'$, \emph{i.e.} a continuous antilinear form on $H^1_0(\mathcal{B}^{\omega}_{L})$. For all $\eps>0$ and $\omega$, there exists a unique $u_\eps^\omega \in H^1_0(\mathcal{B}^{\omega}_{L})$ solution of 
   \begin{equation*}
    \forall v \in H^1_0(\mathcal{B}^{\omega}_{L}),\quad a_\eps^\omega\big(u^{\omega}_\eps,v \big) \ = \ \ell_\eps^\omega(v ),
    \end{equation*}
     where $a_\eps^\omega$ is defined in \eqref{eq:a_epsomega}. Moreover, there exists $C>0$ that depends only on $k$, $L$ and $\gamma$ such that
   \begin{equation}
       \|u^\omega_\eps \|_{H^1(\mathcal{B}^{\omega}_{L})} \leq C \| \ell^\omega_\eps\|_{[H^1_0(\mathcal{B}^{\omega}_{L})]'}.
   \end{equation}
\end{theorem}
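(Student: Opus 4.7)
The plan is to adapt the argument of \cite{baden2020existence}, originally designed for scattering by a rough impedance surface, to the present geometry where the roughness is replaced by the removal of Dirichlet particles $\mathcal{P}^\omega_\eps$. Working from the outset in $H^1_0(\mathcal{B}^\omega_L)$ is in fact favorable: the vanishing trace on $\partial\mathcal{P}^\omega_\eps$ only enhances the dissipative structure of $a^\omega_\eps$. Three steps are required: uniqueness, an a priori estimate, and existence via a limiting absorption principle.

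For uniqueness, I would set $\ell^\omega_\eps = 0$, take $v = u^\omega_\eps$ in the variational formulation and extract the imaginary part of $a^\omega_\eps(u^\omega_\eps, u^\omega_\eps) = 0$. Since $\re{\gamma} > 0$ and, by Proposition \ref{prop:prop_operateur_DtN_alea}, $\im{\langle \Lambda^k\varphi,\varphi\rangle_{\Sigma_L}} \geq 0$, both contributions must separately vanish. Thus $u^\omega_\eps = 0$ on $\Sigma_0$ and $\im{\langle \Lambda^k u^\omega_\eps, u^\omega_\eps\rangle_{\Sigma_L}} = 0$. The Fourier formula of Definition~\ref{definition_DtN_sol_ref_alea} then forces $\widehat{u^\omega_\eps|_{\Sigma_L}}$ to vanish on the propagative disk $\{|\boldsymbol{\zeta}|\leq k\}$, which, injected into the UPRC, yields exponential decay of $u^\omega_\eps$ as $x_d \to \infty$. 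Combined with the vanishing Dirichlet and Neumann data on $\Sigma_0$ (the latter obtained from the Robin condition together with $u^\omega_\eps|_{\Sigma_0}=0$), a Rellich-type unique continuation argument propagates $u^\omega_\eps \equiv 0$ throughout $\mathcal{B}^\omega_L$.

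For the a priori bound, I would argue by contradiction and compactness. Assume data $\ell_n$ with $\|\ell_n\|_{[H^1_0(\mathcal{B}^\omega_L)]'} \to 0$ and normalized solutions $u_n$ satisfying $\|u_n\|_{H^1(\mathcal{B}^\omega_L)} = 1$. Local compactness on truncated cylinders $\mathcal{B}^\omega_L \cap \{|\bsx_\shortparallel|<R\}$ combined with the exponential decay at infinity provided by the UPRC allows one to extract a subsequence strongly convergent in $L^2_{\mathrm{loc}}$ and weakly in $H^1$. The limit solves the homogeneous problem, hence vanishes by uniqueness, and the equation itself then propagates the vanishing back to strong $H^1$-convergence, contradicting $\|u_n\|_{H^1} = 1$.

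Existence then follows from a limiting absorption principle: replace $k^2$ by $k^2 + \ii\delta$ for $\delta > 0$, so that the modified sesquilinear form becomes coercive on $H^1_0(\mathcal{B}^\omega_L)$ by Lax-Milgram, yielding a unique solution $u^{(\delta)}$; the uniform estimate applied along the family $\delta \to 0^+$ lets one pass to the limit. The main obstacle, and what makes the statement non-trivial, is ensuring that none of the constants depend on $\eps$ or on the realization $\omega$. Fortunately, all Fourier and UPRC manipulations take place on the fixed plane $\Sigma_L$ that lies strictly above every particle, and the Dirichlet condition on $\partial\mathcal{P}^\omega_\eps$ only removes admissible test functions without altering any integration by parts on $\Sigma_0$ or $\Sigma_L$. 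This geometric separation is precisely what allows the bound of \cite{baden2020existence} to carry over verbatim, with a constant $C = C(k,L,\gamma)$ independent of $\eps$ and $\omega$.
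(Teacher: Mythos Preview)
The paper does not supply its own proof here; it simply asserts that the argument of \cite{baden2020existence} extends directly to the present geometry. So the comparison is between your outline and the method of \cite{baden2020existence}.

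Your uniqueness step is essentially sound (though the mention of exponential decay in $x_d$ is a distraction: the strip is bounded in $x_d$, and the real engine is that $u|_{\Sigma_0}=\partial_{x_d}u|_{\Sigma_0}=0$ together with unique continuation for Helmholtz propagates the vanishing through the connected domain $\mathcal{B}^\omega_L$).

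The genuine gap is in your a~priori estimate. A contradiction/compactness argument cannot close here: the strip $\mathcal{B}^\omega_L$ is unbounded in the tangential variables $\bsx_\shortparallel$, so the embedding $H^1(\mathcal{B}^\omega_L)\hookrightarrow L^2(\mathcal{B}^\omega_L)$ is \emph{not} compact. Your sequence $u_n$ converges to $0$ only in $L^2_{\mathrm{loc}}$, and the UPRC gives you nothing about decay as $|\bsx_\shortparallel|\to\infty$ (it concerns only propagation in $x_d$ above $\Sigma_L$). When you then try to ``propagate the vanishing back to strong $H^1$-convergence'' from the equation, you need $\|u_n\|_{L^2(\mathcal{B}^\omega_L)}\to 0$ globally to control the $-k^2\|u_n\|^2$ term in $\re a^\omega_\eps(u_n,u_n)$, and this you do not have. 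The paper itself warns against exactly this trap just above the theorem: the standard Fredholm/compactness arguments ``do not apply here because\dots the problem cannot be reformulated in a bounded domain''. The same obstruction undermines your limiting-absorption step: adding $\ii\delta$ to $k^2$ does not by itself make the form coercive on the unbounded strip.

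What \cite{baden2020existence} (following Chandler-Wilde et al.) actually does is derive an \emph{explicit} a~priori bound via a Rellich-type identity: one tests the equation with a multiplier of the form $(x_d-c)\partial_{x_d}\overline{u}$ and integrates by parts, producing a quantitative inequality with constants that can be read off in terms of $k$, $L$ and $\gamma$ only. This bypasses compactness entirely and, because the constants are explicit, immediately gives the uniformity in $\eps$ and $\omega$ that you correctly identify as the crucial point. Your closing paragraph recognises the right difficulty but your mechanism for overcoming it is the one that fails.
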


\subsection{Random setting}\label{sub:random_setting}

We present in this section the different assumptions that we make on the random distribution of particles covering the object. \vsd

Let us consider the infinite normalized strip $\mathcal{L}\coloneqq \R^{d-1}\times (0, h)$. \vsd Let $\{{\bsx}^\omega_n\}_{n\in\mathbb{N}}$ denote the point process corresponding to the centers of the particles. For $n\in\mathbb{N}$ we denote by $B(\bsx_n^{\omega})$ the particle with radius 1 centered at $\bsx^\omega_n$. We suppose in the whole paper the following 

\begin{itemize}
\item[-] for all $n\in\N$, $B(\bsx_n^{\omega}) \in \mathcal{L}$,
\item[-] the particles lie at least at a distance of $\delta$ from one another and from $\Sigma_0$,
\item[-]$\{{\bsx}^{\omega}_n\}_{n\in\mathbb{N}}$ is stationary, \textit{i.e.} the distribution law $\Pro$ of $\{{\bsx}^{\omega}_n\}_{n\in\mathbb{N}}$ is independent of the spatial variable. \end{itemize}
Let $\Omega$ be the set of point processes in the strip such that the set of particles satisfy the previous assumptions. We equip $\Omega$ with its cylindrical $\sigma$-algebra $\F$.\vsd

We introduce the translation operator $\tau : \Omega\times \R^{d-1}\to \Omega$ defined as follows 
\begin{equation*}
     \forall (\omega, \bsypar)\in\Omega\times\R^{d-1}, \quad\tau(\omega,\bsypar)=\omega',\quad\quad  \text{where}\quad \bsx_n^{\omega'}=\bsx_n^{\omega}+\bsypar,\quad \text{for all }n\in \mathbb{N}. 
\end{equation*} 
Note that $\tau$ is measurable on $\Omega\times \R^{d-1}$. In what follows we denote $\tau_{\bsypar}\coloneqq \tau(\cdot,\bsypar)$ for $\bsypar\in\R^{d-1}$.\vsd

The action $(\tau_{\bsxpar})_{\bsxpar\in \R^{d-1}}$ of the group $(\R^{d-1}, +)$ on $(\Omega, \F, \Pro)$ verifies then
$$\forall \bsxpar,\bsypar\in\R^{d-1}, \quad \tau_{\bsxpar+\bsypar}=\tau_{\bsxpar}\circ\tau_{\bsypar},$$
$$\forall \bsxpar\in\R^{d-1},\forall A\in \F,\quad \Pro(\tau_{\bsxpar}A)=\Pro(A)$$
where the last property means that $\tau$ preserves the law distribution and is due to the stationarity of the point process.
Equipped with this action on $(\Omega, \F, \Pro)$, we can now state the definition of a stationary process defined on $\Omega\times\R^{d-1}$.

\begin{definition}[Stationarity]\label{definition:stationary}$f:\R^{d-1}\times\Omega\to\R^p$ is said to be stationary with respect to $\tau$ if 
\begin{equation*}\forall \bsxpar,\bsypar\in\R^{d-1}, \quad f^\omega(\bsxpar+\bsypar)=f^{\tau_{\bsypar}\omega}(\bsxpar),\quad \omega\in\Omega.
\end{equation*}
\end{definition} It is easy to show that if $f$ is stationary then
\begin{equation*}
\E[f]=\E\left[\int_{\square_1} f(\bsxpar)d\bsxpar\right]=\left[\fint_{\square_R} f(\bsxpar)d\bsxpar\right],
\end{equation*}for all $R>0$.
We suppose moreover that the action $(\tau_{\bsxpar})_{\bsxpar\in\R^{d-1}}$ is ergodic. Let us recall the definition of an ergodic action below.
 \begin{definition}[Ergodicity]$(\tau_{\bsxpar})_{\bsxpar\in\R^{d-1}}$ is said to be ergodic if any $\tau$-invariant event has probability 0 or 1, \textit{i.e.}
\begin{equation*}\forall A\in\F,\quad (\forall \bsxpar\in\R^{d-1}, \tau^{-1}_{\bsxpar} A=A)\implies (\Pro(A)\in\{0,1\}).
\end{equation*}\end{definition}\vsd
An easy way to grasp the concept of ergodicity is through Birkhoff ergodic Theorem.
\begin{theorem}[Birkhoff ergodic Theorem]\label{theorem:Birkhoff}Let $f$ be a stationary process with respect to an ergodic action $(\tau_{\bsxpar})_{\bsxpar\in\R^{d-1}}$ such that $f\in L^p(\Omega, L^1_{loc}(\R^{d-1}))$. Then almost surely (\textit{a.s.}) and in $L^p(\Omega)$ for any measurable bounded domain $K\subset\R^{d-1}$\vspace{-0.2cm}\begin{equation*}\forall \bsx\in\R^d,\quad \fint_{K_t}f^\omega(\bsypar+\bsxpar)\dd \bsypar\xrightarrow[t\to \infty]{}\E[f],\end{equation*}where $K_t\coloneqq \left\{\bsxpar\in\R^{d-1}|t^{-1}\bsxpar\in K\right\}$ denotes the homothetic dilation with ratio $t>0$ of $K$.\end{theorem}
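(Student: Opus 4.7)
The plan is to deduce this multidimensional continuous-parameter ergodic theorem from the standard Wiener--Tempelman ergodic theorem for the measure-preserving action $(\tau_{\bsxpar})_{\bsxpar\in\R^{d-1}}$ on $(\Omega,\F,\Pro)$, applied to a scalar random variable built from $f$ by integration over the unit cube. Three preliminary reductions dispose of the nuisances. First, by stationarity, $f^\omega(\bsypar+\bsxpar) = f^{\tau_{\bsxpar}\omega}(\bsypar)$ and the law of $\tau_{\bsxpar}\omega$ is $\Pro$, so it suffices to treat $\bsxpar=0$. Second, for arbitrary $t>0$ one sandwiches $t$ between consecutive integers and compares $K_t$ with $K_n$ via the ratio $|K_n|/|K_t|\to 1$, together with an $L^1$-control on the thin shell between them, reducing to $t=n\in\N$. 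Third, for a general bounded $K$ (with $|\partial K|=0$) one approximates $K$ from inside and outside by finite unions of disjoint dyadic cubes whose symmetric difference has arbitrarily small measure; since dilation is a homothety, this transfers to $K_t$ with the same relative error, so it suffices to prove the result when $K=\square_1$.

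The core step is to introduce the auxiliary random variable
\[
F(\omega)\coloneqq \int_{\square_1} f^\omega(\bsypar)\,\dd\bsypar,
\]
which belongs to $L^p(\Omega)$ by the assumption $f\in L^p(\Omega,L^1_{loc}(\R^{d-1}))$. Stationarity yields $F(\tau_{\bsypar}\omega)=\int_{\bsypar+\square_1} f^\omega(\bsypar')\,\dd\bsypar'$, and a Fubini computation relates the $F$-ergodic average to the target:
\[
\fint_{\square_n} F(\tau_{\bsypar}\omega)\,\dd\bsypar = \fint_{\square_n} f^\omega(\bsypar)\,\dd\bsypar + r_n(\omega),
\]
where $r_n$ comes from points within unit distance of $\partial\square_n$. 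The left-hand side is a genuine Wiener ergodic average of $F\in L^p(\Omega)$ along the $\R^{d-1}$-action, which by the multiparameter Wiener--Tempelman theorem converges almost surely and in $L^p(\Omega)$ to $\E[F\mid\mathcal{I}]$, where $\mathcal{I}$ is the $\sigma$-algebra of $\tau$-invariant events. Ergodicity makes $\mathcal{I}$ trivial, so the limit is the constant $\E[F]$, which equals $\E[f]$ by stationarity and Fubini. The shift by $\bsxpar$ is then reinstated at the end by the measure-preservation argument above.

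The step I expect to cost the most work is making the boundary-correction estimate rigorous in the $L^p(\Omega)$ norm: the layer where $\bsypar+\square_1\not\subset\square_n$ can be covered by $O(n^{d-2})$ unit cubes, and by stationarity each corresponding integral of $|f^\omega|$ has the same $L^p(\Omega)$ norm as $\|F\|_{L^p(\Omega)}$, so after dividing by $n^{d-1}$ one obtains $\|r_n\|_{L^p(\Omega)}=O(1/n)\to 0$. A parallel estimate is needed to pass from cubes to arbitrary bounded $K$, which is why one needs $|\partial K|=0$; without that assumption the statement is not quite correct as written. The remaining ingredient being black-boxed is the multiparameter maximal inequality underlying the Wiener--Tempelman theorem (for instance via Calderón's transference principle), needed both for the almost-sure convergence and, combined with a density argument on simple invariant-like functions, for the $L^p(\Omega)$ convergence.
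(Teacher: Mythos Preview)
The paper does not prove this theorem: immediately after the statement it writes ``We refer to \cite{shiryaev1984probability,reuter1986u} for the proof of this theorem.'' It is quoted as a classical black box, so there is no paper proof to compare your proposal against.

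Your sketch is a reasonable outline of one standard route to the continuous multiparameter ergodic theorem: pass to the scalar variable $F(\omega)=\int_{\square_1}f^\omega$, invoke the Wiener--Tempelman theorem for the $\R^{d-1}$-action on $(\Omega,\F,\Pro)$, and control the discrepancy with $\fint_{\square_n}f^\omega$ via a boundary-layer estimate. Two small caveats. First, the $L^p(\Omega)$ bound $\|r_n\|_{L^p}=O(1/n)$ you give is enough for $L^p$-convergence of the remainder but not automatically for its almost-sure vanishing when $p\leq 1$ (or even $p$ slightly larger, since you still need a Borel--Cantelli or maximal argument); in practice one uses the same maximal inequality you are already black-boxing, or a subsequence-plus-monotonicity trick. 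Second, your observation that the statement as written tacitly requires $|\partial K|=0$ is correct; the paper only ever applies the theorem with $K$ a cube, so this is harmless in context.
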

In other words, ergodicity implies that averaging with respect to the randomness corresponds to averaging spatially.
We refer to \cite{shiryaev1984probability,reuter1986u} for the proof of this theorem.\vsd

For $\bsy\in\R^d$ and $\omega\in\Omega$, let $R^\omega(\bsy)$ be the distance from $\bsy$ to the nearest $\bsx^{\omega}_n$, \textit{i.e.}
\begin{equation}\label{eq:def_R}
R^\omega : \left\{ 
\begin{aligned}
  \mathbb{R}^{d} & \to \  \mathbb{R}_+ \ , \\
   \bsy & \mapsto \  \min_{n\in \mathbb{N}} |\bsy-\bsx_n^\omega|. 
\end{aligned}
\right.
\end{equation}
By stationarity of $\big\{\bsx_n \big\}_{n\in \mathbb{N}}$, for all $y_d>0$, $(\omega, \bsypar)\mapsto R^\omega(\bsypar,y_d)$ is stationary.  

\begin{hypothesis}[H1]
\makeatletter\def\@currentlabelname{(H1)}\makeatother
\label{hyp:mixing}There exists $m>2d$ such that \begin{equation*}\sup_{y_d\in[0,h]}\quad \E[R(\cdot, y_d)^m]<+\infty.\end{equation*}\end{hypothesis}
Intuitively Hypothesis \nameref{hyp:mixing} implies that there exist arbitrarily large portions of $\mathcal{L}$ without particle but the probability to sample a large portion without particle is small. Classical hard-core point processes such as hard-core Poisson point process or random parking satisfy stationarity ergodicity and hypothesis \nameref{hyp:mixing} (see \cite{DuerinckxGloria2017}). Note that the necessity for $m$ to be greater than $2d$ is important to obtain Lemma \ref{lem:weightedtrace}.

Let us define the weight
\begin{equation}\label{eq:h}\mu^\omega(\bsy)\coloneqq \begin{array}{|ll}R^\omega(\bsy_\shortparallel, y_d)^{-m}&\text{if}\,\,y_d\in[0, h],\\(y_d^2+R^\omega(\bsy_\shortparallel, h)^{2m})^{-1}&\text{if}\,\,y_d> h,\end{array}\end{equation} for $\bsy\in D^\omega$ and $\omega\in\Omega$. Assumption \nameref{hyp:mixing}  can then be rewritten as: there exists 
C independent of $y_d$ such that \begin{equation}\label{eq:Eh-1}\forall y_d>0,\quad \quad\E[\mu(\cdot, y_d)^{-1}]\leq C y_d^2.\end{equation}

%

The set of normalized particles is denoted $\dst\Pa^\omega\in \mathcal{L}$ and the set of scaled particles of size $\eps$, $\dst\Pa^\omega_\eps=\eps\Pa^\omega\subset\mathcal{L}^\eps=\eps \mathcal{L}$.

\subsection{Well-posedness of the scattering problem in the random setting}

For any Banach space $V$, we define $L^2\big(\Omega, V\big)$ the space of measurable applications $u:\Omega\to V$ such that
$\mathbb{E}\left[ \|u \|^2_V\right] <+\infty$, that we equip with the usual norm
\begin{equation*}\forall \ u \in L^2\big(\Omega, V\big), \quad \| u\|_{L^2(\Omega,V)}^2 \coloneqq  \mathbb{E}\left[ \big\|u\big\|^2_V \right].\end{equation*}
Since the constant  in Theorem \ref{thm:ref_alea} is independent of $\omega\in\Omega$, the following result can be easily deduced.
\begin{corollary}\label{cor:wpue}
Let $\ell_\eps$ be a continuous antilinear form on $L^2\big(\Omega,H^1_0(\mathcal{B}^{\omega}_{L})\big)$ then for all $\eps>0$, there exists a unique $u_\eps \in L^2\big(\Omega,H^1_0(\mathcal{B}^{\omega}_{L})\big)$ such that for a.e. $\omega\in\Omega$, $u_\eps^\omega$ is solution of \eqref{FV_alea_ref}. Moreover, there exists $C>0$ depending only on $k$, $L$ and $\gamma$ such that 
\begin{equation}
\mathbb{E}\left[\|u_\eps^\omega\|^2_{H^1(\mathcal{B}^{\omega}_{L})} \right]\ \leq \ C \   \mathbb{E}\Big[\|\ell_\eps\|_{\big[H^1_0(\mathcal{B}^{\omega}_{L})\big]'}^2 \Big].
\end{equation}
\end{corollary}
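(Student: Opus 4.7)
The plan is to deduce the corollary from Theorem \ref{thm:ref_alea} by collecting the deterministic solutions pointwise in $\omega$ and integrating the uniform bound, the only genuine work being to ensure that the map $\omega\mapsto u_\eps^\omega$ is measurable with values in an appropriate space.

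First I would reinterpret the hypothesis on $\ell_\eps$ through the canonical identification $L^2\big(\Omega,H^1_0(\mathcal{B}^{\omega}_{L})\big)' \simeq L^2\big(\Omega,[H^1_0(\mathcal{B}^{\omega}_{L})]'\big)$, so that $\ell_\eps$ is represented for a.e. $\omega$ by a continuous antilinear form $\ell_\eps^\omega$ on $H^1_0(\mathcal{B}^{\omega}_{L})$ with $\omega\mapsto\|\ell_\eps^\omega\|_{[H^1_0(\mathcal{B}^{\omega}_{L})]'}$ in $L^2(\Omega)$. For each such $\omega$, Theorem \ref{thm:ref_alea} produces a unique $u_\eps^\omega\in H^1_0(\mathcal{B}^{\omega}_{L})$ solving \eqref{FV_alea_ref}, together with the bound $\|u_\eps^\omega\|_{H^1(\mathcal{B}^{\omega}_{L})}\leq C\|\ell_\eps^\omega\|_{[H^1_0(\mathcal{B}^{\omega}_{L})]'}$ with $C=C(k,L,\gamma)$ independent of $\omega$.

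Second, I would establish measurability of $\omega\mapsto u_\eps^\omega$ by extending $u_\eps^\omega$ by zero inside $\mathcal{P}_\eps^\omega$ (consistent with the Dirichlet condition) and regarding it as an element of the fixed Hilbert space $H^1\big(\R^{d-1}\times(0,L)\big)$. To check measurability with respect to the cylindrical $\sigma$-algebra $\F$, I would use a Galerkin construction: fix a countable total family $(\varphi_j)_{j\in\N}$ of smooth functions compactly supported in $\R^{d-1}\times(0,L]$, and for each $\omega$ keep the indices $j\leq N$ whose supports avoid $\overline{\mathcal{P}_\eps^\omega}$. The selection of indices, the Galerkin matrix entries coming from $a_\eps^\omega$, and the right-hand sides depend measurably on $\omega$ because $\mathcal{P}_\eps^\omega$ is entirely determined by the point process $\{\bsx_n^\omega\}$. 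By the same Fredholm argument underlying Theorem \ref{thm:ref_alea} the finite-dimensional systems are uniquely solvable for every $\omega$, and Cramer's rule yields a measurable Galerkin approximation $u_\eps^{\omega,N}$; the $\omega$-uniform bound allows passing to the limit $N\to\infty$ in $H^1$, producing $u_\eps^\omega$ as a measurable limit.

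Third, squaring the deterministic estimate and taking expectation gives
\begin{equation*}
\E\bigl[\|u_\eps^\omega\|_{H^1(\mathcal{B}^{\omega}_{L})}^2\bigr]\leq C^2\,\E\bigl[\|\ell_\eps^\omega\|_{[H^1_0(\mathcal{B}^{\omega}_{L})]'}^2\bigr]<+\infty,
\end{equation*}
so $u_\eps\in L^2\big(\Omega,H^1_0(\mathcal{B}^{\omega}_{L})\big)$ and the announced estimate holds; uniqueness in this stochastic space follows directly from pointwise uniqueness. The only genuinely non-cosmetic step is the measurability argument above, since $\mathcal{B}^{\omega}_{L}$ itself is a random object; the rest amounts to integrating the deterministic $\omega$-uniform estimate.
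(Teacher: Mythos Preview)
Your proposal is correct and follows exactly the paper's route: the paper does not give a proof but simply remarks that since the constant in Theorem~\ref{thm:ref_alea} is independent of $\omega$, the corollary ``can be easily deduced'', i.e.\ apply the theorem pointwise and integrate the uniform bound. You are in fact more careful than the paper, which glosses over the measurability of $\omega\mapsto u_\eps^\omega$ that you address via the Galerkin argument.
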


\section{A formal asymptotic expansion}\label{sec:formalasympexp}

To build approximations of $u_\eps^\omega$ at different orders of $\eps$ we use multiscale expansion techniques. The first step is to propose a multiscale aymptotic expansion as an Ansatz for $u_\eps^\omega$. We then inject this Ansatz in the equations verified by $u_\eps^\omega$ and obtain formally the equations verified by the different terms of the expansion. This step is detailed in this section.\vsd

For a.e. $\omega\in\Omega$, we divide the upper half-space $D^\omega_\eps$ in two regions : 
\begin{itemize}
\item[-]the domain $\mathcal{B}_{\eps H}^{\omega}\coloneqq \R^{d-1}\times (0,\eps H)\setminus \overline{\Pa}_\eps^\omega$ below the plane $\Sigma_{\eps H} \coloneqq \{x_d=\eps H\}$, and 
\item[-]the domain $\mathcal{B}_{\eps H}^{\infty}\coloneqq  \R^{d-1}\times (\eps H,+\infty)$ above $\Sigma_{\eps H}$. 
\end{itemize} 
Here $H>h$ is a parameter, the only constraint being that the set of particles is always in the domain $\mathcal{B}_{\eps H}^{\infty}$. We will explain in Section \ref{subsection:effective} how this parameter should be chosen.

We propose the following Ansatz for the solution $u_\eps$. For all $\omega\in\Omega$, 
 
\begin{equation}\label{dvt_asymptotique}
  \begin{aligned}
      u_\eps^\omega(\bsx) &= \sum_{n\in \mathbb{N}} \eps^n \  U_n^{\omega,NF}\left(\bsx_\shortparallel;\dfrac{\bsx_\shortparallel}{\eps},\dfrac{x_d}{\eps}\right)  \ + \  \sum_{n\in \mathbb{N}} \eps^n \  u_n^{\omega,FF}\left(\bsx\right),& \quad \bsx\in \mathcal{B}_{\eps H}^{\infty} \\
      u_\eps^\omega(\bsx) &= \sum_{n\in \mathbb{N}} \eps^n \  U_n^{\omega,NF}\left(\bsx_\shortparallel;\dfrac{\bsx_\shortparallel}{\eps},\dfrac{x_d}{\eps}\right),&  \quad \bsx\in \mathcal{B}^{\omega}_{\eps H}.
  \end{aligned}
\end{equation}
where
\begin{itemize}
    \item[-] the so called far-field (FF) terms $u_n^{\omega,FF}$  defined only in $\mathcal{B}_{\eps H}^{\infty}$ depend only on the macroscopic variable $\bsx\coloneqq (\bsx_\shortparallel,x_d)$ and \textit{a priori} on the random distribution;     
    \item[-] the so-called near-field (NF) terms $U_n^{\omega,NF}$ depend on $\bsx_\shortparallel$ and on a microscopic variable $\bsy\coloneqq (\bsy_\shortparallel,y_d)=\left(\eps^{-1}{\bsx_\shortparallel}, \eps^{-1}{x_d}\right)$.
\end{itemize}\vsd
Given the stationarity of the particles distribution, we suppose that the near-field terms are stationary with respect to the tangential microscopic variables 
\begin{equation}
    \forall n\in\N,\;\forall \bsx_\shortparallel\in\R^{d-1},\; \forall y_d \in \mathbb{R}^+,\quad (\omega,\bsy_\shortparallel)\mapsto U_n^{\omega,NF}(\bsx_\shortparallel; \bsy_\shortparallel, y_d) \quad\text{is stationary.}
    \end{equation}
    We want for $u_\eps^{\omega}$ to be well approximated by the far-field terms far away from the particles so we impose
\begin{equation}
    \forall n\in\N,\;\forall \bsx_\shortparallel \in \R^{d-1},\;\forall \bsy_\shortparallel\in\R^{d-1},\quad\dst\lim_{y_d\to+\infty}U_n^{\omega,NF}(\bsx_\shortparallel;\bsy_\shortparallel,y_d)=0\;\text{a.s}.
\end{equation}After injecting the Ansatz into \eqref{eq:P0eps}, we obtain the following :
\begin{itemize}
    \item For all $n\in \mathbb{N}$, the far-field term $u^{\omega, FF}_n$ should satisfy :
    \begin{equation}\label{eq:cascade_champ_loin_alea}
        \begin{array}{|lr}
         -\Delta u_n^{\omega,FF}  -   k^2  u_n^{\omega,FF}=f\delta_{0,n} &  \text{ in } \quad \mathcal{B}_{\eps H,L}\\
         -\partial_{x_d}  u_n^{\omega,FF} + \Lambda^k  u_n^{\omega,FF} = 0 & \text{ on } \quad \Sigma_{L} 
        \end{array}
    \end{equation}
    with $\delta_{0,n}=1$ if $n=0$, $\delta_{0,n}=0$ otherwise and $L>\eps H$ (that is always possible since the interface where the DtN condition is imposed is artificial). A boundary condition for $u^{\omega, FF}_n$ on $\Sigma_{\eps H}$ is missing for now and will be derived by studying the near field terms. 
   \item  The near-field terms $U_n^{\omega, NF}$ are solutions of Laplace-type problems posed in an infinite half-space containing particles of size 1 following a stationary and ergodic distribution,  $\bsx_\shortparallel$ playing the role of a parameter: 
for all $n\in\mathbb{N}$, for a.e.  $\bsx_\shortparallel\in\R^{d-1}$, the near-field term $\bsy\mapsto U^{\omega,NF}_n(\bsx_\shortparallel;\bsy)$ verifies 
\begin{equation}\label{eq:NF}
\begin{array}{|l}
  -\Delta_{\bsy} U^{\omega,NF}_n = \ 2 \nabla_{\bsx_\shortparallel}\cdot \nabla_{\bsy_\shortparallel} U^{\omega,NF}_{n-1}\ + \ \Delta_{\bsx_\shortparallel} U^{\omega,NF}_{n-2}\ + \ k^2 \ U^{\omega,NF}_{n-2}\quad\text{ in } \quad\mathcal{B}_{H}^{\omega} \cup \mathcal{B}_{ H}^{\infty}, \\
  \partial_{y_d} U^{\omega,NF}_n = \ ik\gamma \ U^{\omega,NF}_{n-1}\;\text{ on } \; \Sigma_0, \quad\text{and}\quad
  U^{\omega,NF}_n \ = \ 0\; \text{ on } \; \partial \mathcal{P}^\omega, \\
  \Big[U^{\omega,NF}_n \Big]_{\Sigma_H} = \ -u^{\omega, FF}_n\big|_{\Sigma_{\eps H}}(\bsx_\shortparallel) \;\;\text{and}\;\;
  \Big[-\partial_{y_d}U^{\omega,NF}_n \Big]_{\Sigma_H} = \ \partial_{x_d}u^{\omega, FF}_{n-1}\big|_{\Sigma_{\eps H}}(\bsx_\shortparallel), 
\end{array}
\end{equation}
where $u^{\omega, FF}_{-1}=0$ and $U^{\omega,NF}_ {-1}=U^{\omega,NF}_{-2}=0$ by convention and $$[v]_{\Sigma_H}\coloneqq \left(v\big|_{\mathcal{B}_{H}^{\infty}}\right)\Big|_{\Sigma_H}-\left(v\big|_{\mathcal{B}_{H}^{\omega}}\right)\Big|_{\Sigma_H}.$$\vsd
\end{itemize}
The functional framework and the assumptions on the source terms for which these problems \eqref{eq:NF} set in an unbounded domain are well posed are not straight-forward. Neither is the condition for which the solution tends to $0$ at infinity. The next two sections are devoted to the analysis of this problem. First in section  \ref{sec:NF_pbs} we study the well-posedness of the near field problems. Next in section
\ref{sec:limbehavior}, we focus on the behavior at infinity of the near-fields.

\section{Well-posedness  of the near-field problems}\label{sec:NF_pbs}

The near-field problems \eqref{eq:NF} can be rewritten as follows: for $n \geq 0$, we look for a stationary  $U^{\omega,NF}_n$ solution for a.e. $\bsx_\shortparallel\in \mathbb{R}^{d-1}$ and $\omega\in\Omega$ of 
\begin{equation}\label{eq:NFGen}\begin{array}{|l}
  -\Delta_{\bsy} U^{\omega,NF}_n = F^\omega_{n-2}+\nabla \cdot \bsG^{\omega}_{n-1} \quad\text{ in } \quad\mathcal{B}_{H}^{\omega} \cup \mathcal{B}_{ H}^{\infty}\ , \\
  \partial_{y_d} U^{\omega,NF}_n = \Psi^{\omega}_{n-1}\;\text{ on } \; \Sigma_0, \quad\text{and}\quad
  U^{\omega,NF}_n \ = \ 0 \;\text{ on } \; \partial \mathcal{P}^\omega , \\
  \Big[U^{\omega,NF}_n \Big]_{\Sigma_H} = \alpha^{\omega, D}_{n}\;\;\text{and}\;\;
  \Big[-\partial_{y_d}U^{\omega,NF}_n \Big]_{\Sigma_H} = \alpha^{\omega, N}_{n-1}, 
\end{array}\end{equation}
where $F_{n-2}(\bsx_\shortparallel; \ \cdot, \ y_d )$, $\bsG_{n-1}(\bsx_\shortparallel; \ \cdot , \ y_d )$ and $H_{n-1}(\bsx_\shortparallel; \ \cdot )$ are stationary for any $y_d>0$. $\mathbf{F}^{\omega}_{n-2}(\bsx_\shortparallel; \ \cdot \ )$ depends on the previous term $U^{\omega,NF}_{n-2}(\bsx_\shortparallel; \ \cdot \ )$ and $\bsG^{\omega}_{n-1}(\bsx_\shortparallel; \ \cdot , \ y_d )$  and $\Psi^{\omega}_{n-1}(\bsx_\shortparallel; \ \cdot)$ on $U^{\omega,NF}_{n-1}(\bsx_\shortparallel; \ \cdot \ )$. $\alpha^{\omega, D}_{n}(\bsx_\shortparallel)$ and  $\alpha^{\omega, N}_{n-1}(\bsx_\shortparallel)$ depend respectively on the far field terms $u^{\omega,NF}_n(\bsx_\shortparallel)$ and $\partial_{x_d}u^{\omega, FF}_{n-1}(\bsx_\shortparallel)$.  Furthermore, $\alpha^{\omega, D}_n(\bsx_\shortparallel)$ and $\alpha^{\omega, FF}_{n-1}(\bsx_\shortparallel)$ are functions independent  of $\bsy_\shortparallel$. \vsd

In \eqref{eq:NFGen} $\bsx_\shortparallel$ plays the role of a parameter. For convenience we will omit the dependency on $\bsx_\shortparallel$ in what follows. After lifting the jump of the solution across $\Sigma_H$ we are interested in the well-posedness of the problems verified by $\widetilde{U}^{\omega, NF}_n\coloneqq U^{\omega,NF}_n-\alpha_n^{\omega, D} \ \mathbb{1}_{\mathcal{B}^\infty_H}$, \textit{i.e.} problems of the form
\begin{equation}\label{eq:NFtGen}
\begin{array}{|ll}
   -\Delta_{\bsy} \widetilde{U}^{\omega} =F^\omega+\nabla \cdot \bsG^{\omega}\quad \text{ in } \quad\mathcal{B}_{H}^{\omega} \cup \mathcal{B}_{ H}^{\infty}, \\
  -\partial_{y_d} \widetilde{U}^{\omega}=\Psi^{\omega} \; \text{ on } \; \Sigma_0, \quad\text{and}\quad
   \widetilde{U}^{\omega}= \ 0 \; \text{ on } \; \partial \mathcal{P}^\omega, \\
 \Big[\widetilde{U}^{\omega}  \Big]_H = \ 0 \quad \text{ and }\quad   \Big[-\partial_{y_d}\widetilde{U}^{\omega}  \Big]_H = \ \alpha^{\omega,N},
\end{array}
\end{equation}for given sources $F$, $G$, $\Psi$ and $\alpha^N$. Since one can find arbitrary large boxes $\square_R\times[0,H]$ without Dirichlet particles and since $\mathcal{B}_H^\infty$ is also unbounded im the $y_d$-direction, we cannot find a solution in the classical $L^2$-framework. Instead we consider weighted Sobolev spaces with weight $\mu$ previously introduced in \eqref{eq:h}.

Let us introduce the following weighted functional space
\begin{multline}\label{eq:w0D}
\mathcal{W}_0(D)\coloneqq  \left\{ \text{a.s. } v^\omega\in H_{loc}^1\big(D^\omega\big), \ \mathbb{1}_{D}v\big(\cdot,y_d\big) \text{ stationary for any }y_d>0, \right.\\\left.\dst \text{a.s. }v^\omega=0\,\text{on }\partial \Pa^\omega, \,\mathbb{E}\left[ \int_{\square_1}\int_0^{+\infty} \mathbb{1}_{D} \ (\mu(\bsy)|v(\bsy)|^2+|\nabla v(\bsy)|^2) \dd\bsy\right]<+\infty \right\},
\end{multline}
equipped with the norm
\begin{equation*}
\forall v\in\mathcal{W}_0(D), \quad \|v\|^2_{\mathcal{W}_0(D)}\coloneqq \mathbb{E}\left[ \int_{\square_1}\int_0^{+\infty} \mathbb{1}_{D} \ (\mu(\bsy)|v(\bsy)|^2+|\nabla v(\bsy)|^2) \dd\bsy\right].
\end{equation*}
We introduce the following functional spaces adapted to our source terms
\begin{multline*}
\mathcal{L}^2(D)\coloneqq  \left\{ \text{a.s. } v^\omega\in L_{loc}^2\big(D^\omega\big), \ \mathbb{1}_{D}v\big(\cdot,y_d\big) \text{ stationary for any }y_d>0,\right.\\\left.\mathbb{E}\left[ \int_{\square_1}\int_0^{+\infty} \mathbb{1}_{D} \ |v|^2(\bsy) \dd\bsy\right]<+\infty \right\},\end{multline*}
and for $t\geq 0,$ $\dst\mathcal{L}^2\big(\Sigma_t\big)\coloneqq \left\{ \varphi\in L^2_{loc}\big(\Sigma_t, L^2(\Omega)\big), \varphi \text{ stationary}, \ \mathbb{E}\left[ |\varphi|^2 \right]< +\infty\right\}.$ For all $v\in \mathcal{L}^2(D)$, let us denote $\dst\|v\|^2_{\mathcal{L}^2(D)}\coloneqq \mathbb{E}\left[ \int_{\square_1}\int_0^{+\infty} \mathbb{1}_{D} \ |v|^2(\bsy) \dd\bsy\right]$. 

\begin{remark}Note that $\mathcal{W}_0(D)\not\subset \mathcal{L}^2(D).$\end{remark}
Let us write the variational formulation associated to \eqref{eq:NFtGen} in $\mathcal{W}_0(D)$. We look for $\widetilde{U}$ in $\mathcal{W}_0(D)$ such that for any $V\in \mathcal{W}_0(D)$, 
\begin{equation}\label{FV_Dirichlet_pb_releve_borne_regularise}
  a(\widetilde{U},V)=l(V)
\end{equation}
where $a:\mathcal{W}_0(D)\times \mathcal{W}_0(D) \to \mathbb{C}$ is the sesquilinear form  defined for all $U,V\in\mathcal{W}_0(D)$ by
\begin{equation*}\label{eq:bil_NF}
a(U,V)\coloneqq \mathbb{E}\left[\int_{\square_1}\int_{0}^L \mathbb{1}_{D} \Big[ \nabla {U}(\bsy)\cdot \nabla\overline{V} (\bsy) \Big]\,\dd\bsy\right],
\end{equation*}
and $l:\mathcal{W}_0(D)\to  \mathbb{C}$ the antilinear form defined for all $V\in\mathcal{W}_0(D)$ by
\begin{equation*}\label{eq:lin_NF}\begin{multlined}
l(V)\coloneqq \E\left[\int_{\square_1}\int_0^L \mathbb{1}_{D} (F^\omega\overline{V}^\omega+\bsG^\omega\cdot\nabla\overline{V}^\omega)(\bsy)\,\dd\bsy+\int_{\square_1} (\Psi^\omega \ \overline{V^\omega}\big|_{\Sigma_0} + \alpha^{N} \ \overline{V}^\omega\big|_{\Sigma_H})(\bsypar)\,\dd\bsypar\right].
\end{multlined}\end{equation*}
The objective of the next two subsections is to show the following proposition.
\begin{proposition}\label{prop:Utnwellposed}
Suppose that $\Pa$ verifies hypothesis \nameref{hyp:mixing} then for all $\mu^{-\frac{1}{2}}F\in\mathcal{L}^2(D)$, $\bsG\in\mathcal{L}^2(D)^n$, $\mu^{-\frac{1}{2}}\Psi\in\mathcal{L}^2(\Sigma_0)$ and $\mu^{-\frac{1}{2}}\alpha^N\in \mathcal{L}^2(\Sigma_H)$,  Problem \eqref{FV_Dirichlet_pb_releve_borne_regularise} is well posed in $\mathcal{W}_0(D)$. Moreover the unique solution $\widetilde{U}\in\mathcal{W}_0(D)$ verifies
\begin{equation}\label{eq:estUt}
\|\widetilde{U}\|_{\mathcal{W}_0(D)}\lesssim \|\mu^{-\frac{1}{2}}\Psi\|_{\mathcal{L}^2(\Sigma_0)}+\|\mu^{-\frac{1}{2}}\alpha^{N}\|_{\mathcal{L}^2(\Sigma_H)}+\|\mu^{-\frac{1}{2}}F\|_{\mathcal{L}^2(D)}+\|G\|_{\mathcal{L}^2(D)}.
\end{equation}
\end{proposition}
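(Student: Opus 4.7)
The plan is to apply the Lax--Milgram theorem to the variational formulation \eqref{FV_Dirichlet_pb_releve_borne_regularise} on the Hilbert space $\mathcal{W}_0(D)$. Continuity of the sesquilinear form $a$ is immediate from Cauchy--Schwarz, since $a$ only couples gradients and these are controlled by the $\mathcal{W}_0(D)$-norm. The two non-trivial ingredients are: (i) the coercivity of $a$, which reduces to proving a stochastic weighted Poincar\'e inequality
$$\mathbb{E}\!\left[\int_{\square_1}\!\int_0^{+\infty}\!\mathbb{1}_{D}\,\mu(\bsy)\,|V(\bsy)|^2\,\dd\bsy\right]\;\lesssim\;\mathbb{E}\!\left[\int_{\square_1}\!\int_0^{+\infty}\!\mathbb{1}_{D}\,|\nabla V(\bsy)|^2\,\dd\bsy\right]$$
for $V\in\mathcal{W}_0(D)$, and (ii) the continuity of the antilinear form $l$, which in addition to Cauchy--Schwarz requires weighted trace inequalities on $\Sigma_0$ and $\Sigma_H$.

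The main obstacle is the weighted Poincar\'e inequality, which I would establish by splitting $D^\omega$ into the layer part $\{0<y_d<h\}$ and the upper part $\{y_d>h\}$. Inside the layer, using that $V^\omega$ vanishes on $\partial\mathcal{P}^\omega$, almost every $\bsy\in D^\omega$ can be joined to the nearest particle by a segment of length at most $R^\omega(\bsy)+1$; writing $V(\bsy)$ as the integral of a directional derivative along this segment and applying Cauchy--Schwarz yields $|V(\bsy)|^2\lesssim R^\omega(\bsy)\int_{\mathrm{segment}}|\nabla V|^2$. Multiplying by $\mu^\omega=R^{-m}$, integrating over $\square_1\times(0,h)$, taking expectation and combining Fubini with stationarity, the remaining $R^{1-m}$ factor is absorbed by the moment bound of Hypothesis \nameref{hyp:mixing}. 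Above the layer, $\mu\leq y_d^{-2}$, so the classical one-dimensional Hardy inequality applied vertically on each half-line $\{\bsy_\shortparallel\}\times(h,+\infty)$, combined with the layer estimate to control the trace at $y_d=h$, completes the argument.

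Once coercivity is proved, the continuity of $l$ is handled term by term. For the volume terms Cauchy--Schwarz gives
$$\bigl|\mathbb{E}\textstyle\int \mathbb{1}_{D} F\overline{V}\bigr|\leq \|\mu^{-1/2}F\|_{\mathcal{L}^2(D)}\,\|\mu^{1/2}V\|_{\mathcal{L}^2(D)},\qquad \bigl|\mathbb{E}\textstyle\int \mathbb{1}_{D}\bsG\cdot\nabla\overline{V}\bigr|\leq \|\bsG\|_{\mathcal{L}^2(D)}\,\|\nabla V\|_{\mathcal{L}^2(D)},$$
both dominated by $\|V\|_{\mathcal{W}_0(D)}$ thanks to the weighted Poincar\'e estimate above. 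The boundary terms on $\Sigma_0$ and $\Sigma_H$ are treated via a weighted trace estimate (Lemma \ref{lem:weightedtrace}) that controls the traces of $V$ on these surfaces in a suitable $\mu$-weighted norm by $\|V\|_{\mathcal{W}_0(D)}$; this is precisely the step where the sharper moment condition $m>2d$ of \nameref{hyp:mixing} is invoked. Lax--Milgram then yields both the existence and uniqueness of $\widetilde{U}\in\mathcal{W}_0(D)$ and the estimate \eqref{eq:estUt}.
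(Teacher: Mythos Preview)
Your overall strategy---Lax--Milgram on $\mathcal{W}_0(D)$, coercivity via a weighted Poincar\'e/Hardy inequality, and continuity of $l$ via Cauchy--Schwarz plus the weighted trace Lemma~\ref{lem:weightedtrace}---is exactly the route the paper takes. The structure, the split of the Poincar\'e estimate into the layer $\{0<y_d<h\}$ and the half-space $\{y_d>h\}$, and the use of the one-dimensional Hardy inequality above the layer all match the paper's Proposition~\ref{prop:wHardy}.

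There is one point where your sketch is imprecise. In the layer estimate you write that ``the remaining $R^{1-m}$ factor is absorbed by the moment bound of Hypothesis~\nameref{hyp:mixing}.'' This is not the mechanism: after Fubini, each point $\bsz$ is hit by all segments passing through it, and the resulting weight is a radial integral of the type $\int_{|\bsz|}^{R_{\max}}\rho^{1-m}\rho^{d-1}\,d\rho$. Its finiteness requires $m>d$, i.e.\ the \emph{size of the exponent} in the definition of $\mu$, not the moment condition $\mathbb{E}[R^m]<\infty$. The paper makes this transparent by working in polar coordinates inside each Vorono\"{\i} cell (so the ``segments'' are rays emanating from the unique particle in the cell) and showing directly that $\int_1^{+\infty}\bigl(\int_1^{\rho}s^{1-d}\,ds\bigr)\rho^{-m}\rho^{d-1}\,d\rho<\infty$ once $m>d$. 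Your segment argument is morally the same but would need this geometric bookkeeping to go through; the moment bound in \nameref{hyp:mixing} is used elsewhere (e.g.\ to know $\mu^{-1}\in L^1$ and in the trace lemma where $m>2d$ is needed), not here.
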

\begin{remark}
One can show that the unique solution  in $\mathcal{W}_0(D)$ of Problem \eqref{FV_Dirichlet_pb_releve_borne_regularise} satisfies almost surely Problem \eqref{eq:NFtGen} in the sense of distributions. That is why, in what follows, we designate the unique solution of \eqref{eq:NFtGen}  in $\mathcal{W}_0(D)$ the variational solution.
\end{remark}
\subsection{Functional inequalities in $\mathcal{W}_0(D)$}

We start by proving that $(\mathcal{W}_0(D), \|\cdot\|_{\mathcal{W}_0(D)})$ is a Hilbert space.

\begin{lemma}Assume that $\Pa$ verifies hypothesis \nameref{hyp:mixing} then $(\mathcal{W}_0(D), \|\cdot\|_{\mathcal{W}_0(D)})$ is a Hilbert space.
\end{lemma}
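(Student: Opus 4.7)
My plan is to first verify that $\mathcal{W}_0(D)$ is a pre-Hilbert space under the inner product
$$\langle u,v\rangle \coloneqq \mathbb{E}\left[\int_{\square_1}\int_0^{+\infty}\mathbb{1}_D\bigl(\mu\, u\overline{v} + \nabla u\cdot\nabla\overline{v}\bigr)\dd\bsy\right],$$
and then establish completeness. The form is clearly Hermitian sesquilinear; positive definiteness follows from Hypothesis \nameref{hyp:mixing}, since \eqref{eq:Eh-1} yields $\E[\mu^{-1}(\cdot,y_d)]<+\infty$ for each $y_d>0$ and therefore $\mu>0$ a.e.\ on $D^\omega$ for a.s.\ $\omega$.

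For completeness, I would take a Cauchy sequence $(v_n)\subset\mathcal{W}_0(D)$. Extending by zero outside $D$, the sequences $\mathbb{1}_D\sqrt{\mu}\,v_n$ and $\mathbb{1}_D\nabla v_n$ are Cauchy in the Hilbert space $L^2\bigl(\Omega\times\square_1\times(0,+\infty)\bigr)$ and its vector-valued analogue, so they converge strongly to limits $\eta$ and $\bsh$, both supported in $D$. I would then define $v \coloneqq \eta/\sqrt{\mu}$ on $D\cap(\square_1\times\R^+)$, which is well defined a.e.\ since $\mu>0$ a.e.\ there, and by construction $\sqrt{\mu}\,v_n\to\sqrt{\mu}\,v$ in $L^2$.

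The key step is to upgrade these weighted $L^2$ convergences to $H^1_{loc}(D^\omega)$ convergence so that one may identify $\bsh=\nabla v$ in the sense of distributions. For this I would fix any compact $K\subset\R^{d-1}\times\R^+$ and observe that, almost surely, the particle configuration is locally finite, so $R^\omega$ is bounded above on $K$ by compactness and $\mu$ admits a strictly positive lower bound $c_K^\omega$ on $K\cap D^\omega$. Extracting a subsequence that converges a.s., I would then deduce $v_n\to v$ in $L^2(K\cap D^\omega)$ and $(\nabla v_n)$ Cauchy there, whence $v^\omega\in H^1(K\cap D^\omega)$ with $\nabla v=\bsh$; exhausting $\R^{d-1}\times\R^+$ by a countable family of such compacts yields $v^\omega\in H^1_{loc}(D^\omega)$ a.s. The Dirichlet condition $v^\omega=0$ on $\partial\mathcal{P}^\omega$ then passes to the limit by continuity of the trace on a tubular neighborhood of each particle, where $\mu$ is again bounded below since $R$ is.

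To conclude, I would verify stationarity of $\mathbb{1}_D v(\cdot,y_d)$ by noting that the property is closed under $L^2(\Omega)$ convergence, and extend $v$ from $\square_1$ to the whole of $\R^{d-1}$ via the relation $v^\omega(\bsy_\shortparallel+\boldsymbol{\xi},y_d)\coloneqq v^{\tau_{\boldsymbol{\xi}}\omega}(\bsy_\shortparallel,y_d)$, which is automatically consistent with the constructed limit by stationarity of each $v_n$. This gives $v\in\mathcal{W}_0(D)$ and $v_n\to v$ in norm. The main obstacle I anticipate is coordinating the random lower bound $c_K^\omega$ on $\mu$ across nested compacts so as to upgrade weighted $L^2$ convergence to $H^1_{loc}$ and to pass the Dirichlet trace to the limit; once that is handled, the remaining verifications are routine Hilbert-space bookkeeping.
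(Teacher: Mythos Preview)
Your approach is correct and differs from the paper's in two ways worth noting.

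For the $H^1_{loc}$ identification, the paper appeals to a classical result on weighted Sobolev spaces (Kufner) stating that $W_0(D^\omega_R)$ on each truncated cylinder $D^\omega_R=D^\omega\cap(\square_R\times\R^+)$ is itself a Hilbert space whenever $\mu^{-1}\in L^1_{loc}$; it then passes to $L^2(\Omega,W_0(D^\omega_R))$, obtains a limit for every $R$, and patches these by monotone exhaustion. You instead use directly that $R^\omega$ is a continuous distance function, hence bounded on each compact, giving the random lower bound $c_K^\omega>0$ for $\mu$ and thereby upgrading weighted $L^2$ convergence to unweighted $L^2_{loc}$ convergence along a diagonal subsequence. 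Your route is more elementary---no external reference---at the price of the subsequence bookkeeping you anticipate.

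For stationarity, the paper verifies the relation in $L^1$ via Cauchy--Schwarz and the bound $\E[\mu^{-1}(\cdot,y_d)]<\infty$ from \eqref{eq:Eh-1}, which is precisely where Hypothesis \nameref{hyp:mixing} enters its argument. Your route---passing the identity $v_n^\omega(\bsy_\shortparallel+\boldsymbol{\xi},y_d)=v_n^{\tau_{\boldsymbol{\xi}}\omega}(\bsy_\shortparallel,y_d)$ to an a.e.\ limit along a subsequence and using that $\tau_{\boldsymbol{\xi}}$ is measure-preserving---does not require this moment bound, so your argument in fact shows that completeness holds without \nameref{hyp:mixing}. Relatedly, your invocation of \nameref{hyp:mixing} for positive definiteness is superfluous: $\mu>0$ everywhere on $D^\omega$ simply because the point process is indexed by $\mathbb{N}$, so $R^\omega<\infty$ always. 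Finally, your extension from $\square_1$ to $\R^{d-1}$ via the group action works but is slightly roundabout compared with the paper's direct construction of the limit on each $\square_R$, where consistency on overlaps is automatic.
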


\begin{proof}It is easy to see that $\mathcal{W}_0(D)$ is a Pre-Hilbert space. Let us prove completeness. For all $R>0$ and $\omega\in\Omega$, let $D^\omega_R\coloneqq D^\omega\cap(\square_R\times (0,+\infty)).$
Since a.s. $\mu^{-1}\in L^1_{loc}(D^\omega_R)$ , the weighted Sobolev space
\begin{equation*}
W_0(D^\omega_R)\coloneqq \left\{v\in H_{loc}^1\big(D^\omega_R\big), v=0\,\text{on }\partial \Pa^\omega, \int_{\square_R}\int_0^{+\infty} \mathbb{1}_{D} \ (\mu(\bsy)|v(\bsy)|^2+|\nabla v(\bsy)|^2) \dd\bsy<+\infty\right\}
\end{equation*}
equipped with the norm
\begin{equation*}
\forall v\in W_0(D^\omega_R), \quad \|v\|_{W_0(D^\omega_R)}\coloneqq  \int_{\square_R}\int_0^{+\infty} \mathbb{1}_{D} \ (\mu(\bsy)|v(\bsy)|^2+|\nabla v(\bsy)|^2) \dd\bsy
\end{equation*}
is a Hilbert space \cite[Theorem 1.11]{kufner1984define}. Therefore $(L^2(\Omega, W_0(D^\omega_R)), \|\cdot\|_{L^2(\Omega, W_0(D^\omega_R))})$ is also a Hilbert space. 

Let $(v_n)_{n\in\inte}$ be a Cauchy sequence in $\mathcal{W}_0(D)$. By stationarity $(v_n)_{n\in\inte}$ is a Cauchy sequence in  $L^2(\Omega, W_0(D^\omega_R))$ for any $R>0$. As  $\dst\lim_{R\to\infty}\uparrow \square_R=\R^{d-1}$, there exists $v\in H^1_{loc}(\R^d,L^2(\Omega))$ such that for all $R>0$, $v_n\xrightarrow[n\to\infty]{}v$ in $L^2(\Omega, W_0(D^\omega_R))$.

It remains to show that $\mathbb{1}_D v(\cdot,x_d)$ is stationary for all $x_d>0$. Let $n\in\mathbb{N}$. By stationarity of $\mathbb{1}_D v_n(\cdot,x_d)$, we have for a.e. $\omega\in \Omega$, $\bsx\in\R^{d-1}\times(0,+\infty)$ and $\bsy_\shortparallel\in\R^{d-1}$
\begin{equation*}\E\left[\left|\mathbb{1}_{D}v_n^\omega((\bsx_\shortparallel+\bsy_\shortparallel, x_d))-\mathbb{1}_{D}v_n^{\tau_{\bsy_\shortparallel}\omega}(\bsx)\right|\right]=0.\end{equation*}
We would like to pass to the limit in this equality but we only have that $v_n\xrightarrow[]{}v$ in $L^2(\Omega, W_0(D^\omega_R))$.
Let $R$ such that $\bsx_\shortparallel, \bsx_\shortparallel+\bsy_\shortparallel\in \square_{R}$. Using \eqref{eq:Eh-1} and the stationarity of $\mu$ we obtain by Cauchy-Schwarz inequality
\begin{multline}\label{eq:stati}
  \E\left[\int_{\square_{R}}\int_0^{x_d}\left|\mathbb{1}_{D}v^\omega((\tilde{\bsx}_\shortparallel+\bsy_\shortparallel, \tilde{x}_d))-\mathbb{1}_{D}v^{\tau_{\bsy_\shortparallel}\omega}(\tilde{\bsx})\right|d\tilde{\bsx}\right]\leq\E\left[\int_{\square_{R}}\int_0^{x_d}\mu^{-1}(\tilde{\bsx})d\tilde{\bsx}\right]\\
\left(\E\left[\int_{\square_{R}}\int_0^{x_d}\mu^\omega((\tilde{\bsx}_\shortparallel+\bsy_\shortparallel, \tilde{x}_d))\left|\mathbb{1}_{D}v_n^\omega((\tilde{\bsx}_\shortparallel+\bsy_\shortparallel, \tilde{x}_d))-\mathbb{1}_{D}v^\omega((\tilde{\bsx}_\shortparallel+\bsy_\shortparallel, \tilde{x}_d))\right|^2d\tilde{\bsx}\right]\right. \\
\left.+\E\left[\int_{\square_{R}}\int_0^{x_d}\mu^\omega((\tilde{\bsx}_\shortparallel+\bsy_\shortparallel, \tilde{x}_d))\left|\mathbb{1}_{D} v_n^{\tau_{\bsy_\shortparallel}\omega}(\tilde{\bsx})-\mathbb{1}_{D} v^{\tau_{\bsy_\shortparallel}\omega}(\tilde{\bsx})\right|^2d\tilde{\bsx}\right]\right).
\end{multline}
Since $v_n\xrightarrow[]{} v$ in $\vsu L^2(\Omega, W_0(D^\omega_{R}))$, we deduce by passing to the limit in this last inequality that $\mathbb{1}_D v(\cdot,x_d)$ is stationary for all $x_d>0$.
\end{proof}

In order to show well-posedness of \eqref{eq:NFtGen} we now derive a trace inequality for functions in $\mathcal{W}_0(D)$.
\begin{lemma}[Weighted trace Theorem]\label{lem:weightedtrace}
Let $\Pa$ verify Hypothesis \nameref{hyp:mixing}. Let $\alpha\in\{0\}\cup[h,+\infty)$. There exists $C_\alpha$ such that for any $v\in \mathcal{W}_0(D)$
\begin{equation*}
\mathbb{E}\left[\int_{\square_1}\mu^\omega(\bsxpar,\alpha)\left|v^\omega\big|_{{\Sigma_\alpha}}\right|^2(\bsy_\shortparallel)\dd\bsy_\shortparallel\right] \ \leq C_\alpha \ \|v\|_{\mathcal{W}_0(D)}^2.
\end{equation*}
\end{lemma}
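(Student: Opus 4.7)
The plan is to treat separately the two cases $\alpha\ge h$ and $\alpha=0$, which correspond to the two branches in the definition of the weight $\mu$. For $\alpha\ge h$, the surface $\Sigma_\alpha$ lies above the layer, the domain has no obstacles there and the Dirichlet condition plays no role. I would slice in the vertical direction: by the fundamental theorem of calculus applied between heights $\alpha$ and $y_d\in[\alpha,\alpha+1]$,
\[
|v^\omega(\bsy_\shortparallel,\alpha)|^2\le 2|v^\omega(\bsy_\shortparallel,y_d)|^2+2\int_\alpha^{\alpha+1}|\partial_{y_d}v^\omega(\bsy_\shortparallel,s)|^2\,ds.
\]
The key observation is that the explicit expression $\mu^\omega(\bsy_\shortparallel,y_d)=(y_d^2+R^\omega(\bsy_\shortparallel,h)^{2m})^{-1}$ guarantees that $\mu^\omega(\bsy_\shortparallel,\alpha)$ and $\mu^\omega(\bsy_\shortparallel,y_d)$ are comparable up to a multiplicative constant $C_\alpha$ (depending on $\alpha$ and $h$) uniformly for $y_d\in[\alpha,\alpha+1]$. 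Averaging in $y_d$, multiplying by $\mu^\omega(\bsy_\shortparallel,\alpha)$, integrating over $\bsy_\shortparallel\in\square_1$ and taking expectation then directly yields the bound by $\|v\|_{\mathcal{W}_0(D)}^2$ in this easy case.

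The case $\alpha=0$ is delicate and is precisely where the Dirichlet condition $v^\omega=0$ on $\partial\mathcal{P}^\omega$ must be used. For a.e. $(\bsy_\shortparallel,\omega)$, let $\bsx^\star(\bsy_\shortparallel,\omega)$ denote the center of the particle nearest to $(\bsy_\shortparallel,0)$, so that $|\bsx^\star-(\bsy_\shortparallel,0)|=R:=R^\omega(\bsy_\shortparallel,0)$. Since $v^\omega$ vanishes on $\partial B(\bsx^\star)$, integrating $\nabla v^\omega$ along the straight segment joining $(\bsy_\shortparallel,0)$ to the closest point of this sphere produces the one-dimensional estimate
\[
|v^\omega(\bsy_\shortparallel,0)|^2\le (R-1)\int_0^{R-1}|\nabla v^\omega|^2\,d\ell.
\]
To upgrade this to the desired volume integral, I would average over rays issued from $(\bsy_\shortparallel,0)$ and pointing towards the particle: a particle of radius $1$ at distance $R$ subtends at the apex a solid angle of order $R^{-(d-1)}$, and every ray inside it still meets $\partial B(\bsx^\star)$. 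Passing to polar coordinates on the resulting truncated cone $K(\bsy_\shortparallel,\omega)\subset D^\omega$ leads to an estimate of the form
\[
\mu^\omega(\bsy_\shortparallel,0)\,|v^\omega(\bsy_\shortparallel,0)|^2 \;\lesssim\; R^{-m+d}\int_{K(\bsy_\shortparallel,\omega)}\frac{|\nabla v^\omega(\bsy)|^2}{|\bsy-(\bsy_\shortparallel,0)|^{d-1}}\,\dd\bsy,
\]
and one then integrates over $\bsy_\shortparallel\in\square_1$, applies Fubini and invokes stationarity to re-express the cone-indexed integral as a global integral over $D^\omega$.

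The main technical obstacle is this final step: one must simultaneously control the powers of $R$ produced by the cone geometry and the multiplicity of overlaps of the random cones $K(\bsy_\shortparallel,\omega)$ at a fixed point $\bsy$. This is precisely where Hypothesis \nameref{hyp:mixing} is consumed, since the bound $\sup_{y_d\in[0,h]}\E[R(\cdot,y_d)^m]<+\infty$ with $m>2d$ provides exactly the integrability needed to absorb the $R^{-m+d}$ factor appearing in the cone estimate. This is in line with the remark immediately following Hypothesis \nameref{hyp:mixing}, according to which the condition $m>2d$ is imposed precisely in order to obtain Lemma \ref{lem:weightedtrace}.
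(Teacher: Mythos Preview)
Your split into an ``easy'' case $\alpha\ge h$ and a ``hard'' case $\alpha=0$ misplaces the boundary case $\alpha=h$. At $\alpha=h$ the weight is given by the first branch of \eqref{eq:h}, $\mu^\omega(\bsy_\shortparallel,h)=R^\omega(\bsy_\shortparallel,h)^{-m}$, whereas for $y_d\in(h,h+1]$ one has $\mu^\omega(\bsy_\shortparallel,y_d)=(y_d^2+R^\omega(\bsy_\shortparallel,h)^{2m})^{-1}\le R^{-2m}$. Hence $\mu^\omega(\bsy_\shortparallel,h)/\mu^\omega(\bsy_\shortparallel,y_d)\ge R^m$, which is unbounded, so your claimed comparability fails precisely at $\alpha=h$ and the vertical slicing argument does not close there. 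In the paper the cases $\alpha=0$ and $\alpha=h$ are handled together by the same mechanism, and only afterwards is $\alpha>h$ reduced to $\alpha=h$ by vertical slicing (which is essentially your argument, but used in the opposite direction).

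For $\alpha=0$ your cone idea is in the right spirit but leaves the hardest step undone. Two concrete issues: (i) the straight segment from $(\bsy_\shortparallel,0)$ to the nearest particle may pass through other particles, so the one-dimensional integration is not a priori through $D^\omega$; (ii) after Fubini, bounding the inner integral $\int_{\{\bsy_\shortparallel:\,\bsy\in K(\bsy_\shortparallel,\omega)\}} R(\bsy_\shortparallel)^{d-m}|\bsy-(\bsy_\shortparallel,0)|^{-(d-1)}\,\dd\bsy_\shortparallel$ uniformly in $\bsy$ and $\omega$ is not an obvious consequence of Hypothesis~\nameref{hyp:mixing}, and you do not indicate how to do it. The paper avoids both difficulties by working with the Vorono{\"\i} diagram associated with the particle centers: the cells are convex, tile the strip, and each cell minus its particle lies in $D^\omega$, so integrating radially from the particle center outward gives a clean estimate on $\Sigma_\alpha\cap V_n^\omega$ with weight $\tilde\mu(\rho)=\big(\int_1^\rho s^{1-d}\,ds\big)^{-1}\rho^{1-d}$; one then checks $\mu\lesssim\tilde\mu$ (this is where $m>2d$ enters), sums over cells, and concludes by Birkhoff's theorem. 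Your $\alpha>h$ slicing is a valid shortcut once $\alpha=h$ is in hand, but $\alpha=h$ itself requires the Dirichlet mechanism.
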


\begin{proof} We first prove the result for $\alpha\in\{0,h\}$. Let $v\in\mathcal{W}_0(D)$. Let us consider the Vorono{\"\i} diagram $(V_n^\omega)_{n\in \mathbb{N}}$ in $\mathbb{R}^{d-1}\times[0,h]$ associated with the point process $\{\bsx_n^\omega\}_{n\in \mathbb{N}}$ defined by the sets 
\begin{equation*}
  V_n^\omega \coloneqq  \left\{ \bsy\in \mathbb{R}^{d-1}\times[0,h] \text{ such that } \big|\bsy-\bsx_n^\omega\big| = \min_{j\in \mathbb{N}} \Big|\bsy-\bsx_j^\omega\Big| \ \right\} , \ \ n\in \mathbb{N} . 
\end{equation*} 
For $\omega\in\Omega, n\in\mathbb{N}$ let $\Sigma_n^\omega(\alpha)$ denote the intersection of the hyperplane $\{y_d=\alpha\}$ with the Vorono{\"\i} cell $V_n^\omega$ . Let $(\rho, \bstheta)$ be the spherical coordinate system centered at $\bsx_n^\omega$. Since $V_n^\omega$ is convex, $\Sigma_n^\omega(\alpha)$ can be parametrized as $\Sigma_n^\omega(\alpha)\coloneqq \left\{(r(\bstheta), \bstheta), r(\bstheta)\in [1,+\infty), \bstheta\in [\bstheta_0, \bstheta_1]\subset\mathbb{S}^{d-1}\right\}$. Since $v^\omega\in H^1(V_n^\omega)$ with $v^\omega=0$ on $\partial B(\bsx^\omega_n)$, there exists $(v_p^\omega)_{p\in\N}$ in $\mathcal{C}^\infty(V_n^\omega)^\N$ with $v_p^\omega=0$ on $\partial B(\bsx^\omega_n)$  such that $v_p^\omega\to v^\omega$ in $H^1(V_n^\omega)$. We have for all $p$, \begin{equation*}\forall \bstheta\in [\bstheta_0, \bstheta_1],\quad \mathbb{1}_{D^\omega}v_p^\omega(r(\bstheta),\bstheta)=\int_1^{r(\bstheta)} \mathbb{1}_{D^\omega}\partial_r v^\omega_p(s,\bstheta) ds.\end{equation*}
We deduce for any non-negative function $\tilde{\mu}$ on $(1,+\infty)$, for all $\bstheta\in [\bstheta_0, \bstheta_1]$
\begin{equation*}\mathbb{1}_{D^\omega}|v_p^\omega(r(\bstheta),\bstheta)|^2\tilde{\mu}(r(\bstheta))r(\bstheta)^{d-1}\leq\int_1^{r(\bstheta)}\mathbb{1}_{D^\omega}|\partial_r  v_p^\omega(s, \bstheta)|^2s^{d-1}ds\left(\int_1^{r(\bstheta)} s^{1-d}ds\right)\tilde{\mu}(r(\bstheta))r(\bstheta)^{d-1}.\end{equation*}
For $\tilde{\mu}(\rho)\coloneqq \dst\left(\int_1^{\rho} s^{1-d}ds\right)^{-1}\rho^{1-d}$, we get after integrating with respect to $\bstheta$
\begin{equation*}\int_{\Sigma_n^\omega(\alpha)}\mathbb{1}_{D^\omega}\tilde{\mu}^\omega(\bsypar,\alpha)|v_p^\omega(\bsypar,\alpha)|^2\,\dd\bsypar\leq\int_{\bstheta_0}^{\bstheta_1}\int_1^{r(\bstheta)}\mathbb{1}_{D^\omega}|\partial_r  v_p^\omega(s, \bstheta)|^2\, s^{d-1}\dd s \dd\bstheta.\end{equation*}
We can now pass to the limit when $p\to+\infty$ to obtain the first inequality of 
\begin{equation}\label{eq:wvn}
  \begin{aligned}
  \int_{\Sigma_n^\omega(\alpha)}\mathbb{1}_{D^\omega}\tilde{\mu}^\omega(\bsypar,\alpha)|v^\omega\big|_{\Sigma_\alpha}|^2(\bsypar)\,\dd\bsypar&\leq\int_{\bstheta_0}^{\bstheta_1}\int_1^{r(\bstheta)}\mathbb{1}_{D^\omega}|\partial_r  v^\omega(s, \bstheta)|^2\, s^{d-1}\dd s \dd\bstheta\\&\lesssim\int_{\Sigma_n(\alpha)\times(0,h)}\mathbb{1}_{D^\omega}|\nabla v^\omega|^2(\bsy)\,\dd\bsy.
  \end{aligned}\end{equation}
By definition \eqref{eq:h}, for $\bsy\in V_n^\omega$, $\mu^\omega(\bsy)=r(\bstheta)^{-m}$ with $m>2d$. Therefore $\dst\bsy\mapsto{\mu^\omega}/{\tilde{\mu}^\omega}(\bsy)$ is bounded by monotonicity and there exists a constant independent of $\bsy\in V_n^\omega$ such that $\mu^\omega(\bsy)\lesssim\tilde{\mu}^\omega(\bsy)$. Consequently \eqref{eq:wvn} holds replacing $\tilde{\mu}$ by $\mu.$ For any $L>0$, we sum over all $\Sigma^\omega_n(\alpha)$ such that $\bsx_n^\omega\in\square_L\times(0,h)$ to obtain 
\begin{equation*}
\int_{\bigcup_{n\in N_L}\Sigma_n^\omega(\alpha)} \mathbb{1}_{D^\omega} \ {\mu}^\omega(\bsypar,\alpha)|v^\omega\big|_{\Sigma_\alpha}|^2(\bsypar)\,\dd\bsypar \lesssim \int_{\bigcup_{n\in N_L}\Sigma_n^\omega(\alpha)\times(0,h)} \mathbb{1}_{D^\omega} \ | \nabla v^\omega|^2(\bsy)\,\dd\bsy, 
\end{equation*} 
where $N_L\coloneqq \Big\{n \in \mathbb{N} \text{ such that  } \bsy_n^\omega\in\square_L\times(0,h)\Big\}$. Finally we apply Birkhoff Theorem (Theorem \ref{theorem:Birkhoff}) to conclude. \vsu

Let us consider now $\alpha>h.$ We write for $v\in\mathcal{C}^\infty(\overline{\square_1\times(h,\alpha)})$ \begin{equation*}\forall\bsy_\shortparallel\in\square_1,\quad v(\bsypar, \alpha)=v(\bsypar, h)+\int^{\alpha}_{h}\partial_{y_d}v(\bsy)\dd y_d.\end{equation*}By Cauchy-Schwarz inequality we get\begin{equation*}\forall\bsy_\shortparallel\in\square_1,\quad |v(\bsypar, \alpha)|^2\leq 2 \left(|v(\bsypar, h)|^2+(\alpha-h)\int^{\alpha}_{h}|\partial_{y_d}v(\bsy)|^2\dd y_d\right).\end{equation*}
We integrate over $\square_1$ to obtain
\begin{multline*}\int_{\square_1}\mu^\omega(\bsy_\shortparallel,\alpha)\left|v\big|_{\Sigma_\alpha}\right|^2(\bsy_\shortparallel)d\bsy_\shortparallel\leq 2\left((\alpha-h)\int_{h}^\alpha\int_{\square_1}\mu^\omega(\bsy_\shortparallel,\alpha)|\partial_{y_d} v^\omega|^2(\bsy)\dd\bsy\right.\\\left.+\int_{\square_1}\mu^\omega(\bsy_\shortparallel,\alpha)|\left|v\big|_{\Sigma_h}\right|^2(\bsy_\shortparallel)\dd\bsy_\shortparallel\right).
\end{multline*} By density, this equality also holds for $v\in {\cal W}_0(D)$.  On one hand, by definition \eqref{eq:h} and since $\alpha>h$, we have $\mu^\omega(\bsypar,\alpha)\leq 1$ for all $\bsypar\in\square_1$ so we deduce for the first term of the r.h.s.
\begin{equation}\label{eq:1t}\E\left[\int_{h}^\alpha\int_{\square_1}\mu^\omega(\bsy_\shortparallel,\alpha)|\partial_{y_d} v|^2(\bsy)\dd\bsy\right]\leq\E\left[\int_h^\alpha\int_{\square_1} \mathbb{1}_{D^\omega}|\nabla v^\omega|^2(\bsy)\dd\bsy\right].\end{equation}
On the other hand using the fact that $\mu^\omega(\bsypar,\alpha)\leq \mu^\omega(\bsypar,h)$ for all $\bsypar\in\square_1$ and the first part of the proof, we get for the second term of the r.h.s.
\begin{equation}\label{eq:2t}\E\left[\int_{\square_1}\mu^\omega(\bsy_\shortparallel,\alpha)|\left|v^\omega\big|_{\Sigma_h}\right|^2(\bsy_\shortparallel)\dd\bsy_\shortparallel\right]\lesssim\E\left[\int_0^h\int_{\square_1} \mathbb{1}_{D^\omega}|\nabla v^\omega|^2(\bsy)\dd\bsy\right].\end{equation}
Combining \eqref{eq:1t} and \eqref{eq:2t} we obtain the desired result.

\end{proof}
Let us now derive a random Hardy-type inequality in $\mathcal{W}_0(D)$.
\begin{proposition}[Random Hardy's inequality]\label{prop:wHardy} Suppose that $\Pa$ verifies hypothesis \nameref{hyp:mixing}. Then
\begin{equation}\label{eq:wHardy}\forall v\in\mathcal{W}_0(D),\quad \mathbb{E}\left[ \int_{\square_1}\int_0^{+\infty} \mathbb{1}_{D} \ \mu(\bsy)|v|^2(\bsy)\,\dd\bsy\right]\lesssim\mathbb{E}\left[ \int_{\square_1}\int_0^{+\infty} \mathbb{1}_{D} \ |\nabla v|^2(\bsy)\,\dd\bsy\right].\end{equation}
\end{proposition}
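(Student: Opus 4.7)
The plan is to split the integral $\E[\int_{\square_1}\int_0^{+\infty} \mathbb{1}_{D} \mu |v|^2 d\bsy]$ into the contribution of the layer $\square_1\times(0,h)$ and of the half-space $\square_1\times(h,+\infty)$, using different arguments adapted to the very different forms taken by $\mu$ on the two regions.

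For the lower region I would reuse the Voronoi construction from the proof of Lemma \ref{lem:weightedtrace}. In each cell $V_n^\omega$ of the tessellation in $\R^{d-1}\times[0,h]$ one has $\mu^\omega(\bsy)=|\bsy-\bsx_n^\omega|^{-m}$ and $v$ vanishes on $\partial B(\bsx_n^\omega)$. Passing to spherical coordinates $(\rho,\bstheta)$ centred at $\bsx_n^\omega$, the identity $v(\rho,\bstheta)=\int_1^\rho\partial_\rho v(s,\bstheta)\,ds$ together with Cauchy-Schwarz gives $|v(\rho,\bstheta)|^2\leq (\rho-1)\int_1^\rho|\nabla v|^2\,ds$. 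Multiplying by $\rho^{d-1-m}$, integrating over $\rho\in(1,r(\bstheta))$ and applying Fubini, the crucial point is that Hypothesis \nameref{hyp:mixing} ensures $m-d-1>0$, so the inner integral is bounded by $s^{d-m+1}/(m-d-1)\leq s^{d-1}/(m-d-1)$ for $s\geq 1$. Integrating in $\bstheta$, summing over cells with centres in $\square_L\times(0,h)$ and passing to the limit $L\to+\infty$ via Birkhoff's Theorem \ref{theorem:Birkhoff} yields the inequality on $\square_1\times(0,h)$.

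For the upper region, where $\mathbb{1}_{D}=1$ and $\mu^\omega(\bsypar,y_d)=(y_d^2+r^{2m})^{-1}$ with $r\coloneqq R^\omega(\bsypar,h)$, I would introduce the primitive $\phi(y_d)\coloneqq \int_{y_d}^{+\infty}\mu^\omega(\bsypar,s)\,ds=r^{-m}\bigl(\tfrac{\pi}{2}-\arctan(y_d/r^m)\bigr)$ and integrate by parts. Two elementary scale-invariant observations drive the argument: first $\phi(h)\leq (\pi/2)r^{-m}=(\pi/2)\mu^\omega(\bsypar,h)$; second the ratio $\phi^2/\mu=(\tfrac{\pi}{2}-\arctan t)^2(t^2+1)$ with $t=y_d/r^m$ is uniformly bounded in $r$ and $y_d$. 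Using a cutoff in $y_d$ to justify the vanishing of the boundary term at infinity and then passing to the limit, IBP gives
\[
\int_h^{+\infty}\mu\,|v|^2\,dy_d \ = \ \phi(h)\,|v(\bsypar,h)|^2 \ + \ 2\int_h^{+\infty} v\,\partial_{y_d} v\,\phi\,dy_d,
\]
and Young's inequality combined with $\phi^2/\mu\leq C$ lets me absorb the cross term into the left-hand side, producing
\[
\int_h^{+\infty}\mu\,|v|^2\,dy_d \ \lesssim \ \mu^\omega(\bsypar,h)\,|v(\bsypar,h)|^2 \ + \ \int_h^{+\infty}|\partial_{y_d}v|^2\,dy_d.
\]

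I would then integrate over $\bsypar\in\square_1$, take the expectation, and handle the weighted trace $\E[\int_{\square_1}\mu^\omega(\bsypar,h)|v(\bsypar,h)|^2 d\bsypar]$ via Lemma \ref{lem:weightedtrace} at $\alpha=h$, which bounds it by $\E[\int_{\square_1}\int_0^{+\infty}\mathbb{1}_{D}|\nabla v|^2]$. Adding the estimates of the two regions concludes the proof. The step I expect to be the main obstacle is precisely the upper region: a crude bound $\mu\leq y_d^{-2}$ followed by the textbook 1D Hardy inequality on $(h,+\infty)$ would leave the unweighted trace $|v(\bsypar,h)|^2/h$, which cannot be controlled since $v|_{\Sigma_h}$ has no $L^2(\square_1)$ bound without the weight. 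Choosing the \emph{exact} primitive $\phi$ of $\mu$ is what makes the boundary term come out with the weight $\mu^\omega(\bsypar,h)$ and match the weighted trace lemma.
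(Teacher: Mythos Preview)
Your proposal is correct and follows essentially the paper's route: Voronoi cells with a radial integration for the layer $y_d\in(0,h)$, and integration by parts in $y_d$ combined with the weighted trace Lemma~\ref{lem:weightedtrace} for $y_d>h$. The only cosmetic difference is that the paper first bounds $\mu\leq 2/(y_d+R^{m})^{2}$ and integrates by parts with the primitive $1/(y_d+R^{m})$, whereas you use the exact primitive $\phi$ of $\mu$; both choices produce a boundary term dominated by $\mu(\,\cdot\,,h)\,|v|_{\Sigma_h}|^2$ and the argument closes identically.
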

\begin{proof}   
We first prove that for all $v\in\mathcal{W}_0(D)$
\begin{equation}\label{eq:wHardyL}\mathbb{E}\left[ \int_{\square_1}\int_0^{h} \mathbb{1}_{D} \ \mu(\bsy)|v|^2(\bsy)\dd\bsy\right]\lesssim\mathbb{E}\left[ \int_{\square_1}\int_0^{h} \mathbb{1}_{D} \ |\nabla v|^2(\bsy)\dd\bsy\right].\end{equation}
Let us consider the Vorono{\"\i} diagram $(V_n^\omega)_{n\in \mathbb{N}}$ in $\mathbb{R}^{d-1}\times[0,h]$ associated with the point process $\{\bsx_n^\omega\}_{n\in \mathbb{N}}$.
Let $n\in\mathbb{N}$ and $\omega\in\Omega$. We first prove that the following inequality holds a.s. in $V_n^\omega$
\begin{equation}\label{eq:wHardyV}\int_{V_n^\omega}\mathbb{1}_{D^\omega}\mu^\omega(\bsy)|v^\omega|^2(\bsy)\,\dd\bsy\lesssim\int_{V_n^\omega}\mathbb{1}_{D^\omega}|\nabla v^\omega|^2(\bsy)\,\dd\bsy.\end{equation}The proof of \eqref{eq:wHardyV} relies on similar arguments as in the proof of Lemma 1.3 of \cite{beliaev1996darcy}. We reproduce it here for completeness. 
Let $(\rho, \bstheta)$ be the spherical coordinate system centered at $\bsx_n^\omega$. Since $V_n^\omega$ is convex, it can be parametrized as $V_n^\omega\coloneqq \left\{(\rho, \bstheta), \rho\in(1,R(\bstheta)], \bstheta\in \mathbb{S}^{d-1}\right\}$. Let $v^\omega\in\mathcal{C}^\infty(\overline{V_n^\omega})$ such that  $v^\omega=0$ on $\partial B(\bsx^\omega_n)$. We have for all $(\rho, \bstheta)\in V_n^\omega$
\begin{equation*}v^\omega(\rho,\bstheta)=\int_1^\rho \partial_r v^\omega(s,\bstheta) \dd s.\end{equation*}
Therefore we derive by Cauchy-Schwarz inequality
\begin{equation}\label{eq:polarVorono{\"\i}}|v^\omega(\rho,\bstheta)|^2\leq\int_1^\rho |\partial_r  v^\omega(s, \bstheta)|^2s^{d-1}\dd s\int_1^\rho s^{1-d}\dd s,\end{equation}
and for $\tilde{\mu}$ a non-negative function on $(1,+\infty)$
\begin{equation*}\int_{1}^{R(\bstheta)}\tilde{\mu}(\rho)|v^\omega(\rho,\bstheta)|^2\rho^{d-1}\dd\rho\leq\int_1^{R(\bstheta)}\int_1^\rho |\partial_r  v^\omega(s, \bstheta)|^2s^{d-1}\dd s\left(\int_1^\rho s^{1-d}\dd s\right)\tilde{\mu}(\rho)\rho^{d-1}\dd\rho.\end{equation*}
By Fubini-Tonelli's Theorem we get
\begin{equation*}\int_{1}^{R(\bstheta)}\tilde{\mu}(\rho)|v^\omega(\rho,\bstheta)|^2\rho^{d-1}\dd\rho\leq\int_1^{R(\bstheta)} |\partial_r  v^\omega(s, \bstheta)|^2s^{d-1}\left(\int_s^{R(\bstheta)}\left(\int_1^\rho s^{1-d}\dd s\right)\tilde{\mu}(\rho)\rho^{d-1}\dd\rho \right)\dd s.\end{equation*}
For $\tilde{\mu}(\rho)\coloneqq \rho^{-m}$ with $m>d$, $\dst\int_1^{+\infty} \left(\int_1^\rho s^{1-d}ds\right)\tilde{\mu}(\rho)\rho^{d-1}d\rho<+\infty$. We integrate with respect to $\bstheta$ to get
\begin{equation*}
  \int_{V_n^\omega}\mathbb{1}_{D^\omega}\tilde{\mu}^\omega(\bsy)|v^\omega|^2(\bsy)\,\dd\bsy\lesssim\int_{V_n^\omega}\mathbb{1}_{D^\omega}|\nabla v^\omega|^2(\bsy)\,\dd\bsy.\end{equation*}
To obtain \eqref{eq:wHardyV} for $v\in{\cal W}_0(D)$, we use a density argument and that for $\bsy\in V_n^\omega$, $\mu^\omega(\bsy)\leq\tilde{\mu}^\omega(\bsy)$. We conclude using Birkhoff Theorem (Theorem \ref{theorem:Birkhoff}).


Let us now prove \eqref{eq:wHardy}. By integrating by parts, we obtain
\begin{equation*}\begin{array}{ll}\vsd\dst\int_{h}^{+\infty}\int_{\square_1}\mu^\omega(\bsy)|v^\omega|^2(\bsy)\,\dd\bsy&\dst\leq \frac{1}{2} \int_{h}^{+\infty}\int_{\square_1}\frac{|v^\omega|^2(\bsy)}{(y_d+(R^\omega(\bsypar,h))^{m})^2}\,\dd\bsy,
  \\&\leq\dst \int_{h}^{+\infty}\int_{\square_1}\frac{\overline{v^\omega}\partial_{y_d}v^\omega(\bsy)}{y_d+(R^\omega(\bsypar,h))^{m}}\dd\bsy+\int_{\square_1}\frac{\left|v^\omega\big|_{\Sigma_h}\right|^2(\bsy_\shortparallel)}{h+(R^\omega(\bsypar,h))^{m}}\dd\bsy_\shortparallel.\end{array}
\end{equation*}We deduce the following by applying Cauchy-Schwarz inequality and using the weighted trace theorem (Lemma \ref{lem:weightedtrace})
\begin{multline*}\E\left[\int_{h}^{+\infty}\int_{\square_1}\mu^\omega(\bsy)|v^\omega|^2(\bsy)\,\dd\bsy\right]\leq\mathbb{E}\left[\int_{h}^{+\infty}\int_{\square_1}\frac{|v^\omega(\bsy)|^2}{(y_d+(R^\omega)^m)^2}\dd\bsy\right]^{\frac{1}{2}}\mathbb{E}\left[\int_{h}^{+\infty}\int_{\square_1}|\partial_{y_d}v^\omega|^2(\bsy)\,\dd\bsy\right]^{\frac{1}{2}}\\+\mathbb{E}\left[ \int_{\square_1}\int_0^{+\infty} \mathbb{1}_{D} \ |\nabla v|^2(\bsy)\,\dd\bsy\right].
\end{multline*}
\eqref{eq:wHardy} is then a consequence of Young's inequality.
\end{proof}

\begin{remark}\label{rem:wHardyW0D} Note that using similar arguments we can prove that 
\begin{equation}\label{wHardyCC}\int_{D^\omega}\mu^\omega(\bsy)|v^\omega|^2(\bsy)\,\dd\bsy\lesssim\int_{D^\omega}|\nabla v^\omega|^2(\bsy)\,\dd\bsy,\end{equation}where
for $v^\omega\in W_0(D^\omega)$ for $\omega\in\Omega$, where \begin{equation*}
W_0(D^\omega)\coloneqq \left\{v\in H_{loc}^1\big(D^\omega\big), v=0\,\text{on }\partial \Pa^\omega, \int_{\R^{d-1}}\int_0^{+\infty} \mathbb{1}_{D}\left( \ \mu(\bsy)|v|^2(\bsy)+|\nabla v|^2(\bsy)\right)\, \dd\bsy<+\infty\right\}.
\end{equation*}\end{remark}

\subsection{Proof of proposition \ref{prop:Utnwellposed}}

 The continuity of the sesquilinear form $a$ given by \eqref{eq:bil_NF} is straightforward. The coercivity of $a$ is guaranteed by the weighted Hardy's inequality \eqref{eq:wHardy}. The continuity of the anti-linear form $l$ given by \eqref{eq:lin_NF} can be deduced from Cauchy-Schwarz inequality and the weighted trace lemma (Lemma \ref{lem:weightedtrace}). Indeed for all $V\in\mathcal{W}_0(D)$

\begin{equation*}\begin{array}{ll}
\dst\vsd\E\left[\int_{\square_1}\int_0^L \mathbb{1}_{D} (F^\omega\overline{V}^\omega+\bsG^\omega\cdot\nabla\overline{V}^\omega)(\bsy)\,\dd\bsy\right]&\dst\leq \|\mu^{-\frac{1}{2}}F\|_{\mathcal{L}^2(D)}\|\mu^{\frac{1}{2}}V\|_{\mathcal{L}^2(D)}+\|G\|_{\mathcal{L}^2(D)}\|\nabla V\|_{\mathcal{L}^2(D)},\\&\dst\leq\left(\|\mu^{-\frac{1}{2}}F\|_{\mathcal{L}^2(D)}+\|G\|_{\mathcal{L}^2(D)}\right)\|V\|_{\mathcal{W}_0(D)},\end{array}
\end{equation*}
and
\begin{equation}\label{eq:estl}
\E\left[ \int_{\square_1}(\Psi^\omega \ \overline{V}^\omega\big|_{\Sigma_0} + \alpha^{\omega,N} \ \overline{V}^\omega\big(\cdot, \ H\big)) \right]\leq (\|\mu^{-\frac{1}{2}}\Psi\|_{\mathcal{L}^2(\Sigma_0)}+\|\mu^{-\frac{1}{2}}\alpha^{N}\|_{\mathcal{L}^2(\Sigma_H)}\|V\|_{\mathcal{W}_0(D)}.
\end{equation}By Lax-Milgram Theorem, problem \eqref{eq:NFtGen} is well posed in $\mathcal{W}_0(D)$. The estimate \eqref{eq:estUt} is a direct consequence of the coercivity of $a$ and estimate \eqref{eq:estl}.

Finally, using similar arguments than in , onw shows that the unique solution satisfies almost surely Problem \eqref{eq:NFtGen} in the sense of distributions.
\subsection{Application to the first near-field terms}

Let us now apply this uniqueness and existence result to our first near-field terms, and more specifically to  $\widetilde{U}^{\omega, NF}_n\coloneqq U^{\omega,NF}_n+u_{n}^{\omega, FF}\big|_{\Sigma_{\eps H}}\mathbb{1}_{\mathcal{B}^\infty_H}$ which satisfy \eqref{eq:NFtGen} with \begin{equation*}F\coloneqq \Delta_{\bsx_\shortparallel} U^{\omega,NF}_{n-2}\ + \ k^2 \ U^{\omega,NF}_{n-2},\quad G\coloneqq 2 \nabla_{\bsx_\shortparallel} U^{\omega,NF}_{n-1},\quad \Psi\coloneqq ik\gamma \ U^{\omega,NF}_{n-1},\quad\textrm{and}\quad\alpha^N\coloneqq  \partial_{x_d}u^{\omega, FF}_{n-1}\big|_{\Sigma_{\eps H}}.\end{equation*}
By Proposition \ref{prop:Utnwellposed}, for almost every $\bsx_\shortparallel\in \mathbb{R}^{d-1}$, there exists a unique $\widetilde{U}_{0}^{NF}(\bsx_\shortparallel; \ \cdot \ )\in \mathcal{W}_0(D)$ solution of  
\begin{equation}\label{eq:pb_champ_proche0_Dirichlet_alea}
\begin{array}{|l}
   -\Delta_{\bsy} \widetilde{U}_0^{\omega,NF}(\bsx_\shortparallel; \ \cdot \ )  =0 \quad \text{ in } \quad\mathcal{B}_{H}^{\omega} \cup \mathcal{B}_{ H}^{\infty}, \\
  -\partial_{y_d} \widetilde{U}^{\omega,NF}_0(\bsx_\shortparallel; \ \cdot \ )=0\; \text{ on } \;\Sigma_0, \quad\text{and}\quad
   \widetilde{U}^{\omega,NF}_0 (\bsx_\shortparallel; \ \cdot \ )= \ 0 \;\text{ on } \; \partial \mathcal{P}^\omega, \\
 \Big[\widetilde{U}^{\omega,NF}_0(\bsx_\shortparallel; \ \cdot \ )  \Big]_H = \ 0 \quad \text{ and }\quad   \Big[-\partial_{y_d}\widetilde{U}^{\omega,NF}_0(\bsx_\shortparallel; \ \cdot \ )  \Big]_H =0.
\end{array}\end{equation}
We deduce that $\widetilde{U}_{0}^{\omega,NF}=0$ and then $U_{0}^{\omega,NF}(\bsx_\shortparallel;\bsy)=-u_{0}^{\omega, FF}\big|_{\Sigma_{\eps H}}(\bsx_\shortparallel) \mathbb{1}_{\mathcal{B}^\infty_H}(\bsy)$. Since we look for a solution that tends to $0$ when $y_d$ tends to $+\infty$, we obtain the following boundary condition for $u_0^{FF}$ on $\Sigma_{\eps H}$
   \begin{equation}\label{eq:condition_limite_champ_loin0_Dirichlet_alea}
\text{a.s.  }u_{0}^{\omega, FF}\big|_{\Sigma_{\eps H}}(\bsx_\shortparallel) = 0 . 
\end{equation}
Note that $u_0^{FF}$ that satisfies \eqref{eq:cascade_champ_loin_alea} is then independent of $\omega$.

By proposition \ref{prop:Utnwellposed}, for almost any $\bsx_\shortparallel\in \mathbb{R}^{d-1}$, there exists a unique $\widetilde{U}_{1}^{NF}(\bsx_\shortparallel; \ \cdot \ )\in \mathcal{W}_0(D)$ solution  of  
\begin{equation}\label{eq:pb_champ_proche1_Dirichlet_alea}
\begin{array}{|l}
   -\Delta_{\bsy} \widetilde{U}_1^{\omega,NF} (\bsx_\shortparallel; \ \cdot \ ) =0 \quad \text{ in } \quad\mathcal{B}_{H}^{\omega} \cup \mathcal{B}_{ H}^{\infty}, \\
  -\partial_{y_d} \widetilde{U}^{\omega,NF}_1(\bsx_\shortparallel; \ \cdot \ )=0\; \text{ on } \; \Sigma_0, \quad\text{and}\quad
   \widetilde{U}^{\omega,NF}_1 (\bsx_\shortparallel; \ \cdot \ )= \ 0 \; \text{ on } \;\partial \mathcal{P}^\omega, \\
 \Big[\widetilde{U}^{\omega,NF}_1(\bsx_\shortparallel; \ \cdot \ )  \Big]_H = \ 0 \quad \text{ and }\quad   \Big[-\partial_{y_d}\widetilde{U}^{\omega,NF}_1(\bsx_\shortparallel; \ \cdot \ )  \Big]_H =\partial_{x_d} u_{0}^{\omega, FF}\big|_{\Sigma_{\eps H}}(\bsx_\shortparallel).
\end{array}
\end{equation}
where we used the fact that $U_{0}^{\omega,NF}= 0$.\vsd

By linearity, $\widetilde{U}_{1}^{\omega,NF}(\bsx_\shortparallel;\bsy)=\partial_{x_d} u_{0}^{\omega, FF}\big|_{\Sigma_{\eps H}}(\bsx_\shortparallel) W_1^\omega(\bsy),$ where $W_1^\omega$ is the unique solution in $\mathcal{W}_0(D)$ of  \begin{equation}\label{eq:w10}
\begin{array}{|l}
   -\Delta_{\bsy} W_1^\omega=0 \quad \text{ in } \quad\mathcal{B}_{H}^{\omega} \cup \mathcal{B}_{ H}^{\infty}, \\
  -\partial_{y_d} W_1^\omega=0\;\text{ on } \; \Sigma_0, \quad\text{and}\quad
 W_1^\omega = \ 0 \; \text{ on } \; \partial \mathcal{P}^\omega, \\
 \Big[W_1^\omega  \Big]_H = \ 0 \quad \text{ and }\quad   \Big[-\partial_{y_d}W_1^\omega  \Big]_H =1.
\end{array}
\end{equation}
If we want to impose that $U_{1}^{\omega,NF}$ that is given by 
\begin{equation}\label{eq:U_1_expr}
  U_{1}^{\omega,NF}(\bsx_\shortparallel;\bsy)= -u_{1}^{\omega, FF}\big|_{\Sigma_{\eps H}}(\bsx_\shortparallel) \mathbb{1}_{\mathcal{B}^\infty_H}(\bsy)+\partial_{x_d} u_{0}^{\omega, FF}\big|_{\Sigma_{\eps H}}(\bsx_\shortparallel) W_1^\omega(\bsy),
\end{equation}
 tends to $0$ when $y_d$ tends to $+\infty$, we need to understand the behavior at infinity of $W_1^\omega$, which is done in the following section.

\section{Integral representation and behavior at infinity of the near-fields}\label{sec:limbehavior}

In order to study the behavior of the near-fields when $y_d$ tends to $+\infty$, we first derive an integral representation for $\widehat{U}^\omega$ the unique solution of \eqref{eq:NFtGen} in $\mathcal{W}_0(D)$ for $F=0$, $G=0$,$\mu^{-\frac{1}{2}}\Psi\in\mathcal{L}^2(\Sigma_0)$ and $\mu^{-\frac{1}{2}}\alpha^N\in \mathcal{L}^2(\Sigma_H)$. Let us introduce the Green's function associated to the Laplace operator in $\R^d$ 
\begin{equation}\label{fonction_Laplace_Green}
   \Gamma(\bsz) \coloneqq  
  -\frac{1}{2\pi} \ \mathrm{ln} \ |\bsz| \quad \text{ for } \quad d=2 \ , \quad
  \frac{1}{4\pi|\bsz|} \quad  \text{ for } \quad d=3 \  
  ,\quad \bsz\in \mathbb{R}^{d}.
  \end{equation}
\begin{proposition}[Integral representation for $\widehat{U}^\omega$]\label{prop:representation_integrale_U_alea} Suppose that $\Pa$ verifies Hypothesis \nameref{hyp:mixing}.
A.s.  for $\bsy\in \mathbb{R}^{d-1}\times(L, +\infty)$, $\widehat{U}^\omega(\bsy)$ has the following integral representation 
\begin{equation}\label{eq:repint_W10}
\widehat{U}^\omega\big(\bsy\big) = -2\int_{\R^{d-1}}\partial_{z_d}\Gamma\big(\bsy, (\bszpar,L)\big) \ \varphi^\omega\big(\bsz_\shortparallel\big) \mathrm{d}\bsz_\shortparallel \end{equation}
where $\varphi$ denotes the trace of $\widehat{U}^\omega$ on $\Sigma_L$.
\end{proposition}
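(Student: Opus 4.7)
The plan is to derive the integral representation via Green's second identity combined with the Dirichlet Green's function of the half-space $\{z_d > L\}$, constructed by the method of images. Since $L \geq H$, the region above $\Sigma_L$ contains no particles and no sources, so $\widehat{U}^\omega$ is harmonic there almost surely. Introduce the image-based Green's function
\begin{equation*}
G_L(\bsy, \bsz) \coloneqq \Gamma(\bsy - \bsz) - \Gamma(\bsy - \bsz^\star), \qquad \bsz^\star \coloneqq (\bszpar, 2L - z_d),
\end{equation*}
which satisfies $G_L(\bsy, \cdot) = 0$ on $\Sigma_L$ and whose normal derivative on $\Sigma_L$ reproduces, up to sign, the factor $-2\partial_{z_d}\Gamma(\bsy,(\bszpar, L))$ appearing in the target formula.

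The first step is to fix $\bsy$ with $y_d > L$ and $\omega$ in a full-probability event on which $\widehat{U}^\omega\in H^1_{\mathrm{loc}}$. For $M > y_d$ and $R$ large, apply Green's second identity to $\widehat{U}^\omega$ and $G_L(\bsy, \cdot)$ on the truncated box $D_{R,M} \coloneqq \square_R \times (L, M)$. Using $-\Delta_{\bsz} G_L = \delta_{\bsy}$ in $D_{R,M}$, $\Delta \widehat{U}^\omega = 0$ there, and $G_L = 0$ on $\Sigma_L$, the identity isolates $\widehat{U}^\omega(\bsy)$ together with the desired Poisson-type integral over $\square_R \subset \Sigma_L$, plus two remainder terms $I_{\mathrm{top}}^{R,M}$ on $\square_R \times \{M\}$ and $I_{\mathrm{lat}}^{R,M}$ on $\partial \square_R \times (L, M)$.

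The main obstacle is to show that both remainders can be driven to zero as $R, M \to +\infty$ along suitable subsequences. The key asset is the \emph{dipole} structure of $G_L$: the cancellation between the two copies of $\Gamma$ yields $|G_L(\bsy, \bsz)| \lesssim |\bsz|^{-d}$ and $|\nabla_{\bsz} G_L(\bsy, \bsz)| \lesssim |\bsz|^{-(d+1)}$ for $|\bsz| \to +\infty$. For $I_{\mathrm{top}}^{R,M}$, the weighted trace Lemma~\ref{lem:weightedtrace} at $z_d = M$, combined with stationarity of $\widehat{U}^\omega$ in $\bszpar$ and Birkhoff's ergodic theorem, controls $\int_{\square_R} |\widehat{U}^\omega(\bszpar, M)|^2 \dd\bszpar$ up to a factor $M^2$ coming from $\mu(\cdot, M)^{-1} \sim M^2$; paired with the decay of $G_L$ and $\nabla G_L$ at $z_d = M$, one extracts a sequence $M_n \to +\infty$ along which $I_{\mathrm{top}}^{R, M_n} \to 0$ almost surely. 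For $I_{\mathrm{lat}}^{R,M}$, stationarity together with $\nabla \widehat{U}^\omega \in \mathcal{L}^2(D)$ yields, via Fubini and a Chebyshev-type argument, a sequence $R_n \to +\infty$ along which the surface integrals $\int_{\partial \square_{R_n}\times(L,M)}(|\widehat{U}^\omega|^2 + |\nabla \widehat{U}^\omega|^2)$ grow strictly slower than $R_n^{d-1}$; combined with the tangential decay of $G_L$ and $\nabla G_L$, this forces $I_{\mathrm{lat}}^{R_n, M} \to 0$.

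Passing to the limit in both subsequences gives the representation for a.e.\ $\omega$ at the chosen $\bsy$. A countable dense argument and continuity of both sides in $\bsy$ (interior regularity of harmonic functions for the left-hand side, smoothness of the kernel away from $\Sigma_L$ for the right-hand side) upgrade the statement to an almost sure identity valid for every $\bsy \in \mathbb{R}^{d-1} \times (L, +\infty)$, as claimed.
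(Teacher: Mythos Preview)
Your approach via Green's second identity on truncated boxes is a natural classical route and is genuinely different from the paper's. Two points, however, need attention. First, your decay claim for the image Green's function is off by one power: since the reflection can equivalently be placed in the $\bsy$ variable, the dipole separation $|\bsy-\bsy^\star|=2(y_d-L)$ is \emph{fixed}, so one only gets $|G_L(\bsy,\bsz)|\lesssim|\bsz|^{-(d-1)}$ and $|\nabla_{\bsz}G_L(\bsy,\bsz)|\lesssim|\bsz|^{-d}$ as $|\bsz|\to\infty$, not $|\bsz|^{-d}$ and $|\bsz|^{-(d+1)}$. Second, the double-limit procedure is underspecified and, in the order you hint at, fails: if one sends $R\to\infty$ first for fixed $M$, the top contribution becomes an integral over the full hyperplane $\Sigma_M$, and since $\widehat U^\omega(\cdot,M)$ is stationary (hence not in $L^2(\R^{d-1})$ a.s.), the Cauchy--Schwarz pairing diverges. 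The workable order is the reverse: for fixed $R$, extract $M_n\to\infty$ using that a.s.\ $\int_{\square_R}\int_L^\infty(|\nabla\widehat U^\omega|^2+\mu|\widehat U^\omega|^2)\,\dd\bsy<\infty$, which kills the top and leaves a well-defined lateral term over $\partial\square_R\times(L,\infty)$; then extract $R_n\to\infty$ by a Chebyshev-type selection combined with the (correct) decay of $G_L$. This can be made rigorous, but in $d=2$ the mere $|\bsz|^{-1}$ decay of $G_L$ leaves little margin and the argument is tighter than your sketch suggests.

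The paper bypasses all truncation issues by arguing through uniqueness. It first proves that the half-space Dirichlet problem ($\Delta V=0$ in $\mathcal{B}_L^\infty$, $V=\varphi$ on $\Sigma_L$) is well-posed in the space $\mathcal{W}(\mathcal{B}_L^\infty)$ of stationary finite-energy functions. The key step is the construction of a stationary lifting of $\varphi$ via the \emph{regularized} Green's function $G_T$ for $-\Delta+T^{-1}$, whose exponential decay makes the single-layer potential manifestly convergent and stationary; the remaining homogeneous Dirichlet problem is then handled by a standard Hardy inequality. Finally, the paper checks directly that the Poisson integral \eqref{eq:repint_W10} lies in $\mathcal{W}(\mathcal{B}_L^\infty)$ and solves the same problem, so uniqueness forces it to coincide with $\widehat U^\omega|_{\mathcal{B}_L^\infty}$. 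This trades your delicate boundary-limit analysis for a clean functional-analytic argument, at the price of the auxiliary lifting construction.
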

We postpone the proof of Proposition \ref{prop:representation_integrale_U_alea} to appendix \ref{sec:proofir}. With this integral representation we prove that a.s. and in $L^2(\Omega)$ $\widehat{U}$ tends to a deterministic constant as $y_d\to+\infty$. This constant is the ensemble average of its trace on an hyperplane above the layer of particles. In the periodic case one can establish a similar result with the constant being the spatial mean on the hyperplane over a period.
Before proving this result, let us start with a useful technical lemma.
\begin{lemma}
  Let $\Gamma$ be the Green's function defined in \eqref{fonction_Laplace_Green}. Let us introduce 
  \begin{equation}\label{eq:der_Gamma}
\partial_d\Gamma\coloneqq 
\begin{cases}
  \partial_{z_1}\partial_{z_2}\Gamma  & \text{ for }  d=2 \\
    \partial_{z_1}\partial_{z_2}\partial_{z_3}\Gamma& \text{ for } d=3,
\end{cases}
 \quad\text{and}\quad \pi(\bsz_\shortparallel)\coloneqq 
\begin{cases}
  z_1  & \text{ for }  d=2 \\
   z_1z_2& \text{ for } d=3.
\end{cases}
  \end{equation}
  We have for a fixed $R$ and for  $y_d$ large enough that 
  \begin{equation}\label{eq:ineq1}
  \int_{\square_R} \left|\partial_d\Gamma(\bsz_\shortparallel,y_d-L)\right| d\bsz_\shortparallel \lesssim \frac{1}{y_d^{3(d-1)}}\quad  \text{and}\quad
  \int_{\R^{d-1}\setminus\square_R}\left| \partial_d\Gamma(\bsz_\shortparallel,y_d-L)\left(y_d^{d-1}+\pi(\bsz_\shortparallel)\right)\right|d\bsz_\shortparallel  \lesssim 1 
  \end{equation}
  \begin{equation}\label{eq:ineq2}
 \int_{\square_R}  \left|\partial_{z_i}\partial_d\Gamma(\bsz_\shortparallel,y_d-L)\right| d\bsz_\shortparallel \lesssim \frac{1}{y_d^{3(d-1)}} \quad  \text{and}\quad
 \int_{\R^{d-1}\setminus\square_R}  \left|\partial_{z_i}\partial_d\Gamma(\bsz_\shortparallel,y_d-L)\,\pi(\bsz_\shortparallel)\right|d\bsz_\shortparallel  \lesssim \frac{1}{y_d}
  ,\;  i\in\llbracket 1,d\rrbracket,
  \end{equation}
  where the constants depend only on $R$ and $L$.
\end{lemma}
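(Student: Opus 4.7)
The plan is to prove the four inequalities by direct computation using the explicit form of the Green's function. First I would write out $\partial_d \Gamma$ and $\partial_{z_i}\partial_d\Gamma$ in closed form for both $d=2$ and $d=3$. One finds that $\partial_d\Gamma(\bsz)$ is homogeneous of degree $-d$ with the factor $\pi(\bsz_\shortparallel)\,z_d$ in the numerator (for instance $\partial_d\Gamma(\bsz) = \frac{z_1 z_d}{\pi|\bsz|^4}$ for $d=2$ and $\partial_d\Gamma(\bsz) = -\frac{15\, z_1 z_2 z_3}{4\pi|\bsz|^7}$ for $d=3$), and $\partial_{z_i}\partial_d\Gamma$ is homogeneous of degree $-(d+1)$ with a similar structure.

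For the two estimates on $\square_R$, I would exploit that $|\bsz_\shortparallel|\le R/2$ is bounded while $z_d = y_d-L$ is large. Pointwise this yields $|\partial_d\Gamma(\bsz_\shortparallel,y_d-L)|\lesssim |\pi(\bsz_\shortparallel)|/y_d^{2d+1}$, and integrating over $\square_R$ (whose volume is $R^{d-1}$) produces the bound $R^{2(d-1)}y_d^{-3(d-1)}\lesssim y_d^{-3(d-1)}$ for $R$ fixed. The same kind of pointwise bound (keeping track of which of the $z_i$'s is differentiated and whether the factor $z_d$ in the numerator survives) gives the estimate for $\partial_{z_i}\partial_d\Gamma$; the extra derivative costs one more power of $|\bsz|^{-1}\sim y_d^{-1}$ but in the worst case (derivative in $z_d$) the numerator gains a $z_d$, and the resulting bound still dominates $y_d^{-3(d-1)}$ for $y_d$ large.

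For the two estimates on $\R^{d-1}\setminus \square_R$, the key idea is the anisotropic rescaling $\bsz_\shortparallel = (y_d-L)\,\bsu$, under which $|\bsz|^2 = (y_d-L)^2(|\bsu|^2+1)$. Then $\partial_d\Gamma$ factors as $(y_d-L)^{-d}$ times a bounded function of $\bsu$, while $\pi(\bsz_\shortparallel)=(y_d-L)^{d-1}\pi(\bsu)$ and the Jacobian contributes $(y_d-L)^{d-1}$. Tracking the powers, the integral $\int |\partial_d\Gamma\cdot(y_d^{d-1}+\pi(\bsz_\shortparallel))|\,d\bsz_\shortparallel$ becomes $(y_d/(y_d-L))^{d-1}$ (or $1$) times a $\bsu$-integral of the form $\int |\pi(\bsu)|\cdot|\bsu|/(|\bsu|^2+1)^{(d+2)/2}\,d\bsu$ or $\int|\pi(\bsu)|^2/(|\bsu|^2+1)^{(d+2)/2}\,d\bsu$, both of which converge since the integrand decays like $|\bsu|^{-(d+1)}$ at infinity (here the fact that $\pi(\bsu)$ vanishes on the coordinate hyperplanes does not help, but the polynomial decay is sufficient). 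The corresponding computation for $\partial_{z_i}\partial_d\Gamma\cdot\pi(\bsz_\shortparallel)$ is identical except that an extra factor $(y_d-L)^{-1}$ appears, yielding the claimed $1/y_d$ bound.

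The main technical point is the careful bookkeeping of the powers of $(y_d-L)$ coming from the homogeneity of $\Gamma$, the rescaling of $\pi(\bsz_\shortparallel)$, and the Jacobian, while separately verifying that the resulting dimensionless $\bsu$-integrals converge at infinity. The rest of the argument is a routine case distinction between $d=2$ and $d=3$, and between which variable $z_i$ is differentiated.
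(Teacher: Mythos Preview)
Your approach is exactly the paper's: compute $\partial_d\Gamma$ explicitly, bound it pointwise on $\square_R$, and handle $\R^{d-1}\setminus\square_R$ by the rescaling $\bsz_\shortparallel \mapsto \bsz_\shortparallel/(y_d-L)$. The strategy is correct and complete.

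However, several stated exponents are off and would trip you up if carried through. The function $\partial_d\Gamma$ is homogeneous of degree $-2$ for $d=2$ but of degree $-4$ (not $-3$) for $d=3$, since you are taking three derivatives of $|\bsz|^{-1}$; your own formula $-\tfrac{15 z_1z_2z_3}{4\pi|\bsz|^7}$ already shows this. Consequently the pointwise bound on $\square_R$ should read $|\partial_d\Gamma|\lesssim |\pi(\bsz_\shortparallel)|/y_d^{3(d-1)}$ (not $y_d^{-(2d+1)}$), and after rescaling the $\bsu$-integral for $d=3$ has denominator $(|\bsu|^2+1)^{7/2}$, not $(|\bsu|^2+1)^{(d+2)/2}=(|\bsu|^2+1)^{5/2}$. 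With the latter exponent the integral $\int_{\R^2}|u_1u_2|^2(|\bsu|^2+1)^{-5/2}\,d\bsu$ actually diverges, so the slip is not harmless on paper. Once you use the correct exponents (which follow directly from the explicit formulas you already wrote down), every step goes through and matches the paper verbatim.
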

\begin{proof}
  A straighforward computation yields 
  \[  
  \partial_d\Gamma(\bsz)= \frac{1}{\pi}\frac{z_1z_2}{(z_1^2+z_2^2)^2}  \; \text{ for }  d=2 
  \quad\text{and}\quad  \frac{1}{4\pi}\frac{-15 z_1z_2z_3}{(z_1^2+z_2^2+z_3^2)^{7/2}}\; \text{ for } d=3,
  \]
  it is then easy to show the first inequality of \eqref{eq:ineq1}. Moreover, by a change of variable $\bsz_\shortparallel\mapsto \bsz_\shortparallel/y_d$, we obtain
  the second inequality of \eqref{eq:ineq1}
  \begin{equation*}
  \int_{\R^{d-1}\setminus\square_R} \left|\partial_d\Gamma(\bsz_\shortparallel,y_d-L)\left(y_d^{d-1}+\pi(\bsz_\shortparallel)\right)\right|\,d\bsz_\shortparallel \lesssim 
  \left|\begin{array}{l}\vsd\dst\int_{\R}\frac{|u|(1+|u|)\,du}{(u^2+1)^2} \; \text{ for }  d=2 \qquad\qquad
  \\
  \dst \int_{\R^2}\frac{|u_1u_2|(1+|u_1u_2|)\,du_1du_2}{(u_1^2+u_2^2+1)^{7/2}}\; \text{ for }  d=3.\end{array}\right.
  \end{equation*}
  The proof of \eqref{eq:ineq2} can be done using similar arguments.
\end{proof}
\begin{proposition}\label{prop:limNF} Let $\Pa$ verify Hypothesis \nameref{hyp:mixing}.
$\widehat{U}^\omega$ the unique solution of \eqref{eq:NFtGen}  in $\mathcal{W}_0(D)$ with $F=0$, $G=0$,$\mu^{-\frac{1}{2}}\Psi\in\mathcal{L}^2(\Sigma_0)$ and $\mu^{-\frac{1}{2}}\alpha^N\in \mathcal{L}^2(\Sigma_H)$ verifies 

\begin{equation}\label{eq:lim_infty}
 \text{a.s.}\quad \lim_{y_d\to +\infty} \widehat{U}^\omega\big(\bsy_\shortparallel, y_d\big) = \mathbb{E}\big[ \varphi\big];\quad \bsy_\shortparallel \in \mathbb{R}^{d-1} ,
 \end{equation}
this limit being locally uniform in $\bsy_\shortparallel$ and

 \begin{equation}\label{eq:lim_infty2}
\mathbb{E}\left[\Big| \widehat{U}^\omega\big( \cdot,y_d\big) - \mathbb{E}\big[ \varphi\big]  \Big|^2 \right] \underset{y_d\to+\infty}{\longrightarrow} 0  \ \quad\text{and}\quad
y_d^{2}  \  \mathbb{E}\left[ \Big|\nabla \widehat{U}^\omega\big(\cdot,y_d\big) \Big|^2 \right] \underset{y_d\to+\infty}{\longrightarrow} 0 \ .  
 \end{equation}
\end{proposition}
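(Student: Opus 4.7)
The starting point is the integral representation of Proposition \ref{prop:representation_integrale_U_alea}: a direct computation identifies $-2\partial_{z_d}\Gamma(\bsy,(\bszpar,L))$ with the classical Poisson kernel $P_t(\bsy_\shortparallel-\bszpar)$ of the half-space $\{y_d>L\}$, where $t\coloneqq y_d-L$, and $\int_{\R^{d-1}} P_t = 1$. Setting $\psi^\omega\coloneqq \varphi^\omega-\mathbb{E}[\varphi]$ (stationary, ergodic, and of zero mean), Proposition \ref{prop:representation_integrale_U_alea} becomes
\begin{equation*}
\widehat{U}^\omega(\bsy_\shortparallel,y_d) - \mathbb{E}[\varphi] \;=\; (P_t \ast \psi^\omega)(\bsy_\shortparallel),
\end{equation*}
and both \eqref{eq:lim_infty} and \eqref{eq:lim_infty2} reduce to showing that Poisson averages of a mean-zero stationary ergodic field vanish as $t\to\infty$, a.s.\ and in $L^2(\Omega)$.

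For the a.s.\ pointwise statement, I would fix $\bsy_\shortparallel = 0$ (the general case follows by stationarity) and exploit the radial monotonicity of $P_t$ via the layer-cake / integration-by-parts identity
\begin{equation*}
(P_t \ast \psi^\omega)(0) \;=\; \int_0^{+\infty} A^\omega(r)\,(-P_t'(r))\,\dd r, \qquad A^\omega(r) \coloneqq \int_{B(0,r)} \psi^\omega(\bszpar)\,\dd\bszpar.
\end{equation*}
Birkhoff's theorem (Theorem \ref{theorem:Birkhoff}) gives $A^\omega(r) = o(r^{d-1})$ a.s., so for every $\eta>0$ there is a random $R_\eta(\omega)$ with $|A^\omega(r)|\le \eta\, r^{d-1}$ for $r\ge R_\eta$. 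Splitting the integral at $R_\eta$, the inner part is bounded by a random constant times $P_t(0)\lesssim t^{-(d-1)}\to 0$, while the outer part is controlled by $\eta\int_0^{+\infty} r^{d-1}(-P_t'(r))\,\dd r$, a quantity which is finite and \emph{independent of $t$} after one integration by parts, using $\int P_t = 1$. Letting $\eta\to 0$ produces the a.s.\ limit at $\bsy_\shortparallel = 0$; combining it on a countable dense set with an equicontinuity estimate for $\bsy_\shortparallel\mapsto (P_t\ast \psi^\omega)(\bsy_\shortparallel)$ obtained by repeating the layer-cake argument on $\nabla P_t$ upgrades this to locally uniform convergence in $\bsy_\shortparallel$.

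The $L^2(\Omega)$ convergence follows from a direct second-moment computation. Using stationarity and the Poisson semigroup identity $P_t\ast P_t = P_{2t}$,
\begin{equation*}
\mathbb{E}\bigl[\,|(P_t\ast\psi^\omega)(\bsy_\shortparallel)|^2\,\bigr] \;=\; \int_{\R^{d-1}} P_{2t}(\bsw)\,C(\bsw)\,\dd\bsw, \qquad C(\bsw)\coloneqq \mathbb{E}[\psi(0)\psi(\bsw)],
\end{equation*}
independent of $\bsy_\shortparallel$. By Bochner's theorem, $C$ is the Fourier transform of a finite positive spectral measure $\mu$ on $\R^{d-1}$, and the right-hand side equals $\int e^{-2t|\xi|}\,\dd\mu(\xi)$, which tends by dominated convergence to $\mu(\{0\})$ as $t\to\infty$; ergodicity and $\mathbb{E}[\psi]=0$ force $\mu(\{0\})=0$. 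Finally, $\widehat{U}^\omega - \mathbb{E}[\varphi]$ is harmonic in $\{y_d>L\}$, so interior regularity on balls of radius $y_d/4$ gives
\begin{equation*}
|\nabla\widehat{U}^\omega(\bsy_\shortparallel,y_d)|^2 \;\lesssim\; y_d^{-(d+1)} \int_{B((\bsy_\shortparallel,y_d),\,y_d/4)} |\widehat{U}^\omega - \mathbb{E}[\varphi]|^2,
\end{equation*}
and taking expectation plus horizontal stationarity reduces $y_d^2\,\mathbb{E}[|\nabla\widehat{U}^\omega|^2]\to 0$ to the first limit in \eqref{eq:lim_infty2}.

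\textbf{Main obstacle.} The delicate point is upgrading pointwise a.s.\ convergence to \emph{locally uniform} a.s.\ convergence in $\bsy_\shortparallel$. Birkhoff provides the $o(r^{d-1})$ control on $A^\omega(r)$ only with a random threshold $R_\eta(\omega)$, and since $\psi^\omega$ is not globally bounded the naive equicontinuity estimate based on $\|\nabla P_t\|_{L^1}\lesssim 1/t$ does not suffice; one must control the ergodic averages of $\psi^\omega$ against $|\nabla P_t|$ by an analogous layer-cake argument for $\nabla P_t$, keeping careful track of how the constants behave locally in $\bsy_\shortparallel$.
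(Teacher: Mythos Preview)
Your plan is correct and takes a genuinely different route from the paper. Both start from the integral representation of Proposition~\ref{prop:representation_integrale_U_alea} and invoke Birkhoff, but the paper integrates by parts in the \emph{Cartesian} tangential variables to rewrite $\widehat U^\omega-\mathbb{E}[\varphi]$ against the mixed kernel $\partial_d\Gamma=\partial_{z_1}\!\cdots\partial_{z_d}\Gamma$ and box averages $\fint_0^{\bsz_\shortparallel}\varphi^\omega$, and then treats all three limits in \eqref{eq:lim_infty}--\eqref{eq:lim_infty2} by the same device: split at a Birkhoff radius $R$, control the near part by the explicit kernel decay \eqref{eq:ineq1}--\eqref{eq:ineq2}, and the far part by ergodicity. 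You instead exploit the \emph{radial} monotonicity of $P_t$ for the a.s.\ limit, Bochner's theorem plus the Poisson semigroup for the first half of \eqref{eq:lim_infty2}, and interior harmonic regularity for the gradient. Your argument is more conceptual for the $L^2$ and gradient parts and sidesteps the dimension-specific kernel estimates; the paper's is more hands-on and uniform across the three statements.

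Two small repairs. First, the interior estimate should read $|\nabla\widehat U^\omega|^2\lesssim y_d^{-(d+2)}\int_B|\widehat U^\omega-\mathbb{E}[\varphi]|^2$ (you wrote $-(d+1)$); with the correct exponent the reduction to the first limit in \eqref{eq:lim_infty2} goes through after averaging over the ball and using horizontal stationarity. Second, your proposed route to local uniformity via a layer-cake on $\nabla P_t$ is awkward because $\nabla P_t$ is not radially monotone; a cleaner fix is to apply Birkhoff to $|\psi^\omega|$ itself, which yields $\int_{B(0,r+M)\setminus B(0,r-M)}|\psi^\omega|=o(r^{d-1})$ a.s., whence $A^\omega_{\bsy_\shortparallel}(r)=A^\omega_0(r)+o(r^{d-1})$ uniformly for $|\bsy_\shortparallel|\le M$, so your layer-cake bound at the origin transfers over compact sets. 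Finally, your Bochner step needs $\varphi\in L^2(\Omega)$; only $\varphi\in L^1(\Omega)$ is recorded before the proposition, but the paper's own bound \eqref{eq:majoration2} tacitly uses the same second-moment hypothesis, so you are on equal footing there.
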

Recall that by Lemma \ref{lem:weightedtrace}, $\mu^{\frac{1}{2}}_{|_{\Sigma_L}} \varphi\in\mathcal{L}^2(\Sigma_L)$, and thus since $\mu^{-1}_{|_{\Sigma_L}}\in L^1(\Omega)$ by Hypothesis \nameref{hyp:mixing}, $\varphi\in L^1(\Omega)$ by Cauchy-Schwarz inequality.
\begin{proof}
The proof is based as in section 4.6 of \cite{Basson2006} on the integral representation established in Proposition \ref{prop:representation_integrale_U_alea} and Birkhoff ergodic Theorem.
\\\\
Let us first rewrite $\widehat{U}^\omega\big( \cdot,y_d\big) - \mathbb{E}[ \varphi]$ using the integral representation \eqref{eq:repint_W10}.  We then have \begin{equation}
\begin{aligned}\label{eq:U-EphiI}
\widehat{U}^\omega\big( \cdot,y_d\big) -\mathbb{E}[ \varphi]\coloneqq & -2 \int_{\mathbb{R}^{d-1}} \partial_{z_d} \Gamma\big(\bsz_\shortparallel,y_d-L\big) \big(\varphi^\omega(\bsy_\shortparallel-\bsz_\shortparallel)-\mathbb{E}[\varphi]\big)\mathrm{d}\bsz_\shortparallel, \\  
=& -2 \int_{\mathbb{R}^{d-1}} \partial_{z_d} \Gamma\big( \bsz_\shortparallel,y_d-L \big)  \partial_{\bsz_\shortparallel}\int_{0}^{\bsz_\shortparallel} \big(\varphi^\omega(\bsy_\shortparallel-\mathbf{w})-\mathbb{E}\big[\varphi\big]\big)\mathrm{d}\mathbf{w} \mathrm{d}\bsz_\shortparallel,\\ 
=& (-1)^d 2\int_{\mathbb{R}^{d-1}} \partial_{d}\Gamma \big(\bsz_\shortparallel,y_d-L\big) \pi(\bsz_\shortparallel) \left[\fint_{0}^{\bsz_\shortparallel} \varphi^\omega(\bsy_\shortparallel-\mathbf{w})\mathrm{d}\mathbf{w}-\mathbb{E}[\varphi]  \right] \mathrm{d}\bsz_\shortparallel.
\end{aligned}
\end{equation}
where we used the notations introduced in \eqref{eq:der_Gamma} and also that $\partial_{\bsz_\shortparallel}\coloneqq \partial_{z_1}$ for $d=2$ and $\coloneqq\partial_{z_1}\partial_{z_2}$ for $d=3$ and $\dst\int_{0}^{\bsz_\shortparallel} \coloneqq \int_{0}^{z_1}$ for $d=2$ and $\dst\coloneqq\int_{0}^{z_1} \int_{0}^{z_2}$ for $d=3$.

We first prove \eqref{eq:lim_infty}. Let $\eps>0$. By Birkhoff ergodic Theorem (Theorem \ref{theorem:Birkhoff}), we know that there exists $R>0$ such that for all $R'>R$
\begin{equation*}
 \text{a.s.}\quad \left| \fint_{\Box_{R'}} \varphi^\omega\big(\bsy_\shortparallel-\mathbf{w}\big)\, \mathrm{d}\mathbf{w}\ - \ \mathbb{E}[ \varphi] \right| \ \leq \ \eps,\quad \bsy_\shortparallel\in \Box_{R'} \ . 
\end{equation*}
On one hand we obtain then using the second inequality of \eqref{eq:ineq1}{
\[
  \left|\int_{\R^{d-1}\setminus\Box_R}\hspace{-0.3cm}\partial_{d}\Gamma \big(\bsz_\shortparallel,y_d-L \big)\pi(\bsz_\shortparallel) \left[ \fint_{0}^{\bsz_\shortparallel}\varphi^\omega\big(\bsy_\shortparallel-\mathbf{w}\big) \mathrm{d}\mathbf{w}-\mathbb{E}[\varphi]\right] \mathrm{d}\bsz_\shortparallel\right| 
   \leq \eps. 
\]
\\
On the other hand we have
\begin{multline*}
    \left|\int_{\Box_R} \partial_d\Gamma \big(\bsz_\shortparallel,y_d-L \big)\pi(\bsz_\shortparallel)  \left[ \fint_{0}^{\bsz_\shortparallel} \varphi^\omega\Big(\bsy_\shortparallel-\mathbf{w}\big)\, \mathrm{d}\mathbf{w}-\mathbb{E}[\varphi^\omega]\right] \,\mathrm{d}\bsz_\shortparallel\right| \\
       \lesssim  \ \left( \int_{\Box_R} \Big|\varphi^\omega\big(\bsy_\shortparallel-\mathbf{w}\big)\Big| \, \mathrm{d}\mathbf{w}+ R^2\mathbb{E}[ \varphi^\omega]\right) \int_{\Box_R} \left|\partial_d\Gamma \big(\bsz_\shortparallel,y_d-L\big) \right| \mathrm{d}\bsz_\shortparallel \ ,  
\end{multline*}
and it suffices to use the first inequality of \eqref{eq:ineq1} to obtain the wanted result.}
\\\\
Let us now prove the first limit in \eqref{eq:lim_infty2}.
Let $\eps>0$. By Birkhoff's Theorem, since $\int_{\square_1} \varphi\in L^2(\Omega)$ there exists $R>0$ such that for all $R'>R$
\begin{equation}\label{inegalite_pointb_preuve_prop_alea}
 \mathbb{E}\left[ \left|\fint_{\square_{R'}} \varphi(\bsz_\shortparallel) \mathrm{d}\bsz_\shortparallel - \mathbb{E}\big[ \varphi\big] \right|^2 \right] \ \leq \ \eps \ . 
\end{equation}
By applying Cauchy-Schwarz inequality and taking the expectation, we obtain for $\bsy_\shortparallel\in \mathbb{R}^{d-1}$, $y_d>L$, 
\begin{multline}\label{eq:I}
\mathbb{E}\left[\left|\widehat{U}^\omega\big( \cdot,y_d\big)-\mathbb{E}[ \varphi]\right|^2 \right]
\lesssim \int_{\R^{d-1}\setminus\square_R} \left| \pi(\bsz_\shortparallel) \partial_d\Gamma\big(\bsz_\shortparallel,y_d-L\big)\right|\mathrm{d}\bsz_\shortparallel \\ 
 \int_{\R^{d-1}\setminus\square_R} \left|\pi(\bsz_\shortparallel) \partial_d\Gamma\big(\bsz_\shortparallel,y_d-L\big)  \right| \ \mathbb{E}\left[\left| \fint_0^{\bsz_\shortparallel} \varphi\big(\bsy_\shortparallel-\mathbf{w}\big) \  \mathrm{d}\mathbf{w} -\mathbb{E}\big[\varphi \big] \right|^2\right]\mathrm{d}\bsz_\shortparallel\\
 +\int_{\square_R} \Big|\partial_d\Gamma\big(\bsz_\shortparallel ,y_d-L\big) \Big| \mathrm{d}\bsz_\shortparallel \int_{\square_R} \left| \partial_d\Gamma\big(\bsz_\shortparallel,y_d-L\big)  \right| \ \mathbb{E}\left[\left| \int_0^{\bsz_\shortparallel} (\varphi\big(\bsy_\shortparallel-\mathbf{w}\big) \  -\mathbb{E}\big[\varphi \big])\mathrm{d}\mathbf{w}  \right|^2\right] \mathrm{d}\bsz_\shortparallel. 
\end{multline}
To bound the second term of the right-hand side of \eqref{eq:I}, note that since $\varphi\in L^2_{loc}\big(\mathbb{R}^{d-1},L^2(\Omega)\big)$, we have
\begin{multline}\label{eq:majoration2}
    \left|\int_{\square_R} \partial_d\Gamma \big(\bsz_\shortparallel,y_d-L \big) \E\left[\left| \int_{0}^{\bsz_\shortparallel} (\varphi\big(\bsy_\shortparallel-\mathbf{w}\big) \  -\mathbb{E}\big[\varphi \big])\mathrm{d}\mathbf{w}\right|^2\right] \mathrm{d}\bsz_\shortparallel\right| \\
       \lesssim  R^{d-1}\mathbb{E}\left[\int_{\square_R} \left|\varphi\big(\bsy_\shortparallel-\mathbf{w}\big) \  -\mathbb{E}\big[\varphi \big]\right|^2\mathrm{d}\mathbf{w}\right]\int_{\square_R} \left|\partial_d\Gamma \big(\bsz_\shortparallel,y_d-L\big) \right| \mathrm{d}\bsz_\shortparallel.  
\end{multline}
We then use the first inequality of \eqref{eq:ineq1} to conclude.
{Let us now bound the first term of the right-hand side of \eqref{eq:I}. From \eqref{inegalite_pointb_preuve_prop_alea} we obtain \begin{multline*}
  \left|\int_{\R^{d-1}\setminus\square_R}\partial_d\Gamma\big(\bsz_\shortparallel,y_d-L \big)\pi(\bsz_\shortparallel) \mathbb{E}\left[\left| \fint_0^{\bsz_\shortparallel} \varphi\big(\bsy_\shortparallel-\mathbf{w}\big) \  \mathrm{d}\mathbf{w} -\mathbb{E}\big[\varphi \big] \right|^2\right]\mathrm{d}\bsz_\shortparallel\right| \\ 
   \leq \eps \ \int_{\R^{d-1}\setminus\square_R} \left|\partial_d\Gamma\big(\bsz_\shortparallel,y_d-L \big)\pi(\bsz_\shortparallel) \right| \mathrm{d}\bsz_\shortparallel. 
\end{multline*}
By the second inequality of \eqref{eq:ineq1} the integral in the right-hand side is bounded. Finally we proved that for $y_d$ large enough
\begin{equation*}
 \mathbb{E}\left[ \left| \widehat{U}^\omega\big( \cdot,y_d\big)-\mathbb{E}[ \varphi]\right|^2\right] \ \lesssim \ \frac{1}{y_d^{6(d-1)}} + C\eps. 
\end{equation*}
To prove the second limit in \eqref{eq:lim_infty2}, we replace $\widehat{U}^\omega-\mathbb{E}[ \varphi]$ by $\nabla  \widehat{U}^\omega$ and $\partial_d\Gamma$ by $\nabla\partial_d\Gamma$ in \eqref{eq:I} and use the two inequalities of \eqref{eq:ineq2}.}
\end{proof}
In the periodic case, the convergence of $\widehat{U}$ to its limit as $y_d\to+\infty$ is exponential with a rate inversely proportional to the period. In the random stationary ergodic setting, the speed of convergence can be estimated provided that its trace $\varphi$ satisfies some additional mixing assumption.

\begin{proposition}\label{prop:U_comportement_inf_alea_quant}
  If 
  \begin{equation}\label{eq:ergod_quant}
 R^{d-1}\mathbb{E}\left[\left|\fint_{\square_R} \varphi\big(\bsz_\shortparallel\big)\mathrm{d}\bsz_\shortparallel - \mathbb{E}[\varphi]\right|^2\right]\underset{R\to+\infty}{=}\mathcal{O}(1),
  \end{equation}
  then
  \begin{equation}\label{eq:lim_infty3}
    y_d^{d-1}\mathbb{E}\left[\Big| \widehat{U}^\omega\big( \cdot,y_d\big) - \mathbb{E}\big[ \varphi\big]  \Big|^2 \right] +    y_d^{d+1} \  \mathbb{E}\left[ \Big|\nabla  \widehat{U}^\omega\big(\cdot,y_d\big) \Big|^2 \right] \underset{y_d\to+\infty}{=}\mathcal{O}(1).  
    \end{equation}
\end{proposition}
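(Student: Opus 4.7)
The plan is to mirror the proof of Proposition \ref{prop:limNF}, keeping the starting point \eqref{eq:U-EphiI} but replacing the qualitative Birkhoff-type control of $\mathbb{E}[|\fint_0^{\bsz_\shortparallel}\varphi-\mathbb{E}[\varphi]|^2]$ by a quantitative one coming from the assumption \eqref{eq:ergod_quant}. The argument has four steps.

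First, I would upgrade \eqref{eq:ergod_quant} from an estimate on averages over cubes to one on averages over the rectangles $R(\bsz_\shortparallel) = [0,z_1]\times\cdots\times[0,z_{d-1}]$ that arise after the integration by parts. Concretely, I want a bound of the form
\begin{equation*}
\mathbb{E}\left[\left|\fint_0^{\bsz_\shortparallel}\varphi^\omega(\cdot-\mathbf{w})\,\dd\mathbf{w} - \mathbb{E}[\varphi]\right|^2\right] \lesssim \min\left(1,\frac{1}{|\pi(\bsz_\shortparallel)|}\right),
\end{equation*}
which can be obtained either by tiling a balanced rectangle with cubes of comparable size, or directly from the covariance function of $\varphi$, which is integrable under \eqref{eq:ergod_quant}. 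The trivial bound $\lesssim 1$ takes care of thin rectangles.

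Second, I square \eqref{eq:U-EphiI}, take expectation and apply Cauchy--Schwarz on the resulting covariance, reproducing the decomposition \eqref{eq:I} into a near part integrated over $\square_R$ and a far part integrated over $\R^{d-1}\setminus\square_R$. In the near part I use $\mathbb{E}[\cdot]\lesssim 1$; in the far part I use the quantitative bound from Step~1.

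Third, I choose $R$ proportional to $y_d$ and perform the change of variables $\bsz_\shortparallel = y_d\tilde{\bsz}_\shortparallel$. The first inequality of \eqref{eq:ineq1} ensures that the near contribution is already of order $y_d^{-3(d-1)}$, which is better than needed. In the far part, Cauchy--Schwarz gives the product of $\int_{\R^{d-1}\setminus\square_R}|\pi\,\partial_d\Gamma|\,\dd\bsz_\shortparallel$ (which is $O(1)$ after the change of variables, as the integrand is homogeneous of the right degree in $y_d$) and $\int_{\R^{d-1}\setminus\square_R}|\partial_d\Gamma|\,\dd\bsz_\shortparallel$, which the same scaling shows is $O(y_d^{-(d-1)})$; the product gives the claimed $O(y_d^{-(d-1)})$ bound.

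Fourth, for the gradient estimate I differentiate \eqref{eq:U-EphiI} in $\bsy_\shortparallel$, which replaces $\partial_d\Gamma$ by $\nabla\partial_d\Gamma$. The inequalities \eqref{eq:ineq2} replace \eqref{eq:ineq1} and the same scaling argument buys one extra factor of $y_d^{-2}$, yielding $\mathbb{E}[|\nabla\widehat{U}^\omega(\cdot,y_d)|^2] = O(y_d^{-(d+1)})$. The main obstacle will be Step~1: \eqref{eq:ergod_quant} is stated only for cubes, whereas the rectangles $R(\bsz_\shortparallel)$ produced by the integration by parts can be arbitrarily anisotropic; the weight $\pi(\bsz_\shortparallel)$ damps the contribution of thin rectangles but a careful book-keeping (either through direct covariance computations, or by splitting the quantitative regime $\min_i|z_i|\gtrsim 1$ from the remainder handled by the trivial variance bound together with the vanishing of $\pi$) is needed to make the argument fully rigorous.
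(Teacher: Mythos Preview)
Your strategy is the paper's: start from \eqref{eq:U-EphiI}, split at $\square_R$, and combine \eqref{eq:ineq1}--\eqref{eq:ineq2} with the quantitative variance bound on the far part. The one structural divergence is that the paper keeps $R$ \emph{fixed}: the near contribution then falls under the first inequality of \eqref{eq:ineq1} verbatim (giving $O(y_d^{-6(d-1)})$), while in the far part the Cauchy--Schwarz split $\int_{\R^{d-1}\setminus\square_R}|\partial_d\Gamma|\cdot\int_{\R^{d-1}\setminus\square_R}|\pi|^2|\partial_d\Gamma|\,\mathbb{E}[|\fint_0^{\bsz_\shortparallel}\varphi-\mathbb{E}\varphi|^2]$ yields $O(y_d^{-(d-1)})\cdot O(1)$ directly from the $y_d^{d-1}$--part of the second inequality of \eqref{eq:ineq1}. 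With your choice $R\sim y_d$ the invocation of the first inequality of \eqref{eq:ineq1} for the near part is no longer legitimate (its constant depends on $R$), so the claimed rate $y_d^{-3(d-1)}$ there is unjustified; keeping $R$ fixed removes the need for any rescaling.

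Your Step~1 concern (cubes vs.\ anisotropic rectangles) is real, and the paper simply writes $|\pi(\bsz_\shortparallel)|\,\mathbb{E}[|\fint_0^{\bsz_\shortparallel}\varphi-\mathbb{E}\varphi|^2]\lesssim 1$ without comment. For $d=2$ this is immediate since $[0,z_1]$ is an interval. For $d=3$, the paper's only use of this proposition is via Theorem~\ref{thm:fluct}, whose conclusion \eqref{eq:fluct} (taking $f=|B|^{-1}\mathbb{1}_B$) does give the rectangle bound directly; if one insists on working from \eqref{eq:ergod_quant} alone, your proposed decomposition into thick rectangles (tile by cubes of side $\min_i|z_i|$) and thin ones (trivial variance bound combined with the smallness of $|\pi|$) is the natural patch.
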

\begin{proof}
  Let us fix $\eps>0$. We suppose now that there exist $C\geq0$ and  $R>0$ such that for all $R'>R$
  \begin{equation*}
(R')^{d-1}\mathbb{E}\left[\left| \fint_{\square_{R'}} \varphi^\omega\big(\bsz_\shortparallel\big)\mathrm{d}\bsz_\shortparallel\ - \ \mathbb{E}[ \varphi] \right|\right] \ \leq C. 
  \end{equation*} 
{
Using similar arguments as in the proof of Proposition \ref{prop:limNF}, we have for $\bsy_\shortparallel\in \mathbb{R}^{d-1}$, $y_d>L$, 
\begin{multline*}
\E\left[\left| \widehat{U}^\omega\big( \cdot,y_d\big) - \mathbb{E}\big[ \varphi\big]\right|^2 \right]
\lesssim \int_{\R^{d-1}\setminus\square_R} \left|\partial_d\Gamma\big(\bsz_\shortparallel,y_d-L\big)\right|\mathrm{d}\bsz_\shortparallel \\ \int_{\R^{d-1}\setminus\square_R} \left|\pi(\bsz_\shortparallel) \partial_d\Gamma\big(\bsz_\shortparallel,y_d-L\big)  \right| \ \pi(\bsz_\shortparallel)\mathbb{E}\left[\left| \fint_0^{\bsz_\shortparallel} \varphi\big(\bsy_\shortparallel-\mathbf{w}\big) \  \mathrm{d}\mathbf{w} -\mathbb{E}\big[\varphi \big] \right|^2\right]\mathrm{d}\bsz_\shortparallel  
\\+ \int_{\square_R} \Big|\partial_d\Gamma\big(\bsz_\shortparallel ,y_d-L\big) \Big| \mathrm{d}\bsz_\shortparallel  \int_{\square_R} \left| \partial_d\Gamma\big(\bsz_\shortparallel,y_d-L\big)  \right| \ \mathbb{E}\left[\left| \int_0^{\bsz_\shortparallel} \varphi\big(\bsy_\shortparallel-\mathbf{w}\big) \  \mathrm{d}\mathbf{w} -\mathbb{E}\big[\varphi \big] \right|^2\right] \mathrm{d}\bsz_\shortparallel.
 \end{multline*}
We can bound the second term of the right-hand side as in the proof of Proposition \ref{prop:limNF} (see \eqref{eq:majoration2}). To bound the first term, we use the second inequality of \eqref{eq:ineq1}.}

{To prove the second part of \eqref{eq:lim_infty3}, we decompose similarly  $\E\left[\left|\nabla  \widehat{U}^\omega\right|^2\right]$ and use the two inequalities of \eqref{eq:ineq2}.}
\end{proof}

\section{Effective model}\label{sec:eff_model}
\subsection{Construction of $U^{NF}_1$}

Recall that $U_{1}^{NF}$ is given by \eqref{eq:U_1_expr} where $W_1$ is the unique solution in $\mathcal{W}_0(D)$ of \eqref{eq:w10}. In particular $W_1$ satisfies proposition \ref{prop:limNF}. As we look for a solution $U_{1}^{\omega,NF}$ which tends to $0$ when $y_d$ tends to $+\infty$, we obtain the following boundary condition for $u_1^{FF}$ on $\Sigma_{\eps H}$
\begin{equation}\label{eq:condition_limite_champ_loin1_Dirichlet_alea}
\text{a.s.  } \  -u_{1}^{\omega, FF}\big|_{\Sigma_{\eps H}}(\bsx_\shortparallel) + \partial_{x_d} u_0^{FF}\big|_{\Sigma_{\eps H}}(\bsx_\shortparallel) \ c_1= 0,
\end{equation}
where $c_1$ is given by \begin{equation}\label{eq:constante_c01_Dirichlet_alea}
  c_1\coloneqq \lim_{y_d\to +\infty}W_1^\omega(\cdot,y_d)=\mathbb{E}\left[ W_1\big|_{\Sigma_L}\right] . 
\end{equation}
Notice that $u_1^{FF}$ that satisfies \eqref{eq:cascade_champ_loin_alea} is deterministic.

\subsection{Construction of the first far-field terms}

The boundary conditions \eqref{eq:condition_limite_champ_loin0_Dirichlet_alea} and \eqref{eq:condition_limite_champ_loin1_Dirichlet_alea} which guarantee the existence of two near-field terms going to 0 at infinity, allow us to complete the equations satisfied by the far field terms. We can now write the problems verified by $u_{0}^{FF}$ and $u_{1}^{FF}$.

\subsubsection*{Construction of $u_0^{FF}$}
\begin{definition}
By using \eqref{eq:cascade_champ_loin_alea} and the boundary condition \eqref{eq:condition_limite_champ_loin0_Dirichlet_alea} on $\Sigma_{\eps H}$, we define $u_0^{FF}$ as the unique deterministic solution in $H^1\big(\mathcal{B}_{\eps H,L}\big)$ of  
\begin{equation}\label{eq:pb_champ_loin_u0_Dirichlet_Alea}
\left\{
\begin{aligned}
-\Delta u_0^{FF} - k^2 u_0^{FF} &= f \quad \text{ in } \quad \mathcal{B}_{\eps H,L},\\
u_0^{FF} &= 0 \quad\text{ on }\quad \Sigma_{\eps H},\\
-\partial_{x_d}  u_0^{FF} + \Lambda^k  u_0^{FF} &= 0 \quad \text{ on } \quad \Sigma_{L} .
\end{aligned}
\right.
\end{equation}
\end{definition}

 Since $u_0^{FF}$ is solution of the homogeneous Helmholtz equation in a half-space with a homogeneous Dirichlet condition on the hyperplane and a source term in $L^2\big(\mathcal{B}_{\eps H,L}\big)$ with compact support, 
 completed by the outgoing wave condition \eqref{eq:UPRC}, we can show that the problem \eqref{eq:pb_champ_loin_u0_Dirichlet_Alea} is well posed in $H^1\big(\mathcal{B}_{\eps H,L}\big)$ \cite{chandler2005existence}. 
{Actually, $u_0^{FF}$ can be computed explicitly using a Fourier transform in the ${\bsypar}$-direction and one can deduce classical elliptic regularity results such as $u_0^{FF}\in H^{s}\big(\mathcal{B}_{\eps H,L'}\big)$ for any $s>0$ with $L'<L$ since the support of $f$ lies far away from $\Sigma_{\eps H}$ by hypothesis (see \eqref{eq:source}).}

\subsubsection*{Construction of $u_1^{FF}$}
\begin{definition}
By using \eqref{eq:cascade_champ_loin_alea} and the boundary condition \eqref{eq:condition_limite_champ_loin1_Dirichlet_alea} on $\Sigma_{\eps H}$, we define $u_1^{FF}$ as the unique deterministic solution in $H^1\big(\mathcal{B}_{\eps H,L}\big)$ of 
\begin{equation}\label{eq:pb_champ_loin_u1_Dirichlet_Alea}
\left\{
\begin{aligned}
  -\Delta u_1^{FF} - k^2 u_1^{FF} &= 0 \quad \text{ in } \quad \mathcal{B}_{\eps H,L} , \\
  u_1^{FF}&= c_1 \partial_{x_d} u_0^{FF}  \quad \text{ on }\quad \Sigma_{\eps H} ,  \\
-\partial_{x_d}  u_1^{FF} + \Lambda^k  u_1^{FF} &= 0 \quad \text{ on } \quad \Sigma_{L} .
\end{aligned}
\right.
\end{equation}
where $c_1$ is given by \eqref{eq:constante_c01_Dirichlet_alea}.
\end{definition}
{The far field $u_1^{FF}$ is solution of the homogeneous Helmholtz equation in a half-space with a Dirichlet datum that is in $H^{s}\big(\Sigma_{\eps H}\big)$ for any $s>0$.  Using the Fourier transform, one can compute 
$u_1^{FF}$ explicitly and deduce in particular that $u_1^{FF}\in H^{s}(\mathcal{B}_{\eps H,L})$ for all $s>0$.}

\subsection{Effective model} \label{subsection:effective}
Either we build the far field terms incrementally, or we use the model that will approximate the truncated far field series. This can be useful for studying more general geometries. By using an approximated model, we can have a direct approximation up to a certain order. \vsd 

The first model that we propose is built from $u_0^{FF}$ satisfying  \eqref{eq:pb_champ_loin_u0_Dirichlet_Alea}. The particles and the impedance condition on the object are replaced by a conducting plane on $\Sigma_{\eps H}$. This effective impedance condition does not take into account the particles. Therefore we propose a second model. This model is built from $u_0^{FF}$ and $u_1^{FF}$. We want to have an approximation of $u_0^{FF}+\eps u_1^{FF}$ up to the second order in $\eps$.
Since $u_0^{FF}$ and $u_1^{FF}$ satisfy the Helmholtz equation, we also have
\begin{equation}
-\Delta \big(u_0^{FF} + \eps u_1^{FF} \big) \ - \ k^2 \ \big(u_0^{FF} + \eps u_1^{FF} \big) = f \quad \text{ in } \quad \mathbb{R}^{d-1}\times(\eps H, +\infty) . 
\end{equation}
Given the boundary conditions \eqref{eq:condition_limite_champ_loin0_Dirichlet_alea} and \eqref{eq:condition_limite_champ_loin1_Dirichlet_alea}
\begin{equation}
\big(u_0^{FF} + \eps u_1^{FF} \big) = \eps \ c_1 \ \partial_{x_d}u_0^{FF} \quad \text{ on }\quad \Sigma_{\eps H} . 
\end{equation}
It is then natural to introduce the following effective model.
\begin{definition}
Let $v_\eps$ satisfy 
\begin{equation}\label{eq:modele_eff_Dirichlet_Alea}
\left\{
\begin{aligned}
-\Delta v_{\eps} - k^2 v_\eps &= f \quad \text{ in }\,\,\mathcal{B}_{\eps H,L} , \\
-\eps c_1 \partial_{x_d} v_\eps  + v_\eps &= 0 \quad \text{ on }\,\, \Sigma_{\eps H} , \\
-\partial_{x_d}  v_\eps + \Lambda^k  v_\eps &= 0 \quad \text{ on } \quad \Sigma_{L} ,
\end{aligned}
\right.
\end{equation}
where $c_1$ is given by \eqref{eq:constante_c01_Dirichlet_alea}.
\end{definition}


If $c_1>0$, then the problem \eqref{eq:modele_eff_Dirichlet_Alea} is well posed in 
$H^1\big(\mathcal{B}_{\eps H, L}\big)$ \cite{chandler1997impedance}. 
{Moreover, $v_\eps$ can be computed explicitly using the Fourier transform in the $\bsypar$-direction, and one can deduce easily that $v_\eps\in H^s(\mathcal{B}_{\eps H, L'})$ for any $s>0$ with $L'<L$, with norms independent of $\eps$, since the support of $f$ lies far away from $\Sigma_{\eps H}$. }
Let us prove that the parameter $H$ can be chosen such that $c_1>0$.

\begin{proposition}
There exists $H_0\geq h$ such that for all $H\geq H_0$ \begin{equation}\label{eq:c10pos}c_1(H)>0,\end{equation} where $c_1(H)\coloneqq \E\left[W_1(H)\right]$ and $W_1(H)$ denotes the solution of \eqref{eq:w10} with the normal derivative jump occurring at $\Sigma_H$.
\end{proposition}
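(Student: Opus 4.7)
The plan is to prove the much stronger statement that $c_1(H)>0$ for every $H\geq h$, by expressing $c_1$ as the global Dirichlet energy of $W_1$ and then ruling out the degenerate case with Hardy's inequality. So in fact $H_0=h$ will work.

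First I would establish an energy identity. Because the data of \eqref{eq:w10} are real, $W_1$ may be chosen real-valued. I multiply $-\Delta W_1=0$ by $W_1$ and integrate by parts over the slab $(\square_1\times(0,Y))\cap D^\omega$ for large $Y$. The Dirichlet condition on $\partial \mathcal{P}^\omega$ and the homogeneous Neumann condition on $\Sigma_0$ kill the contributions on particles and on the bottom plane. Continuity of $W_1$ across $\Sigma_H$ combined with $[-\partial_{y_d}W_1]_H=1$ turns the two traces on $\Sigma_H$ into a single contribution $\int_{\square_1} W_1|_{\Sigma_H}$. After taking expectation, the stationarity of $W_1(\cdot,y_d)$ in the tangential variables makes the lateral contributions on opposite faces of $\partial\square_1\times(0,Y)$ cancel pairwise, yielding
\[
\E\!\left[\int_{\square_1}\!\int_0^{Y}\mathbb{1}_{D}|\nabla W_1|^2\,\dd\bsy\right]=\E\!\left[\int_{\square_1} W_1|_{\Sigma_H}\right]-\E\!\left[\int_{\square_1} W_1\,\partial_{y_d}W_1\big|_{\Sigma_Y}\right].
\]

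Next I would pass to the limit $Y\to+\infty$ and identify each side. The boundary term at $\Sigma_Y$ tends to $0$ by Proposition \ref{prop:limNF} applied to $\widehat{U}^\omega=W_1$: the $L^2(\Omega)$ convergence $W_1(\cdot,Y)\to c_1$ together with $Y^2\,\E[|\nabla W_1(\cdot,Y)|^2]\to 0$ yields the claim via Cauchy--Schwarz. For the trace on $\Sigma_H$, I argue that $y_d\mapsto \E[W_1(\bsy_\shortparallel,y_d)]$ is constantly equal to $c_1$ on $[H,+\infty)$: by horizontal stationarity it is independent of $\bsy_\shortparallel$; integrating $\Delta W_1=0$ in $\mathcal{B}_H^{\infty}$ against $\mathbb{1}_{\square_1}$ and taking expectation (again, the tangential flux cancels by stationarity) gives $\partial_{y_d}^2\E[W_1]=0$ on $(H,+\infty)$; the $\mathcal{W}_0(D)$-integrability forbids a non-zero slope; and the value of the constant is fixed to $c_1$ by \eqref{eq:constante_c01_Dirichlet_alea} and Proposition \ref{prop:limNF}. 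By continuity across $\Sigma_H$, $\E[W_1|_{\Sigma_H}]=c_1$. Combined with the first step, this gives the key identity
\[
c_1=\E\!\left[\int_{\square_1}\!\int_0^{+\infty}\mathbb{1}_{D}|\nabla W_1|^2\,\dd\bsy\right]\geq 0.
\]

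To upgrade the inequality to a strict one, I argue by contradiction. If $c_1=0$, then $\nabla W_1=0$ almost surely almost everywhere on $D^\omega$, so by the weighted Hardy inequality \eqref{eq:wHardy} applied to $W_1\in\mathcal{W}_0(D)$ we deduce $W_1=0$ almost surely almost everywhere on $D^\omega$. In particular $W_1\equiv 0$ on both $\mathcal{B}_H^\omega$ and $\mathcal{B}_H^\infty$, so its distributional normal derivatives on each side of $\Sigma_H$ vanish, in contradiction with $[-\partial_{y_d}W_1]_H=1$. Hence $c_1(H)>0$ for every $H\geq h$, and one may take $H_0=h$.

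The main obstacle I anticipate is the rigorous justification of the passage to the limit in the truncated energy identity: controlling the cancellation of the lateral boundary terms on $\partial\square_1\times(0,Y)$ (which uses stationarity and Lemma \ref{lem:weightedtrace} to ensure integrability) and vanishing of the boundary term on $\Sigma_Y$ (which relies crucially on the quantitative $L^2(\Omega)$-decay estimates of Proposition \ref{prop:limNF} under Hypothesis \nameref{hyp:mixing}).
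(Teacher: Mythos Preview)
Your argument is correct and in fact proves more than the paper asks: you establish $c_1(H)>0$ for every $H\geq h$, so one may take $H_0=h$. The key identity $c_1=\E\big[\int_{\square_1}\int_0^{+\infty}\mathbb{1}_D|\nabla W_1|^2\big]$ is simply the variational formulation \eqref{FV_Dirichlet_pb_releve_borne_regularise} for $W_1$ tested against $V=W_1$, combined with the observation that $\E[W_1|_{\Sigma_H}]=c_1$; the strict positivity then follows since $W_1\equiv 0$ would violate the jump condition (equivalently, would contradict uniqueness in $\mathcal{W}_0(D)$ with the nonzero datum $\alpha^N=1$). One small remark: rather than running the integration by parts on a truncated box and dealing with lateral cancellations, you could simply invoke \eqref{FV_Dirichlet_pb_releve_borne_regularise} directly, which already encodes the stationarity-based cancellation and the passage $Y\to+\infty$.

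The paper takes a completely different, shorter route. It shows by direct inspection (and uniqueness in $\mathcal{W}_0(D)$) that for $H'>H$,
\[
W_1(H')=W_1(H)+(y_d-H)\,\chi_{(H,H')}(y_d)+(H'-H)\,\chi_{(H',+\infty)}(y_d),
\]
from which $c_1(H')=c_1(H)+(H'-H)$, so $c_1$ becomes positive for $H'$ large. This proof is three lines and uses nothing beyond Proposition~\ref{prop:Utnwellposed}; it also exhibits explicitly the affine dependence of $c_1$ on the artificial parameter $H$. Your approach, by contrast, yields the sharper conclusion $H_0=h$ and the physically meaningful interpretation of $c_1$ as the Dirichlet energy of the corrector, at the cost of invoking Proposition~\ref{prop:limNF} and the Hardy inequality.
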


\begin{proof}Let H'>H. One can easily establish thanks to proposition \ref{prop:Utnwellposed} that
\begin{equation*}W_1(H')=W_1(H)+(y_d-H)\chi_{(H,H')}(y_d)+(H'-H)\chi_{(H',+\infty)}(y_d).\end{equation*}Taking the limit as $y_d\to+\infty$, one gets
\begin{equation*}c_1(H')=c_1(H)+(H'-H).\end{equation*}
Choosing $H'$ large enough leads to \eqref{eq:c10pos}.
\end{proof}
Finally, let us show that $v_\eps$ is indeed an approximation of $u_0^{FF}+\eps u_1^{FF}$ up to an order 2 in $\eps$.
\begin{proposition}\label{prop:effmod}
For $H$ large enough and $L>\eps H$, the following estimate holds
\begin{equation}\label{eq:effmod}
\left\| u_0^{FF}+\eps u_1^{FF}-v_\eps\right\|_{H^2(\mathcal{B}_{\eps H,L})}\underset{\eps\to0}{=}\mathcal{O}(\eps^2).
\end{equation}
\end{proposition}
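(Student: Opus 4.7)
The strategy is to study directly the residual $w_\eps \coloneqq u_0^{FF}+\eps u_1^{FF}-v_\eps$, identify the problem it satisfies, and bound it using the stability of the effective model \eqref{eq:modele_eff_Dirichlet_Alea}. By linearity of \eqref{eq:pb_champ_loin_u0_Dirichlet_Alea}, \eqref{eq:pb_champ_loin_u1_Dirichlet_Alea} and \eqref{eq:modele_eff_Dirichlet_Alea}, one has $-\Delta w_\eps - k^2 w_\eps = 0$ in $\mathcal{B}_{\eps H, L}$ and $-\partial_{x_d} w_\eps + \Lambda^k w_\eps = 0$ on $\Sigma_L$. The only non-trivial computation is on $\Sigma_{\eps H}$: using $u_0^{FF}=0$ and $u_1^{FF}= c_1\partial_{x_d} u_0^{FF}$ on $\Sigma_{\eps H}$, a direct calculation yields
\begin{equation*}
 -\eps c_1 \partial_{x_d} w_\eps + w_\eps \;=\; -\eps^2 c_1\, \partial_{x_d} u_1^{FF}\big|_{\Sigma_{\eps H}} \quad\text{on}\; \Sigma_{\eps H}.
\end{equation*}
Hence $w_\eps$ solves the effective problem \eqref{eq:modele_eff_Dirichlet_Alea} with zero volume source and a non-homogeneous Robin datum of order $\eps^2$ on $\Sigma_{\eps H}$.

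Next I would invoke the well-posedness of the effective problem, established via the analysis of \cite{chandler1997impedance} once $c_1>0$, which was just shown to hold for $H$ large enough. The key point is that the stability constant is uniform in $\eps$: indeed, by the translation $x_d \mapsto x_d-\eps H$ and the fact that $L>\eps H$ is fixed away from $\eps H$, the problem is essentially posed in the fixed strip $\mathcal{B}_{0,L}$ with a Robin coefficient $\eps c_1$ that only degenerates towards a Dirichlet condition as $\eps\to 0$ (the stable limit). One can also conclude more concretely by computing $w_\eps$ explicitly via the Fourier transform in $\bsy_\shortparallel$, exactly as indicated in the remark following \eqref{eq:modele_eff_Dirichlet_Alea} for $v_\eps$: the symbol $(\eps c_1 \ii\sqrt{k^2-|\boldsymbol{\zeta}|^2}+1)^{-1}$ is bounded uniformly in $\eps$ and in $\boldsymbol{\zeta}\in\R^{d-1}$, which yields a lifting in any $H^s(\mathcal{B}_{\eps H,L'})$ with a norm controlled by the datum's $H^{s-1/2}(\Sigma_{\eps H})$ norm.

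Applied to our setting, this gives
\begin{equation*}
  \|w_\eps\|_{H^2(\mathcal{B}_{\eps H,L})} \;\lesssim\; \eps^2 \, \big\| \partial_{x_d} u_1^{FF} \big\|_{H^{3/2}(\Sigma_{\eps H})}.
\end{equation*}
Now $u_1^{FF}$ was shown after \eqref{eq:pb_champ_loin_u1_Dirichlet_Alea} to belong to $H^s(\mathcal{B}_{\eps H, L})$ for every $s>0$, with norms bounded independently of $\eps$ (since the Dirichlet datum $c_1 \partial_{x_d} u_0^{FF}|_{\Sigma_{\eps H}}$ is smooth in the tangential variable and bounded uniformly in $\eps$ because $\mathrm{supp}\, f$ stays far from $\Sigma_{\eps H}$). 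Hence $\|\partial_{x_d} u_1^{FF}\|_{H^{3/2}(\Sigma_{\eps H})}$ is $O(1)$ and \eqref{eq:effmod} follows.

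The only delicate step is the $\eps$-uniform elliptic $H^2$-estimate for the effective problem: the Robin coefficient degenerates with $\eps$, so one must avoid estimates whose constants blow up as $\eps\to 0$. This is why I would rely on the explicit Fourier representation rather than on a variational or lifting argument that uses the Robin norm, the former giving the required uniformity effortlessly, the latter requiring care.
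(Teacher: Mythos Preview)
Your proof is correct and follows essentially the same route as the paper: the paper also computes the error $e_\eps\coloneqq u_0^{FF}+\eps u_1^{FF}-v_\eps$ via the tangential Fourier transform, obtains the explicit expression $\widehat{e_\eps}(\boldsymbol{\zeta},x_d)=\eps^2\,c_1\,\partial_{x_d}\widehat{u}^{FF}_1(\boldsymbol{\zeta},\eps H)\,(1-\ii\eps c_1\sqrt{k^2-|\boldsymbol{\zeta}|^2})^{-1}e^{\ii \sqrt{k^2-|\boldsymbol{\zeta}|^2}x_d}$, and concludes $\|e_\eps\|_{H^2(\mathcal{B}_{\eps H,L})}\lesssim \eps^2\|\partial_{x_d}u_1^{FF}\|_{H^{3/2}(\Sigma_{\eps H})}$ exactly as you do. Your additional discussion of the PDE satisfied by the residual and the uniform-in-$\eps$ boundedness of the Robin symbol is a helpful elaboration of the same argument.
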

\begin{proof}
Let $e_\eps\coloneqq u_0^{FF}+\eps u_1^{FF}-v_\eps$. Applying the Fourier Transform in the $\bsypar$-direction to \eqref{eq:pb_champ_loin_u0_Dirichlet_Alea}, \eqref{eq:pb_champ_loin_u1_Dirichlet_Alea} and 
\eqref{eq:modele_eff_Dirichlet_Alea}, one obtains that 
\[ 
\widehat{e_\eps}(\zeta,x_d)=\eps^2\frac{c_1\partial_{x_d}\widehat{u}^{FF}_1(\zeta,\eps H)}{1-\ii\eps c_1\sqrt{k^2-|\boldsymbol{\zeta}|^2}}e^{\ii \sqrt{k^2-|\boldsymbol{\zeta}|^2}x_d}
\]
where $\widehat{e_\eps}$ (resp. $\partial_{x_d}\widehat{u}^{FF}_1$) denotes the Fourier Transform of $e_\eps$ (resp. $\partial_{x_d}{u_1}$) and by convention $\sqrt{k^2-|\boldsymbol{\zeta}|^2}=\ii\sqrt{|\boldsymbol{\zeta}|^2-k^2}$. Using the characterization of Sobolev spaces from the Fourier Transform (see \eqref{eq:def_Hs}), one easily shows that 
\[ 
\|e_\eps\|_{H^2(\mathcal{B}_{\eps H,L})}\lesssim \eps^2 \|\partial_{x_d}{u}_1^{FF}\|_{H^{3/2}(\Sigma_{\eps H})}.
\]

\end{proof}
\section{Error estimates} \label{sec:errest}

In this section we estimate
\begin{itemize}\item[-]  the error between $u_\eps$ and our two-scale asymptotic expansion in $H^1(\mathcal{B}^{\omega}_{L})$ (Recall that $\mathcal{B}^{\omega}_{L}\coloneqq \R^{d-1}\times(0,L)\setminus\Pa^\omega$ corresponds to the infinite strip (outside the particles) between $\Sigma_0$ and $\Sigma_L$ with $L>\eps H$); 
  \item[-] the error between $u_\eps$ and the far fields  $u^{FF}_0+\eps u^{FF}_1$ in $H^1(\mathcal{B}_{L',L})$. Recall that $\mathcal{B}_{L',L}$ is the infinite strip $\{(\bsxpar,x_d),\,L'<x_d<L\}$ with $L'>\eps H$. To estimate the latter it is necessary to consider a strip sufficiently far away from the layer (see Figure \ref{fig:error}) so that the contribution of the near-fields in the asymptotic expansion are negligible. This motivates our choice of considering a strip at distance of order 1 from the small particles. 
 \item[-] the same two errors replacing the far fields  $u^{FF}_0+\eps u^{FF}_1$ by the effective solution $v_\eps$.
\end{itemize}
 \begin{figure}[htbp]
 \begin{center}
    \begin{center}
  \begin{tikzpicture}[scale=0.5]
     \fill[color1!20!white] (-3.2,0)--(3,0)--(3,3.5)--(-3.2,3.5)--cycle;
    \draw[plum,thick] (-3.2,1.3) -- (3,1.3);  
    \draw [plum] (3,1.3) node[right] {$\Sigma_{\eps H}$} ; 
    
    \pattern[pattern=north  west lines, opacity =0.5, pattern color=color3, thick] (-3.2,2.8)--(3,2.8)--(3,3.5)--(-3.2,3.5)--cycle;
    \draw[color1,thick]  (-3.2,3.5) -- (3,3.5); 
        \draw [color1] (3,3.5) node[right] {$\Sigma_L$} ; 
          \draw [color3] (3,2.8) node[right] {$\Sigma_{L'}$} ; 
              \draw[color3,thick]  (-3.2,2.8) -- (3,2.8);

       \draw [color3] (-3.2,3.1) node[left] {$\mathcal{B}_{L',L}$} ; 
        \draw [color1] (-3.2,1.5) node[left] {$\mathcal{B}_{L}^\omega$} ; 
        \draw [color1] (-3.2,0.2) node[left] {$\eps r_0$} ; 
    
    \draw[green3,thick] (-3.2,-0.1) -- (3,-0.1);

       \draw [green3] (3,-0.1) node[right] {$\Sigma_0$} ; 
    
    \draw [thick,color1, fill=white] (-2.9,0.65) circle(0.25);
    \draw [thick,color1, fill=white](-2.5,0.25) circle(0.25);
    \draw [thick,color1, fill=white](-2.2,0.8) circle(0.25);
    \draw [thick,color1, fill=white](-1.7,0.4) circle(0.25);
    \draw [thick,color1, fill=white](-1.1,0.3) circle(0.25);
    \draw [thick,color1, fill=white] (-0.85,0.85) circle(0.25);
    \draw [thick,color1, fill=white](-0.3,0.65) circle(0.25);
    \draw [thick,color1, fill=white] (0.3,0.85) circle(0.25);
    \draw [thick,color1, fill=white] (0.5,0.30) circle(0.25);
    \draw [thick,color1, fill=white](1.1,0.40) circle(0.25);
    \draw [thick,color1, fill=white](1.9,0.65) circle(0.25);
    \draw [thick,color1, fill=white] (2.3,0.30) circle(0.25);
    \draw [thick,color1, fill=white](2.7,0.8) circle(0.25);
    \draw [color=color1,-,thick] (-2.5,0.30) -- (-2.25,0.30); 
 
  \end{tikzpicture}
\end{center}  
 \end{center}
 \caption{Illustration of the different domains where the errors are estimated in section \ref{sec:errest}}\label{fig:error}
 \end{figure}
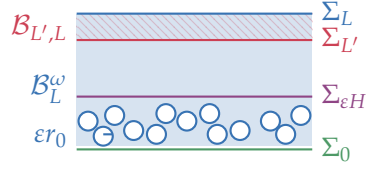
We establish those error estimates in the specific case where the distance between two particles is bounded a.s. and a.e. in the layer. We assume indeed  that

\begin{hypothesis}[$L^{\infty}$]\makeatletter\def\@currentlabelname{($L^{\infty}$)}\makeatother\label{hyp:Linfty}
The distance to the nearest particule $R$ defined in \eqref{eq:def_R} satisfies $$\sup_{y_d\in(0,h)}R(\cdot,y_d)\in L^{\infty}(\Omega, L^{\infty}(\R^{d-1})).$$ 
\end{hypothesis}

\begin{remark}\label{rem:WP_Linfty}
  Hypothesis \nameref{hyp:Linfty} corresponds to a stronger assumption than \nameref{hyp:mixing} for the particle's distribution. A direct consequence is, since $\mu$ is bounded from below, that the near-fields solutions of problem of the form \eqref{eq:NFtGen} are in
\begin{multline*}
 \mathcal{H}_0(D)\coloneqq  \left\{\text{a.s. } V^\omega\in H^1_{loc}(D^\omega), V^\omega\big(\cdot,y_d\big) \text{ stationary for any }y_d>0, \right. \\\left. \text{a.s.  } V^\omega=0 \text{ on } \Pa^\omega,\,\E\left[ \int_{\square_1}\int_0^{+\infty}\mathbb{1}_{D^\omega}\Big( \frac{|V^\omega|^2}{1+y_d^2}+|\nabla V^\omega|^2\Big) \, \mathrm{d}\bsy\right] <+\infty \right\}.
\end{multline*}
 \end{remark}
We first prove in Section \eqref{subsection:firstest} estimates relying only on Hypothesis \nameref{hyp:Linfty}. We then improve those estimates in subsection \ref{subsection:quantest} by adding a quantitative mixing assumption (Hypothesis \nameref{hyp:mix}) on the particle's distribution.

\subsection{Technical lemmatas}

 Let us start with the following general lemma and its corollary that provide  
 $L^2(\mathcal{B}^\omega_L)$- and $H^1(\mathcal{B}_{L',L})$-estimates of near-field terms depending on their behaviour as $y_d\to\infty$. 

\begin{lemma}\label{lem:err1}
Let $u\in H^1(\R^{d-1})$ and $U\in\mathcal{H}_0(D)$ be such that 
\begin{equation}\label{eq:firstrate}
  \E[|U|^2(\cdot,y_d)]+y_d^{2}\,\E[|\nabla U|^2(\cdot,y_d)]\underset{y_d\to+\infty}{=}o(1).
\end{equation}
Then $\displaystyle\mathcal{U}_\eps:\bsx\mapsto u(\bsxpar)\,U(\frac{\bsx}{\eps})$ satisfies 
\begin{equation}\label{eq:energy_estimate}
\left\|\mathcal{U}_{\eps}\right\|_{L^2(\Omega, L^2(\mathcal{B}^{\omega}_{L}))}\leq \eta(\eps)\|u\|_{L^2(\R^{d-1})}\quad\text{and}\quad
+\left\|\mathcal{U}_{\eps}\right\|_{L^2(\Omega,H^1(\mathcal{B}_{L',L}))}\leq \eta(\eps)\|u\|_{H^1(\R^{d-1})},
\end{equation}
where $\eta(\eps)$ tends to $0$ as $\eps$ tends to $0$.
\end{lemma}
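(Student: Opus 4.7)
The strategy is to reduce the estimate on $\mathcal{U}_\eps(\bsx)=u(\bsxpar)U^\omega(\bsx/\eps)$ to a one-dimensional integral in $y_d$ by means of the change of variables $\bsy=\bsx/\eps$ combined with the stationarity of $U$ in the tangential microscopic variable $\bsypar$. Writing
\[
\E\Bigl[\int_{\mathcal{B}_L^\omega}|u(\bsxpar)|^2\,|U^\omega(\bsx/\eps)|^2\,\dd\bsx\Bigr]
=\eps^d\int_{\R^{d-1}}\int_0^{L/\eps}|u(\eps\bsypar)|^2\,\E\bigl[\mathbb{1}_{D^\omega}(\bsy)|U^\omega(\bsy)|^2\bigr]\,\dd y_d\,\dd\bsypar,
\]
and using that by the stationarity of $\mathbb{1}_{D}|U|^2(\cdot,y_d)$, the expectation is independent of $\bsypar$, hence equal to $\Phi(y_d)\coloneqq \E[|U|^2(\cdot,y_d)]$, I can factorize the $\bsypar$-integral. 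A change of variable in $\bsypar$ gives $\int_{\R^{d-1}}|u(\eps\bsypar)|^2\dd\bsypar=\eps^{-(d-1)}\|u\|_{L^2(\R^{d-1})}^2$, so that
\[
\bigl\|\mathcal{U}_\eps\bigr\|_{L^2(\Omega,L^2(\mathcal{B}_L^\omega))}^2
=\eps\,\|u\|_{L^2(\R^{d-1})}^2\int_0^{L/\eps}\Phi(y_d)\,\dd y_d.
\]

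To finish the first estimate I would invoke a Ces\`aro-type argument: since $\Phi(y_d)\to 0$ as $y_d\to+\infty$ by hypothesis \eqref{eq:firstrate}, for any $\delta>0$ there is $Y>0$ such that $\Phi\leq \delta$ on $[Y,+\infty)$; splitting the integral at $Y$ gives $\eps\int_0^{L/\eps}\Phi\leq \eps\,Y\,\|\Phi\|_\infty+\delta L$, which is $\leq (1+L)\delta$ for $\eps$ small enough. This yields $\eta(\eps)\to 0$.

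For the $H^1$ estimate on $\mathcal{B}_{L',L}$, I would decompose
\[
\nabla\mathcal{U}_\eps(\bsx)=U(\bsx/\eps)\,(\nabla_{\bsxpar}u(\bsxpar),0)+\eps^{-1}u(\bsxpar)\,\nabla_{\bsy}U(\bsx/\eps),
\]
and bound $|\nabla\mathcal{U}_\eps|^2$ by twice the sum of squares. The first piece is treated exactly as above with $\nabla u$ replacing $u$ and with integration in $y_d$ restricted to $(L'/\eps,L/\eps)$: it is controlled by $(L-L')\sup_{y_d\geq L'/\eps}\Phi(y_d)\,\|\nabla u\|_{L^2}^2$, which goes to zero. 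The second, singular-looking piece
\[
\eps^{-2}\,\E\Bigl[\int_{\mathcal{B}_{L',L}}|u(\bsxpar)|^2|\nabla U(\bsx/\eps)|^2\,\dd\bsx\Bigr]
=\eps^{-1}\|u\|_{L^2(\R^{d-1})}^2\int_{L'/\eps}^{L/\eps}\Psi(y_d)\,\dd y_d,
\]
with $\Psi(y_d)\coloneqq \E[|\nabla U|^2(\cdot,y_d)]$, is where the $y_d^2$-weighted decay $y_d^2\Psi(y_d)=o(1)$ is used: bounding $\Psi(y_d)\leq y_d^{-2}\sup_{s\geq L'/\eps}(s^2\Psi(s))$ and integrating $y_d^{-2}$ between $L'/\eps$ and $L/\eps$ produces a factor $\eps/L'$, which cancels the prefactor $\eps^{-1}$ and leaves a quantity going to $0$.

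The only subtle point is the coupling between the macroscopic variable $\bsxpar$ (on which $u$ depends) and the microscopic one $\bsxpar/\eps$ (on which $U$ depends), resolved by Fubini and stationarity; the rest is a careful bookkeeping of powers of $\eps$, with the two decay rates in \eqref{eq:firstrate} precisely matching what is needed to kill the $\eps^{-1}$ arising from the inner derivative. Combining the two pieces yields $\|\mathcal{U}_\eps\|_{L^2(\Omega,H^1(\mathcal{B}_{L',L}))}\leq\eta(\eps)\|u\|_{H^1(\R^{d-1})}$ with a possibly updated $\eta(\eps)\to 0$.
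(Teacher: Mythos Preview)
Your proof is correct and follows essentially the same route as the paper: reduce to a one-dimensional integral in $y_d$ via the change of variables $\bsy=\bsx/\eps$ and stationarity, then split the integral at a large threshold and use the decay hypothesis~\eqref{eq:firstrate}, with the $y_d^{2}$-weighted decay of $\E[|\nabla U|^2]$ precisely absorbing the $\eps^{-1}$ from the chain rule. One small point: in bounding $\eps\int_0^Y\Phi$ you invoke $\|\Phi\|_\infty$, which is not guaranteed by the hypotheses alone; the safer (and equivalent) bound is simply $\eps\int_0^Y\Phi(y_d)\,\dd y_d$, which is finite since $U\in\mathcal{H}_0(D)$ implies $\int_0^\infty \Phi(y_d)/(1+y_d^2)\,\dd y_d<\infty$ --- this is exactly what the paper does.
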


\begin{proof}
  By stationarity of $U$ (and a change of variable in the first integral), we get immediately
\begin{equation}\label{eq:inter1}
\E\left[\int_0^L\int_{\R^{d-1}}|u(\bsxpar)|^2\left|U\left(\frac{\bsx}{\eps}\right)\right|^2\dd \bsxpar\dd x_d\right]
=\eps\,\int_0^{L/\eps}\E\left[\left|U\left(\cdot,y_d\right)\right|^2\right]\dd y_d\int_{\R^{d-1}}|u(\bsxpar)|^2\dd \bsxpar.
\end{equation}
Let $\delta>0$. By \eqref{eq:firstrate}, there exists $L_0\geq 0$ such that for all $y_d\geq L_0$, 
\begin{equation}\label{eq:inter2}
  \E[|U|^2(\cdot,y_d)]+y_d^2\E[|\nabla U|^2(\cdot,y_d)]\leq \frac{\delta}{2L}.
\end{equation}
We can then write
\begin{equation*}
\int_0^{{L}/{\eps}} \E\left[\left|U\left(\cdot,y_d\right)\right|^2\right]\dd y_d\leq\int_0^{L_0}\E\left[\left|U\left(\cdot,y_d\right)\right|^2\right]\dd y_d+\frac{\delta}{2\eps}.
\end{equation*}
This and the previous relation prove that for $\eps$ small enough, $\left\|\mathcal{U}_\eps\right\|^2_{L^2(\Omega, L^2(\mathcal{B}^{\omega}_{L}))}\leq \delta\,\|u\|^2_{L^2(\R^{d-1})}$.
Moreover
\begin{equation}\label{eq:inter3bis}
  \|\nabla\mathcal{U}_\eps\|_{L^2(\Omega, L^2(\mathcal{B}_{L',L}))}\leq\|\nabla_{\bsxpar} u\,U\left(\frac{\cdot}{\eps}\right)\|_{L^2(\Omega, L^2(\mathcal{B}_{L',L}))}+\frac{1}{\eps}\|u\,\nabla U\left(\frac{\cdot}{\eps}\right)\|_{L^2(\Omega, L^2(\mathcal{B}_{L',L}))}.
\end{equation} 
The first term can be dealt with in the same way as $\left\|u(\cdot)U\left(\frac{\cdot}{\eps}\right)\right\|^2_{L^2(\Omega, L^2(\mathcal{B}^{\omega}_{L}))}$ replacing $u$ by $\nabla_{\bsxpar}u$.
For the second term, we use \eqref{eq:inter2} to obtain
\begin{equation}\label{eq:inter3}
  \begin{array}{ll}
  \dst \frac{1}{\eps^2}\|u(\cdot)\nabla U\left(\frac{\cdot}{\eps}\right)\|^2_{L^2(\Omega, L^2(\mathcal{B}_{L',L}))}
  &=\dst\frac{1}{\eps}\|u\|^2_{L^2(\R^{d-1})}\int_{\frac{L'}{\eps}}^{\frac{L}{\eps}} \E\left[\left|\nabla U\left(\cdot,y_d\right)\right|^2\right]\dd y_d
  \\&\dst\leq\frac{\delta}{2L\eps}\|u\|^2_{L^2(\R^{d-1})}\int_{\frac{L'}{\eps}}^{\frac{L}{\eps}}\frac{1}{y_d^{2}}\dd y_d\lesssim \delta\|u\|^2_{L^2(\R^{d-1})}.
\end{array}\end{equation}
\end{proof}

\begin{corollary}\label{cor:betterNFest}
Let $u\in L^2(\R^{d-1})$. Consider $U\in\mathcal{H}_0(D)$ such that  
\begin{equation}\label{eq:convu}
  y_d^{d-1}\E[|U|^2(\cdot,y_d)]+y_d^{d+1}\E[|\nabla U|^2(\cdot,y_d)]\underset{y_d\to+\infty}{=}\mathcal{O}(1).\end{equation} 
  Then $\displaystyle\mathcal{U}_\eps:\bsx\mapsto u(\bsxpar)\,U(\frac{\bsx}{\eps})$ satisfies 
  \eqref{eq:energy_estimate} with 
\begin{equation*}
\eta(\eps)\lesssim\left\{\begin{array}{ll}\dst\mathcal{O}\left(\eps^{1/2}|\log \eps|^{1/2}\right)&\text{for }\; d=2,
  \\\dst\mathcal{O}(\eps^{1/2})&\text{for }\; d=3.\end{array}\right.
\end{equation*}
\end{corollary}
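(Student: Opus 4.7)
The plan is to refine the argument of Lemma~\ref{lem:err1} by exploiting the quantitative decay \eqref{eq:convu} in place of the merely qualitative one \eqref{eq:firstrate}. The computations follow exactly the same structure as for Lemma~\ref{lem:err1}; only the evaluation of the resulting one-dimensional $y_d$-integrals changes.

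For the $L^2$-estimate on $\mathcal{B}^\omega_L$, I start from the identity
\[
\left\|\mathcal{U}_{\eps}\right\|^2_{L^2(\Omega, L^2(\mathcal{B}^{\omega}_{L}))} = \eps\, \|u\|^2_{L^2(\R^{d-1})} \int_0^{L/\eps} \E\bigl[|U(\cdot, y_d)|^2\bigr] \,\dd y_d
\]
already obtained in \eqref{eq:inter1}. I choose a threshold $y_0\geq 1$ large enough so that \eqref{eq:convu} yields $\E[|U(\cdot, y_d)|^2] \leq C\, y_d^{-(d-1)}$ for all $y_d \geq y_0$. The contribution from $[0,y_0]$ is a finite constant since $U\in\mathcal{H}_0(D)$ implies
\[
\int_0^{y_0}\E\bigl[|U(\cdot,y_d)|^2\bigr]\,\dd y_d \leq (1+y_0^2)\,\E\Bigl[\int_{\square_1}\int_0^{y_0}\mathbb{1}_D\frac{|U|^2}{1+y_d^2}\,\dd\bsy\Bigr] < +\infty,
\]
while the contribution from $[y_0, L/\eps]$ evaluates to $\mathcal{O}(|\log\eps|)$ when $d=2$ and to $\mathcal{O}(1)$ when $d=3$. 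Multiplying by $\eps$ and taking the square root gives the announced $L^2$-rates.

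For the $H^1$-estimate on $\mathcal{B}_{L',L}$ I reproduce the splitting \eqref{eq:inter3bis}. The first term $\|\nabla u\, U(\cdot/\eps)\|^2$ is handled exactly as above with $u$ replaced by $\nabla u$, except that the integration range is now $[L'/\eps, L/\eps]$ with both endpoints tending to $+\infty$: the decay \eqref{eq:convu} then applies uniformly and no splitting of the integral is necessary. For the second term, the change of variables used in \eqref{eq:inter3} together with \eqref{eq:convu} yield
\[
\frac{1}{\eps^2}\bigl\|u\,\nabla U(\cdot/\eps)\bigr\|^2_{L^2(\Omega, L^2(\mathcal{B}_{L',L}))} \lesssim \frac{\|u\|^2_{L^2(\R^{d-1})}}{\eps}\int_{L'/\eps}^{L/\eps} y_d^{-(d+1)}\,\dd y_d \lesssim \eps^{d-1}\|u\|^2_{L^2(\R^{d-1})},
\]
which is $\mathcal{O}(\eps)$ for $d=2$ and $\mathcal{O}(\eps^2)$ for $d=3$; in both dimensions this is dominated by the $L^2$-contribution above.

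I do not anticipate any genuine obstacle; the argument is a direct quantitative refinement of Lemma~\ref{lem:err1} using only the hypothesis \eqref{eq:convu}. The logarithmic factor in dimension two arises because $y_d^{-1}$ fails to be integrable at infinity on $[y_0, L/\eps]$, while in dimension three the integrand $y_d^{-2}$ is integrable and no logarithm appears. Taking $\eta(\eps)$ as the larger of the rates obtained for the $L^2$- and $H^1$-contributions yields the stated bounds.
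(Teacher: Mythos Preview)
Your proposal is correct and follows essentially the same approach as the paper: both proofs reuse the identity \eqref{eq:inter1} and the splitting \eqref{eq:inter3bis} from Lemma~\ref{lem:err1}, replacing the qualitative $o(1)$ bounds by the quantitative decay \eqref{eq:convu} and evaluating the resulting $y_d$-integrals explicitly. Your treatment is in fact slightly more detailed than the paper's, for instance in justifying finiteness of the $[0,y_0]$-contribution via membership in $\mathcal{H}_0(D)$.
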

\begin{proof} We follow the same ideas as in the proof of Lemma \ref{lem:err1}. Thanks to \eqref{eq:convu} we can improve our initial estimate of $\dst\int_0^{\frac{L}{\eps}} \E\left[\left|U\left(\cdot,y_d\right)\right|^2\right]\dd y_d$ as such
\begin{align*}
\int_0^{\frac{L}{\eps}} \E\left[\left|U\left(\cdot,y_d\right)\right|^2\right]\dd y_d&\leq\int_0^{L'}\E\left[\left|U\left(\cdot,y_d\right)\right|^2\right]\dd y_d+C'\int_{L'}^{\frac{L}{\eps}}y_d^{-(d-1)}\dd y_d\\&=\int_0^{L'}\E\left[\left|U\left(\cdot,y_d\right)\right|^2\right]\dd y_d+ C'\left\{\begin{array}{ll}\dst\log \frac{L}{\eps}-\log L'&\textrm{for}\, d=2,\\\dst(L'^{-1}-\frac{\eps}{L})&\textrm{for}\, d=3.\end{array}\right.
\end{align*}
This inequality combined with \eqref{eq:inter1} gives us the first result. Moreover, one has to adapt \eqref{eq:inter3} using that
\begin{equation*}
  \int_{\frac{L'}{\eps}}^{\frac{L}{\eps}}y_d^{-(d+1)}=\mathcal{O}(\eps^d).
\end{equation*}
\end{proof}
\subsection{Estimates under Hypothesis \nameref{hyp:Linfty}}\label{subsection:firstest}
Suppose that $u_0^{FF}$ and $u_1^{FF}$ satisfy respectively \eqref{eq:pb_champ_loin_u0_Dirichlet_Alea} and \eqref{eq:pb_champ_loin_u1_Dirichlet_Alea} 
then $U_{1}^{NF}$ is given by \eqref{eq:U_1_expr} that rewrites for $\bsxpar\in \R^{d-1}, \bsy\in D^\omega,$ 
\begin{equation}\label{eq:U1v}
U_1^{\omega, NF}(\bsxpar;\bsy)= \partial_{x_d} u_{0}^{FF}\big|_{\Sigma_{\eps H}}(\bsx_\shortparallel) V_1^\omega(\bsy) 
\quad\text{where}\;  V_1^\omega(\bsy) \coloneqq -c_1\mathbb{1}_{\mathcal{B}^\infty_H}(\bsy)+ W_1^\omega(\bsy).
\end{equation}
Consider now $W_2$ the unique solution in $\mathcal{H}_0(D)$ of the following near-field problem
\begin{equation}\label{eq:w2}
\begin{array}{|l}
   -\Delta_{\bsy} W_2^\omega=0 \quad \;\text{ in } \quad\mathcal{B}_{H}^{\omega} \cup \mathcal{B}_{ H}^{\infty}, \\
  -\partial_{y_d} W_2^\omega=-ik\gamma V_1^\omega\; \text{ on } \;\Sigma_0, \quad\text{and}\quad
 W_2^\omega = \ 0 \;\text{ on } \; \partial \mathcal{P}^\omega, \\
 \Big[W_2^\omega  \Big]_H = \ 0 \quad\text{ and }\quad   \Big[-\partial_{y_d}W_2^\omega  \Big]_H =0.
\end{array}
\end{equation}
Since $W_1|_{\Sigma_0}\in\mathcal{L}^2(\Sigma_0)$ under Hypothesis \nameref{hyp:Linfty}, then $V_1|_{\Sigma_0}\in\mathcal{L}^2(\Sigma_0)$ and \eqref{eq:w2} is well-posed in $\mathcal{H}_0(D)$ (see Remark \ref{rem:WP_Linfty}). Moreover we know that the results of Proposition \ref{prop:limNF} can be applied to $W_2$. Let us denote $c_2\coloneqq \E[W_2|_{\Sigma_L}]$, $V_2^\omega\coloneqq -c_2\mathbb{1}_{\mathcal{B}^\infty_H}+ W_2^\omega$ and 
\begin{equation}\label{eq:U2t}
\forall \bsxpar\in \R^{d-1}, \bsy\in D^\omega,\,\, \widetilde{U}_2^{\omega, NF}(\bsxpar;\bsy)\coloneqq \partial_{x_d} u_{1}^{ FF}\big|_{\Sigma_{\eps H}} (\bsxpar)V_1^\omega(\bsy)+\partial_{x_d} u_{0}^{FF}\big|_{\Sigma_{\eps H}} (\bsxpar)V_2^\omega(\bsy).
\end{equation}

 \begin{remark}Note that we cannot prove that Problem \eqref{eq:NF} with $n=2$ is well posed since 
  it is not clear in general that ${\bf G}_1$, which depends on $\nabla_{\bsxpar} U_1^{NF}$ (see \eqref{eq:NF}), is in $\mathcal{L}^2(D)$. The function $\widetilde{U}_2^{\omega, NF}$ is somehow the part of ${U}_2^{\omega, NF}$ that is well-defined and enables us to perform the error analysis.

\end{remark}

We estimate in the theorem below 
\begin{itemize}
\item[-]the error between $u_\eps$ and the second-order multi-scale asymptotic expansion 
\begin{equation*}
  w_\eps^\omega:\bsx\mapsto (u_0^{FF}(\bsx)+\eps u_1^{FF}(\bsx))\mathbb{1}_{x_d\geq\eps H}(\bsx)+\left[\eps U_1^{\omega, NF}+\eps^2 \widetilde{U}_2^{\omega, NF}\right]\left(\bsxpar;\frac{\bsx}{\eps}\right) \in  L^2(\Omega,H^1(\mathcal{B}^\omega_L));
\end{equation*}
\item[-] the error between $u_\eps$ and $u_0^{FF}+\eps u_1^{FF}$ in $H^1(\mathcal{B}_{L',L})$.\end{itemize}

\begin{theorem} \label{thm:error} 
  Suppose that $\Pa$ verifies Hypothesis \nameref{hyp:Linfty}.
For all $L>L'>\eps H$, the following estimates hold
\begin{equation}\label{eq:err1a}
\left\| u_\eps-w_\eps\right\|_{L^2\big(\Omega,H^1(\mathcal{B}^{\omega}_{L})\big)} \underset{\eps\to0}{=} o(\eps),
\end{equation}
and
\begin{equation}\label{eq:err1b}
\left\| u_\eps-(u_0^{FF}+\eps u_1^{FF})\right\|_{L^2\big(\Omega,H^1(\mathcal{B}_{L',L})\big)} \underset{\eps\to0}{=} o(\eps).
\end{equation}
\end{theorem}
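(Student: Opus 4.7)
The strategy is the classical one for multiscale asymptotic expansions: set $r_\eps \coloneqq u_\eps - w_\eps$, show that $r_\eps$ satisfies the variational problem \eqref{FV_alea_ref} with an antilinear form $\ell_\eps^\omega$ whose norm in $L^2(\Omega,[H^1_0(\mathcal{B}^\omega_L)]')$ is $o(\eps)$, and conclude by the stability estimate of Corollary \ref{cor:wpue}. Since $w_\eps$ has a small discontinuity across $\Sigma_{\eps H}$ whereas $u_\eps\in H^1$, a preliminary step is to check — using the transmission conditions in \eqref{eq:NF} together with $u_0^{FF}|_{\Sigma_{\eps H}} = 0$, $u_1^{FF}|_{\Sigma_{\eps H}} = c_1 \partial_{x_d} u_0^{FF}|_{\Sigma_{\eps H}}$, and the explicit expression \eqref{eq:U2t} of $\widetilde U_2^{NF}$ — that the $\mathcal{O}(\eps)$ contributions in $[w_\eps]_{\Sigma_{\eps H}}$ cancel, leaving $[w_\eps]_{\Sigma_{\eps H}} = \mathcal{O}(\eps^2)$ and $[\partial_{x_d} w_\eps]_{\Sigma_{\eps H}} = 0$. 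An $H^1$-lifting of this jump is then harmless for the final estimate.

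The bulk and boundary residuals are computed by plugging $w_\eps$ into \eqref{eq:P0eps}. Using the chain rule $\Delta[\eps^n V(\bsxpar,\bsx/\eps)] = \eps^n\Delta_\bsxpar V + 2\eps^{n-1}\nabla_\bsxpar\!\cdot\!\nabla_\bsypar V + \eps^{n-2}\Delta_\bsy V$, the identities $\Delta_\bsy U_1^{NF} = \Delta_\bsy \widetilde U_2^{NF} = 0$, and the far-field equations, the bulk residual $(-\Delta-k^2)w_\eps - f$ reduces to a sum of terms of order $\eps$ or higher, \emph{except} for the single cross term $-2\,\nabla_\bsxpar\!\cdot\!\nabla_\bsypar U_1^{NF}(\bsxpar,\bsx/\eps)$, which is formally $\mathcal{O}(1)$ — this is precisely where the fact that $\widetilde U_2^{NF}$ does not fully solve the $U_2^{NF}$ equation (cf. the remark after \eqref{eq:U2t}) shows up. The crucial observation is the identity $(\nabla_\bsypar V_1)(\bsx/\eps) = \eps\,\nabla_{\bsxpar}[V_1(\bsx/\eps)]$: testing against $\varphi\in H^1_0(\mathcal{B}^\omega_L)$ and integrating by parts in the tangential variable $\bsxpar$ (no boundary contributions, since $\varphi$ vanishes on $\partial\Pa_\eps^\omega$ and $\Sigma_0, \Sigma_L$ have purely vertical normals) transfers the $\bsypar$-gradient onto the test function and recovers the missing $\eps$ factor. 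The Robin residual on $\Sigma_0$ equals $\eps^2 ik\gamma\,\widetilde U_2^{NF}|_{y_d=0}$, and the DtN residual on $\Sigma_L$ is driven by traces of the near-fields at $y_d = L/\eps\to+\infty$, both controlled by the decay of $V_1,V_2$.

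Under Hypothesis \nameref{hyp:Linfty}, Remark \ref{rem:WP_Linfty} places $W_1,W_2$ (hence $V_1, V_2$) in $\mathcal{H}_0(D)$, and Proposition \ref{prop:limNF} ensures the decay $\E[|V_j|^2(\cdot,y_d)] + y_d^2\E[|\nabla V_j|^2(\cdot,y_d)] \to 0$, which is exactly the hypothesis of Lemma \ref{lem:err1}. Every remaining residual is a product of a macroscopic factor — a derivative of $u_0^{FF}$ or $u_1^{FF}$, smooth along $\Sigma_{\eps H}$ by the elliptic regularity noted after \eqref{eq:pb_champ_loin_u0_Dirichlet_Alea} and \eqref{eq:pb_champ_loin_u1_Dirichlet_Alea} — and a microscopic factor of the form $V_j(\bsx/\eps)$ or $\nabla V_j(\bsx/\eps)$, the latter being tamed by the integration-by-parts trick. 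Lemma \ref{lem:err1} then gives $\|\ell_\eps\|_{L^2(\Omega,[H^1_0]')} = o(\eps)$, and Corollary \ref{cor:wpue} delivers \eqref{eq:err1a}.

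For \eqref{eq:err1b}, I would use in $\mathcal{B}_{L',L}$ the decomposition
\[
 u_\eps - (u_0^{FF} + \eps u_1^{FF}) = r_\eps + \eps\,U_1^{NF}(\bsxpar,\bsx/\eps) + \eps^2 \widetilde U_2^{NF}(\bsxpar,\bsx/\eps).
\]
Since $\mathcal{B}_{L',L}\subset\mathcal{B}^\omega_L$, the first term is $o(\eps)$ in $H^1$ by \eqref{eq:err1a}. For the two remaining near-field terms, the microscopic variable $y_d = x_d/\eps$ is bounded below by $L'/\eps\to+\infty$ on $\mathcal{B}_{L',L}$, so a direct adaptation of Lemma \ref{lem:err1} (integrating only over $\{L'/\eps<y_d<L/\eps\}$) combined with the decay from Proposition \ref{prop:limNF} yields an $o(\eps)$ bound for both of them in $L^2(\Omega,H^1(\mathcal{B}_{L',L}))$, proving \eqref{eq:err1b}. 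The main obstacle throughout is the apparently order-one bulk residual $\nabla_\bsxpar\!\cdot\!\nabla_\bsypar U_1^{NF}$; the tangential integration-by-parts trick, which crucially exploits the stationarity of the correctors in $\bsypar$, is the single essential step that allows an $o(\eps)$ rate to be obtained with $\widetilde U_2^{NF}$ in place of the not-yet-well-defined $U_2^{NF}$.
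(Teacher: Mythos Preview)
Your proposal is correct and follows essentially the same route as the paper: derive the variational problem satisfied by $r_\eps=u_\eps-w_\eps$, bound the residual antilinear form by $o(\eps)$ via Lemma~\ref{lem:err1} and Proposition~\ref{prop:limNF}, and conclude through Corollary~\ref{cor:wpue}; the decomposition you give for \eqref{eq:err1b} is exactly the one used. Your treatment is in fact slightly more careful in two places: you correctly note that $[w_\eps]_{\Sigma_{\eps H}}=\mathcal{O}(\eps^2)$ (the paper states it vanishes, which overlooks the $\eps^2(-c_1\partial_{x_d}u_1^{FF}-c_2\partial_{x_d}u_0^{FF})$ contribution from $\widetilde U_2^{NF}$, though this is harmless for the final estimate), and you make explicit the integration-by-parts mechanism that tames the formally $\mathcal{O}(1)$ cross term $\nabla_{\bsxpar}\!\cdot\!\nabla_{\bsypar}U_1^{NF}$, which the paper packages directly into the divergence form $-\eps\nabla\!\cdot\![\nabla_{\bsxpar}(U_1^{NF}+\eps\widetilde U_2^{NF})]$; one small imprecision is that this trick relies on the chain rule (the correctors depend on $\bsxpar$ only through $\bsy=\bsx/\eps$), not on stationarity per se.
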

Thanks to Proposition \ref{prop:effmod}, similar error estimates can be proven replacing the far field expansion $u_0^{FF}+\eps u_1$ by the effective solution $v_\eps$. More precisely 
let us introduce for $\bsx\in D^\omega_\eps$
\begin{equation*}
  \widetilde{w}_\eps(\bsx):=v_\eps(\bsx)\mathbb{1}_{x_d\geq\eps H}(\bsx)+\partial_{x_d} v_\eps\big|_{\Sigma_{\eps H}}(\bsxpar) V^\omega_1\left(\bsxpar;\frac{\bsx}{\eps}\right)+\eps^2\partial_{x_d} v_\eps\big|_{\Sigma_{\eps H}}(\bsxpar) V^\omega_2\left(\bsxpar;\frac{\bsx}{\eps}\right).
\end{equation*} 
In the following corollary we estimate $u_\eps-\widetilde{w}_\eps$ in $H^1(\mathcal{B}^{\omega}_{L})$ and $u_\eps-v_\eps$ in $H^1(\mathcal{B}_{L',L})$.

\begin{corollary} \label{cor:error1} 
For all $L>L'>\eps H$, under Hypothesis \nameref{hyp:Linfty}, the following estimates hold
\begin{equation}\label{eq:cor1a}
\left\| u_\eps-\widetilde{w}_\eps\right\|_{L^2\big(\Omega,H^1(\mathcal{B}^{\omega}_{L})\big)} \underset{\eps\to0}{=} o(\eps),
\end{equation}
and
\begin{equation}\label{eq:cor1b}
\left\| u_\eps-v_\eps\right\|_{L^2\big(\Omega,H^1(\mathcal{B}_{L',L})\big)} \underset{\eps\to0}{=} o(\eps).
\end{equation}
\end{corollary}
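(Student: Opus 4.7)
The corollary follows from Theorem~\ref{thm:error} and Proposition~\ref{prop:effmod} by triangle inequality arguments, with the more substantial work concentrated on estimate~\eqref{eq:cor1a}. For \eqref{eq:cor1b}, my plan is to write
\[
  u_\eps - v_\eps \;=\; \bigl(u_\eps - (u_0^{FF} + \eps u_1^{FF})\bigr) \;+\; \bigl((u_0^{FF} + \eps u_1^{FF}) - v_\eps\bigr).
\]
The first summand is $o(\eps)$ in $L^2\big(\Omega, H^1(\mathcal{B}_{L',L})\big)$ by \eqref{eq:err1b} of Theorem~\ref{thm:error}, and the second is deterministic and bounded on $\mathcal{B}_{L',L}$ (for $\eps$ small enough that $L' > \eps H$) by $\|u_0^{FF} + \eps u_1^{FF} - v_\eps\|_{H^2(\mathcal{B}_{\eps H, L})} = \mathcal{O}(\eps^2)$ via Proposition~\ref{prop:effmod}. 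Summing yields $o(\eps)$.

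For \eqref{eq:cor1a}, I would similarly split
\[
  u_\eps - \widetilde{w}_\eps \;=\; (u_\eps - w_\eps) \;+\; (w_\eps - \widetilde{w}_\eps),
\]
and bound the first summand by \eqref{eq:err1a}. The second summand consists of a far-field contribution $(u_0^{FF} + \eps u_1^{FF} - v_\eps)\mathbb{1}_{x_d \geq \eps H}$, of order $\eps^2$ in $H^1(\mathcal{B}^{\omega}_{L})$ by Proposition~\ref{prop:effmod}, together with near-field corrections of the form
\[
  \eps\,\alpha^{(1)}_\eps(\bsxpar)\,V_1(\bsx/\eps) \;+\; \eps^2\,\alpha^{(2)}_\eps(\bsxpar)\,V_2(\bsx/\eps),
\]
where $\alpha^{(1)}_\eps = \partial_{x_d}(u_0^{FF} + \eps u_1^{FF} - v_\eps)|_{\Sigma_{\eps H}}$ and $\alpha^{(2)}_\eps = \partial_{x_d}(u_0^{FF} - v_\eps)|_{\Sigma_{\eps H}}$. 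The trace theorem combined with Proposition~\ref{prop:effmod} yields $\|\alpha^{(1)}_\eps\|_{L^2(\R^{d-1})} = \mathcal{O}(\eps^2)$ and $\|\alpha^{(2)}_\eps\|_{L^2(\R^{d-1})} = \mathcal{O}(\eps)$.

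Each product $\alpha_\eps(\bsxpar) V_i(\bsx/\eps)$ would then be estimated in $L^2\big(\Omega, H^1(\mathcal{B}^{\omega}_{L})\big)$ in the spirit of Lemma~\ref{lem:err1}. The $L^2$-part and the macroscopic-derivative contribution $(\nabla \alpha_\eps) V_i(\cdot/\eps)$ to the gradient are controlled directly by that lemma. The remaining piece $\eps^{-1}\alpha_\eps (\nabla V_i)(\cdot/\eps)$ is bounded by a direct stationarity-and-change-of-variables calculation: the rescaling $\bsy = \bsx/\eps$ produces a Jacobian $\eps^d$, while stationarity of $V_i$ in $\bsypar$ and the fact that $V_i \in \mathcal{W}_0(D)$ yield
\[
  \|\alpha_\eps (\nabla V_i)(\cdot/\eps)\|_{L^2(\Omega, L^2(\mathcal{B}^{\omega}_{L}))} \;\leq\; C\,\eps^{1/2}\,\|\alpha_\eps\|_{L^2(\R^{d-1})}\,\|\nabla V_i\|_{\mathcal{L}^2(D)}.
\]
Combined with the prefactors $\eps$, $\eps^2$ and the $\mathcal{O}(\eps^2)$, $\mathcal{O}(\eps)$ bounds on the coefficients, each near-field contribution is at most $\mathcal{O}(\eps^{5/2})$, hence $o(\eps)$ as required.

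The main technical point is precisely this gradient estimate on the full layer region $\mathcal{B}^{\omega}_{L}$: Lemma~\ref{lem:err1} as stated provides an $H^1$-bound only on the far strip $\mathcal{B}_{L',L}$, where $V_i$ decays as $y_d \to \infty$, whereas here we must absorb the $\eps^{-1}$ loss from differentiating $V_i(\cdot/\eps)$ throughout the layer. The $\eps^{1/2}$ gain obtained from the Jacobian of the rescaling is what exactly compensates this loss and keeps the final rate strictly below $\eps$.
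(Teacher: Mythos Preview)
Your proof is correct and follows essentially the same approach as the paper: for \eqref{eq:cor1b} you combine \eqref{eq:err1b} with Proposition~\ref{prop:effmod}, and for \eqref{eq:cor1a} you split $w_\eps-\widetilde{w}_\eps$ into the far-field difference plus the two near-field corrections with coefficients $\alpha^{(1)}_\eps,\alpha^{(2)}_\eps$, exactly as in the paper's decomposition~\eqref{eq:decomperr}. One small remark: the factor $\eps^{1/2}$ in your bound $\|\alpha_\eps(\nabla V_i)(\cdot/\eps)\|_{L^2(\Omega,L^2(\mathcal{B}^\omega_L))}\le C\eps^{1/2}\|\alpha_\eps\|_{L^2}\|\nabla V_i\|_{\mathcal{L}^2(D)}$ comes only from the change of variable $y_d=x_d/\eps$ (stationarity in $\bsypar$ handles the tangential directions without rescaling, cf.~\eqref{eq:inter1}), not from a full $\eps^d$ Jacobian; the resulting estimate and the final $\mathcal{O}(\eps^{5/2})$ are nonetheless correct and in fact slightly sharper than the cruder $\eps^{-1}$ bound written in the paper's \eqref{eq:V1e}--\eqref{eq:V2e}.
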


\begin{proof}[Proof of Theorem \ref{thm:error}]
Let us write the problem verified by $w^\omega_\eps$ in $\mathcal{B}^{\omega}_{L}$.
$w^\omega_\eps$ satisfies on one hand for $\bsx\in\mathcal{B}^\omega_L$
\begin{equation*}
\begin{array}{rl}
\vsd\dst-(\Delta+k^2)w^\omega_\eps (\bsx)
=&\dst f(\bsx)-\eps(\Delta +k^2 )\Big[(U_{1}^{\omega,NF}+ \eps\widehat{U}_2^{\omega,NF})\left( \bsx_\shortparallel;\frac{\bsx}{\eps}\right) \Big] \\ \vsd
=&\dst f(\bsx)+ \left[-\frac{1}{\eps} \Delta_{\bsy}- \nabla_{\bsx_\shortparallel} \cdot\nabla_{\bsy} - \eps \Big( \Delta_{\bsx_\shortparallel}+k^2\Big)\right](U_{1}^{\omega,NF}+ \eps\widetilde{U}_2^{\omega,NF})\left( \bsx_\shortparallel;\frac{\bsx}{\eps}\right) \\
=&\dst f(\bsx) - \eps \nabla\cdot\left[\nabla_{\bsxpar}( U_{1}^{\omega,NF}+ \eps\widetilde{U}_2^{\omega,NF})\left( \bsx_\shortparallel;\frac{\bsx}{\eps}\right)\right]-\eps k^2( U_{1}^{\omega,NF}+ \eps\widetilde{U}_2^{\omega,NF})\left( \bsx_\shortparallel;\frac{\bsx}{\eps}\right),
\end{array}
\end{equation*} 
where we used that $U_1^{NF}$ and $\widetilde{U}_2^{NF}$ are solutions of near-field problems with no volume source terms and that $\eps\nabla [V(\bsxpar;\bsx/\eps)]=\eps[\nabla_{\bsxpar}V](\bsxpar;\bsx/\eps)+[\nabla_y V](\bsxpar;\bsx/\eps)$. On the other hand $w^\omega_\eps$ verifies the following jumps and boundary conditions
\begin{equation*}
\begin{array}{|l}
\dst \Big[ w^\omega_\eps\Big]_{\eps H}=0,\,\,\text{and}\,\,\Big[ \partial_{x_d}w^\omega_\eps\Big]_{\eps H}=0,\\ 
\dst -\partial_{x_d}w^\omega_\eps+ik\gamma w^\omega_\eps=\eps^2\ ik\gamma \widetilde{U}_2^{\omega,NF}|_{\Sigma_0}\quad\text{on\,}\; \Sigma_0,\quad\text{and}\quad \dst w^\omega_\eps=0\;\text{on\,}\; \partial \mathcal{P}^\omega,\\
 \partial_{x_d} w^\omega_\eps + \Lambda^k w^\omega_\eps=\dst [\frac{1}{\eps}\partial_{y_d} + \Lambda^k](\eps U_{1}^{\omega,NF}+ \eps^2\widetilde{U}_2^{\omega,NF})\quad\text{on\,} \Sigma_L,
 \end{array}
\end{equation*}where $\Lambda^K$ denotes the DtN operator introduced in the definition \ref{definition_DtN_sol_ref_alea}.

We can now write the variational formulation verified by $e_\eps\coloneqq  u_\eps-w_\eps$ in $L^2\big(\Omega,H^1(\mathcal{B}^{\omega}_{L})\big)$. 
For all $v \in L^2\big(\Omega,H^1(\mathcal{B}^{\omega}_{L})\big)$, 
\begin{equation}\label{eq:FVerror}
   \mathbb{E}\left[a_\eps^\omega(e_\eps^\omega,v^\omega)\right] =\eps \ell_\eps^1(v)+\eps^2\ell_\eps^2(v)+\eps\ell_\eps^3(v),
\end{equation}
where $a_\eps^\omega$ is defined in \eqref{eq:a_epsomega} and 
\begin{equation}\label{eq:FVerror_ell}
  \begin{array}{ll}
\ell_\eps^1(v)&= -\dst\mathbb{E}\left[ \int_{\mathcal{B}^{\omega}_{L}}\mathbb{1}_{D^\omega}\nabla_{\bsx_\shortparallel} \left(U_{1}^{\omega,NF} +\eps \widetilde{U}_{2}^{\omega,NF}\right)\big(\bsx_\shortparallel;\frac{\bsx}{\eps}\big)\cdot \overline{ \nabla v^\omega}(\bsx)\dd\bsx\right.\\
&\vsd\dst\hspace{2cm}+ k^2\int_{\mathcal{B}^{\omega}_{L}} \mathbb{1}_{D^\omega} \left(U_{1}^{\omega,NF} +\eps \widetilde{U}_{2}^{\omega,NF}\right)\big(\bsx_\shortparallel;\frac{\bsx}{\eps} \big) \overline{v^\omega}(\bsx)\dd\bsx\left.\right]\\
\ell_\eps^2(v)&= \dst ik\gamma\,\mathbb{E}\left[\int_{\Sigma_0} \widetilde{U}_{2}^{\omega,NF}\Big|_{\Sigma_0}\big(\bsx_\shortparallel;\frac{\bsx_\shortparallel}{\eps}\big) \overline{v^\omega}\big|_{\Sigma_0}(\bsx_\shortparallel)\dd\bsx_\shortparallel\right]\\
\ell_\eps^3(v)&= \dst\mathbb{E}\left[\int_{\Sigma_{L}}\overline{v^\omega}|_{\Sigma_L}(\bsxpar) \left[\frac{1}{\eps}\partial_{y_d} + \Lambda^k \right] \left(U_{1}^{\omega,NF} +\eps \widetilde{U}_{2}^{\omega,NF}\right)\Big|_{\Sigma_L}\big(\bsx_\shortparallel;\frac{\bsx_\shortparallel}{\eps}\big)  \dd\bsx_\shortparallel \right].
\end{array}
\end{equation}
Let us now bound the norm of each anti-linear form appearing in \eqref{eq:FVerror}. For $\ell_\eps^1$, both volume integrals 
can be bounded applying Lemma \ref{lem:err1} to $U_1^{NF}, \nabla_{\bsxpar} U_1^{NF}, \widetilde{U}_2^{NF}$ 
or $\nabla_{\bsxpar}\widetilde{U}_2^{NF}$ with $U$ being $V_i, i=1,2$ and $u$ being $\partial_{x_d} u_i^{FF}\big|_{\Sigma_{\eps H}}$ or $\nabla_{\bsxpar}\partial_{x_d} u_i^{FF}\big|_{\Sigma_{\eps H}}$. Note that $V_i, i=1,2$ verify \eqref{eq:firstrate} by Proposition \ref{prop:limNF} and $\partial_{x_d} u_i^{FF}\big|_{\Sigma_{\eps H}}, \nabla_{\bsxpar}\partial_{x_d} u_i^{FF}\in H^{1}(\Sigma_{\eps H}), i=1,2$ ($\Sigma_{\eps H}$ being identified to $\R^{d-1}$). By Cauchy-Schwarz inequality, we get 
\begin{equation*}
|\ell_\eps^1(v)|\lesssim \eta(\eps) \|v\|_{L^2(\Omega,H^1(\mathcal{B}^{\omega}_{L}))},
\end{equation*}
where $\eta(\eps)$ tends to $0$ as $\eps$ tends to $0$.

For $\ell_\eps^2$, we use that under Hypothesis \nameref{hyp:Linfty} the following trace theorem (analogue to 
Lemma \ref{lem:weightedtrace}) holds for all $U\in\mathcal{H}_0(D)$
\begin{equation*}
  \E\left[\left|U\big|_{\Sigma_0}(\cdot)\right|^2\right]\lesssim\E\left[\int_{0}^{+\infty}\int_{\square_1}\left|\nabla U\right|^2(\bsy)\dd\bsy\right].
\end{equation*}
We obtain for $u\in L^2(\R^{d-1})$ and  $U\in\mathcal{H}_0(D)$
\begin{equation*}
  \E\left[\int_{\Sigma_0}\left|u(\bsxpar)U\big|_{\Sigma_0}\left(\frac{\bsxpar}{\eps}\right)\right|^2\dd\bsxpar\right]
  \lesssim\|u\big|_{\Sigma_0}\|^2_{L^2(\Sigma_0)}\E\left[\int_{0}^{+\infty}\int_{\square_1}\left|\nabla U\right|^2(\bsy)\dd\bsy\right].
\end{equation*}
Applying this result to $\widetilde{U}_2^{NF}$ we get by Cauchy-Schwarz inequality
\begin{equation*}
|\ell_\eps^2(v)|\lesssim \|v\big|_{\Sigma_0}\|_{L^2(\Omega,L^2(\Sigma_0))}.
\end{equation*}
We conclude with a trace inequality in $L^2(\Omega, H^1(\mathcal{B}^\omega_L))$ 
\begin{equation}\label{eq:tracewoutmu}
  \forall \alpha\in[0,L],\quad \|\mathbb{1}_D v|_{\Sigma_\alpha}\|_{L^2(\Omega,L^2(\Sigma_\alpha))}\lesssim \|\mathbb{1}_Dv\|_{L^2(\Omega,H^1(\mathcal{B}_L^\omega))}.
\end{equation}
Finally let us focus on $\ell_\eps^3$.
Since by Proposition \ref{prop:limNF} $\|V_i(\cdot,y_d)\|_{L^2(\Omega)}+y_d\|\nabla V_i(\cdot,y_d)\|_{L^2(\Omega)}\underset{y_d\to+\infty}{=}o(1)$  for $i=1$ and $2$, we deduce that
\begin{equation*}
\|V_i\big|_{\Sigma_{{L}/{\eps}}}\|_{L^2(\Omega)}\underset{\eps\to0}{=}o(1),\quad\text{and}\quad \|\partial_{y_d}V_i|_{\Sigma_{{L}/{\eps}}}\|_{L^2(\Omega)}\underset{\eps\to0}{=}o(\eps).
\end{equation*}
By Definition \eqref{eq:U1v} and \eqref{eq:U2t} of $U_1^{NF}$ and $\widetilde{U}_2^{NF}$ and thanks to the continuity of $\Lambda^k$ and \eqref{eq:tracewoutmu} we deduce that 
\begin{equation*}
|\ell_\eps^3(v)|\lesssim \eta(\eps)\,\|v\|_{L^2(\Omega,H^1(\mathcal{B}_L^\omega))},
\end{equation*}
Therefore, we have shown that, for any $ v\in L^2\Big( \Omega,H^1\big(\mathcal{B}^{\omega}_{L}\big)\Big)$
\begin{equation}
\big| \eps \ell_\eps^1(v)+\eps^2\ell_\eps^2(v)+\eps\ell_\eps^3(v)\big| \leq  \eps \,\eta(\eps)\, \| v\|_{L^2(\Omega,H^1(\mathcal{B}_L^\omega))}.
\end{equation}
Thanks to the well-posedness result of Corollary \ref{cor:wpue} we deduce that
\begin{equation*}
\| e_\eps\|_{L^2(\Omega,H^1_0(\mathcal{B}_L^\omega))}\underset{\eps\to0}{=} \eps \,\eta(\eps). 
\end{equation*} 
This ends the proof of \eqref{eq:err1a}.

To prove \eqref{eq:err1b} we decompose the error in two parts
\begin{equation*}
\left\| u_\eps-(u_0^{FF}+\eps u_1^{FF})\right\|_{L^2\big(\Omega,H^1(\mathcal{B}_{L',L})\big)}\leq \left\| u_\eps-w_\eps\right\|_{L^2\big(\Omega,H^1(\mathcal{B}_{L',L})\big)}+\left\| \eps U_1^{NF}+\eps^2 \widetilde{U}_2^{NF}\right\|_{L^2\big(\Omega,H^1(\mathcal{B}_{L',L})\big)}
\end{equation*}
From \eqref{eq:err1a}, we know that the first term of the right-hand side is a $o(\eps)$. For the second term we use the result of Lemma \ref{lem:err1}.
\end{proof}
\begin{proof}[Proof of Corollary \ref{cor:error1}] 
  To prove \eqref{eq:cor1a} we need to estimate the norm of $w_\eps-\widetilde{w}_\eps$ in $V_L\coloneqq L^2(\Omega, H_1(\mathcal{B}_L^\omega))$. To do so we decompose the error in three parts
  \begin{multline}\label{eq:decomperr}
\left\| w_\eps-\widetilde{w}_\eps\right\|_{V_L}\leq \left\| u_0^{FF}+\eps u_1^{FF}-v_\eps\right\|_{V_L}
+\eps \left\| \partial_{x_d} (u_0^{FF}+\eps  u_1^{FF}- v_\eps)\big|_{\Sigma_{\eps H}}V_1\left(\frac{\cdot}{\eps}\right)\right\|_{V_L}\\ +\eps^2\left\|  \partial_{x_d} (u_0^{FF}- v_\eps)\big|_{\Sigma_{\eps H}}V_2\left(\frac{\cdot}{\eps}\right)\right\|_{V_L}.
\end{multline}
Proposition \ref{prop:effmod} allows us to estimate the first term of the right-hand-side. 
Let us now deal with the two remaining terms. Since $V_i, i=1,2$ verify \eqref{eq:firstrate} by Proposition \ref{prop:limNF} 
and $\partial_{x_d} u_i^{FF}\big|_{\Sigma_{\eps H}},\partial_{x_d} v_\eps^{FF}\big|_{\Sigma_{\eps H}}\in H^{s}(\Sigma_{\eps H})$ for any $s$, we get deriving inequalities similar to \eqref{eq:inter3bis} and \eqref{eq:inter3}, and from Lemma \ref{lem:err1}
\begin{multline}\label{eq:V1e}
\left\| \partial_{x_d} (u_0^{FF}+\eps  u_1^{FF}- v_\eps)\big|_{\Sigma_{\eps H}}V_1\left(\frac{\cdot}{\eps}\right)\right\|_{V_L}\lesssim \frac{1}{\eps}\|\partial_{x_d} (u_0^{FF}+\eps  u_1^{FF}- v_\eps)\big|_{\Sigma_{\eps H}}\|_{L^2(\Sigma_{\eps H})}\\
+\eta(\eps)\|\partial_{x_d} (u_0^{FF}+\eps  u_1^{FF}- v_\eps)\big|_{\Sigma_{\eps H}}\|_{H^1(\Sigma_{\eps H})},
\end{multline} 
and  
\begin{multline}\label{eq:V2e}
\left\| \partial_{x_d} (u_0^{FF}- v_\eps)\big|_{\Sigma_{\eps H}}V_2\left(\frac{\cdot}{\eps}\right)\right\|_{V_L}\lesssim \frac{1}{\eps}\|\partial_{x_d} (u_0^{FF}- v_\eps)\big|_{\Sigma_{\eps H}}\|_{L^2(\Sigma_{\eps H})}+\eta(\eps)\|\partial_{x_d} (u_0^{FF}- v_\eps)\big|_{\Sigma_{\eps H}}\|_{H^1(\Sigma_{\eps H})}
\end{multline}
where $\eta(\eps)$ tends to $0$ as $\eps$ tends to $0$. From \eqref{eq:effmod} we know moreover that
\begin{equation}\label{eq:difftrace}
\|\partial_{x_d} (u_0^{FF}+\eps  u_1^{FF}- v_\eps)\big|_{\Sigma_{\eps H}}\|_{H^1(\Sigma_{\eps H})}\lesssim \eps^2, \quad\text{and}\quad\|\partial_{x_d} (u_0^{FF}- v_\eps)\big|_{\Sigma_{\eps H}}\|_{H^1(\Sigma_{\eps H})}\lesssim \eps.
\end{equation} 
Inserting \eqref{eq:difftrace} into \eqref{eq:V1e} and \eqref{eq:V2e} allows us to conclude.
Finally, to prove \eqref{eq:cor1b} we combine \eqref{eq:err1b} and \eqref{eq:effmod}.
\end{proof}

\subsection{Improved estimates under a quantitative mixing assumption on $\Pa$}\label{subsection:quantest}
The limiting factor in the two estimates of Theorem \ref{thm:error} is that contrarily to the periodic case, we cannot without an additional quantitative mixing assumption on $\Pa$ quantify the rate of convergence of $V_1$ and $y_d^{2}\big| \nabla V_1\big|$ to $0$. Under such an assumption (Hypothesis \nameref{hyp:mix}) we prove in this subsection that for $R\geq1$
 \begin{equation}\label{eq:quanttrace}\E\left[\left|\fint_{\square_R} V_1^\omega\Big|_{\Sigma_L}\right|^2\right]=\E\left[\left|\E[W_1^{\omega}|_{\Sigma_L}]-\fint_{\square_R}W_1^{\omega}|_{\Sigma_L}\right|^2\right]\lesssim R^{-(d-1)},\end{equation}
 so that the results of proposition \ref{prop:U_comportement_inf_alea_quant} hold true.  More specifically we establish the following result
  \begin{theorem}[Fluctuations of $W_1^{\omega}|_{\Sigma_L}$] \label{thm:fluct} Let $L\geq \eps H$. Providing that $\Pa$ verifies \nameref{hyp:Linfty} and the mixing hypothesis \nameref{hyp:mix}, $W_1^{\omega}|_{\Sigma_L}$ verifies for all $f\in L^\infty(\Sigma_L)$ with compact support\begin{equation}\label{eq:fluct}\Var\left[\int_{\Sigma_L}f(\bsy_\shortparallel)W_1^\omega|_{\Sigma_L}(\bsy_\shortparallel)\dd \bsy_\shortparallel\right]\lesssim\|f\|^2_{L^2(\Sigma_L)}.\end{equation} 
 \end{theorem}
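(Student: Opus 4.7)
The plan is to follow the spectral-gap approach that is now standard in quantitative stochastic homogenization (cf.\ Duerinckx-Gloria \cite{DuerinckxGloria2021}). I assume that Hypothesis \nameref{hyp:mix} provides a Poincaré-type inequality on $(\Omega,\F,\Pro)$ of the form
\begin{equation*}
\Var[X(\omega)]\lesssim \E\left[\int_{\R^{d-1}}|\partial^{\mathrm{osc}}_{z}X|^2\,\dd z\right],
\end{equation*}
where $\partial^{\mathrm{osc}}_{z}X$ denotes the oscillation of the random variable $X$ when one re-samples the particle configuration inside a unit box centered around $z$. Applied to the linear functional $X(\omega)=\int_{\Sigma_L}f\,W_1^\omega|_{\Sigma_L}\,\dd\bsy_\shortparallel$, the theorem will then follow from a pointwise estimate of $|\partial^{\mathrm{osc}}_{z}X|$ in terms of local energies of $W_1$ and of a suitable adjoint solution.

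For the pointwise estimate I would use duality. Let $g_f\in\mathcal{W}_0(D)$ be the unique solution, obtained by Proposition \ref{prop:Utnwellposed} applied with $\alpha^N=f$ at $\Sigma_L$ (viewed as a jump interface, above $\Sigma_H$), of the adjoint near-field problem
\begin{equation*}
-\Delta g_f=0\text{ in }D,\quad -\partial_{y_d}g_f=0\text{ on }\Sigma_0,\quad g_f=0\text{ on }\partial\Pa^\omega,\quad [-\partial_{y_d}g_f]_{\Sigma_L}=f,
\end{equation*}
which under Hypothesis \nameref{hyp:Linfty} satisfies $\|g_f\|_{\mathcal{W}_0(D)}\lesssim\|f\|_{L^2(\Sigma_L)}$, since $\mu|_{\Sigma_L}$ is then bounded from below. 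Given two particle configurations $\omega,\omega'$ coinciding outside a unit ball $B_z$, set $\delta W:=W_1^\omega-W_1^{\omega'}$ (extended suitably across the differing particles). Testing the equation defining $g_f$ against $\delta W$ and using that outside $B_z$ both $W_1^\omega$ and $W_1^{\omega'}$ solve the same homogeneous Laplace problem with identical jump on $\Sigma_H$, Green's identity localizes the sensitivity to $B_z$:
\begin{equation*}
\left|\int_{\Sigma_L}f\,\delta W\right|\lesssim \|\nabla g_f\|_{L^2(B_z^\ast)}\Big(\|\nabla W_1^\omega\|_{L^2(B_z^\ast)}+\|\nabla W_1^{\omega'}\|_{L^2(B_z^\ast)}\Big),
\end{equation*}
where $B_z^\ast$ denotes a slightly enlarged ball accounting for the Dirichlet particles intersecting $B_z$.

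To close the argument, deterministic Caccioppoli-type energy estimates combined with Hypothesis \nameref{hyp:Linfty} should provide a uniform-in-$\omega$ bound $\|\nabla W_1^{\omega}\|_{L^2(B_z^\ast)}\lesssim 1$. Squaring the pointwise bound above, integrating over $z$, and exploiting the finite-overlap property of $\{B_z^\ast\}_z$,
\begin{equation*}
\int_{\R^{d-1}}\|\nabla g_f\|_{L^2(B_z^\ast)}^2\,\dd z\lesssim\|\nabla g_f\|_{L^2(D)}^2\lesssim\|f\|_{L^2(\Sigma_L)}^2,
\end{equation*}
which together with the spectral gap produces \eqref{eq:fluct}. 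The main obstacle I anticipate is making the localization step rigorous: since $B_z$ may intersect several Dirichlet particles, and since the particle set itself is modified between $\omega$ and $\omega'$, an admissible extension of $\delta W$ has to be chosen with care, and the local Poincaré and trace constants must be controlled uniformly in the random geometry. This is precisely where Hypothesis \nameref{hyp:Linfty} is essential, as it prevents these geometry-dependent constants from degenerating and ensures the deterministic energy bound on $\nabla W_1^\omega$ used above.
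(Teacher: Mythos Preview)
Your overall strategy --- apply the variance inequality from Hypothesis \nameref{hyp:mix}, control the oscillation via a duality argument with an adjoint solution $g_f$ (the paper's $u_{f,\boldsymbol{0}}$), and then integrate using the global energy of $g_f$ --- is precisely the skeleton of the paper's proof (Proposition~\ref{prop:osc} and Section~\ref{subsubsec:proofthmfluct}). Two points, however, deserve attention.

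A minor one first: $g_f$ is not stationary (its source $f$ has compact support), so Proposition~\ref{prop:Utnwellposed} does not apply; the correct well-posedness is realization-wise in $W_0(D^\omega)$ via Proposition~\ref{prop:aux}. Also, Hypothesis~\nameref{hyp:mix} in the paper is a \emph{multiscale} functional inequality with an integral over scales $\ell\ge 1$, not a spectral gap on unit boxes; the proof must carry the prefactor $(\ell_+^\omega(\bsxpar))^2\lesssim(\ell+R^+)^2$ from the oscillation bound (Proposition~\ref{prop:osc}) through the $\ell$-integral, and the super-algebraic decay of $\pi$ is what makes that integral converge.

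The substantive gap is your claim that ``Caccioppoli plus \nameref{hyp:Linfty}'' gives a \emph{deterministic} bound $\|\nabla W_1^\omega\|_{L^2(B_z^\ast)}\lesssim 1$. Caccioppoli only controls the local gradient by $\|W_1\|_{L^2}$ on a larger set plus the $\Sigma_H$-source; closing this requires a priori control of $W_1$ itself (for instance an $L^\infty$ bound), which you neither provide nor does it follow from the stationary energy estimate. The paper does \emph{not} establish such a deterministic bound. Instead it proves stochastic moment bounds $\E\big[(\fint_{\square_{R_1}}\int_0^L|\nabla W_1|^2)^{q/2}\big]^{1/q}\lesssim 1$ for all $q\ge 2$ (Theorem~\ref{thm:momentsnw1}) via a nontrivial buckling argument (Propositions~\ref{prop:fluctw1}--\ref{prop:localnorm}) that itself relies on \nameref{hyp:mix}. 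Because only moment control is available, one cannot simply pull $\|\nabla W_1\|$ out of the expectation; the paper therefore applies H\"older inside the variance bound and needs, for the adjoint factor, a Meyers-type annealed $L^p$-regularity $\|\mathbb{1}_D\nabla u_{f}\|_{L^2(\R^{d-1}\times\R^+,L^{2q}(\Omega))}\lesssim\|f\|_{L^2}$ (Theorem~\ref{thm:aux}). These two ingredients --- moment bounds on $\nabla W_1$ and $L^p$-regularity of the adjoint --- are the real work your sketch elides by postulating the deterministic bound.
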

For $f\coloneqq R^{-\frac{d-1}{2}}\mathbb{1}_{\square_R}$ with $R\geq 1$, we directly retrieve \eqref{eq:quanttrace}. Consequently  the $L^2(\mathcal{B}^\omega_L)$- and $H^1(\mathcal{B}_{L',L})$- estimates of corollary \ref{cor:betterNFest} can be applied to the near-field $U^{NF}_1$ and we can then prove the following improved error estimates 
 \begin{theorem} \label{thm:errorimproved} Suppose that $\Pa$ verifies  \nameref{hyp:Linfty} and \nameref{hyp:mix} . For all $R>0$ and $L>L'>\eps H$,  the following estimates hold
\begin{equation}\label{eq:err1improved}
\left\| u_\eps-w_\eps\right\|^2_{L^2\big(\Omega,H^1(\mathcal{B}^{\omega}_{L})\big)}+\left\| u_\eps-(u_0^{FF}+\eps u_1^{FF})\right\|^2_{L^2\big(\Omega,H^1(\mathcal{B}_{L',L})\big)} \underset{\eps\to0}{=}\left\{\begin{array}{ll}\dst\mathcal{O}\left(\eps^{3}|\log\eps|\right)&\text{for } d=2,\\\dst\mathcal{O}(\eps^3)&\text{for } d=3,\end{array}\right.
\end{equation}
\end{theorem}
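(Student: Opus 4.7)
The plan is to mirror the proof of Theorem \ref{thm:error} step by step, replacing every invocation of Lemma \ref{lem:err1} by the sharper Corollary \ref{cor:betterNFest}. The variational decomposition of the residual $e_\eps = u_\eps - w_\eps$ into the three anti-linear forms $\ell_\eps^1, \ell_\eps^2, \ell_\eps^3$ given by \eqref{eq:FVerror_ell}, as well as the use of Corollary \ref{cor:wpue} to recover the final bound from the norm of these forms, carry over verbatim. Only the rate $\eta(\eps)$ produced by bounding the profiles $V_1$ and $V_2$ improves, and this improvement, equal to a factor $\eps^{1/2}$ (up to a logarithmic correction when $d=2$), propagates directly into both estimates of \eqref{eq:err1improved}.

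The enabling step is to verify that $V_1$ (and then $V_2$) satisfies the quantitative decay \eqref{eq:convu} required by Corollary \ref{cor:betterNFest}. For $V_1$, I would apply Theorem \ref{thm:fluct} with the test function $f = R^{-(d-1)/2}\mathbb{1}_{\square_R}$, which lies in $L^\infty(\Sigma_L)$ with compact support and unit $L^2$-norm. This test function identifies $\int f W_1^\omega|_{\Sigma_L}$ with $R^{(d-1)/2}\fint_{\square_R} W_1^\omega|_{\Sigma_L}$, so that \eqref{eq:fluct} yields the quantitative ergodicity assumption \eqref{eq:ergod_quant} of Proposition \ref{prop:U_comportement_inf_alea_quant} for $\varphi = W_1^\omega|_{\Sigma_L}$. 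Proposition \ref{prop:U_comportement_inf_alea_quant} then delivers
\begin{equation*}
y_d^{d-1}\E[|V_1(\cdot,y_d)|^2] + y_d^{d+1}\E[|\nabla V_1(\cdot,y_d)|^2] = \mathcal{O}(1),
\end{equation*}
which is exactly \eqref{eq:convu}. The same quantitative statement must then be pushed through for $V_2 = W_2 - c_2 \mathbb{1}_{\mathcal{B}^\infty_H}$, using that $W_2$ solves \eqref{eq:w2} with the random Neumann source $-ik\gamma V_1|_{\Sigma_0}$; by linearity and an integral representation analogous to Proposition \ref{prop:representation_integrale_U_alea}, one derives a variance bound for $W_2^\omega|_{\Sigma_L}$ of the same type as in Theorem \ref{thm:fluct}, and Proposition \ref{prop:U_comportement_inf_alea_quant} applies again.

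With \eqref{eq:convu} in hand for both $V_1$ and $V_2$, Corollary \ref{cor:betterNFest}, applied to each of the macroscopic factors $\partial_{x_d} u_i^{FF}|_{\Sigma_{\eps H}}$ and $\nabla_{\bsxpar}\partial_{x_d}u_i^{FF}|_{\Sigma_{\eps H}}$ (all of which lie in $H^s(\Sigma_{\eps H})$ for every $s \geq 0$ by elliptic regularity of \eqref{eq:pb_champ_loin_u0_Dirichlet_Alea}--\eqref{eq:pb_champ_loin_u1_Dirichlet_Alea}), upgrades the control of the volume contribution in $\ell_\eps^1$ with rate $\eta(\eps) = \mathcal{O}(\eps^{1/2}|\log \eps|^{1/2})$ for $d=2$ and $\mathcal{O}(\eps^{1/2})$ for $d=3$. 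The boundary term $\ell_\eps^2$ on $\Sigma_0$ is handled by a trace inequality as in the original proof, producing even smaller $\mathcal{O}(\eps^2)$ contributions, and $\ell_\eps^3$ on $\Sigma_L$ is controlled by the pointwise decay of $V_i(\cdot,y_d)$ and $\nabla V_i(\cdot,y_d)$ at $y_d = L/\eps$, which under \eqref{eq:convu} is $\mathcal{O}(\eps^{(d-1)/2})$ and $\mathcal{O}(\eps^{(d+1)/2})$ respectively. Combining these estimates via Corollary \ref{cor:wpue} gives $\|u_\eps - w_\eps\|_{L^2(\Omega, H^1(\mathcal{B}^\omega_L))} = \mathcal{O}(\eps \, \eta(\eps))$, and squaring yields the first part of \eqref{eq:err1improved}. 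The second part follows from the triangle inequality
\begin{equation*}
\|u_\eps - (u_0^{FF}+\eps u_1^{FF})\|_{H^1(\mathcal{B}_{L',L})} \leq \|u_\eps - w_\eps\|_{H^1(\mathcal{B}_{L',L})} + \|\eps U_1^{NF}+\eps^2\widetilde U_2^{NF}\|_{H^1(\mathcal{B}_{L',L})},
\end{equation*}
where the second term is now bounded by Corollary \ref{cor:betterNFest} with the same improved rate.

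The main obstacle is the extension of Theorem \ref{thm:fluct} from $W_1^\omega|_{\Sigma_L}$ to $W_2^\omega|_{\Sigma_L}$: since $W_2$ is generated by the random trace of $V_1$ on $\Sigma_0$ rather than by a deterministic jump at $\Sigma_H$, the fluctuation analysis must be composed with the near-field Green function on the random half-strip and one has to ensure that the quantitative mixing hypothesis \nameref{hyp:mix} transfers along this composition without a loss of rate. I expect this step to lean on the technical lemmas relegated to the appendices. A secondary, purely bookkeeping difficulty is to verify that the logarithmic loss $|\log \eps|$ for $d=2$ arises only in the volume contribution to $\ell_\eps^1$ (coming from the integration of $y_d^{-(d-1)}$ up to $L/\eps$), and not in the boundary terms, so that it appears exactly as stated in \eqref{eq:err1improved}.
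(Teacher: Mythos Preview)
Your proposal follows the paper's route for $V_1$ and is correct there: apply Theorem \ref{thm:fluct} with $f=R^{-(d-1)/2}\mathbb{1}_{\square_R}$ to obtain \eqref{eq:ergod_quant}, invoke Proposition \ref{prop:U_comportement_inf_alea_quant} to get the quantitative decay \eqref{eq:convu} for $V_1$, and then feed this into Corollary \ref{cor:betterNFest} and into the trace bounds on $\Sigma_L$ in place of the qualitative estimates used in the proof of Theorem \ref{thm:error}. The paper's proof does exactly this.

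The difference is that the paper does \emph{not} attempt to extend Theorem \ref{thm:fluct} to $W_2$, and you do not need to either. Look again at the decomposition \eqref{eq:FVerror}--\eqref{eq:FVerror_ell}: every occurrence of $\widetilde{U}_2^{NF}$ (and hence of $V_2$) in $\ell_\eps^1$ and $\ell_\eps^3$ is multiplied by an additional factor of $\eps$ relative to the $U_1^{NF}$ terms. Consequently, the qualitative rates $\|V_2(\cdot,y_d)\|_{L^2(\Omega)}=o(1)$ and $\|\nabla V_2(\cdot,y_d)\|_{L^2(\Omega)}=o(y_d^{-1})$ from Proposition \ref{prop:limNF}, together with Lemma \ref{lem:err1}, already yield contributions of size $o(\eps^2)$ in $\eps\ell_\eps^1$ and $\eps\ell_\eps^3$, which is below the target $\mathcal{O}(\eps^{3/2}|\log\eps|^{1/2})$ (resp.\ $\mathcal{O}(\eps^{3/2})$). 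The same remark applies to the $\eps^2\widetilde{U}_2^{NF}$ piece in the triangle-inequality bound for the far-field error on $\mathcal{B}_{L',L}$. So the ``main obstacle'' you identify --- pushing the fluctuation analysis through the random boundary source of \eqref{eq:w2} --- simply does not arise; the paper's proof is complete with quantitative decay for $V_1$ alone.
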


\begin{corollary} \label{cor:error2improved}Suppose that $\Pa$ verifies  \nameref{hyp:Linfty} and \nameref{hyp:mix}. For all $R>0$ and $L>L'>\eps H$, the following estimates hold
\begin{equation*}
\left\| u_\eps-\widetilde{w}_\eps\right\|^2_{L^2\big(\Omega,H^1(\mathcal{B}^{\omega}_{L})\big)}+\left\| u_\eps-v_\eps\right\|^2_{L^2\big(\Omega,H^1(\mathcal{B}_{L',L})\big)} \underset{\eps\to0}{=}\left\{\begin{array}{ll}\dst\mathcal{O}\left(\eps^{3}|\log\eps|\right)&\text{for } d=2,\\\dst\mathcal{O}(\eps^3)&\text{for } d=3,\end{array}\right.
\end{equation*}
\end{corollary}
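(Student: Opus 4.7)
The plan is to mirror the proof of Corollary \ref{cor:error1} verbatim, substituting the quantitative ingredients now available under Hypothesis \nameref{hyp:mix}: Theorem \ref{thm:errorimproved} replaces Theorem \ref{thm:error} for the bounds on $u_\eps - w_\eps$ and on $u_\eps - (u_0^{FF}+\eps u_1^{FF})$, which delivers directly the announced rates $\mathcal{O}(\eps^3|\log\eps|)$ in $d=2$ and $\mathcal{O}(\eps^3)$ in $d=3$ (after squaring). The additional pieces arising from the transition from $w_\eps$ to $\widetilde{w}_\eps$ and from $u_0^{FF}+\eps u_1^{FF}$ to $v_\eps$ turn out to be of order $\eps^2$ (squared: $\eps^4$) and are therefore absorbed into the target.

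Concretely, for the first estimate I would start from
\[ \|u_\eps - \widetilde{w}_\eps\|_{V_L} \leq \|u_\eps - w_\eps\|_{V_L} + \|w_\eps - \widetilde{w}_\eps\|_{V_L}, \]
with $V_L\coloneqq L^2(\Omega, H^1(\mathcal{B}^{\omega}_L))$, apply \eqref{eq:err1improved} to the first summand, and then reuse the three-part decomposition \eqref{eq:decomperr} together with the bounds \eqref{eq:V1e}--\eqref{eq:V2e} and the regularity estimate \eqref{eq:difftrace} already established in the proof of Corollary \ref{cor:error1}. Each of the three summands is $\mathcal{O}(\eps^2)$: Proposition \ref{prop:effmod} controls the far-field difference, while the $V_1$- and $V_2$-contributions are $\eps(\eps^{-1}\cdot\eps^2 + \eta(\eps)\cdot\eps^2)$ and $\eps^2(\eps^{-1}\cdot\eps + \eta(\eps)\cdot\eps)$ respectively. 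The second estimate is handled analogously: the decomposition
\[ \|u_\eps - v_\eps\|_{L^2(\Omega,H^1(\mathcal{B}_{L',L}))} \leq \|u_\eps - (u_0^{FF}+\eps u_1^{FF})\|_{L^2(\Omega,H^1(\mathcal{B}_{L',L}))} + \|u_0^{FF}+\eps u_1^{FF} - v_\eps\|_{H^1(\mathcal{B}_{L',L})} \]
reduces it to \eqref{eq:err1improved} combined with Proposition \ref{prop:effmod}, which is deterministic.

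The only point meriting attention is confirming that no quantitative mixing analogue of Theorem \ref{thm:fluct} is needed for $W_2$. The improvement in Theorem \ref{thm:errorimproved} rests on upgrading the decay of $V_1$ from \eqref{eq:firstrate} to the sharper \eqref{eq:convu}, granted by combining Theorem \ref{thm:fluct} with Proposition \ref{prop:U_comportement_inf_alea_quant}; however, $V_2$ always enters the expansion with an $\eps^2$ prefactor, so the crude bound on $u(\bsxpar) V_2(\bsx/\eps)$ provided by Lemma \ref{lem:err1} (with $\eta(\eps)\to 0$) is already enough to make its contribution $o(\eps^{3/2})$, comfortably inside the target. This is the one nontrivial conceptual verification; everything else is a direct transcription of the argument given for Corollary \ref{cor:error1}.
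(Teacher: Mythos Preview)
Your proposal is correct and follows essentially the same route as the paper, which simply states that the proof is identical to that of Corollary~\ref{cor:error1} but invokes the improved estimate of Corollary~\ref{cor:betterNFest}. Your observation that the crude bounds \eqref{eq:V1e}--\eqref{eq:V2e} already yield $\|w_\eps-\widetilde{w}_\eps\|_{V_L}=\mathcal{O}(\eps^2)$ (so that Corollary~\ref{cor:betterNFest} is not strictly needed for this correction term) is a valid sharpening of the paper's terse argument.
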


\begin{proof}[Proof of Theorem  \ref{thm:errorimproved}]
By Theorem \ref{thm:fluct}, under Hypotheses \nameref{hyp:mix} and \nameref{hyp:Linfty}, for all $R\geq1$
 \begin{equation*}
  \E\left[\left|\E[W_1^{\omega}|_{\Sigma_L}]-\fint_{\square_R}W_1^{\omega}|_{\Sigma_L}\right|^2\right]\lesssim R^{-(d-1)}.
\end{equation*} 
Consequently by Proposition \ref{prop:U_comportement_inf_alea_quant} we know that $V_1$, defined in \eqref{eq:U1v}, verifies 
\begin{equation*}
  y_d^{d-1}\E[|V_1|^2(\cdot,y_d)]+y_d^{d+1}\E[|\nabla V_1|^2(\cdot,y_d)]\underset{y_d\to+\infty}{=}\mathcal{O}(1).
\end{equation*} 
We then obtain for $L>\eps H$
\begin{equation}\label{eq:ingr1}
\|V_1|_{\Sigma_{{L}/{\eps}}}\|_{L^2(\Omega)}\underset{\eps\to0}{=}\mathcal{O}\left(\eps^{\frac{d-1}{2}}\right),\quad\text{and}\quad \|\partial_{y_d}V_1|_{\Sigma_{{L}/{\eps}}}\|_{L^2(\Omega)}\underset{\eps\to0}{=}\mathcal{O}\left(\eps^{\frac{d+1}{2}}\right).
\end{equation}
Inserting \eqref{eq:ingr1} and the result of Corollary \ref{cor:betterNFest} in the proof of Theorem \ref{thm:error} yields \eqref{eq:err1improved}.


\end{proof}

\begin{proof}[Proof of Corollary  \ref{cor:error2improved}]
 The proof is identical as the proof of Corollary \ref{cor:error1} but we take advantage here of the improved estimate of Corollary  \ref{cor:betterNFest}. 

 \end{proof}

 The rest of this section is dedicated to the proof of Theorem \ref{thm:fluct}.


\subsubsection{The quantitative mixing assumption}

To prove Theorem \ref{thm:fluct} we need to assume that the point process associated to the distribution 
of particles $\Pa$ verifies a quantitative mixing condition. Since the dependency of $W_1^{\omega}$ in $\Pa$ 
is highly non-linear, we choose to write the hypothesis as a variance inequality on any function $F$ of $\Pa$. 
As proven in \cite{DuerinckxGloria2017}, most common hard-core point processes such as random parking or hard-core 
Poisson point processes verify the following variance inequality.

\begin{hypothesis}[Mix] \makeatletter\def\@currentlabelname{(Mix)}\makeatother\label{hyp:mix}
There exists a non-increasing weight function \mbox{ $\pi:\R^+\to\R^+$} with super-algebraic decay such that $\Pa$ verifies for all $\sigma(\Pa)$- measurable random variable $F(\Pa)$,
\begin{equation} \label{eq:decorr}
\Var\left[F(\Pa)\right]\leq\E\left[\int_1^{+\infty}\int_{\R^{d-1}} \left(\partial^{osc}_{\Pa, \mathcal{L}_{\ell}(\bsxpar)} F(\Pa)\right)^2 \dd\bsxpar \ell^{-(d-1)} \pi(\ell-1) d\ell\right],
\end{equation}
where $\mathcal{L}_\ell(\bsxpar)\coloneqq \dst\square_\ell(\bsxpar)\times(0,h)$ is the portion of the layer with width 
$\ell\geq 0$ centered at $\dst\left(\bsxpar, {h}/{2}\right)$ and the oscillation $\partial^{osc}_{\Pa, \mathcal{L}_\ell(\bsxpar)} F(\Pa)$
 of $F(\Pa)$ with respect to $\Pa$ on $\mathcal{L}_\ell(\bsxpar)$ is defined by: 
 \begin{multline}\label{eq:der_osc}
  \partial^{osc}_{\Pa, \mathcal{L}_{\ell}(\bsxpar)}F(\Pa)\coloneqq \textrm{sup ess}\left\{F(\Pa'), \Pa'\cap\left(\R^d\setminus \overline{\mathcal{L}_\ell(\bsxpar)}\right)=\Pa\cap\left(\R^d\setminus \overline{\mathcal{L}_\ell(\bsxpar)}\right)\right\}\\-\textrm{inf ess}\left\{F(\Pa'), \Pa'\cap\left(\R^d\setminus \overline{\mathcal{L}_\ell(\bsxpar)}\right)=\Pa\cap\left(\R^d\setminus \overline{\mathcal{L}_\ell(\bsxpar)}\right)\right\}.
\end{multline}
\end{hypothesis}
The structure of the proof follows the classical structure of proofs of quantitative stochastic homogenization estimates \cite{Gloria2019}. For a particularly pedagogical proof of the same nature but in a Gaussian setting we refer to \cite{josien2022annealed}. More precisely to prove Theorem \ref{thm:fluct} we adapted \cite{DuerinckxGloria2021} that study random suspensions of rigid particles in a steady Stokes flow to our setting of a thin layer of randomly distributed particles.

The starting point is to apply \eqref{eq:decorr} to 
$\int_{\Sigma_L}f(\bsy_\shortparallel)W_1^\omega|_{\Sigma_L}(\bsy_\shortparallel)\dd \bsy_\shortparallel$ 
for $f\in L^\infty(\Sigma_L)$ with compact support. A preliminary step of the proof 
consists then in bounding a.s. the oscillation of the integral. This step is done in 
Section \ref{subsubsec:osc}. The analysis requires  the introduction of a (well-posed) 
auxiliary problem and the control of high stochastic moments of $W_1$. These constitute 
the respective subjects of Sections \ref{subsubsec:aux} and \ref{subsubsec:moments}. 
Theorem \ref{thm:fluct} is then finally proven in Section \ref{subsubsec:proofthmfluct}. 

\subsubsection{Auxiliary problem}\label{subsubsec:aux}

Let 
\begin{equation}\label{eq:W0D}
W_0(D^\omega)\coloneqq \left\{v\in H_{loc}^1\big(D^\omega\big), v=0\,\text{on }\partial \Pa^\omega, \int_{\R^{d-1}}\int_0^{+\infty} \mathbb{1}_{D} \ (\mu|v|^2+|\nabla v|^2) \dd\bsy<+\infty\right\}.
\end{equation}
Equipped with the norm
\begin{equation*}
\forall v\in W_0(D^\omega), \quad \|v\|^2_{W_0(D^\omega)}\coloneqq  \int_{\R^{d-1}}\int_0^{+\infty} \mathbb{1}_{D} \ (\mu|v|^2+|\nabla v|^2) \dd\bsy,
\end{equation*}
the weighted Sobolev space $W_0(D^\omega)$ is a Hilbert space.
Recall that we proved in Proposition \ref{prop:wHardy} (see Remark \ref{rem:wHardyW0D}) that if $v\in W_0(D^\omega)$ for $\omega\in\Omega$, then
\begin{equation}\label{eq:wPoincareCC}
  \int_{D^\omega}\mu^\omega|v^\omega|^2\lesssim\int_{D^\omega}|\nabla v^\omega|^2,
\end{equation}
so that the $W_0(D^\omega)$-norm and the $H^1$ semi-norm are equivalent in $W_0(D^\omega)$. 

Consider now the following adjoint auxiliary problem
\begin{equation}\label{eq:uf}
\left\{
\begin{array}{l}
-\Delta u_{f,\bsg}^\omega =\nabla\cdot\bsg\; \text{in}\, D^\omega\setminus(\Sigma_H\cup\Sigma_L),  \\
  -\partial_{y_d} u_{f,\bsg}^\omega  = 0 \;\text{on} \,\Sigma_0, \quad\text{and}\quad 
  u_{f,\bsg}^\omega =0\;\text{on} \,\partial\Pa^\omega,\\
 \Big[u_{f,\bsg}^\omega \Big]_H  = 0 \quad \text{and}\quad\Big[ -\partial_{y_d} u_{f,\bsg}^\omega \Big]_H=0, \\ 
 \Big[u_{f,\bsg}^\omega \Big]_L  = 0 \quad \text{and}\quad \Big[ -\partial_{y_d} u_{f,\bsg}^\omega \Big]_L=f,
\end{array}
\right.
\end{equation} 
for $f\in L^\infty(\Sigma_L)$ and $\bsg\in L^\infty(\R^{d-1}\times\R^+)^d$ both with compact support. 
Problem \eqref{eq:uf} is well-posed a.s. in $W_0(D^\omega)$ as stated by the following proposition.

\begin{proposition}\label{prop:aux} 
  Let $f\in L^2(\Sigma_L)$ and $\bsg\in L^2(\R^{d-1}\times\R^+)^d$ with compact support. There exists a unique process $u_{f,\bsg}$ such that a.s. $u_{f,\bsg}^\omega\in W_0(D^\omega)$ is a weak solution of \eqref{eq:uf} and 
\begin{equation}\label{eq:energufg}
  \|\nabla u_{f,\bsg}\|_{L^2(D^\omega)}\lesssim\|\mu^{-\frac{1}{2}}f\|_{ L^2(\Sigma_L)}+\|\bsg\|_{L^2(D^\omega)^d}.
\end{equation} 
\end{proposition}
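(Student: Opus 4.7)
The plan is to apply the Lax--Milgram lemma pointwise in $\omega$ in the Hilbert space $W_0(D^\omega)$. First I would derive the natural variational formulation of \eqref{eq:uf}: testing against $v \in W_0(D^\omega)$ and integrating by parts, using the Neumann condition on $\Sigma_0$, the Dirichlet condition on $\partial \Pa^\omega$, and the jump of $-\partial_{y_d} u$ across $\Sigma_L$ (the transmission across $\Sigma_H$ being trivial and hence producing no boundary contribution), one finds
\begin{equation*}
\int_{D^\omega} \nabla u_{f,\bsg}^\omega \cdot \nabla \overline{v}\, \dd\bsy \ = \ -\int_{D^\omega} \bsg \cdot \nabla \overline{v}\, \dd\bsy \ + \ \int_{\Sigma_L} f\,\overline{v}\, \dd\bsypar, \quad \forall v \in W_0(D^\omega).
\end{equation*}

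The coercivity of the bilinear form $a^\omega(u,v) \coloneqq \int_{D^\omega} \nabla u \cdot \nabla \overline{v}$ on $W_0(D^\omega)$ is a direct consequence of the deterministic weighted Hardy inequality \eqref{wHardyCC} recalled in Remark \ref{rem:wHardyW0D}, which makes the full $W_0(D^\omega)$-norm equivalent to the $H^1$-seminorm on this space.

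For the continuity of the right-hand side, the volume term involving $\bsg$ is controlled directly by Cauchy--Schwarz. The boundary integral $\int_{\Sigma_L} f\,\overline{v}$ requires the deterministic analog of Lemma \ref{lem:weightedtrace}, namely
\begin{equation*}
\int_{\R^{d-1}} \mu^\omega(\bsypar, L)\, |v(\bsypar, L)|^2\, \dd\bsypar \ \lesssim \ \int_{D^\omega} |\nabla v|^2\, \dd\bsy,
\end{equation*}
which follows essentially from the same proof as Lemma \ref{lem:weightedtrace}: the Voronoi-cell estimate is derived pointwise in $\omega$ with a constant depending only on $d$ and $m$, and the final passage to the expectation via Birkhoff's theorem can simply be replaced by summing over all Voronoi cells and invoking monotone convergence. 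Combined with Cauchy--Schwarz this yields $|\int_{\Sigma_L} f\,\overline{v}| \leq \|\mu^{-1/2} f\|_{L^2(\Sigma_L)}\,\|\nabla v\|_{L^2(D^\omega)}$, and $\mu^{-1/2} f$ does belong to $L^2(\Sigma_L)$ since $f$ has compact support and $\mu^{-1}$ is pointwise bounded under Hypothesis \nameref{hyp:Linfty}.

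The Lax--Milgram lemma then produces, for a.e. $\omega$, a unique $u_{f,\bsg}^\omega \in W_0(D^\omega)$ satisfying the variational formulation, and the energy estimate \eqref{eq:energufg} is an immediate consequence of the coercivity constant and the continuity bounds just derived. Measurability of the family $\omega \mapsto u_{f,\bsg}^\omega$ follows from the measurable dependence of the geometry $D^\omega$ on $\omega$ combined with the continuity of the Lax--Milgram solution operator with respect to the data. The only delicate point in the whole argument is the deterministic trace inequality above, but this amounts to a cosmetic rereading of the proof of Lemma \ref{lem:weightedtrace}, removing the averaging step.
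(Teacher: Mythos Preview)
Your proof is correct and follows essentially the same route as the paper's: Lax--Milgram in $W_0(D^\omega)$, coercivity via the deterministic Hardy inequality \eqref{wHardyCC}, and continuity of the boundary term via Cauchy--Schwarz together with the weighted trace bound (which the paper uses implicitly rather than spelling out). One small remark: you do not need Hypothesis \nameref{hyp:Linfty} to ensure $\mu^{-1/2}f\in L^2(\Sigma_L)$, since $\mu^{-1}$ is a.s.\ locally bounded and $f$ has compact support---this is exactly how the paper justifies it.
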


\begin{proof}[Proof of Proposition \ref{prop:aux}]
It suffices to use Lax-Milgram Theorem.
The variational formulation reads for
\begin{equation}\label{eq:WFug}
  \forall v^\omega\in W_0(D^\omega),\quad \int_{D^\omega}\nabla u_{f,\bsg}^\omega\cdot\nabla\overline{v}^\omega=\int_{D^\omega}\bsg\cdot\nabla\overline{v}^\omega+\int_{\Sigma_L}f\overline{v^\omega}.
\end{equation}
The coercivity of the sesquilinear form is ensured by \eqref{eq:wPoincareCC}.
Since $f\in L^2(\Sigma_L)$ has compact support, $\mu^{-\frac{1}{2}}f\in L^2(\Sigma_L)$ and
\begin{equation*}
  \left|\int_{\Sigma_L}f\overline{v}^\omega\right|\leq \|\mu^{-\frac{1}{2}}f\|_{L^{2}(\Sigma_L)}\| v^\omega\|_{W_0(D^\omega)}.
\end{equation*} 
\end{proof}

\begin{theorem}[$L^p$-regularity of $u_{f,\bsg}$]\label{thm:aux} 
  Suppose that $\Pa$ verifies  \nameref{hyp:Linfty} and \nameref{hyp:mix}. There exists $C_0\simeq 1$ such that for all $f\in L^\infty(\Sigma_L)$ and $\bsg\in L^\infty(\R^{d-1}\times\R^+)^d$ with compact support, the solution $u_{f,\bsg}$ of the auxiliary problem \eqref{eq:uf} statisfies 
\begin{equation}\label{eq:momentuf}
\|\mathbb{1}_{D}\nabla u_{f,\bsg}\|_{L^p(\R^{d-1}\times\R^+, L^q(\Omega))}\lesssim\|f\|_{ L^p(\Sigma_L)}+\|\bsg\|_{ L^p(\R^{d-1}\times\R^+)^d},
\end{equation}
for all $|p-2|,|q-2|\leq \frac{1}{C_0}$.
 \end{theorem}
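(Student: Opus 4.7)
The plan is to upgrade the $L^2$ energy estimate of Proposition \ref{prop:aux} in two steps: first a deterministic Meyers-type estimate in the spatial variable for $p$ close to $2$, then an upgrade in the stochastic integrability via the quantitative mixing assumption \nameref{hyp:mix}. The two are combined by a duality argument, in the spirit of the annealed regularity results developed in \cite{DuerinckxGloria2017, DuerinckxGloria2021}.

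\textbf{Step 1 (Deterministic spatial Meyers).} For $\omega$ frozen, I would prove that there exists $C_1\simeq 1$, independent of $\omega$, such that
\begin{equation*}
\|\mathbb{1}_{D^\omega}\nabla u_{f,\bsg}^\omega\|_{L^p(\R^{d-1}\times\R^+)} \lesssim \|f\|_{L^p(\Sigma_L)} + \|\bsg\|_{L^p(\R^{d-1}\times\R^+)^d}, \quad |p-2|\leq 1/C_1.
\end{equation*}
The Dirichlet condition on $\partial\Pa^\omega$, combined with Hypothesis \nameref{hyp:Linfty} (which ensures that no ball in $\mathcal{L}$ of radius of order $1$ escapes a particle), yields a uniform Caccioppoli estimate with constants independent of $\omega$. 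Reformulating \eqref{eq:uf} as a Laplace equation on $\R^{d-1}\times\R^+$ with Neumann condition on $\Sigma_0$ and a defect concentrated on $\partial\Pa^\omega\cup\Sigma_L$, one concludes by a reverse Hölder inequality and the classical Calderón-Zygmund theory for the half-space Laplacian.

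\textbf{Step 2 (Stochastic moments via spectral gap).} By duality, controlling the norm \eqref{eq:momentuf} reduces to bounding the moments in $\omega$ of the linear functional
\begin{equation*}
F_\bsh(\Pa) \coloneqq \int_{\R^{d-1}\times\R^+} \mathbb{1}_{D^\omega}\nabla u_{f,\bsg}^\omega \cdot \bsh\,\dd\bsx,\quad \bsh\in L^{p'}\cap L^{\infty}\text{ with compact support.}
\end{equation*}
If $\Pa$ and $\Pa'$ coincide outside $\mathcal{L}_\ell(\bsxpar)$, then $u_{f,\bsg}^{\Pa}-u_{f,\bsg}^{\Pa'}$ satisfies an auxiliary problem with sources localized in (a neighborhood of) that patch; using Step 1 and the decay of the adjoint Green function away from $\Sigma_L$, I would bound the oscillation as
\begin{equation*}
\big|\partial^{osc}_{\Pa,\mathcal{L}_\ell(\bsxpar)} F_\bsh\big| \lesssim \ell^{d-1}\Big(\fint_{\mathcal{L}_\ell(\bsxpar)}|\nabla u_{f,\bsg}|^p\Big)^{1/p}\Big(\fint_{\widetilde{\mathcal{L}}_\ell(\bsxpar)}|\bsh|^{p'}\Big)^{1/p'},
\end{equation*}
where $\widetilde{\mathcal{L}}_\ell(\bsxpar)$ is a slightly enlarged patch. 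Injecting this into \eqref{eq:decorr} together with the super-algebraic decay of $\pi$ and a maximal-function estimate yields a variance bound of the right form. The passage from variance to arbitrary moments $|q-2|\leq 1/C_0$ is done by reapplying \eqref{eq:decorr} to $|F_\bsh|^{q/2}$ and bootstrapping, which is where the restriction on $q$ originates.

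\textbf{Main obstacle.} The delicate point lies in Step 2, namely the control of $\partial^{osc}_{\Pa,\mathcal{L}_\ell(\bsxpar)} u_{f,\bsg}$. Perturbing $\Pa$ in a small patch produces a Dirichlet defect on a set that itself depends on $\Pa$, and one must convert this into an $L^p$ estimate with constants that do not degrade as $\ell$ grows. This is where the joint strength of Hypothesis \nameref{hyp:Linfty} (uniform geometric density of particles, which powers the Hardy inequality \eqref{wHardyCC} with deterministic constants), Hypothesis \nameref{hyp:mix} (variance inequality with super-algebraic weight), and the deterministic Meyers estimate of Step 1 is crucial; the admissible range $1/C_0$ in \eqref{eq:momentuf} is then dictated by the narrow range obtained in Step 1 and by the loss incurred in the bootstrap.
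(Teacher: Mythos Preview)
Your Step 1 is essentially the paper's Step 1: a Caccioppoli inequality (testing with $\chi(u_{f,\bsg}-c)$), Poincar\'e--Sobolev (this is where \nameref{hyp:Linfty} enters, giving constants uniform in $\omega$), and Gehring's lemma to obtain a pathwise reverse-H\"older / Meyers estimate for $p$ in a small range above $2$.

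Your Step 2, however, diverges from the paper and is both unnecessary and potentially circular. The paper does \emph{not} use the variance inequality \nameref{hyp:mix} at all to prove this theorem. Instead, once the pathwise Meyers estimate is available, the upgrade to the mixed norm $L^p(\R^{d-1}\times\R^+,L^q(\Omega))$ is obtained by applying Shen's dual Calder\'on--Zygmund lemma directly to the deterministic function $\bsx\mapsto \E\bigl[|\nabla u_{f,\bsg}(\bsx)|^{p_0}\bigr]^{1/p_0}$: on each cube one splits $u_{f,\bsg}=u^0+u^1$ according to whether the data is restricted to the cube or not, the pathwise Meyers bound handles both pieces, and Shen's lemma closes the argument. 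The range $p<2$ is then recovered by a standard duality (testing against $u_{0,\bsh}$). The stochastic integrability is thus inherited for free from the deterministic Meyers bound, with no sensitivity calculus and no bootstrap.

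Your route---controlling $\partial^{osc}_{\Pa,\mathcal{L}_\ell(\bsxpar)}F_\bsh$ and plugging into \eqref{eq:decorr}---is the machinery the paper reserves for the \emph{corrector} $W_1$ (Theorem~\ref{thm:fluct} and Proposition~\ref{prop:fluctw1}), where the data is random and no global pathwise $L^p$ bound is available. Importing it here creates a difficulty you flag yourself: the oscillation bound you write has $\bigl(\fint|\nabla u_{f,\bsg}|^p\bigr)^{1/p}$ on the right, a random quantity whose moments are precisely what you are after, so the argument is circular without an additional buckling step. That buckling can be done, but it is the hard part of the corrector analysis, and entirely avoidable here since $f,\bsg$ are deterministic and the pathwise estimate of Step~1 already dominates everything.
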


The proof of the theorem is postponed to Appendix \ref{sec:appaux}. 

\subsubsection{Higher stochastic moments of $\nabla W_1$}\label{subsubsec:moments}

The last ingredient we need is a control in $L^{q}(\Omega)$-norm of $\|\mathbb{1}_{D}\nabla W_1\|_{L^2(\square_1(\bsx_\shortparallel)\times(0,L))}$ for $L$ large enough.

\begin{theorem}[Stochastic moments of $\nabla W_1$]\label{thm:momentsnw1} 
Assume that $\Pa$ verifies hypotheses  \nameref{hyp:Linfty} and \nameref{hyp:mix}. 
For all $q\geq 2$, $R_1\geq1$, the following estimate holds for all $L$ large enough
\begin{equation}\label{eq:momentnw1}
\E\left[\left(\fint_{\square_{R_1}}\int_0^L\mathbb{1}_{D}|\nabla W_1|^2\right)^{\frac{q}{2}}\right]^{\frac{1}{q}}\lesssim 1,
\end{equation}
where the constant depends on $R_1$ and $L$.
\end{theorem}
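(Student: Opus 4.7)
The plan is to bootstrap from the deterministic $L^2$-type energy estimate already available to a full $L^q(\Omega)$-control by using the quantitative mixing Hypothesis \nameref{hyp:mix} together with Theorem \ref{thm:aux}. First I would reduce the problem to estimating the centered random variable $F_{R_1,L}\coloneqq \fint_{\square_{R_1}}\int_0^L \mathbb{1}_{D}|\nabla W_1|^2$. The case $q=2$ (\textit{i.e.}\ $\E[F_{R_1,L}]\lesssim 1$) follows from the fact that $W_1\in\mathcal{W}_0(D)$ is stationary in the tangential directions with finite $\mathcal{W}_0(D)$-norm by Proposition \ref{prop:Utnwellposed}; thus by stationarity and Fubini, $\E[F_{R_1,L}]=\E\big[\int_{\square_1}\int_0^L\mathbb{1}_D|\nabla W_1|^2\big]\lesssim 1$. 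It therefore remains to control higher moments.

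The key input is the $L^p$-version of Hypothesis \nameref{hyp:mix}: for any $\sigma(\Pa)$-measurable $F$ and any $q\geq 2$,
\begin{equation*}
\E\big[|F-\E F|^q\big]^{1/q}\lesssim \sqrt{q}\,\E\left[\left(\int_1^{+\infty}\int_{\R^{d-1}}\big|\partial^{osc}_{\Pa,\mathcal{L}_\ell(\bsxpar)}F\big|^2\,\mathrm{d}\bsxpar\,\ell^{-(d-1)}\pi(\ell-1)\,\mathrm{d}\ell\right)^{q/2}\right]^{1/q},
\end{equation*}
which is a standard consequence of the variance inequality \eqref{eq:decorr} by tensorization. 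Applied to $F=F_{R_1,L}$, this reduces the task to estimating the oscillation $\partial^{osc}_{\Pa,\mathcal{L}_\ell(\bsxpar)}F_{R_1,L}$ pointwise in $(\bsxpar,\ell)$, and then controlling the resulting double integral in high $L^q(\Omega)$-norm.

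Next I would bound the oscillation by a local energy. For any admissible modification $\Pa'$ of $\Pa$ inside $\mathcal{L}_\ell(\bsxpar)$, the difference $\delta W\coloneqq W_1^{\Pa}-W_1^{\Pa'}$ (suitably extended by lifting the Dirichlet obstacle created by inserting or removing particles) solves an adjoint-type problem of the form \eqref{eq:uf} with source $\bsg$ and $f$ supported in $\mathcal{L}_\ell(\bsxpar)$ and depending on $\nabla W_1$ and $\nabla W_1^{\Pa'}$ there. Expanding the square and using Cauchy--Schwarz gives an estimate of the form
\begin{equation*}
\big|\partial^{osc}_{\Pa,\mathcal{L}_\ell(\bsxpar)}F_{R_1,L}\big|\lesssim \Big(\fint_{\square_{R_1}}\int_0^L\mathbb{1}_D|\nabla W_1|^2\Big)^{1/2}\,\Big(\int_{\mathcal{L}_{C\ell}(\bsxpar)}\mathbb{1}_D|\nabla W_1|^2\Big)^{1/2}+\int_{\mathcal{L}_{C\ell}(\bsxpar)}\mathbb{1}_D|\nabla W_1|^2,
\end{equation*}
where the right-hand side is obtained by testing the weak formulation of $\delta W$ against $W_1^\Pa+W_1^{\Pa'}$ and using the local energy estimate of Proposition \ref{prop:aux} with $\bsg=\mathbb{1}_{\mathcal{L}_\ell(\bsxpar)}\nabla W_1$.

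Finally I would insert this oscillation bound into the $L^q$-variance inequality and use the super-algebraic decay of $\pi$ to integrate out $\ell$; the spatial integral in $\bsxpar$ will produce, by stationarity, a Meyers-type exponent on $\nabla W_1$. This is where Theorem \ref{thm:aux} enters by duality: the $L^{2+\eta}(\R^{d-1}\times\R^+,L^{2+\eta}(\Omega))$-regularity of $u_{f,\bsg}$ transfers to a corresponding annealed Meyers estimate on $\nabla W_1$, which allows one to absorb the small gain above the $L^2$-integrability into the right-hand side. The bootstrap then proceeds by increasing $q$ by a fixed factor $1+1/C_0$ at each iteration, starting from $q=2$, and after finitely many steps reaching any prescribed $q\geq 2$. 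The main obstacle is the control of the oscillation by a local energy in the random setting, since inserting or deleting particles modifies the Dirichlet set and changes the admissible function space; the careful construction of the lift of $\delta W$ together with the $L^p$-theory for the adjoint problem \eqref{eq:uf} is what makes the bootstrap close.
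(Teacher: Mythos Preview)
Your proposal has a genuine gap in the oscillation estimate, and the bootstrap you describe is not the mechanism that closes the argument.

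The central problem is that your oscillation bound for the \emph{quadratic} functional $F_{R_1,L}=\fint_{\square_{R_1}}\int_0^L\mathbb{1}_D|\nabla W_1|^2$,
\[
\big|\partial^{osc}_{\Pa,\mathcal{L}_\ell(\bsxpar)}F_{R_1,L}\big|\lesssim F_{R_1,L}^{1/2}\Big(\int_{\mathcal{L}_{C\ell}(\bsxpar)}\mathbb{1}_D|\nabla W_1|^2\Big)^{1/2}+\int_{\mathcal{L}_{C\ell}(\bsxpar)}\mathbb{1}_D|\nabla W_1|^2,
\]
carries no decay in $\bsxpar$: both factors on the right are stationary in $\bsxpar$, so $\int_{\R^{d-1}}|\partial^{osc}F_{R_1,L}|^2\,\mathrm{d}\bsxpar=+\infty$ almost surely, and the $L^q$ variance inequality yields nothing. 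What you have lost is the locality of $F_{R_1,L}$: a modification of $\Pa$ in $\mathcal{L}_\ell(\bsxpar)$ far from $\square_{R_1}$ does perturb $\nabla W_1|_{\square_{R_1}}$ only weakly, but bounding $\|\nabla\delta W\|_{L^2(\square_{R_1})}$ by the \emph{global} energy of $\delta W$ (which is what Proposition~\ref{prop:aux} gives with $\bsg=\mathbb{1}_{\mathcal{L}_\ell(\bsxpar)}\nabla W_1$) throws this decay away.

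The paper circumvents this by never applying the variance inequality to $F_{R_1,L}$ directly. Instead it (i) bounds the local energy by large-scale \emph{linear} averages $\int \Lchi_{R^\alpha}(\cdot-\bsy)\nabla W_1(\bsy)\,\mathrm{d}\bsy$ via a Caccioppoli-type estimate (Proposition~\ref{prop:localnorm}), and (ii) applies the higher-moment variance inequality to those linear functionals (Proposition~\ref{prop:fluctw1}). For a linear functional $\int \bsg\cdot\nabla W_1$ with \emph{deterministic} compactly supported $\bsg$, the oscillation is represented through the adjoint $u_{\bsg}$, whose decay in $\bsxpar$ makes $\int_{\R^{d-1}}\|\nabla u_{\bsg}\|_{L^2(\mathcal{L}_{\ell_+}(\bsxpar))}^2\,\mathrm{d}\bsxpar$ finite; Theorem~\ref{thm:aux} is used there, in $L^{2\tilde q}(\Omega)$ with $\tilde q$ close to $1$, to decouple $u_{\bsg}$ from $\nabla W_1$ by H\"older --- not to produce a Meyers estimate on $\nabla W_1$ itself. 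The argument then closes by a buckling in the \emph{spatial scale} (playing $\square_{R_1}$ against $\square_{R_1+R_2}$ with $R_2$ large and $\alpha$ close to $1$), at a single fixed large $q$; there is no iteration ``from $q$ to $(1+1/C_0)q$'' in the moment exponent.
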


To prove Theorem \ref{thm:momentsnw1} we adapt the proof of \cite[Theorem 4.2]{DuerinckxGloria2021} to our problem. The proof relies on two arguments that we state in the two propositions below. The first result is a control in the $L^q(\Omega)$- norm of the fluctuations of $\nabla W_1$.

\begin{proposition}[Fluctuations of $\nabla W_1$]\label{prop:fluctw1}
Providing that $\Pa$ verifies hypotheses \nameref{hyp:Linfty} and \nameref{hyp:mix}, $\nabla W_1^{\omega}$ verifies for $\bsg\in L^\infty(\R^{d-1}\times\R^+)^d$ with compact support, $q\gg 2$ and $R_1\geq1$
 \begin{equation}\label{eq:fluctw1}
  \E\left[\left|\int_{0}^{+\infty}\int_{\R^{d-1}}\mathbb{1}_{D^\omega}\bsg\cdot\nabla W_1^\omega\dd \bsy\right|^q\right]^{\frac{1}{q}}
  \lesssim\|\bsg\|_{L^2(\R^{d-1}\times\R^+)^d}\E\left[\left(\int_{\square_{R_1}}\int_0^L\mathbb{1}_{D^\omega}|\nabla W_1^\omega|^2\right)^{\frac{q}{2}}\right]^{\frac{1}{q}}.
\end{equation} 
\end{proposition}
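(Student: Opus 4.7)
The plan is to apply an $L^q$ extension of the decorrelation inequality \eqref{eq:decorr} to the random variable $F(\Pa) \coloneqq \int_0^{+\infty}\int_{\R^{d-1}}\mathbb{1}_{D^\omega}\bsg\cdot\nabla W_1^\omega \dd\bsy$. Such an $L^q$ moment inequality is classical (cf.~\cite{DuerinckxGloria2017}) from Hypothesis \nameref{hyp:mix} and reads
$$\E\bigl[|F-\E[F]|^q\bigr]^{1/q}\lesssim \sqrt{q}\,\E\!\left[\left(\int_1^{+\infty}\int_{\R^{d-1}}\bigl(\partial^{osc}_{\Pa,\mathcal{L}_\ell(\bsx_\shortparallel)}F\bigr)^2 \ell^{-(d-1)}\pi(\ell-1)\dd\bsx_\shortparallel\dd\ell\right)^{q/2}\right]^{1/q}.$$
The mean $\E[F]$ is handled separately: by stationarity of $\nabla W_1$ in the tangential direction, Cauchy--Schwarz and the energy bound on $W_1$ from Proposition \ref{prop:Utnwellposed}, one has $|\E[F]|\lesssim \|\bsg\|_{L^2}$, which is consistent with the claimed right-hand side.

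The crux is the pointwise oscillation bound. Let $\Pa,\Pa'$ be two admissible configurations coinciding outside $\mathcal{L}_\ell(\bsx_\shortparallel)$, with associated correctors $W_1^\omega,W_1^{\omega'}$, and let $u_\bsg^\omega\in W_0(D^\omega)$ be the adjoint solution of Proposition \ref{prop:aux} with $f=0$. Testing the variational formulations of $W_1^\omega$ and $W_1^{\omega'}$ against $u_\bsg^\omega$ and $u_\bsg^{\omega'}$ respectively expresses the difference $F(\Pa)-F(\Pa')$ as an integral concentrated on the perturbation region. Using a cutoff of $u_\bsg^\omega$ supported in an enlargement $\mathcal{L}_\ell^*(\bsx_\shortparallel)$ of $\mathcal{L}_\ell(\bsx_\shortparallel)$ as a test function for the equation of $W_1^\omega-W_1^{\omega'}$ (suitably extended by zero inside the symmetric difference $\Pa\triangle\Pa'$), together with Caccioppoli-type inequalities to absorb boundary terms, yields
$$|F(\Pa)-F(\Pa')|\lesssim \left(\int_{\mathcal{L}_\ell^*(\bsx_\shortparallel)}\mathbb{1}_{D^\omega}|\nabla u_\bsg^\omega|^2\right)^{1/2}\left(\int_{\mathcal{L}_\ell^*(\bsx_\shortparallel)}\mathbb{1}_{D^\omega}|\nabla W_1^\omega|^2\right)^{1/2},$$
where the supremum over admissible $\Pa'$ has been absorbed into the value at $\Pa$ through a local energy comparison whose constants are configuration-independent thanks to Hypothesis \nameref{hyp:Linfty}.

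Inserting this oscillation bound into the $L^q$ inequality, using Fubini in $(\bsx_\shortparallel,\ell)$ and the super-algebraic decay of $\pi$, one is left with a product of local spatial $L^2$ norms of $\mathbb{1}_D\nabla u_\bsg$ and $\mathbb{1}_D\nabla W_1$. Applying Hölder in $\Omega$ with exponents $(p,q')$ satisfying $\tfrac1p+\tfrac1q+\tfrac{1}{q'}=1$ and $|p-2|,|q'-2|\leq C_0^{-1}$ decouples the two factors. The $L^p(\R^{d-1}\times\R^+,L^{q'}(\Omega))$-norm of $\mathbb{1}_D\nabla u_\bsg$ is then controlled by $\|\bsg\|_{L^p}$ via Theorem \ref{thm:aux}, and interpolation with the deterministic $L^2$ energy estimate \eqref{eq:energufg} (combined with the compact support of $\bsg$) converts this into the factor $\|\bsg\|_{L^2}$. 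Stationarity of $\nabla W_1$ lets one replace the local averages in $\bsx_\shortparallel$ by a single average over the reference cube $\square_{R_1}$, yielding the stated bound.

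The main obstacle is the pointwise oscillation estimate, because perturbing $\Pa$ inside $\mathcal{L}_\ell(\bsx_\shortparallel)$ modifies the very domain of definition of $W_1$ through its Dirichlet boundary: $W_1^\omega-W_1^{\omega'}$ is not simply the solution of a Laplace problem with a localized volumic source. The natural workaround is a symmetrized two-cutoff argument on $D^\omega\cap D^{\omega'}$ with test functions vanishing on $\Pa\cup\Pa'$ inside the perturbation zone. This is the single place in the proof where a uniform-in-$\omega$ lower bound on the local Poincar\'e constants is needed; it is provided precisely by Hypothesis \nameref{hyp:Linfty}.
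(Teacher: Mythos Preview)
Your overall strategy coincides with the paper's: apply the $L^q$ version of the mixing inequality (the paper records this as Lemma \ref{lem:highmix}), derive a pointwise oscillation bound for $F(\Pa)=\int \mathbb{1}_{D^\omega}\bsg\cdot\nabla W_1^\omega$ in terms of local energies of $\nabla W_1^\omega$ and of the adjoint $\nabla u_\bsg^\omega$, bound the $\nabla u_\bsg$ factor by $\|\bsg\|_{L^2}$ through Theorem \ref{thm:aux}, and reduce the $\nabla W_1$ factor to the reference cube $\square_{R_1}$ by stationarity.

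Two points deserve tightening. First, for the oscillation bound the paper does not use a volumic cutoff-plus-Caccioppoli argument as you sketch; it works directly with the extension $Z^\omega=\mathbb{1}_{\Pa_\ell'}W_1'-\mathbb{1}_{\Pa_\ell^\omega}W_1^\omega$ on $\widetilde D^\omega=D^\omega\cup\overline{\Pa_\ell^\omega}$, tests the weak formulations of $Z^\omega$ and $u_\bsg^\omega$ against one another, and controls the resulting $H^{-1/2}\times H^{1/2}$ boundary pairings on the perturbed particles by duality. This produces an explicit geometric prefactor $(\ell_+^\omega(\bsxpar))^2$ (distance to the nearest exterior particle), which is then bounded by $(\ell+R^+)^2$ using \nameref{hyp:Linfty}; your ``uniform constants from \nameref{hyp:Linfty}'' is the same device, but the boundary-duality computation is what actually replaces the $W_1'$-dependence by $W_1^\omega$-dependence (this is \eqref{eq:claim1} in the paper).

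Second, your decoupling step is too loose. After the oscillation bound one must control
\[
\E\!\left[\Bigl(\int_{\R^{d-1}} A_\ell(\bsxpar)\,B_\ell(\bsxpar)\,\dd\bsxpar\Bigr)^{q/2}\right]^{2/q},\qquad
A_\ell=\fint_{\square_{\ell+R^+}(\bsxpar)}\!\!\int_0^h\mathbb{1}_D|\nabla W_1|^2,\;\;
B_\ell=\fint_{\square_{\ell+R^+}(\bsxpar)}\!\!\int_0^h\mathbb{1}_D|\nabla u_\bsg|^2,
\]
and a bare H\"older in $\Omega$ cannot separate the two factors because they sit inside a common spatial integral raised to the $q/2$ power. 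The paper handles this by duality: write $\E[Z^{q/2}]^{2/q}=\sup_{\|X\|_{L^{\tilde q}(\Omega)}=1}\E[XZ]$ with $\tilde q=q/(q-2)$, push $X$ through the spatial integral, bound the $W_1$-part by stationarity and Jensen, and absorb $X^{1/2}$ into $\bsg$ so that Theorem \ref{thm:aux} applies to $u_{0,X^{1/2}\bsg}$ with $p=2$ and $L^{2\tilde q}(\Omega)$ (in range since $2\tilde q\to 2$ as $q\to\infty$). This gives $\|\bsg\|_{L^2}$ directly, with no interpolation needed.
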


 The second argument consists in bounding local norms of $\nabla W_1$ (such as the one appearing in the right-hand side of \eqref{eq:fluctw1}) with large scale averages of $\nabla W_1$.
\begin{proposition}\label{prop:localnorm} 
  Suppose that $\Pa$ verifies Hypotheses \nameref{hyp:Linfty} and \nameref{hyp:mix}. Consider ${\Lchi}\in C_c^\infty(\square_1\times(0,1))$ 
  such that $\int_{\square_1\times (0,1)}\Lchi=1$ and define for $r>0$, $\Lchi_r(\bsx)\coloneqq r^{-d}\Lchi\left(\frac{\bsx}{r}\right)$. 
For all $R\gg 1$, $L\gg 1$, $\alpha\in(0,1)$ and $q\geq 2$  
\begin{equation*}
\E\left[\left(\fint_{\square_R}\int_0^L\mathbb{1}_{D^\omega}|\nabla W_1^\omega|^2\right)^{\frac{q}{2}}\right]^{\frac{1}{q}}
\lesssim 1+\left(\int_0^{2L}\E\left[\left|\int_{\R^{d-1}}\int_0^{+\infty}\mathbb{1}_{D^\omega}\Lchi_{R^\alpha}(\bsypar, x_d-y_d)\nabla W_1^\omega(\bsy)\dd\bsy\right|^q\right]^{\frac{2}{q}}\dd x_d\right)^{\frac{1}{2}},
\end{equation*}
where the constant only depends on $\Lchi$.
\end{proposition}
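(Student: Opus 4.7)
The plan is to decompose $\mathbb{1}_D\nabla W_1$ at each point into a local convolution against $\Lchi_{R^\alpha}$ and a fluctuation remainder: denoting $\langle \bsG\rangle_r(\bsx) \coloneqq \int \Lchi_r(\bsx-\bsy)\bsG(\bsy)\,\dd\bsy$ (which, modulo stationarity, matches the right-hand side of the statement), I write
\begin{equation*}
\mathbb{1}_D \nabla W_1 \;=\; \langle \mathbb{1}_D \nabla W_1\rangle_{R^\alpha} \;+\; \bigl(\mathbb{1}_D \nabla W_1 - \langle \mathbb{1}_D \nabla W_1\rangle_{R^\alpha}\bigr).
\end{equation*}
Applying the triangle inequality in $L^2(\square_R\times(0,L))$ and then taking $L^q(\Omega)$-norms reduces the problem to bounding each piece separately.

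For the averaged piece, the stationarity of $\mathbb{1}_D\nabla W_1$ in $\bsypar$ transfers to its convolution, so $\E[|\langle\mathbb{1}_D\nabla W_1\rangle_{R^\alpha}(\bsypar,x_d)|^q]$ does not depend on $\bsypar$. Since $q\geq 2$, Jensen's inequality applied to the convex map $t\mapsto t^{q/2}$ (for the $\bsypar$-average over $\square_R$), followed by Minkowski's integral inequality in $x_d$, yields
\begin{equation*}
\E\left[\Bigl(\fint_{\square_R}\!\int_0^{L}|\langle\mathbb{1}_D\nabla W_1\rangle_{R^\alpha}|^2\Bigr)^{q/2}\right]^{2/q} \;\leq\; \int_0^{2L}\E\bigl[|\langle\mathbb{1}_D\nabla W_1\rangle_{R^\alpha}(0,x_d)|^q\bigr]^{2/q}\dd x_d,
\end{equation*}
which recovers the second term on the right-hand side of the statement; the upper bound $2L$ accounts for the support of $\Lchi_{R^\alpha}$ in the $y_d$-direction.

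The bulk of the work is to bound the fluctuation piece by $O(1)$ in $L^q(\Omega)$, for which I adapt the Caccioppoli-based argument of \cite{DuerinckxGloria2021}. Using the PDE $-\Delta W_1=0$ in $D^\omega\setminus\Sigma_H$ together with the Dirichlet condition on $\partial\Pa^\omega$ and the jump $[-\partial_{y_d}W_1]_H=1$, testing with $\eta^2 W_1$ for a smooth cutoff $\eta$ supported in $\square_{2R}\times[0,2L)$ and equal to $1$ on $\square_R\times(0,L)$ produces an inequality of the form
\begin{equation*}
\int \eta^2\mathbb{1}_D|\nabla W_1|^2 \;\lesssim\; R^{-2}\!\int \eta^2\mathbb{1}_D|W_1|^2 \;+\; \int_{\Sigma_H}\eta^2 W_1.
\end{equation*}
A careful reintroduction of the smoothed average $\langle W_1\rangle_{R^\alpha}$ on the right-hand side, exploiting the Dirichlet condition on the dense set of particles (guaranteed by Hypothesis \nameref{hyp:Linfty}) to obtain a Poincar\'e-type estimate on each cell of size $R^\alpha$, produces a bound by $R^\alpha\|\nabla W_1\|_{L^2}$ on a slightly enlarged concentric box. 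Since $\alpha<1$, the resulting gain $R^{2(\alpha-1)}<1$ enables a buckling argument across concentric boxes that absorbs the gradient term back to the left-hand side.

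The main obstacle is that the smoothed average $\langle W_1\rangle_{R^\alpha}$ does not vanish on $\partial\Pa^\omega$, so a weighted Poincar\'e inequality analogous to Proposition \ref{prop:wHardy} cannot be applied directly to $W_1-\langle W_1\rangle_{R^\alpha}$. I plan to handle this discrepancy by expressing the residual traces via duality against the solution $u_{f,\bsg}$ of the auxiliary problem \eqref{eq:uf}, for suitably chosen $(f,\bsg)$, and invoking the stochastic $L^p$-regularity of Theorem \ref{thm:aux}. Combined with the variance inequality of Hypothesis \nameref{hyp:mix}, which quantifies the decorrelation at scale $R^\alpha$, this closes the buckling estimate and produces the additive constant $1$ on the right-hand side.
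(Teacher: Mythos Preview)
Your high-level structure---Caccioppoli plus a decomposition into average and fluctuation, followed by a buckling argument---matches the paper's. But the detailed execution diverges at the crucial step, and the ``main obstacle'' you identify is a phantom.

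After Caccioppoli you have to bound $\int_{\square_{2R}}\int_0^{2L}\mathbb{1}_D|W_1|^2$, and here the paper splits $W_1=(W_1-\Lchi_r\!*\!W_1)+\Lchi_r\!*\!W_1$. The fluctuation piece is handled by the \emph{elementary} convolution estimate (the paper's \eqref{eq:nw}):
\[
\int_{\square_{2R}}\!\!\int_0^{2L}\mathbb{1}_D\,|W_1-\Lchi_r\!*\!W_1|^2 \;\lesssim\; r^2\!\int_{\square_{2R+r}}\!\!\int_0^{2L+r}\mathbb{1}_D|\nabla W_1|^2,
\]
obtained by writing $W_1(\bsx)-W_1(\bsx-\bsy)=\int_0^1\nabla W_1(\bsx-t\bsy)\!\cdot\!\bsy\,\dd t$ and integrating against $\Lchi_r$. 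No Poincar\'e is used here, and there is no need for $W_1-\Lchi_r\!*\!W_1$ to vanish on $\partial\Pa^\omega$. This already produces the small factor $r^2/R^2=R^{2(\alpha-1)}$ enabling buckling. It is the \emph{averaged} piece $\Lchi_r\!*\!W_1$ that is then bounded by a Poincar\'e under Hypothesis \nameref{hyp:Linfty} (the paper's \eqref{eq:cnw}), and this is what produces the $\Lchi_r\!*\!\nabla W_1$ term on the right-hand side of the proposition. You have these two roles interchanged, which leads you to look for a Dirichlet Poincar\'e where none is needed.

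As a consequence, your proposed fix---duality against the auxiliary solution $u_{f,\bsg}$, Theorem \ref{thm:aux}, and Hypothesis \nameref{hyp:mix}---is unnecessary. The paper's proof of Proposition \ref{prop:localnorm} makes no use of any of these ingredients; they enter only later, in Proposition \ref{prop:fluctw1} and Theorem \ref{thm:fluct}. Invoking them here is both an overcomplication and too vague to assess: you do not say which $(f,\bsg)$ to choose, and the variance inequality controls global observables, not the pathwise local quantity $\fint_{\square_R}\int_0^L|\nabla W_1-\langle\nabla W_1\rangle_{R^\alpha}|^2$ you would need to bound.

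Finally, after taking $L^{q/2}(\Omega)$-norms and using stationarity, the buckling only closes for the vertical range $(0,L)$; the excess $(L,2L+r)$ on the right-hand side has to be treated separately. The paper does this via the integral representation of $W_1$ above $\Sigma_{L'}$ (Proposition \ref{prop:representation_integrale_U_alea}) and the decay of $\nabla\partial_d\Gamma$. Your proposal does not address this tail at all.
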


We prove below Theorem \ref{thm:momentsnw1}. The proof of Propositions \ref{prop:fluctw1} and \ref{prop:localnorm} is postponed to Appendix \ref{sec:appnw1}.

\begin{proof}[Proof of Theorem \ref{thm:momentsnw1}]
We prove here \eqref{eq:momentnw1} for $q\gg 2$. By Jensen's inequality the result can then be extended to all $q\geq 2$. 
By stationarity of $W_1$ we have on one hand for all $R_1, R_2\geq 1$, $L\geq H$
\begin{align}\label{eq:buck1}
  &\E\left[\left(\fint_{\square_{R_1}}\int_0^L \mathbb{1}_D|\nabla W_1|^2\right)^{\frac{q}{2}}\right]^{\frac{1}{q}}
  =\E\left[\fint_{\square_{R_2}}\left(\fint_{\square_{R_1}(\bsxpar)}\int_0^L \mathbb{1}_D |\nabla W_1|^2\right)^{\frac{q-1}{2}}\left(\fint_{\square_{R_1}(\bsxpar)} \int_0^L \mathbb{1}_D|\nabla W_1|^2\right)^{\frac{1}{2}}\dd\bsxpar\right]^{\frac{1}{q}}
  \nonumber \\&\qquad\qquad\leq\left(\frac{R_1+R_2}{R_1}\right)^{\frac{(d-1)(q-1)}{2q}}\E\left[\left(\fint_{\square_{R_1+R_2}}\int_0^L  \mathbb{1}_D|\nabla W_1|^2\right)^{\frac{q-1}{2}}
    \fint_{\square_{R_2}}\left(\fint_{\square_{R_1}(\bsxpar)}\int_0^L  \mathbb{1}_D |\nabla W_1|^2\right)^{\frac{1}{2}}\dd\bsxpar\right]^{\frac{1}{q}}
   \nonumber \\&\qquad\qquad\leq\left(\frac{R_1+R_2}{R_1}\right)^{\frac{(d-1)(q-1)}{2q}}\frac{1}{R_2^{\frac{d-1}q}}\E\left[\left(\fint_{\square_{R_1+R_2}}\int_0^L  \mathbb{1}_D|\nabla W_1|^2\right)^{\frac{q-1}{2}}
    \left(\int_{\square_{R_2}}\fint_{\square_{R_1}(\bsxpar)}\int_0^L  \mathbb{1}_D |\nabla W_1|^2\right)^{\frac{1}{2}}\dd\bsxpar\right]^{\frac{1}{q}}
   \nonumber \\&\qquad\qquad\leq\left(\frac{R_1+R_2}{R_1}\right)^{\frac{(d-1)}{2}}\frac{1}{R_2^{\frac{d-1}q}}\E\left[\left(\fint_{\square_{R_1+R_2}} \int_0^L \mathbb{1}_D|\nabla W_1|^2\right)^{\frac{q}{2}}\right]^{\frac{1}{q}},
  \end{align}
  where we used Jensen's inequality since $x\mapsto \sqrt{x}$ is concave.
  On the other hand according first to Proposition \ref{prop:localnorm} for $R\gg 1$, $L\gg1$ and $q\gg2$ and second to Proposition \ref{prop:fluctw1} 
\begin{equation}\label{eq:buck2}
\begin{aligned}
    \E\left[\left(\fint_{\square_{R}}\int_0^{L} |\nabla W_1|^2\right)^{\frac{q}{2}}\right]^{\frac{1}{q}}
    &\lesssim1+\left(\int_0^{2L}\E\left[\left|\int_{\R^{d-1}}\int_0^{+\infty}\Lchi_{{R}^\alpha}(\bsy,x_d-y_d)\nabla W_1(\bsy)\dd\bsy\right|^q\right]^{\frac{2}{q}}\dd x_d\right)^{\frac{1}{2}}
    \\&\lesssim 1+\left(\int_0^{2L}\|\Lchi_{{R}^\alpha}(x_d-\cdot)\|^2_{L^2(\R^{d-1}\times\R^+)}\dd x_d\right)^{\frac{1}{2}}\E\left[\left(\int_{\square_{R_1}}\int_0^L |\nabla W_1|^2\right)^{\frac{q}{2}}\right]^{\frac{1}{q}}
    \\&\lesssim1+R^{-\frac{\alpha d}{2}}R_1^{\frac{d-1}2}\E\left[\left(\fint_{\square_{R_1}}\int_0^L |\nabla W_1|^2\right)^{\frac{q}{2}}\right]^{\frac{1}{q}}
  \end{aligned}
\end{equation}
Injecting \eqref{eq:buck2} with $R=R_1+R_2$ into the final inequality of \eqref{eq:buck1} yields
\begin{equation*}
  \E\left[\left(\fint_{\square_{R_1}} \int_0^L\mathbb{1}_D |\nabla W_1|^2\right)^{\frac{q}{2}}\right]^{\frac{1}{q}}\lesssim\left(\frac{R_1+R_2}{R_1}\right)^{\frac{d-1}{2}}\frac{1}{R_2^{\frac{d-1}{q}}}
  +\frac{(R_1+R_2)^{\frac{(1-\alpha)d-1}{2}}}{R_2^{\frac{d-1}{q}}}\E\left[\left(\fint_{\square_{R_1}}\int_0^L |\nabla W_1|^2\right)^{\frac{q}{2}}\right]^{\frac{1}{q}}
\end{equation*}
With $\alpha$ chosen so that $(1-\alpha)d<1$ and $R_2$ large enough, the second term of the right-hand side can be absorbed into the left hand-side and we get the result.
\end{proof}

\subsubsection{A.s. estimate of the oscillation} \label{subsubsec:osc}

We prove that the  oscillation of $\dst\int_{\Sigma_L}f(\bsy_\shortparallel)W_1^{\omega}|_{\Sigma_L}(\bsy_\shortparallel)\dd \bsy_\shortparallel$ 
in $\mathcal{L}_\ell(\bsx_\shortparallel)$ can be controlled by the $L^2$-norms of $\nabla W_1^\omega$ and $\nabla u_{f,\boldsymbol{0}}^\omega$ in a 
layer slightly larger (represented in orange on Figure \ref{fig:do}) than $\mathcal{L}_\ell(\bsx_\shortparallel)$ (represented in red on Figure \ref{fig:do}).
\begin{figure}\centering
\includegraphics[scale=1.2]{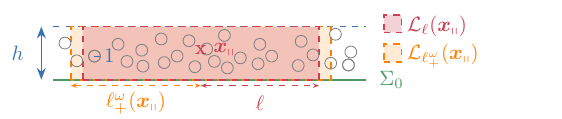}
\caption{Notations used in Section \ref{subsubsec:osc}}\label{fig:do}
\end{figure}
\begin{proposition}\label{prop:osc}
  Let $\bsxpar\in \R^{d-1}$ and $\ell\geq 1$. 
  For $f\in L^\infty(\Sigma_L)$ with compact support
\begin{equation*}
  \left|\partial^{osc}_{\Pa, \mathcal{L}_{\ell}(\bsxpar)}\int_{\Sigma_L}f(\bsy_\shortparallel)W_1^{\omega}|_{\Sigma_L}(\bsy_\shortparallel)\dd \bsy_\shortparallel\right|
  \lesssim(\ell^{\omega}_+(\bsxpar))^2\|\mathbb{1}_{D^\omega}\nabla u_{f}^\omega \|_{L^2(\mathcal{L}_{\ell^{\omega}_+}(\bsx_\shortparallel))}\|\mathbb{1}_{D^\omega}\nabla W_1^\omega\|_{L^2(\mathcal{L}_{\ell^{\omega}_+}(\bsx_\shortparallel))},
\end{equation*} 
where  $u_f^\omega\coloneqq u_{f,\boldsymbol{0}}^\omega$ and $\ell^{\omega}_+(\bsx_\shortparallel)$ is the smallest distance such that $\mathcal{L}_{\ell^{\omega}_+}(\bsx_\shortparallel)\setminus\overline{\mathcal{L}_\ell(\bsx_\shortparallel)}$ contains at least one particle (see Figure \ref{fig:do}), 
\textit{i.e.}
\begin{equation}\label{eq:dp}
\ell^{\omega}_+(\bsx_\shortparallel)\coloneqq \text{argmin}\left\{d>\ell, \quad\exists j\in\mathbb{N}, B(\bsx_j^\omega)\subset\mathcal{L}_{d}(\bsx_\shortparallel)\setminus\overline{\mathcal{L}_\ell(\bsx_\shortparallel)}\right\}.
\end{equation} 
\end{proposition}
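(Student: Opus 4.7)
The plan is to exploit a duality identity between $W_1^\Pa$ and the auxiliary solution $u_f^\Pa := u_{f,\mathbf{0}}^\Pa$ of \eqref{eq:uf}. Testing the weak formulation of $u_f^\Pa$ against $W_1^\Pa$ (and vice versa, both admissible since $W_1^\Pa, u_f^\Pa \in W_0(D^\omega)$) yields the representation
\begin{equation*}
F(\Pa) := \int_{\Sigma_L} f \, W_1^\Pa|_{\Sigma_L} \, \dd\bsypar = \int_{D^\Pa} \nabla u_f^\Pa \cdot \nabla W_1^\Pa,
\end{equation*}
turning the surface integral defining $F$ into a volume integral naturally amenable to localization.

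Fix two configurations $\Pa, \Pa'$ coinciding outside $\mathcal{L}_\ell(\bsxpar)$, and introduce a Lipschitz cutoff $\chi$ supported in $\mathcal{L}_{\ell_+^\omega}(\bsxpar)$, equal to $1$ on $\mathcal{L}_\ell(\bsxpar)$. The key geometric input, encoded in \eqref{eq:dp}, is that the annular region $\mathcal{L}_{\ell_+^\omega}(\bsxpar) \setminus \overline{\mathcal{L}_\ell(\bsxpar)}$ contains at least one full particle, necessarily common to $\Pa$ and $\Pa'$. The modified functions $(1-\chi) u_f^\Pa$ and $(1-\chi) W_1^\Pa$ then vanish on every particle of $\Pa'$: trivially on $\Pa' \cap \mathcal{L}_\ell$ where $1-\chi \equiv 0$, and on $\Pa' \setminus \mathcal{L}_\ell = \Pa \setminus \mathcal{L}_\ell$ because $u_f^\Pa$ and $W_1^\Pa$ already vanish on $\partial \Pa$. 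Since $\mathcal{L}_{\ell_+^\omega} \subset \mathcal{L} = \R^{d-1} \times (0,h)$ lies strictly below $\Sigma_H$ and $\Sigma_L$, the jump data at $\Sigma_H$ and the trace on $\Sigma_L$ are left untouched, so these are admissible test functions in the $\Pa'$-variational formulations.

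Substituting $(1-\chi) W_1^\Pa$ as a test function in the weak formulation of $u_f^{\Pa'}$ and comparing to the corresponding representation for $F(\Pa)$, one reduces $F(\Pa) - F(\Pa')$ to a sum of bilinear integrals supported in $\mathcal{L}_{\ell_+^\omega}(\bsxpar) \cap D^\omega$, some involving $\nabla \chi$. Each such term is estimated by Cauchy--Schwarz combined with the local Poincar\'e inequality
\begin{equation*}
\|v\|_{L^2(\mathcal{L}_{\ell_+^\omega}(\bsxpar))} \lesssim \ell_+^\omega(\bsxpar) \, \|\nabla v\|_{L^2(\mathcal{L}_{\ell_+^\omega}(\bsxpar) \cap D^\omega)},
\end{equation*}
valid for any $v \in W_0(D^\omega)$ by virtue of its vanishing on the buffer particle isolated above. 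Applying this inequality once to $u_f^\Pa$ and once to $W_1^\Pa$ produces exactly the prefactor $(\ell_+^\omega(\bsxpar))^2$ appearing in the statement; passing to the supremum over the admissible $\Pa'$ yields the claimed oscillation bound.

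The main obstacle is the careful bookkeeping of all boundary and interface contributions generated by inserting the cutoff into the representation formula: one must verify that $\chi$ does not disturb the jump at $\Sigma_H$ (the source driving \eqref{eq:w10}) nor the trace on $\Sigma_L$ (which carries the functional $F$), while being supported in a random box whose diameter $\ell_+^\omega$ fluctuates with $\omega$. The two powers of $\ell_+^\omega$ in the final estimate are the signature of the two independent Poincar\'e steps, one for $u_f$ and one for $W_1$, each relying on the common buffer particle in the annular region.
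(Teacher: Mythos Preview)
Your duality idea (represent $F(\Pa)$ through $u_f^\Pa$ and localize via a cutoff anchored to a buffer particle) is morally the paper's approach as well, but as written the sketch has two real gaps.

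First, a minor one: $W_1^\Pa$ is \emph{not} in $W_0(D^\omega)$. It lives in the stationary space $\mathcal{W}_0(D)$ (see \eqref{eq:w0D}); its gradient is only locally $L^2$ and the global integral $\int_{\R^{d-1}}\int_0^\infty |\nabla W_1|^2$ diverges. So you cannot test $W_1^\Pa$ directly in the $W_0(D^\omega)$-variational formulation of $u_f^\Pa$, and neither is $(1-\chi)W_1^\Pa$ an admissible test function in $W_0(D^{\Pa'})$, since outside $\mathcal{L}_{\ell_+^\omega}$ it equals $W_1^\Pa$ and does not decay. This can be repaired (e.g.\ via a large-scale cutoff and the decay of $u_f$ to justify the integration by parts), but it must be said.

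Second, and more substantively: your cutoff manipulation does \emph{not} eliminate the primed quantities. If you test $(1-\chi)W_1^\Pa$ in the $\Pa'$-formulation of $u_f^{\Pa'}$ you obtain $F(\Pa)=\int_{D^{\Pa'}}\nabla u_f^{\Pa'}\cdot\nabla((1-\chi)W_1^\Pa)$, hence
\[
F(\Pa)-F(\Pa')=\int_{D^{\Pa'}}\nabla u_f^{\Pa'}\cdot\nabla\big((1-\chi)W_1^\Pa-W_1^{\Pa'}\big),
\]
which still carries $\nabla u_f^{\Pa'}$ and $\nabla W_1^{\Pa'}$. No combination of the identities you list produces bilinear terms supported in $\mathcal{L}_{\ell_+^\omega}$ involving \emph{only} $u_f^\Pa$ and $W_1^\Pa$. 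To reach the stated bound (whose right-hand side depends only on the reference $\omega$) you need an additional a~priori comparison of the perturbed fields with the unperturbed ones. The paper supplies exactly this as its Step~1: it proves the stability estimate
\[
\|\mathbb{1}_{D'}\nabla W_1^{\Pa'}\|_{L^2(\mathcal{L}_{\ell_+^\omega})}\;\lesssim\;\ell_+^\omega\,\|\mathbb{1}_{D^\omega}\nabla W_1^\omega\|_{L^2(\mathcal{L}_{\ell_+^\omega})},
\]
by writing a problem for the difference $Z^\omega$ on $\widetilde D^\omega=D^\omega\cup\overline{\Pa_\ell^\omega}$ and testing with $Z^\omega$ itself. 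Only then, in Step~2, does it express $\int_{\Sigma_L}f\,Z^\omega$ as boundary integrals over $\partial\Pa_\ell^\omega$ and $\partial\Pa_\ell'$, estimate the $H^{-1/2}$ normal traces by local energies, and close with the $\ell_+^\omega$-Poincar\'e inequality~\eqref{eq:tracelayer}. Your two Poincar\'e applications account for the factor $(\ell_+^\omega)^2$ correctly in spirit, but one of them must really be applied to a primed field, and it is the Step~1 stability that converts that into a bound on $\nabla W_1^\omega$ (contributing the second power of $\ell_+^\omega$). Without it, your argument cannot pass to the supremum over $\Pa'$ with a right-hand side independent of $\Pa'$.
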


\begin{proof}[Proof of proposition \ref{prop:osc}]
Let $\bsxpar\in \R^{d-1}$ and $\ell\geq 1$.
Consider $\Pa^\omega$ a realization of the particle distribution in the layer and let $\Pa'$ be a repartition of particles satisfying the hard-core assumption and such that
\begin{equation*}
  \Pa'\cap\left(\R^d\setminus \overline{\mathcal{L}_\ell(\bsxpar)}\right)=\Pa^\omega\cap\left(\R^d\setminus \overline{\mathcal{L}_\ell(\bsxpar)}\right).\end{equation*}
We let $W_1'$ denote the solution of problem \eqref{eq:w10} where $\Pa^\omega$ (resp. $D^\omega$) is replaced by $\Pa'$ (resp. $D'\coloneqq \R^{d-1}\times\R^+\setminus\overline{\Pa'}$). 


\noindent\textit{Step 1:} We first prove that $W_1^{'}$ is well-defined and verifies
\begin{equation}\label{eq:claim1}
  \|\mathbb{1}_{D'}\nabla W_1^{'}\|_{L^2(\mathcal{L}_{\ell^{\omega}_+}(\bsx_\shortparallel))}\lesssim\ell^{\omega}_+(\bsx_\shortparallel)\,\|\mathbb{1}_{D^\omega}\nabla W_1^\omega\|_{L^2(\mathcal{L}_{\ell^{\omega}_+}(\bsx_\shortparallel))}.\end{equation}
Consider the set of particles of $\Pa^\omega$ (resp. $\Pa'$) intersecting with $\mathcal{L}_\ell(\bsx_\shortparallel)$, \textit{i.e.}
 \begin{gather*}
  \Pa_\ell^\omega\coloneqq\bigcup_{i\in\mathcal{I}_{\ell}^\omega(\bsx_\shortparallel)}B(\bsx_i^\omega),\,\, \text{with }\quad\mathcal{I}_\ell^\omega(\bsx_\shortparallel)\coloneqq \left\{i\in\mathbb{N}, \quad \bsx_i^\omega\in\mathcal{L}_\ell(\bsxpar)\right\},
  \\\left(\text{resp. }\,\,\Pa_\ell'\coloneqq\bigcup_{i\in\mathcal{I}_{\ell}'(\bsx_\shortparallel)}B(\bsx_i'),\,\, \text{with }\quad\mathcal{I}'_\ell(\bsx_\shortparallel)\coloneqq \left\{i\in\mathbb{N}, \quad \bsx'_i\in\mathcal{L}_\ell(\bsxpar)\right\}\right).
\end{gather*} 
Let $\mathcal{S}\coloneqq\text{Hull}\left(\Pa_\ell^\omega\right)$ where $\text{Hull}$ denotes the convex hull. Let $\Lchi$ be a smooth cut-off function such that $\text{supp} \Lchi \subset \mathcal{S}$
then $X^\omega\coloneqq W_1'-\mathbb{1}_{\Pa_\ell^\omega} W_1^\omega(1-\chi)$ verifies the following problem in $D'$
\begin{equation}\label{eq:x}
\left\{
\begin{array}{l}
-\Delta X^\omega= \Delta (\chi W_1^\omega)  \;\text{ in } \quad D'\setminus\Sigma_H,  \\
  -\partial_{y_d} X^\omega = 0 \; \text{ on } \quad \Sigma_0, \quad\text{and}\quad 
 X^\omega = 0\; \text{ on } \quad \Pa', \\
    \Big[X^\omega\Big]_H  = 0 \quad  \text{ and }\quad \Big[ -\partial_{y_d}X^\omega\Big]_H = 0.
\end{array}
\right.
\end{equation}
Since $\nabla (\chi W_1)\in [L^2(D')]^d$ with compact support, there exists a unique $X^\omega\in W_0(D')$ by proposition \ref{prop:aux}. Hence $W_1'$ is well-defined. Let us now show that it verifies estimate \eqref{eq:claim1}.

Let $\widetilde{D}^\omega\coloneqq D^\omega\cup\overline{\Pa_\ell^\omega}.$
$Z^\omega\coloneqq \mathbb{1}_{\Pa_\ell'}W_1'- \mathbb{1}_{\Pa_\ell^\omega}W_1^\omega$  verifies
\begin{equation}\label{eq:weakz}
  \forall v\in W_0(\widetilde{D}^\omega),\quad \int_{\widetilde{D}^\omega} \nabla Z^\omega\cdot\nabla\overline{ v}=\int_{\partial\Pa_\ell^\omega}\nabla W_1^\omega\cdot\bsn \,\overline{v}+\int_{\partial\Pa_\ell'}\nabla W_1^{'}\cdot\bsn \,\overline{v}.
\end{equation} 
To simplify notations we use here integrals on the boundary of $\Pa_\ell^\omega$ and $\Pa_\ell'$ to denote the duality product $H^{-{1}/{2}}, H^{{1}/{2}}$. 

The coercivity of the associated sesquilinear form is a direct consequence of the weighted Poincar\'e inequality \eqref{eq:wPoincareCC}. 
The continuity of the linear form is ensured by the trace theorem and a Poincare's inequality that holds with a constant proportional to $\ell^{\omega}_+(\bsxpar)$ since at least one particle lies in $\mathcal{L}_{\ell^{\omega}_+(\bsxpar)}(\bsx_\shortparallel)$. We obtain for all $i\in\mathcal{I}'_\ell(\bsx_\shortparallel), j\in\mathcal{I}_\ell^\omega(\bsx_\shortparallel)$, $v\in W_0(\widetilde{D}^\omega)$
\begin{equation}\label{eq:tracelayer}
  \|v\|_{H^{{1}/{2}}(\partial B(\bsx_j^\omega))}+\|v\|_{H^{{1}/{2}}(\partial B(\bsx_i'))}\lesssim \ell^{\omega}_+(\bsxpar)\,\|\nabla v\|_{L^2(\mathcal{L}_{\ell^{\omega}_{+}}(\bsx_\shortparallel))}.
\end{equation} 

Let us now estimate $\|\nabla W_1^\omega\cdot\bsn\|_{H^{-{1}/{2}}(\partial B(\bsx_i^\omega))}$ for $i\in\mathcal{I}_\ell^\omega(\bsx_\shortparallel)$. 
We proceed by duality. Let $\psi_i\in H^{{1}/{2}}(\partial B(\bsx_i^\omega))$ and $\Psi_i\in H^1(\dst\mathcal{L}_{\ell^{\omega}_+}(\bsx_\shortparallel))$ 
a lifting of $\psi$ such that $\textrm{supp} \Psi_i\subset \dst\mathcal{L}_{\ell^{\omega}_+}(\bsx_\shortparallel)$, $\Psi_i|_{\partial  B(\bsx_i^\omega)}=\psi_i\delta_{i,j}$ for all $j\in\mathcal{I}_\ell^\omega(\bsx_\shortparallel)$ and 
\begin{equation*}
  \|\Psi_i\|_{H^1(\dst\mathcal{L}_{\ell^{\omega}_+}(\bsx_\shortparallel))}\leq\|\psi_i\|_{H^{{1}/{2}}(\partial B(\bsx_i^\omega))}.
\end{equation*} 
We write the weak formulation associated to \eqref{eq:w10} in $\mathcal{L}_{\ell^{\omega}_{+}}(\bsx_\shortparallel)$
\begin{equation*}
  \int_{\mathcal{L}_{\ell^{\omega}_{+}}(\bsx_\shortparallel)}\mathbb{1}_{D^\omega}\nabla W_1^\omega\cdot\nabla \overline{\Psi_i}=\int_{\partial B(\bsx_i^\omega)}\nabla W_1^\omega\cdot\bsn\, \overline{\psi_i}.
\end{equation*}
By definition of the $H^{-1/2}$-norm we deduce that
\begin{equation*}
  \|\nabla W_1^\omega\cdot\bsn\|_{H^{-{1}/{2}}(\partial B(\bsx_i^\omega))}\leq\|\mathbb{1}_{D^\omega}\nabla W_1^\omega\|_{L^2(\mathcal{L}_{\ell^{\omega}_{+}}(\bsx_\shortparallel))}.
\end{equation*}
We can prove similarly for $i\in\mathcal{I}'_\ell(\bsx_\shortparallel)$, $\|\nabla W_1^{'}\cdot\bsn\|_{H^{-{1}/{2}}(\partial B(\bsx_i'))}\leq\|\mathbb{1}_{D'}\nabla W_1'\|_{L^2(\mathcal{L}_{\ell^{\omega}_{+}}(\bsx_\shortparallel))}.$

Testing \eqref{eq:weakz} with $v=Z^\omega$, we get
\begin{equation*}
  \int_{\widetilde{D}^\omega} |\nabla Z^\omega|^2=-\int_{\partial\Pa_\ell^\omega}\nabla W_1^\omega\cdot\bsn\, \mathbb{1}_{D'}\overline{W_1^{'}}
  -\int_{\partial\Pa_\ell'}\nabla W_1^{'}\cdot\bsn\,  \mathbb{1}_{D^\omega}\overline{W_1^\omega},
\end{equation*} 
and therefore
\begin{equation*}
  \int_{\widetilde{D}^\omega}|\nabla Z^\omega|^2\lesssim \ell^{\omega}_{+}(\bsxpar)\| \mathbb{1}_{D^\omega}\nabla W_1^\omega\|_{L^2(\mathcal{L}_{\ell^{\omega}_{+}}(\bsx_\shortparallel))}\| \mathbb{1}_{D'}\nabla W_1^{'}\|_{L^2(\mathcal{L}_{\ell^{\omega}_{+}}(\bsx_\shortparallel))}.
\end{equation*}
Claim \eqref{eq:claim1} follows from Young's inequality.\vsd

\noindent\textit{Step 2: Sensitivity of $W_1|_{\Sigma_L}$} \vsd

Recall that $u_f\coloneqq u_{f,\boldsymbol{0}}$ denotes the unique weak solution in $W_0(D^\omega)$ of \eqref{eq:uf} with $\bsg=\boldsymbol{0}$. The weak formulation associated to \eqref{eq:uf} reads for all $v\in W_0(\widetilde{D}^\omega)$
\begin{equation}\label{eq:weakug}
  \int_{\widetilde{D}^\omega} \mathbb{1}_{D^\omega}\nabla u_{f}^\omega\cdot\nabla\overline{ v}=\int_{\Sigma_L}f\overline{v}+\int_{\partial\Pa_\ell^\omega}\nabla u_{f}^\omega\cdot\bsn\, \overline{v}.
\end{equation} 
We evaluate \eqref{eq:weakz} for $v=\mathbb{1}_{D^\omega}\overline{u_{f}^\omega}$ and \eqref{eq:weakug} for $v=\overline{Z^\omega}.$ After subtracting both expressions and since $u_f^\omega=W^\omega_1=0$ on $\partial\Pa^\omega$, we obtain 
\begin{equation*}
  \int_{\Sigma_L}f Z^\omega=-\int_{\partial\Pa_\ell^\omega}\nabla u_{f}^\omega\cdot\bsn \,\mathbb{1}_{D'}W_1^{'}-\int_{\partial\Pa_\ell'}\nabla W_1^{'}\cdot\bsn\, \mathbb{1}_{D^\omega}u_{f}^\omega.
\end{equation*}
Next we can estimate $\|\nabla u_{f}^\omega\cdot\bsn\|_{H^{-{1}/{2}}(\partial B(\bsx_i^\omega))}$ for $i\in\mathcal{I}_{\ell}(\bsx_\shortparallel)$ using similar steps as in the estimate of $\|\nabla W_1^\omega\cdot\bsn\|_{H^{-{1}/{2}}(\partial B(\bsx_j^\omega))}$. We deduce that

\begin{equation*}
  \|\nabla u_{f}\cdot\bsn\|_{H^{-{1}/{2}}(\partial B(\bsx_i^\omega))}\leq\|\mathbb{1}_{D^\omega}\nabla u_{f}^\omega\|_{L^2(\mathcal{L}_{\ell^{\omega}_{+}}(\bsx_\shortparallel))}.
\end{equation*}
Therefore
\begin{equation*}
  \left|\int_{\Sigma_L}f (\mathbb{1}_{\Pa_\ell'}W_1'- \mathbb{1}_{\Pa_\ell^\omega}W_1^\omega)\right|\lesssim \ell^{\omega}_{+}(\bsxpar)\,\|\mathbb{1}_{D^\omega}\nabla u_{f}^\omega \|_{L^2(\mathcal{L}_{\ell^{\omega}_{+}}(\bsx_\shortparallel))}\,\|\mathbb{1}_{D'}\nabla W_1^{'}\|_{L^2(\mathcal{L}_{\ell^{\omega}_{+}}(\bsx_\shortparallel))}.
\end{equation*}
Using \eqref{eq:claim1} we deduce that
\begin{equation*}
  \left|\int_{\Sigma_L}f (\mathbb{1}_{\Pa_\ell'}W_1'- \mathbb{1}_{\Pa_\ell^{''}}W_1^{''})\right|\lesssim \ell^{\omega}_{+}(\bsxpar)\,\|\mathbb{1}_{D^\omega}\nabla u_{f}^\omega \|_{L^2(\mathcal{L}_{\ell^{\omega}_{+}}(\bsx_\shortparallel))}\,\|\mathbb{1}_{D^\omega}\nabla W_1^{\omega}\|_{L^2(\mathcal{L}_{\ell^{\omega}_{+}}(\bsx_\shortparallel))}.
\end{equation*}
where $\Pa_\ell''$ satisfies the same assumption than $\Pa_\ell'$ and $W_1''$ is the associated solution. By definition of \eqref{eq:der_osc}, this allows us to conclude.
\end{proof}

\subsubsection{Proof of Theorem \ref{thm:fluct}}\label{subsubsec:proofthmfluct}

Let $f\in L^\infty(\Sigma_L)$ with compact support.
The first step consists in applying the mixing hypothesis (Hypothesis \nameref{hyp:mix}) to $\dst\int_{\Sigma_L}f(\bsypar)W_1^\omega|_{\Sigma_L}(\bsypar)\dd \bsypar$ and use Proposition \ref{prop:osc}.
\begin{align*}
  &\Var\left[\int_{\Sigma_L}f(\bsypar)W_1^\omega|_{\Sigma_L}(\bsypar)\dd \bsypar\right]
  \leq \E\left[\int_1^{+\infty}\int_{\R^{d-1}} \left|\partial^{osc}_{\Pa, \mathcal{L}_{\ell}(\bsxpar)} \int_{\Sigma_L}f(\bsypar)W_1^\omega|_{\Sigma_L}(\bsypar)\dd \bsypar\right|^2 \dd\bsxpar \ell^{-(d-1)} \pi(\ell-1) \dd\ell\right]
  \\
  &\qquad\qquad\lesssim\E\left[\int_1^{+\infty}\int_{\R^{d-1}}(\ell^{\omega}_+(\bsxpar))^4 \|\mathbb{1}_{D^\omega}\nabla u_{f}^\omega \|_{L^2(\mathcal{L}_{\ell^{\omega}_{+}}(\bsx_\shortparallel))}^2\|\mathbb{1}_{D^\omega}\nabla W_1^\omega\|_{L^2(\mathcal{L}_{\ell^{\omega}_{+}}(\bsx_\shortparallel))}^2\dd\bsxpar 
    \ell^{-(d-1)} \pi(\ell-1) \dd\ell\right]
  \end{align*}
Next we use Hypothesis \nameref{hyp:Linfty} to bound a.s. and for a.e. $\bsxpar\in R^{d-1}$, $\ell_{+}^{\omega}(\bsxpar)$ by $\ell+R^+$.
\begin{multline*}
  \Var\left[\int_{\Sigma_L}f(\bsy_\shortparallel)W_1^\omega|_{\Sigma_L}(\bsy_\shortparallel)\dd \bsy_\shortparallel\right]\lesssim\int_1^{+\infty}\int_{\R^{d-1}}(\ell+R^+)^{2(d+1)}\E\left[ \left(\fint_{\square_{\ell+R^+}(\bsxpar)}\int_0^h\mathbb{1}_{D^\omega}|\nabla u_{f}^\omega|^2\right)\right.
  \\\left.\left(\fint_{\square_{\ell+R^+}(\bsxpar)}\int_0^h\mathbb{1}_{D^\omega}|\nabla W_1^\omega|^2\right)\right]\dd\bsxpar \ell^{-(d-1)} \pi(\ell-1) \dd\ell
\end{multline*}
 By H\"older's inequality, we get for $q$ such that $|{q}-1|\leq \frac{1}{2C_0}$ where $C_0$ is defined in Theorem \ref{thm:aux} and $q'\coloneqq q/(q-1)$
\begin{multline*}
  \Var\left[\int_{\Sigma_L}f(\bsy_\shortparallel)W_1^\omega|_{\Sigma_L}(\bsy_\shortparallel)\dd \bsy_\shortparallel\right]\lesssim\int_1^{+\infty}\int_{\R^{d-1}}(\ell+R^+)^{2(d+1)}\E\left[ \left(\fint_{\square_{\ell+R^+}(\bsxpar)}\int_0^h\mathbb{1}_{D^\omega}|\nabla u_{f}^\omega|^2\right)^{q}\right]^{{1}/{q}}
  \\\E\left[\left(\fint_{\square_{\ell+R^+}(\bsxpar)}\int_0^h\mathbb{1}_{D^\omega}|\nabla W_1^\omega|^2\right)^{q'}\right]^{{1}/{q'}}\dd\bsxpar \ell^{-(d-1)} \pi(\ell-1) \dd\ell.
\end{multline*}
We use  the stationarity of $W_1$ and Theorem \ref{thm:momentsnw1} with $R_1=\ell+R^+$ to deal with the near-field term
\begin{equation}\label{eq:1f}
\E\left[\left(\fint_{\square_{\ell+R^+}(\bsxpar)}\int_{0}^{h}\mathbb{1}_{D^\omega}|\nabla W_1^{\omega}|^2\right)^{q'}\right]^{{1}/{q'}}=\E\left[\left(\fint_{\square_{\ell+R^+}(0)}\int_{0}^{h}\mathbb{1}_{D^\omega}|\nabla W_1^{\omega}|^2\right)^{q'}\right]^{{1}/{q'}}
\lesssim 1.
\end{equation} 
Since $X\mapsto\E[|X|^{{q}}]^{{1}/{{q}}}$ is a norm we get by Jensen's inequality
\begin{equation}\begin{aligned}\label{eq:2f}
  \int_{\R^{d-1}}\E\left[\left( \fint_{\square_{\ell+R^+}(\bsxpar)}\int_{0}^{h}\mathbb{1}_{D^\omega}|\nabla u_{f}^\omega|^2\right)^{q}\right]^{{1}/{q}}\dd\bsxpar
  &\leq\int_{\R^{d-1}} \fint_{\square_{\ell+R^+}(\bsxpar)}\int_{0}^{h}\E\left[\mathbb{1}_{D^\omega}|\nabla u_{f}^\omega|^{2q}\right]^{{1}/{q}}\\
  &=\int_{\R^{d-1}} \int_{0}^{h}\E\left[\mathbb{1}_{D^\omega}|\nabla u_{f}^\omega|^{2q}\right]^{{1}/{q}}\\&=\|\mathbb{1}_D\nabla u_f^\omega\|_{L^2(\R^{d-1}\times \R^+,L^{2 q}(\Omega))}^2. \end{aligned}
\end{equation}
On the penultimate row we simplified the spatial average in the second term of the right-hand side by noticing that by Fubini-Tonelli's Theorem for $h\in L^1(\R^{d-1})$
\begin{equation*}
  \int_{\R^{d-1}}\fint_{\square_{\ell+R^+}(\bsxpar)}h(\bsypar)\dd\bsypar\dd\bsxpar=\int_{\R^{d-1}}h(\bsxpar)\dd\bsxpar.
\end{equation*} 
We conclude with the regularity result on the adjoint problem (see Theorem \ref{thm:aux}). Finally since $\pi$ is super-algebraic the integral over $\ell$ is finite.
\begin{equation}\label{eq:4f}
 \int_1^{+\infty}(\ell+R^{+})^{2(d+1)}\ell^{-(d-1)}\pi(\ell-1) \dd\ell<+\infty.
\end{equation}
Combining \eqref{eq:1f}, \eqref{eq:2f}, \eqref{eq:4f} yields the desired result.

\section{Numerical results}

In this section we present numerical simulations that illustrate our theoretical results and more specifically the error estimates derived in Corollary \ref{cor:error2improved}. All simulations were conducted using XLife$++$ an open source FEM-BEM solver \cite{kielbasiewicz2017xlife++}.

Instead of considering a compactly supported source we study the scattering by an incident plane wave $u_{inc}:=e^{i(k_1x_1+k_2x_2)}$. We solve then the following problem

\begin{equation*}
\left\{
\begin{aligned}
-\Delta u_{\eps} - k^2 u_\eps &= 0 \quad \text{ in }\,\,\mathcal{B}_{\eps H,L} , \\
-\partial_{x_d} u_\eps  +ik\gamma u_\eps &= 0 \quad \text{ on }\,\, \Sigma_{\eps H} , \\
-\partial_{x_d}  u_\eps + \Lambda^k  u_\eps &= -\partial_{x_d}  u_{inc} + \Lambda^k  u_{inc} \quad \text{ on } \quad \Sigma_{L} ,
\end{aligned}
\right.
\end{equation*}

Since the problem satisfied by $u_\eps$ is unbounded in the longitudinal directions, we approximate it by truncating the layer
$L_T:=\square_T\times(0,\eps h)$ with $T=400$ and impose that the field, {still denoted $u_\eps$}, is $k_1 T$-quasi-periodic, 
\emph{i.e.} $x_1\mapsto e^{-k_1x_1} u_\eps(x_1,x_2)$ is $T$-periodic. 
Thanks to the quasi-periodicity the Dirichlet-to-Neumann operator on $\Sigma_L^T:=\square_T\times\{y_d=L\}\}$ admits the following modal decomposition
\begin{equation*}
\forall v\in H^{{1}/{2}}(\Sigma_L^T),\quad \Lambda^k v(\cdot)=i\sum_{m\in\mathbb{Z}}\beta_m\langle v,\varphi_m\rangle_{L^2(\Sigma_L^T)}\varphi_m(\cdot),
\end{equation*}
where $\beta_m^2\coloneqq k^2 - ({2m\pi}/{T}+k_1)^2$ and  $\varphi_m(x_1)\coloneqq{1}/{\sqrt{T}}e^{i\left({2m\pi}/{T}+k_1\right)x_1}$ for $x_1\in\R$. 
To implement this operator we truncate the series to ${-N}\leq m\leq N$ where $N>0$ is chosen such that
\begin{equation*}
  e^{-\sqrt{\left({2N\pi}/{T}+k_1\right)^2-k^2}(L-\eps H)}<\eta
\end{equation*} 
with $\eta$ being a small threshold parameter ($\eta=10^{-6}$ in the simulations).
The centers of the particles are sampled according to a Mat\`ern point process \cite[Section 6.5.2]{illian2008statistical} restricted to the truncated layer $L_T$.
 The algorithm follows the following steps : 
 \begin{enumerate}
 \item for a given initial density $\rho  \in (0,1)$ we sample uniformly in $L_T$, $N_p$ centers where $N_p$ 
 is drawn according to a Poisson distribution of parameter $\nu^\eps\coloneqq\rho {T \eps h}/{\pi\eps^2};$
 \item we assign to each center independently and randomly a score between 0 and 1;
 \item we remove the centers violating the hard-core assumption with the centers with a lower score.
 \end{enumerate}    
  On Figure \ref{fig:ue},  the real part of the total field $u_\eps^\omega$ for a given realization $\omega$ is displayed.
 \begin{figure}
 \begin{center}\begin{tabular}[t]{cc}
\includegraphics [width=0.85\textwidth] {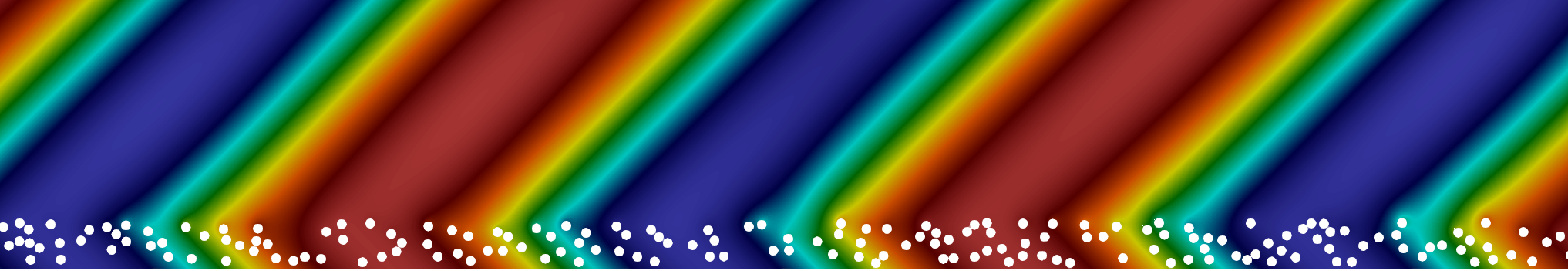}&\includegraphics [width=0.05\textwidth] {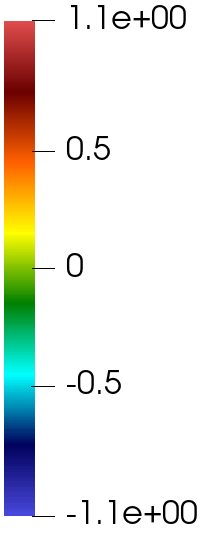}\end{tabular}
 \end{center}
 \caption{Real part of the total field $u_\eps$ for $\rho=0.4$, frequency$=2$GHz, $\theta=\pi/4$, $k_2\varepsilon=10^{-1}$m, $\gamma=1+i$}\label{fig:ue}
 \end{figure}

To implement the effective model \eqref{eq:modele_eff_Dirichlet_Alea} the first step is to compute $c_1$. 
By stationarity of $W_ 1$ for all $R>0$, it holds that 
\begin{equation*}
  c_1= \mathbb{E}[W_1\Big|_{\Sigma_L}]=\mathbb{E}\left[\fint_{\square_R}W_1\Big|_{\Sigma_L}(\bsypar) \dd\bsy_\shortparallel \right].
\end{equation*}
 To compute the expectation we use a Monte-Carlo algorithm so that
 \begin{equation}\label{eq:c1num}c_1\sim \frac{1}{N}\sum_{j=1}^N\left[\displaystyle\fint_{\square_R}W_1^{\omega_j}\Big|_{\Sigma_L}(\bsy_\shortparallel)\, \dd\bsy_\shortparallel \right].
 \end{equation}
$W_1^\omega$ is solution of a Laplace-type problem in $\mathcal{B}_L^\omega$, an unbounded random domain. To compute it numerically,
we restrict the domain to $\vsu\square_R\times [0,L]\setminus\Pa^\omega$ and impose periodic boundary conditions at $x_1=-{R}/{2}, {R}/{2}$ as it is customary in stochastic homogenization 
(see \cite{clozeau2024bias}). Since $y_1\to W_1^\omega(y_1, y_2)$ is $R$-periodic, the DtN operator on $\Sigma_L^\sharp\coloneqq\{(y_1,L), y_1\in(-{R}/{2},{R}/{2})\}$ 
admits the following modal decomposition
\begin{equation*}
\forall v\in H_\sharp^{{1}/{2}}(\Sigma_L^\sharp),\quad \Lambda v(\cdot)=\sum_{m\in\mathbb{Z}}\frac{2|m|\pi}{R}\langle v,\phi_m\rangle_{L^2(\Sigma_L^\sharp)}\phi_m(\cdot),
\end{equation*}
where $\phi_m(y_1)\coloneqq{1}/{\sqrt{R}}e^{\imath {2m\pi y_1}/{R}}$ for $y_1\in\R$.To implement this operator we truncate the series to ${-N}\leq m\leq N$ where $N>0$ is chosen such that
\begin{equation*}
  e^{-{2N\pi (L-H)}/{R}}<\eta.
\end{equation*}The centers of the particles are sampled according to a Mat\`ern point process restricted to the truncated layer $\square_R\times (0,h)$.
Keep in mind that contrarily to the simulation of $u_\eps$, the particles are now of size $1$ so that the parameter of the Poisson process for a given $\rho$ is $\nu\coloneqq\rho{R h}/{\pi}.$ On Figure \ref{fig:w1} two plots of $W_1^\omega$ for two different realizations and two different $\rho$ are displayed. We see that the density of particles plays a role in the convergence rate of $W_1^\omega$ when $y_2\to +\infty.$

\begin{figure}
\begin{center}\begin{tabular}{cccc}\includegraphics [width=0.4\textwidth] {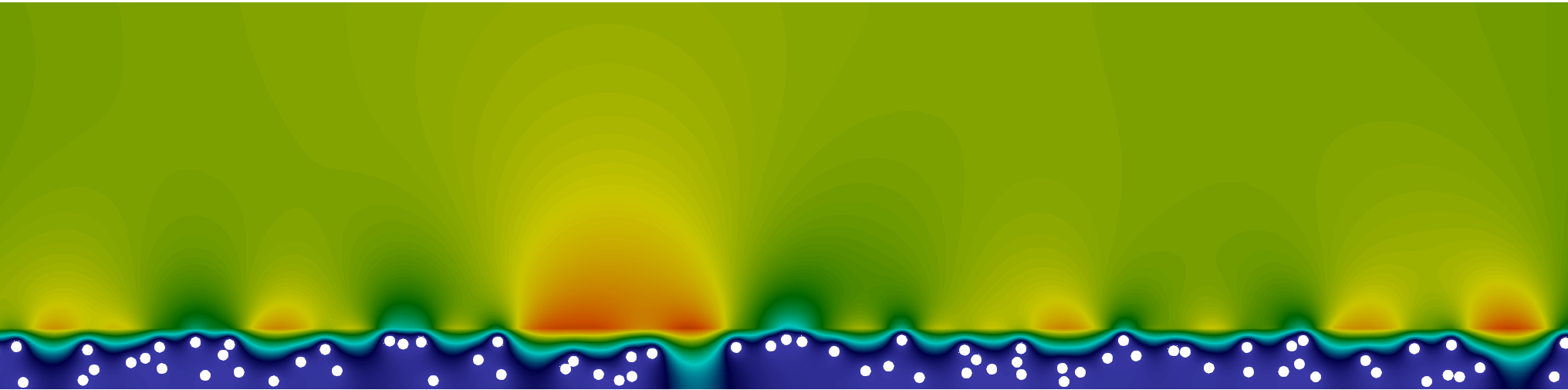}&\includegraphics [width=0.04\textwidth] {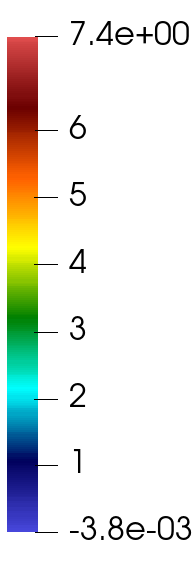}&\includegraphics [width=0.4\textwidth] {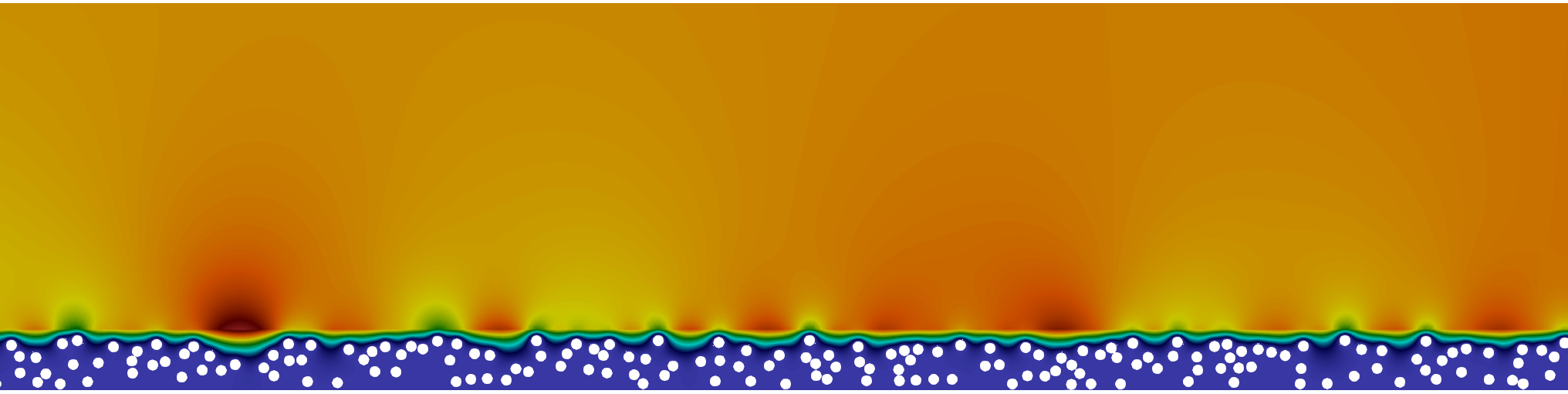}&\includegraphics [width=0.04\textwidth] {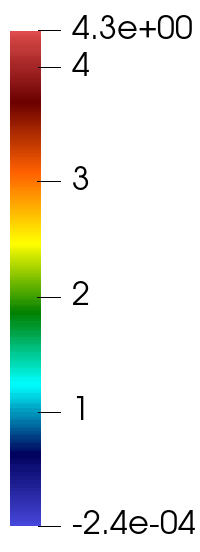}\end{tabular}\\
\caption{Profile function $W_1$ for $\rho=0.1$ (left) and $\rho=0.4$ (right)}\label{fig:w1}\end{center}\end{figure}

\begin{figure}
\begin{center}\begin{tabular}{cccc}\includegraphics [width=0.3\textwidth] {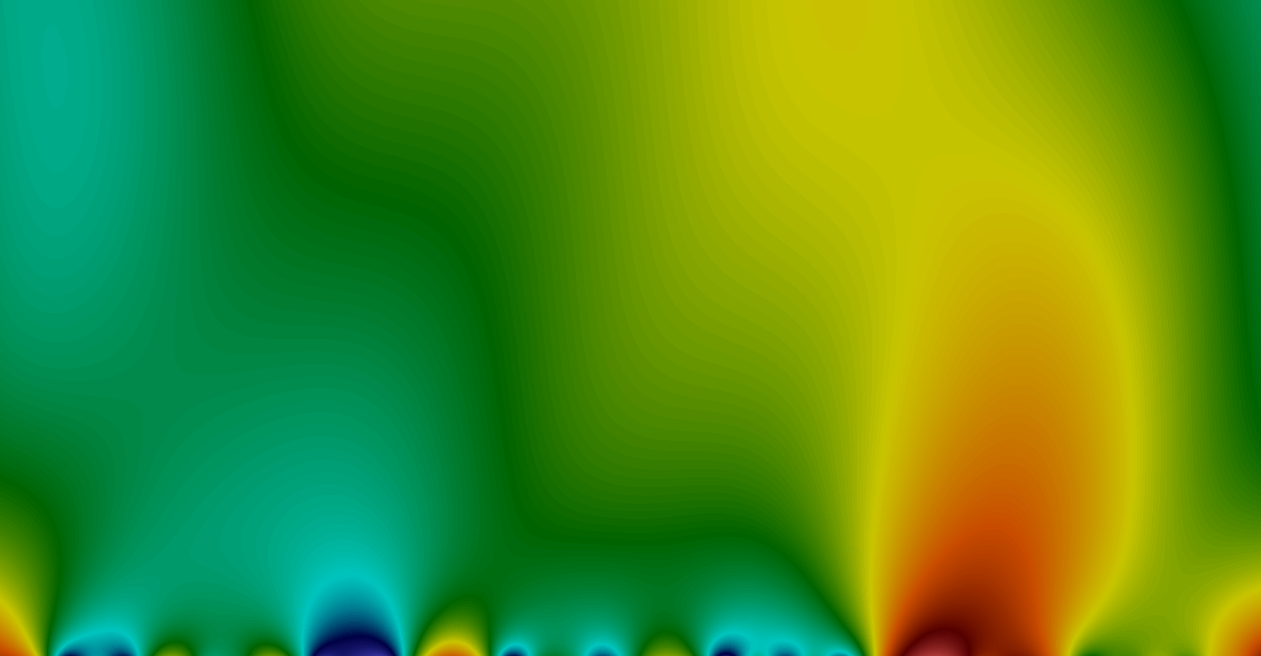}&\includegraphics [width=0.04\textwidth] {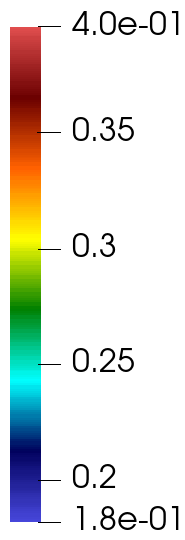}&\hspace{1cm}\includegraphics [width=0.3\textwidth] {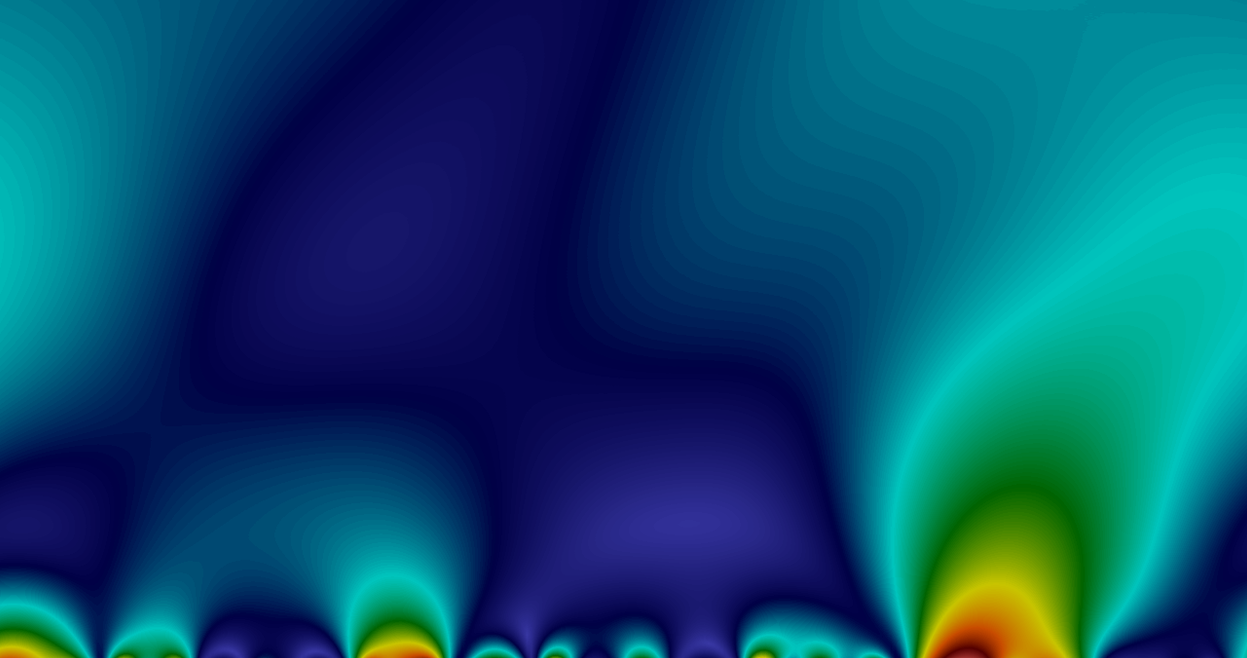}&\includegraphics [width=0.04\textwidth] {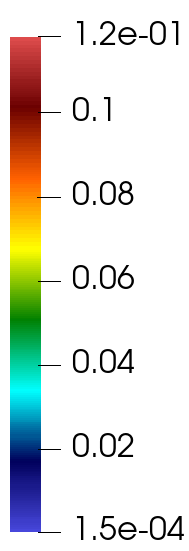}\end{tabular}\\
\caption{$|u_\eps^\omega-u_0^{FF}|$ (left) and $|u_\eps^\omega-v_\eps|$ (right) on $(\eps H, \eps L)$ for a given $\omega$ and $L=70$}\label{fig:err}\end{center}\end{figure}

In \eqref{eq:c1num} we can adjust the size of the domain $R$ and the number of realizations $N$ to improve the rate of convergence. 
On Figure \ref{fig:c1} on the left, we have plotted the value of the computed constant for different sizes $R$ of the computational domain. 
In green, the average is computed over 100 realizations and in red over 500 realizations. The two averages are displayed as the dotted lines 
and the colored zones correspond to the $95\%$-confidence interval. On Figure \ref{fig:c1} on the right, the constant is computed with one realization for three different realizations thanks to the ergodicity. We notice that the computed coefficient does converge to the same limit as the ones computed with Monte-Carlo algorithm but as expected it requires a much larger domain to achieve convergence ($3200$ vs $1700$).

 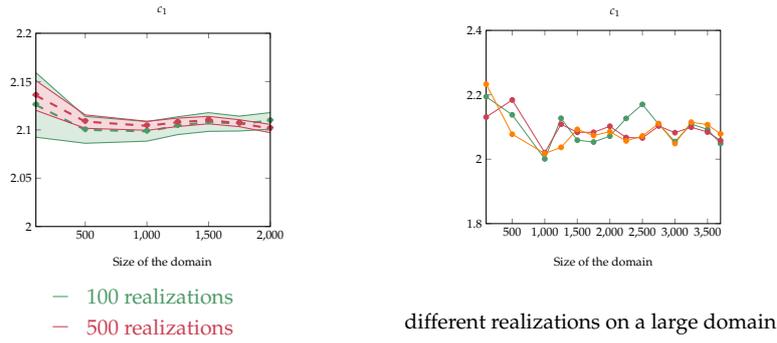
\begin{figure}
 \begin{center}
\begin{center}\begin{tabular}{cc}

   \begin{tikzpicture}[scale=0.45,transform shape]
   \begin{axis}[axis on top, xlabel={Size of the domain }, ymin=2.0,ymax=2.2, yticklabel style={/pgf/number format/fixed, /pgf/number format/precision=5}, scaled y ticks=false, title={}, enlarge x limits=false,title={$c_1$}, title style={at={(1.2,2.5)},anchor=north},  xlabel style={at={(1.2,-0.3)},anchor=south}]]  
  
   \addplot[mark=,green3,name path=B,forget plot]table[x=T,y=ETM]{graphes/M100_TE.txt};
   \addplot [mark=,green3,name path=A,forget plot]table[x=T,y=ETP]{graphes/M100_TE.txt};
   \addplot [mark=*,green3,dashed,thick]table[x=T,y=MOYENNE]{graphes/M100_TE.txt};

   \addplot[green3!20] fill between[of=A and B];

    \addplot[mark=,color3,name path=B,forget plot]table[x=T,y=ETM]{graphes/M500_TE.txt};
   \addplot [mark=,color3,name path=A,forget plot]table[x=T,y=ETP]{graphes/M500_TE.txt};
   \addplot [mark=*,color3,dashed,thick]table[x=T,y=MOYENNE]{graphes/M500_TE.txt};

   \addplot[color3!20] fill between[of=A and B];
\end{axis}
 
  \end{tikzpicture}
&\hspace{1cm}
   \begin{tikzpicture}[scale=0.45]
   \begin{axis}[xlabel={Size of the domain },ylabel={},ylabel={}, ymin=1.8,ymax=2.4,yticklabel style={ /pgf/number format/fixed, /pgf/number format/precision=5 }, scaled y ticks=false, enlarge x limits=false,title={$c_1$}, title style={at={(0.55,1.1)},anchor=north},  xlabel style={at={(0.5,-0.15)},anchor=south}]]
   
   \addplot [mark=*,green3,thick]table[x=T,y=VALEUR1]{evol_coef_TE.txt};
   \addplot [mark=*,color3,thick]table[x=T,y=VALEUR2]{evol_coef_TE.txt};
   \addplot [mark=*,orange,thick]table[x=T,y=VALEUR3]{evol_coef_TE.txt};
   \end{axis}
  \end{tikzpicture}\\\begin{tikzpicture}\draw[-, color=green3](0,0)--(0.2,0) node[at={(2.5,0)}, anchor=east]{\footnotesize100 realizations};\draw[-, color=color3](0,-0.4)--(0.2,-0.4) node[at={(2.5,-0.4)}, anchor=east]{\footnotesize500 realizations};\end{tikzpicture}&\hspace{1cm}\begin{tikzpicture}\draw node[at={(2,0.3)}, anchor=east]{\footnotesize different realizations on a large domain};\end{tikzpicture}\end{tabular}

\end{center}
 \end{center}
 \caption{Computation of the effective model's coefficient}\label{fig:c1}
 \end{figure}

Finally we verify the convergence rate of the error between the effective model and the reference solution. In the case of an incident plane wave $u_{inc}:=e^{i(k_1x_1+k_2x_2)}$ the solution $v_\eps$ to the effective model problem
\begin{equation*}
\left\{
\begin{aligned}
-\Delta v_{\eps} - k^2 v_\eps &= 0 \quad \text{ in }\,\,\mathcal{B}_{\eps H,L} , \\
-\eps c_1 \partial_{x_d} v_\eps  + v_\eps &= 0 \quad \text{ on }\,\, \Sigma_{\eps H} , \\
-\partial_{x_d}  v_\eps + \Lambda^k  v_\eps &= -\partial_{x_d}  u_{inc} + \Lambda^k  u_{inc} \quad \text{ on } \quad \Sigma_{L} ,
\end{aligned}
\right.
\end{equation*} is explicit:
\begin{equation*}
v_\eps(x_1, x_2)=e^{i(k_1x_1+k_2x_2)}+r_\eps^2e^{i(k_1x_1-k_2x_2)}
\end{equation*}
for $x_1\in \mathbb{R}, x_2\geq \eps H$ where the reflection coefficient $r_\eps^2$ is given by\begin{equation*}
r_\eps^2\coloneqq\frac{ik_2\eps c_1-1}{ik_2\eps c_1+1}e^{2ik_2\eps H}.
\end{equation*}
Similarly $u_0^{FF}=e^{i(k_1x_1+k_2x_2)}+r_\eps^1e^{i(k_1x_1-k_2x_2)}$ with $r^1_\eps\coloneqq-e^{2ik_2\eps H}$. The reflection coefficient of the computed reference solution $u_\eps^\omega$ for a given realization $\omega$ can be computed as
\begin{equation*}
r_{ref}^\omega\coloneqq\frac{e^{ik_2L}}{R}\int_{\Sigma_{L}^\sharp}(u_\eps^\omega-u_{inc})(x_1,L)e^{-k_1x_1}\,\dd x_1.
\end{equation*}On Figure \ref{fig:reflectioncoef} we plot the errors $\E[|r_{ref}^\omega-r_\eps^i|]$ for $i=1,2$ with respect to $k_2\eps$ for two different densities of particles $\rho=0.1$ and $0.4$. The expectation is computed over $50$ samples. The shadowed areas correspond to the confidence intervals using one empirical standard deviation. 
For each $\rho$ we recover the expected rate of convergence of $1$ for $i=1$. For $i=2$, the convergence rate is $1.57$ for $\rho=0.1$ and $1.3$ for $\rho=0.4$ (recall that the theoretical rate is $1.5$). Note first that as expected the larger the density the closer the solution is to the solution of the Dirichlet problem. 
For $k_2 \eps=0.025$ the first order model gives a precision of $0.22$ for $\rho=0.1$ and $0.14$ for $\rho=0.4$. For this value of $k_2\eps$, it is imperative to use the second order model that is more accurate. At order 2 for $k_2\eps =0.025$ the error is indeed about $0.01$ for $\rho=0.1$ and $0.003$ for $\rho=0.4$. Again, the error at order 2 becomes smaller as $\rho$ increases.  This explains in our point of view why the convergence rate for the order 2 degrades as $\rho$ increases. Indeed, since the error is smaller it is more sensible to the approximation of the reference solution. 
As a result to witness the theoretical convergence rates one needs to solve for the reference solution with higher accuracy (\textit{i.e.} smaller $\eps$, larger $T$) which makes the computation more costly. Note finally that this last observation justifies directly the need for effective boundary conditions.
 \begin{figure}
 \begin{center}\begin{tabular}{cc}
\includegraphics[scale=0.7]{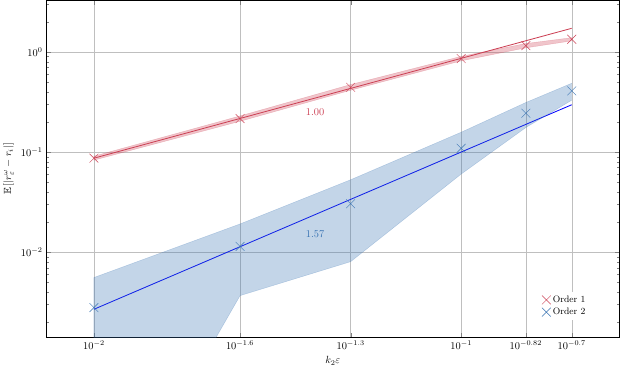}&\includegraphics[scale=0.7]{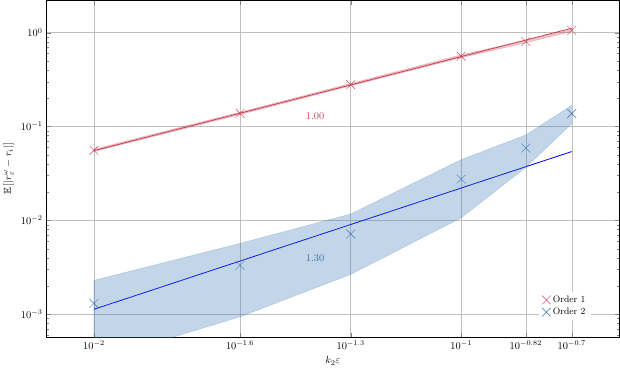}\end{tabular}
 \end{center}
 \caption{Error between the reflection coefficients for the reference solution and the effective model for $\rho=0.1$ (left) and $\rho=0.4$ (right)}\label{fig:reflectioncoef}
 \end{figure}

\bibliographystyle{abbrv}
\bibliography{references}

\appendix

\section{Integral representation of the near-field}\label{sec:proofir}

Let us introduce the space of stationary traces 
\begin{equation*}
  \mathcal{W}^{\frac{1}{2}}(\Sigma_L)\coloneqq  \Big\{\psi\in L^2\big(\Omega,H^{\frac{1}{2}}_{loc}(\Sigma_L)\big) ,  \psi \text{ stationary},\quad \mathbb{E}\Big[\| h_{|_{\Sigma_L}}^{\frac{1}{2}}\psi\|^2_{{L^2}(\square_1)} \Big]<+\infty\Big\}  
\end{equation*}
where we have identified $\Sigma_L$ with $\R^{d-1}$ for the definition of the stationarity. 

For any $\varphi$ such that $\varphi\in \mathcal{W}^{\frac{1}{2}}(\Sigma_L)$, we consider the following half-space problem
\begin{equation}\label{eq:halfspace}
\left\{
\begin{aligned}
-\Delta U^\omega &= 0 \quad \quad \text{ in } \quad  \mathcal{B}^\infty_L \ ,  \\
U^\omega &= \varphi^\omega\quad \text{ on } \quad \Sigma_L. 
\end{aligned}
\right.
\end{equation}The appropriate functional framework is defined as follows \begin{multline*}
    \mathcal{W}(\mathcal{B}^\infty_L)\coloneqq \Big\{V\in L^2(\Omega,H^1_{loc}\big(\mathcal{B}^\infty_L\big), V\big(\cdot,y_d\big) \text{ stationary for any } y_d>L, \\ \mathbb{E}\Big[\int_{\square_1}\int_{L}^{+\infty} | \nabla V^\omega|^2 \mathrm{d}\bsy \Big]<+\infty \Big\}. 
    \end{multline*}
Let $\widehat{U}^\omega$ be  the unique solution of \eqref{eq:NFtGen} in $\mathcal{W}_0(D)$ for $F=0$, $G=0$,$\mu^{-\frac{1}{2}}\Psi\in\mathcal{L}^2(\Sigma_0)$ and $\mu^{-\frac{1}{2}}\alpha^N\in \mathcal{L}^2(\Sigma_H)$. Note that $\widehat{U}^\omega|_{\mathcal{B}^\infty_L}\in\mathcal{W}(\mathcal{B}^\infty_L)$ verifies \eqref{eq:halfspace} with $\varphi=\widehat{U}^\omega|_{\Sigma_L}$.
The outline of the proof is the following : we first show that there exists a unique solution $V\in \mathcal{W}(\mathcal{B}^\infty_L)$ to \eqref{eq:halfspace}. We then prove that the integral representation \eqref{eq:repint_W10} is in $\mathcal{W}(\mathcal{B}^\infty_L)$ and verifies problem \eqref{eq:halfspace}.

Let us first prove the well-posedness result.
    \begin{proposition}\label{prop:pbdemiespa_bienpose}
For all $\varphi$ such that $\varphi\in \mathcal{W}^{\frac{1}{2}}(\Sigma_L)$, there exists a unique $V\in \mathcal{W}(\mathcal{B}^\infty_L)$ that is a.s. solution of \eqref{eq:halfspace}.
    \end{proposition}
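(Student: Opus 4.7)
The plan is to reduce the half-space problem \eqref{eq:halfspace} to a coercive variational problem on functions vanishing on $\Sigma_L$ via a lifting of the Dirichlet data, and then apply Lax-Milgram. Throughout we identify $\Sigma_L$ with $\R^{d-1}$ and work on $\mathcal{B}^\infty_L$ which, being above the layer, is deterministic: all the randomness enters only through $\varphi$ and via stationarity in the tangential variables.

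First, I would observe that the bilinear form
\[
a(U,V)\coloneqq \E\left[\int_{\square_1}\int_L^{+\infty}\nabla U\cdot\nabla\overline{V}\,\dd\bsy\right]
\]
is exactly the square of the norm on $\mathcal{W}(\mathcal{B}^\infty_L)$, hence continuous and coercive on the closed subspace
\[
\mathcal{W}_0(\mathcal{B}^\infty_L)\coloneqq \Bigl\{V\in\mathcal{W}(\mathcal{B}^\infty_L),\;V\big|_{\Sigma_L}=0\;\text{a.s.}\Bigr\}.
\]
The real work is to construct a continuous lifting $R:\mathcal{W}^{1/2}(\Sigma_L)\to\mathcal{W}(\mathcal{B}^\infty_L)$. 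The natural candidate is the Poisson extension
\[
R\varphi^\omega(\bsypar,y_d)\coloneqq c_d\int_{\R^{d-1}}\frac{(y_d-L)\,\varphi^\omega(\bszpar)}{\bigl(|\bsypar-\bszpar|^2+(y_d-L)^2\bigr)^{d/2}}\,\dd\bszpar,
\]
which is harmonic in $\mathcal{B}^\infty_L$, equals $\varphi$ in the trace sense on $\Sigma_L$, and is stationary in $\bsypar$ (in the sense of Definition~\ref{definition:stationary}) because the convolution with a deterministic kernel commutes with the action $\tau$. The key estimate to establish is
\[
\E\left[\int_{\square_1}\int_L^{+\infty}|\nabla R\varphi|^2\,\dd\bsy\right]\;\lesssim\;\|\varphi\|_{\mathcal{W}^{1/2}(\Sigma_L)}^2,
\]
which I would obtain by a Plancherel-type argument at the level of tangential Fourier modes: formally, the tangential Fourier transform of $R\varphi$ is $e^{-|\boldsymbol{\zeta}|(y_d-L)}\widehat{\varphi}(\boldsymbol{\zeta})$, and integrating $|\boldsymbol{\zeta}|^2 e^{-2|\boldsymbol{\zeta}|(y_d-L)}$ in $y_d$ produces the $|\boldsymbol{\zeta}|^{1/2}$ weight characteristic of the $H^{1/2}$ norm. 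In the stationary setting this formal computation is made rigorous by combining Bochner's spectral representation for the correlation of $\varphi$ with Fubini, which reduces the bound to a deterministic one on each realization averaged over $\square_1$.

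With the lifting at hand, I would set $\widetilde V\coloneqq V-R\varphi$ and write Problem \eqref{eq:halfspace} in variational form: find $\widetilde V\in\mathcal{W}_0(\mathcal{B}^\infty_L)$ such that
\[
\forall W\in\mathcal{W}_0(\mathcal{B}^\infty_L),\quad a(\widetilde V,W)=-a(R\varphi,W).
\]
The right-hand side is a continuous antilinear form on $\mathcal{W}_0(\mathcal{B}^\infty_L)$ by Cauchy-Schwarz and the lifting estimate, so Lax-Milgram yields a unique $\widetilde V$; then $V\coloneqq \widetilde V+R\varphi\in\mathcal{W}(\mathcal{B}^\infty_L)$ solves \eqref{eq:halfspace} in the sense of distributions a.s. Uniqueness for $V$ follows at once: the difference of two solutions lies in $\mathcal{W}_0(\mathcal{B}^\infty_L)$ with vanishing bilinear form, hence is zero.

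The main obstacle is the careful construction and estimation of the stationary lifting $R\varphi$, because $\varphi$ is a random field on $\R^{d-1}$ that is not in $L^2$ realization-wise but only in a stationary, locally $L^2$ sense. In particular, the tangential Fourier transform of $\varphi^\omega$ does not exist pointwise, and one must argue through the spectral measure of the stationary process (or equivalently via mollification and truncation of $\varphi$ combined with the a priori estimate above to pass to the limit). Once this is handled, the rest of the argument is a standard Lax-Milgram application.
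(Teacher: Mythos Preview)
Your overall architecture---lift the Dirichlet datum, then apply Lax--Milgram to the residual in $\mathcal{W}_0(\mathcal{B}^\infty_L)$---matches the paper's, but the choice of lifting and the ensuing analysis diverge substantially. The paper does \emph{not} lift by the harmonic Poisson extension. Instead it introduces a regularized lifting $V_T$ built from the Green's function $G_T$ of $-\Delta+T^{-1}$ (see \eqref{eq:fonction_Laplace_Green_T}--\eqref{eq:VT_intrep}): because $G_T$ decays exponentially, one shows directly that $V_T\in\mathcal{H}^1(\mathcal{B}^\infty_L)$ (so $V_T$ itself, not only $\nabla V_T$, is stationary-$L^2$) using only the weighted $L^2$ norm $\|\mu^{1/2}\varphi\|_{\mathcal{L}^2(\Sigma_L)}$ together with the qualitative $H^{1/2}_{\rm loc}$ membership for a localized cut-off piece. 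The residual $\widetilde V=V-V_T$ then solves $-\Delta\widetilde V=-T^{-1}V_T$ with zero trace; coercivity on $\mathcal{W}_0$ comes from the Hardy inequality \eqref{inegalite_Hardy_alea}, and continuity of the right-hand side uses the exponential decay of $V_T$ to absorb the weight $(1+y_d^2)$. Your route is more direct---since $R\varphi$ is already harmonic, the Lax--Milgram step becomes vacuous and everything reduces to proving $R\varphi\in\mathcal{W}(\mathcal{B}^\infty_L)$. The paper does eventually establish exactly this (Proposition~\ref{prop:regir}), but by real-variable kernel estimates on $\partial_{y_d}\nabla\Gamma$ (splitting into a localized part handled via $\chi\varphi\in H^{1/2}$ and a far-field part handled via the weighted $L^2$ norm of $\varphi$), not by Fourier or spectral methods.

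One caveat on your sketch: your Plancherel heuristic yields a bound by a stationary $\dot H^{1/2}$ seminorm, essentially the spectral integral $\int|\boldsymbol{\zeta}|\,d\sigma(\boldsymbol{\zeta})$, whereas the quantitative part of the $\mathcal{W}^{1/2}(\Sigma_L)$ norm as defined in the paper is the \emph{weighted $L^2$} quantity $\mathbb{E}\bigl[\|\mu^{1/2}\varphi\|_{L^2(\square_1)}^2\bigr]$. These are not the same object, so the inequality $\mathbb{E}\bigl[\int|\nabla R\varphi|^2\bigr]\lesssim\|\varphi\|_{\mathcal{W}^{1/2}}^2$ as you state it does not follow from your Fourier computation alone. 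To close the loop you would have to extract finiteness of $\int|\boldsymbol{\zeta}|\,d\sigma(\boldsymbol{\zeta})$ from the hypothesis $\varphi\in L^2(\Omega,H^{1/2}_{\rm loc})$ combined with stationarity---doable, but a nontrivial extra step you did not spell out. The paper's regularization trick sidesteps this issue entirely: the exponential decay of $G_T$ makes the weighted $L^2$ norm of $\varphi$ sufficient for the lifting estimate, and the $H^{1/2}_{\rm loc}$ assumption enters only qualitatively near the boundary.
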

To prove this result that is similar to the analysis of the half-space problem for the Stokes equation with a trace $H^{\frac{1}{2}}_{uloc}$ by D. Gerard-Varet and N.Masmoudi in \cite{gerard2010relevance}, we follow the following steps: for all  $\varphi$ such that $\varphi\in \mathcal{W}^{\frac{1}{2}}(\Sigma_L)$,
\begin{enumerate}
    \item[Step 1] we construct a regular stationary lift of $\varphi$;
    \item[Step 2] we reduce problem \eqref{eq:halfspace} to the problem with a vanishing Dirichlet boundary condition and prove well-posedness using an appropriate Hardy inequality.
\end{enumerate}
In the rest of the proof $\varphi$ is fixed.
\noindent\textit{Step 1: construction of a stationary and regular lifting of $\varphi$.}
In order to derive a lifting, we introduce the Green's function associated to the regularized operator $- \Delta + T^{-1}$ in $\R^d$ for $T>0$, namely
\begin{equation}\label{eq:fonction_Laplace_Green_T}
    G_T(\bsz) \coloneqq  
    -\frac{i}{4} H_0^{(1)}\big(i\sqrt{T}^{-1}|\bsz|\big)\; \text{ for } \; d=2,\quad  
    \frac{e^{-\sqrt{T}^{-1}|\bsz|}}{4\pi|\bsz|} \;\text{ for } \; d=3, \quad \bsz\in \mathbb{R}^{d}.
    \end{equation}
    Note that, using \cite[Equation (9.2.30)]{abramowitz1968handbook}, one can show that for $d=2$ there exists a constant (whose value is not relevant for the sequel) such that
    \begin{equation}\label{eq:GreenT2d}
    G_T(\bsz) = C \frac{e^{-\sqrt{T}^{-1}|\bsz|}}{|\bsz|^{\frac{1}{2}}}\left( 1+ \mathcal{O}(|\bsz|^{-1})\right)\quad \text{as}\;|\bsz|\rightarrow +\infty.
    \end{equation}
As we shall prove in the next proposition, a natural lifting of $\varphi$ is given by the integral formula
\begin{equation}\label{eq:VT_intrep}
    \text{a.s. }  \quad V_{T}^{\omega}\big(\bsy_\shortparallel,y_d\big) = -2 \int_{\mathbb{R}^{d-1}} \partial_{y_d}G_T\big(\bsy_\shortparallel-\bsz_\shortparallel,y_d-L \big) \ \varphi^\omega(\bsz_\shortparallel) \, \mathrm{d}\bsz_\shortparallel , \quad \bsy_\shortparallel\in \mathbb{R}^{d-1}, \; y_d>L
    \end{equation}
 that is in the functional space 
    \begin{multline*}
        \mathcal{H}^1(\mathcal{B}_L^\infty)\ \coloneqq  \ \Bigg\{V\in L^2\big(\Omega,H_{loc}^1(\mathcal{B}^\infty_L) \big), V\big(\cdot,y_d\big)\text{ stationary for any } y_d>L, \\ \mathbb{E}\Big[\int_{\square_1}\int_{L}^{+\infty} |V^\omega|^2+ | \nabla V^\omega|^2 \,\mathrm{d}\bsy \Big]<+\infty \Bigg\} \ ;
      \end{multline*}

\begin{lemma}\label{Lemme_decompo_VT_alea}
The function $V_T$ given by \eqref{eq:VT_intrep} is in $\mathcal{H}^1(\mathcal{B}_L^\infty)$ and satisfies a.s.
\begin{equation}\label{eq:halfspaceT}
    \left\{ 
    \begin{aligned}
    - \Delta V_T^\omega + T^{-1}V_T^\omega &= 0 \quad \quad \text{ in  } \quad \mathcal{B}^\infty_L \ , \\
    V_T^\omega &= \varphi^\omega \quad \text{ on } \quad \Sigma_L \ . 
    \end{aligned}
    \right.
    \end{equation}
\end{lemma}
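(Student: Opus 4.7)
The plan is to verify the four defining properties of $\mathcal{H}^1(\mathcal{B}_L^\infty)$ for $V_T$ together with the PDE and the trace condition, exploiting the exponential decay of the regularized Green's function $G_T$ (due to the factor $e^{-|\bsz|/\sqrt{T}}$ in both dimensions, cf.~\eqref{eq:GreenT2d}). First I would show that \eqref{eq:VT_intrep} is well-defined a.s.\ pointwise for every $y_d>L$: a direct computation gives
\begin{equation*}
\big|\partial_{y_d}G_T(\bsy_\shortparallel-\bsz_\shortparallel,y_d-L)\big|\lesssim \frac{(y_d-L)\,e^{-|(\bsy_\shortparallel-\bsz_\shortparallel,y_d-L)|/\sqrt T}}{|(\bsy_\shortparallel-\bsz_\shortparallel,y_d-L)|^{\alpha}}
\end{equation*}
for appropriate $\alpha$, hence by Cauchy--Schwarz and the stationarity estimate $\mathbb{E}[\|\varphi\|^2_{L^2(\square_1)}]<+\infty$ the integral converges a.s. (and $\mathbb{E}[|V_T(\bsy)|]<+\infty$) thanks to the exponential decay in $\bsz_\shortparallel$ at fixed $y_d>L$.

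Next I would establish the stationarity of $V_T(\cdot,y_d)$ in $\bsy_\shortparallel$. The kernel depends only on the difference $\bsy_\shortparallel-\bsz_\shortparallel$, so for any $\bsxpar\in\R^{d-1}$, a change of variables $\bsz_\shortparallel\mapsto\bsz_\shortparallel-\bsxpar$ followed by the stationarity identity $\varphi^\omega(\bsz_\shortparallel+\bsxpar)=\varphi^{\tau_{\bsxpar}\omega}(\bsz_\shortparallel)$ yields $V_T^\omega(\bsy_\shortparallel+\bsxpar,y_d)=V_T^{\tau_{\bsxpar}\omega}(\bsy_\shortparallel,y_d)$ a.s. The PDE in \eqref{eq:halfspaceT} is then obtained by differentiating under the integral; the exponential decay allows this interchange, and $(-\Delta_{\bsy}+T^{-1})\partial_{y_d}G_T(\cdot-\bsz_\shortparallel,\cdot-L)=0$ away from $(\bsz_\shortparallel,L)$, so $V_T^\omega$ is harmonic (in the regularized sense) on $\mathcal{B}_L^\infty$. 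The boundary trace $V_T^\omega|_{\Sigma_L}=\varphi^\omega$ is a classical Poisson-kernel/approximation-of-identity statement: $-2\partial_{y_d}G_T(\cdot,y_d-L)$ is an approximation of the Dirac mass on $\R^{d-1}$ as $y_d\to L^+$, modulo an exponentially small correction coming from $T^{-1}$.

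The main obstacle is the $L^2$ integrability in $\mathcal{H}^1(\mathcal{B}_L^\infty)$, namely
\begin{equation*}
\mathbb{E}\Big[\int_{\square_1}\int_L^{+\infty}(|V_T^\omega|^2+|\nabla V_T^\omega|^2)\,\dd\bsy\Big]<+\infty.
\end{equation*}
My plan is to proceed by Young's convolution inequality on the tangential variable, using that for every $y_d>L$ the kernel $K_{y_d}(\bsw):=-2\partial_{y_d}G_T(\bsw,y_d-L)$ satisfies $\|K_{y_d}\|_{L^1(\R^{d-1})}\lesssim e^{-(y_d-L)/\sqrt T}$ thanks to the exponential decay (and analogously for $\nabla K_{y_d}$ with an extra polynomial factor which is absorbed into the exponential). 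Combining this with the stationarity bound $\mathbb{E}[\|\varphi\|^2_{L^2(\square_R)}]\lesssim R^{d-1}$ via a periodization argument or by direct Fubini--Tonelli over unit cells, one obtains
\begin{equation*}
\mathbb{E}\Big[\int_{\square_1}|V_T^\omega(\bsy_\shortparallel,y_d)|^2\,\dd\bsy_\shortparallel\Big]\lesssim \|K_{y_d}\|_{L^1}^2\,\mathbb{E}[\|\varphi\|^2_{L^2(\square_1)}]\lesssim e^{-2(y_d-L)/\sqrt T},
\end{equation*}
whose integral in $y_d\in(L,+\infty)$ is finite. An identical estimate for $\nabla V_T$ closes the argument. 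The gain provided by the strictly positive lower bound $T^{-1}$ on the symbol of the operator is essential here: without the regularization the kernel would only have polynomial decay and this Young-inequality approach would fail.
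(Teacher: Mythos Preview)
Your argument for stationarity, for the PDE, and for the trace condition is fine and matches the paper. The gap is in the $\mathcal{H}^1$-integrability, specifically in the gradient bound near $y_d=L$.

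For $V_T$ itself your Young-type estimate works, because the Poisson kernel $K_{y_d}=-2\partial_{y_d}G_T(\cdot,y_d-L)$ indeed satisfies $\|K_{y_d}\|_{L^1(\R^{d-1})}\lesssim e^{-(y_d-L)/\sqrt T}$ uniformly (it is bounded near $y_d=L$ and decays exponentially at infinity). But the claim that the same holds for $\nabla K_{y_d}$ ``with an extra polynomial factor absorbed into the exponential'' is wrong at the boundary: one has $\|\nabla K_{y_d}\|_{L^1(\R^{d-1})}\sim(y_d-L)^{-1}$ as $y_d\to L^+$, exactly as for the unregularized Poisson kernel (the regularization only helps at infinity). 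Hence $\int_L^{L'}\|\nabla K_{y_d}\|_{L^1}^2\,\dd y_d$ diverges and your bound $\mathbb{E}\big[\int_{\square_1}|\nabla V_T|^2\big]\lesssim\|\nabla K_{y_d}\|_{L^1}^2\,\mathbb{E}[\|\varphi\|^2_{L^2(\square_1)}]$ cannot be integrated in $y_d$ near $L$. The $H^{1/2}$-regularity of $\varphi$, which you never use, is precisely what is needed here.

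The paper fixes this by splitting the strip into a far region $y_d>L'$ (where your exponential-decay argument is essentially what they do, though they run Cauchy--Schwarz with the weight $\mu$ since the hypothesis only gives $\mathbb{E}[\|\mu^{1/2}\varphi\|^2_{L^2(\square_1)}]<\infty$, not the unweighted bound you invoke) and a near region $L<y_d<L'$. In the near region they introduce a tangential cutoff $\chi$ supported near $\square_1$: for $\chi\varphi^\omega\in H^{1/2}(\Sigma_L)$ one has the classical half-space estimate giving $H^1$ control of the corresponding piece of $V_T$, while for $(1-\chi)\varphi^\omega$ the kernel is evaluated away from its singularity and is therefore smooth and integrable. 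This near/far and local/nonlocal splitting is the missing ingredient in your proposal.
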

\begin{remark}\label{rem:comportement_infini_reg}
Even if it is not fundamental for the sequel, note that the function $V_T$ given by \eqref{eq:VT_intrep} is even the unique solution of \eqref{eq:halfspaceT} in  $\mathcal{H}^1(\mathcal{B}_L^\infty)$

Let us also mention that if $\varphi^\omega\in H^{\frac{1}{2}}(\Sigma_L)$ then it is easy to show using Lax Milgram's Theorem that $V^\omega\in H^1(B_L)$. The difficulty in our setting comes from the fact that $\varphi\in\mathcal{W}^{\frac{1}{2}}(\Sigma_L)$ but as we shall see the stationarity plays a fundamental role. 
\end{remark}
\begin{proof}[Proof of Lemma \ref{Lemme_decompo_VT_alea}]

Let ${V}_T$ be given by the integral expression of \eqref{eq:VT_intrep}. It is easy to show (using similar arguments than in \cite{colton2013integral} or in \cite[Theorem 3.2]{chandler1997impedance} for instance) that a.s. for any $\bsy_\shortparallel\in \mathbb{R}^{d-1}$, $y_d>L$, $\widetilde{V}_T^\omega$ is in $C^2(\mathcal{B}_L^\infty)$, it satisfies the first equation of (\ref{eq:halfspaceT}) and the boundary condition of (\ref{eq:halfspaceT}) in the trace sense. This implies in particular that a.s. $V_T^\omega\in H_{loc}^1(\mathcal{B}^\infty_L)$ .
By stationarity of $\varphi$, we have for $ \bsy_\shortparallel$, $\bsx_\shortparallel \in \mathbb{R}^{d-1}$ and $ y_d>L$, 
\begin{equation*}
\begin{aligned}
\widetilde{V}_T^\omega\big(\bsy_\shortparallel+\bsx_\shortparallel,y_d \big) 
&= -2 \int_{\mathbb{R}^{d-1}}\partial_{y_d} G_T^\omega\big(\bsy_\shortparallel-\bsz_\shortparallel,y_d-L\big) \ \varphi^\omega(\bsz_\shortparallel+\bsx_\shortparallel) \, \mathrm{d}\bsz_\shortparallel, \\ 
&= -2 \int_{\mathbb{R}^{d-1}}\partial_{y_d} G_T^\omega\big(\bsy_\shortparallel-\bsz_\shortparallel,y_d-L\big) \ \varphi^{\tau_{\bsx_\shortparallel}\omega}(\bsz_\shortparallel) \, \mathrm{d}\bsz_\shortparallel= \widetilde{V}_T^{\tau_{\bsx_\shortparallel}\omega}\big(\bsy_\shortparallel,y_d \big). 
\end{aligned}
\end{equation*}
which shows that $\big(\omega,\bsy_\shortparallel\big)\mapsto V^\omega\big(\bsy_\shortparallel,y_d\big)\text{is stationary for any } y_d>L$ .

Moreover,  we have on one hand for $L'>L$,

 \begin{equation*}
 \begin{array}{ll}
 \vsd\dst\mathbb{E}\left[ \int_{\square_1}\int_{L'}^{+\infty} \big|{V}_T^\omega \big(\bsy_\shortparallel,y_d\big)\big|^2 \, \mathrm{d}\bsy\right]&\dst= \ 4 \  \mathbb{E}\left[\int_{\square_1} \int_{L'}^{+\infty} \left|\int_{\mathbb{R}^{d-1}} \partial_{y_d}G_T(\bsz_\shortparallel,y_d-L) \  \varphi^\omega\big(\bsy_\shortparallel-\bsz_\shortparallel\big) \, \mathrm{d}\bsz_\shortparallel \right|^2  \, \mathrm{d}\bsy\right], \\ 
\vsd&\dst \lesssim    \ \int_{\square_1}\int_{L'}^{+\infty} \mathbb{E}\left[\int_{\mathbb{R}^{d-1}}(\mu^\omega)^{-1}(\bsypar-\bsz_\shortparallel,L) \left|\partial_{y_d}G_T(\bsz_\shortparallel,y_d-L)\right|  \, \mathrm{d}\bsz_\shortparallel \right] \\\vsd&\dst\E \left[\int_{\mathbb{R}^{d-1}} \left|\partial_{y_d}G_T(\bsz_\shortparallel,y_d-L)\right|  \big|(\mu^\omega_{|_{\Sigma_L}})^{\frac{1}{2}}\varphi^\omega\big(\bsy_\shortparallel-\bsz_\shortparallel\big)\big|^2 \, \mathrm{d}\bsz_\shortparallel \right] \, \mathrm{d}\bsy,  \\ 
\vsd&\dst \lesssim   \ \int_{L'}^{+\infty} \left(\int_{\mathbb{R}^{d-1}} \left|\partial_{y_d}G_T(\bsz_\shortparallel,y_d-L)\right|  \, \mathrm{d}\bsz_\shortparallel \right)^2dy_d\, \mathbb{E}\Big[ h_{|_{\Sigma_L}}|\varphi|^2\Big] \\&\dst\lesssim \ \big\| \mu^{\frac{1}{2}}\varphi\big\|^{2}_{\mathcal{L}^2(\Sigma_L)},
 \end{array}
 \end{equation*}
where we used that $\mu(\cdot,L)$ and $\varphi$ are stationary, $\E[\mu^{-1}(\cdot,L)]\lesssim L^2$ and that (thanks to \eqref{eq:fonction_Laplace_Green_T} and \eqref{eq:GreenT2d}) for $L'$ large enough
\[
\int_{\mathbb{R}^{d-1}} \left|\partial_{y_d}G_T(\bsz_\shortparallel,y_d-L)\right|  \, \mathrm{d}\bsz_\shortparallel
 \lesssim \frac{1}{y_d-L}\quad \text{for}\;y_d>L'.\]
On the other hand, for $\chi\in \mathcal{C}_c^\infty\big(\mathbb{R}^{d-1}\big)$ such that $0\leq \chi\leq 1$ and $\chi=1$ on $\left(-\frac{3}{2},\frac{3}{2}\right)^{d-1}$, we can write
\begin{multline*}
  \mathbb{E}\left[ \int_{\square_1}\int_{L}^{L'} \big|{V}_T^\omega \big(\bsy_\shortparallel,y_d\big)\big|^2 \, \mathrm{d}y_d\mathrm{d}\bsypar\right] \lesssim \mathbb{E}\left[ \int_{\square_1}\int_{L}^{L'} \left| \int_{\mathbb{R}^{d-1}}  \partial_{y_d} G_T \big(\bsy_\shortparallel -\bsz_\shortparallel, y_d-L\big) \ \chi  \varphi^\omega(\bsz_\shortparallel)\, \mathrm{d}\bsz_\shortparallel \right|^2\, \mathrm{d}\bsy \right] \\   + \mathbb{E}\left[ \int_{\square_1}\int_L^{L'}  \left| \int_{\mathbb{R}^{d-1}}\partial_{y_d} G_T\big(\bsy_\shortparallel-\bsz_\shortparallel,y_d-L\big) \ (1-\chi)  \varphi^\omega\big(\bsz_\shortparallel\big)\, \mathrm{d}\bsz_\shortparallel \right|^2 \,\mathrm{d}\bsy\right].
\end{multline*}
Since a.s.  $\chi \varphi^\omega \in {H}^{\frac{1}{2}}(\Sigma_L)$ it is easy to show (see Remark \ref{rem:comportement_infini_reg}) that a.s.
\[
   \int_{\square_1}\int_{L}^{L'} \left| \int_{\mathbb{R}^{d-1}}  \partial_{y_d} G_T \big(\bsy_\shortparallel -\bsz_\shortparallel, y_d-L\big) \ \chi \ \varphi^\omega(\bsz_\shortparallel)\, \mathrm{d}\bsz_\shortparallel \right|^2\, \mathrm{d}\bsy <+\infty
\]
Moreover, using that when $\bsy_\shortparallel\in\square_1$, $(1-\chi)(\bsy_\shortparallel-\bsz_\shortparallel)= 0$ when $\bsz_\shortparallel\in\square_1$, we deduce that
\begin{align*}
 &\mathbb{E}\left[ \int_{\square_1} \int_L^{L'} \left| \int_{\mathbb{R}^{d-1}} \partial_{y_d}G_T\big(\bsz_\shortparallel,y_d-L\big) \ (1-\chi)\varphi^\omega(\bsy_\shortparallel-\bsz_\shortparallel) \, \mathrm{d}\bsz_\shortparallel\right|^2 \, \mathrm{d}\bsy \right],\\
&\quad \qquad\qquad\lesssim \int_{\square_1}\int_{L}^{L'} \mathbb{E}\left[ \int_{\mathbb{R}^{d-1}\setminus\square_1} (\mu^\omega)^{-1}(\bsypar-\bsz_\shortparallel,L)\left|\partial_{y_d}G_T(\bsz_\shortparallel,y_d-L)\right|  \, \mathrm{d}\bsz_\shortparallel \right]  \\ &\qquad\qquad\qquad\qquad \mathbb{E}\left[\int_{\mathbb{R}^{d-1}\setminus\square_1} \left|\partial_{y_d}G_T(\bsz_\shortparallel,y_d-L)\right| \  \big|(\mu^\omega_{|_{\Sigma_L}})^{\frac{1}{2}}\varphi^\omega\big(\bsy_\shortparallel-\bsz_\shortparallel\big)\big|^2 \, \mathrm{d}\bsz_\shortparallel \right] \,\mathrm{d}\bsy, \\ 
&\quad \qquad\qquad\lesssim   \ \int_{L}^{L'} \left(\int_{\mathbb{R}^{d-1}\setminus\square_1} \left|\partial_{y_d}G_T(\bsz_\shortparallel,y_d-L)\right|  \, \mathrm{d}\bsz_\shortparallel \right)^2\,\mathrm{d}y_d\,\mathbb{E}\Big[ h_{|_{\Sigma_L}}|\varphi|^2\Big], \\&\quad \qquad\qquad\dst\lesssim \ \big\| \mu^{\frac{1}{2}}\varphi\big\|^{2}_{\mathcal{L}^2(\Sigma_L)}.\end{align*}
Note that $\partial_{y_d}G_T\in L^1(\R^{d-1}\setminus\square_1\times(L,L'))$ is a regular function with an exponential decay at infinity (see \eqref{eq:fonction_Laplace_Green_T}-\eqref{eq:GreenT2d}).

Similar results can be derived for $\nabla V_T^\omega$.

\end{proof}

\noindent\textit{Step  2 : Laplace Problem with a vanishing Dirichlet boundary condition.}
Now consider $\widetilde{V}=V-V_T$. By Lemma \ref{Lemme_decompo_VT_alea}, $\widetilde{V}$ verifies a.s. 
\begin{equation}\label{eq:halfspaceD}
\left\{ 
\begin{aligned}
-\Delta \widetilde{V}^\omega &= - T^{-1} V_T^\omega\quad \text{ in } \quad \mathcal{B}^\infty_L \ , \\
\widetilde{V}^\omega \ &= \ 0 \quad \text{ in } \quad \Sigma_L \ . 
\end{aligned}
\right.
\end{equation} 

We introduce the following functional space
\begin{multline*}
 \mathcal{W}_0(\mathcal{B}^\infty_L)\coloneqq  \Big\{\text{a.s. } V^\omega\in H^1_{loc}\big(\mathcal{B}^\infty_L\big), V\big(\cdot,y_d\big) \text{ stationary for any }y_d>L,  \\ \text{a.s.  } V^\omega=0 \text{ on } \Sigma_L,\,\E\left[ \int_{\square_1}\int_L^{+\infty}\Big( \frac{|V^\omega|^2}{1+y_d^2}+|\nabla V^\omega|^2\Big) \, \mathrm{d}\bsy\right] <+\infty \Big\} \ .
\end{multline*}
equipped with the norm
\begin{equation*}
 \|V\|^2_{\mathcal{W}^0}\coloneqq \mathbb{E}\left[\int_{\square_1}\int_{L}^{+\infty} \Big( \frac{|V^\omega|^2}{1+y_d^2}+|\nabla V^\omega|^2\Big) \, \mathrm{d}\bsy\right]. 
\end{equation*}
A classical Hardy inequality gives that for all $V\in \mathcal{W}_0(\mathcal{B}^\infty_L)$ and since $V^\omega=0$ on $\Sigma_L$ a.s., we have a.s.
\begin{equation*}
\int_{\square_1} \int_L^{+\infty} \frac{|V^\omega|^2}{1+y_d^2} \, \mathrm{d}\bsy\leq \int_{\square_1}\int_L^{+\infty} |\nabla V^\omega|^2  \, \mathrm{d}\bsy.  
\end{equation*}
By taking the expectation of the last inequality, we obtain that the following Hardy inequality is verified in $\mathcal{W}_0(\mathcal{B}^\infty_L)$ 
\begin{equation}\label{inegalite_Hardy_alea}
\forall V\in \mathcal{W}_0(\mathcal{B}^\infty_L) \ , \quad \mathbb{E}\left[\int_{\square_1}\int_L^{+\infty} \frac{|V^\omega|^2}{1+y_d^2} \, \mathrm{d}\bsy \right] \lesssim\mathbb{E}\left[\int_{\square_1}\int_L^{+\infty} |\nabla V^\omega|^2 \, \mathrm{d}\bsy \right]. 
\end{equation}
\begin{lemma}
Problem \eqref{eq:halfspaceD} is well posed in $\mathcal{W}_0(\mathcal{B}^\infty_L)$.
\end{lemma}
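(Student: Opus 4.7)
The plan is to apply the Lax--Milgram theorem in $\mathcal{W}_0(\mathcal{B}^\infty_L)$. First I would verify that this space is a Hilbert space, which follows from a truncation-and-limit argument analogous to the one used earlier for $\mathcal{W}_0(D)$, with the stationary weight $\mu$ replaced by the deterministic weight $(1+y_d^2)^{-1}$. The situation is in fact simpler here, since there are no particles to handle in $\mathcal{B}^\infty_L$.

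I would then consider the variational formulation: find $\widetilde{V}\in\mathcal{W}_0(\mathcal{B}^\infty_L)$ such that for every $W\in\mathcal{W}_0(\mathcal{B}^\infty_L)$,
\begin{equation*}
a(\widetilde{V},W)\coloneqq\mathbb{E}\Big[\int_{\square_1}\!\!\int_L^{+\infty}\nabla\widetilde{V}\cdot\nabla\overline{W}\,\dd\bsy\Big]=-T^{-1}\mathbb{E}\Big[\int_{\square_1}\!\!\int_L^{+\infty}V_T\,\overline{W}\,\dd\bsy\Big]\eqqcolon\ell(W).
\end{equation*}
The continuity of $a$ is immediate, and its coercivity is a direct consequence of the Hardy inequality \eqref{inegalite_Hardy_alea}. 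For the continuity of $\ell$, I would use Cauchy--Schwarz in the symmetrized form
\begin{equation*}
|\ell(W)|\leq T^{-1}\Big(\mathbb{E}\int_{\square_1}\!\!\int_L^{+\infty}(1+y_d^2)|V_T|^2\,\dd\bsy\Big)^{1/2}\Big(\mathbb{E}\int_{\square_1}\!\!\int_L^{+\infty}\frac{|W|^2}{1+y_d^2}\,\dd\bsy\Big)^{1/2},
\end{equation*}
so that the whole question reduces to checking that $(1+y_d^2)^{1/2}V_T\in\mathcal{L}^2(\mathcal{B}^\infty_L)$.

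This last point is what I expect to be the main obstacle, since Lemma \ref{Lemme_decompo_VT_alea} only provides $V_T\in\mathcal{H}^1(\mathcal{B}^\infty_L)$, that is, unweighted $L^2$-integrability. The remedy is to exploit the exponential decay of the regularized Green's function $G_T$ given by \eqref{eq:fonction_Laplace_Green_T}--\eqref{eq:GreenT2d}. Splitting $\partial_{y_d}G_T(\bsy_\shortparallel-\bsz_\shortparallel,y_d-L)$ into the contributions of $|\bsy_\shortparallel-\bsz_\shortparallel|\leq 1$ and of $|\bsy_\shortparallel-\bsz_\shortparallel|>1$, as in the second half of the proof of Lemma \ref{Lemme_decompo_VT_alea}, and combining the stationarity of $\varphi$ with Hypothesis \nameref{hyp:mixing} to control the resulting convolutions, one should obtain an estimate of the form $\mathbb{E}\int(1+y_d^2)|V_T|^2\lesssim C(T)\,\|\mu^{1/2}\varphi\|^2_{\mathcal{L}^2(\Sigma_L)}$. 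The polynomial factor $1+y_d^2$ is absorbed by the exponential decay of $G_T$ at infinity, at the cost of a $T$-dependent constant which is harmless here since $T$ is fixed.

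Once the continuity of $\ell$ is established, Lax--Milgram yields a unique $\widetilde{V}\in\mathcal{W}_0(\mathcal{B}^\infty_L)$ satisfying the variational identity. Testing against $\mathcal{C}_c^\infty(\mathcal{B}^\infty_L)$ then shows that $\widetilde{V}$ is almost surely a distributional solution of \eqref{eq:halfspaceD}, the homogeneous Dirichlet trace on $\Sigma_L$ being encoded in the definition of $\mathcal{W}_0(\mathcal{B}^\infty_L)$.
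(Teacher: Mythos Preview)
Your proposal is correct and follows essentially the same route as the paper: the variational formulation is handled by Lax--Milgram, with coercivity from the Hardy inequality \eqref{inegalite_Hardy_alea} and continuity of the linear form from the bound $\mathbb{E}\int_{\square_1}\int_L^{+\infty}(1+y_d^2)|V_T|^2<+\infty$, which the paper justifies in one line by invoking the exponential decay of the integral representation \eqref{eq:VT_intrep}. Your near/far splitting argument is a more detailed version of the same idea, and the Hilbert-space verification you add is a reasonable precaution that the paper omits.
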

\begin{proof}[Proof of Lemma \ref{inegalite_Hardy_alea}]
The variational formulation associated with \eqref{eq:halfspaceD} is given by 
\begin{equation}\label{eq:pbDir_FV}
\forall W\in \mathcal{W}_0(\mathcal{B}^\infty_L), \quad \mathbb{E} \left[\int_{\square_1} \int_{L}^{+\infty} \nabla \widetilde{V} \cdot \nabla\overline{ W}\, \mathrm{d}\bsy  \right] \ = \ -T^{-1}\mathbb{E} \left[\int_{\square_1}\int_{L}^{+\infty}  V_T \overline{W}\mathrm{d}\bsy \right]. 
\end{equation}The coercivity of the sesquilinear form can be directly deducted from Hardy's inequality \eqref{inegalite_Hardy_alea}.
From the integral representation of $V_T$, we deduce that $y_d\mapsto V_T(\cdot,y_d)$ is exponentially decaying and in particular
\begin{equation*}
\mathbb{E}\left[\int_{\square_1}\int_L^{+\infty} \big( 1 + y_d^2\big) | V_T|^2 \, \mathrm{d}\bsy \right] \ < \ +\infty \ . 
\end{equation*}
We have then by Cauchy-Schwarz inequality
\begin{equation*}
 T^{-1} \left|\mathbb{E}\left[ \int_{\square_1}\int_{L}^{+\infty}   V_T \overline{W} \mathrm{d}\bsy \right] \right|\lesssim T^{-1}\|\sqrt{1+y_d^2}V_T\|_{\mathcal{L}^2(\mathcal{B}^\infty_L)}\| W\|_{\mathcal{W}^{0}}  \ . 
\end{equation*}
The linear form in the r.h.s of \eqref{eq:pbDir_FV} is then continuous in $\mathcal{W}^{0}$. We conclude by Lax-Milgram's Theorem.
\end{proof}

We deduce finally proposition \ref{prop:pbdemiespa_bienpose} : since there exists a unique $\widetilde{V}\in \mathcal{W}_0(\mathcal{B}^\infty_L)$ solution of \eqref{eq:halfspaceD}, $V\coloneqq \widetilde{V}+V_T\in\mathcal{W}(\mathcal{B}^\infty_L)$ satisfies \eqref{eq:halfspace}. The uniqueness result  for \eqref{eq:halfspace} is the consequence of the one for\eqref{eq:halfspaceD}.

Let us finish the proof of Proposition \ref{prop:representation_integrale_U_alea} by proving that the integral representation given by \eqref{eq:repint_W10} is in $\mathcal{W}(\mathcal{B}^\infty_L)$ and verifies problem \eqref{eq:halfspace}. By uniqueness of the solution this shows that $\widehat{U}{|_{\mathcal{B}^\infty_L}}$ can be represented by \eqref{eq:repint_W10} .

\begin{proposition}\label{prop:regir} Let $Z:\R^{d-1}\times(L,+\infty)$ be defined as 
\begin{equation*}
 Z(\bsy)=-2\int_{\R^{d-1}}\partial_{y_d}\Gamma\big(\bsy_\shortparallel-\bsz_\shortparallel,y_d-L\big) \ \varphi^\omega\big(\bsz_\shortparallel\big) \mathrm{d}\bsz_\shortparallel,\end{equation*}Then $Z\in \mathcal{W}(\mathcal{B}^\infty_L)$ and $Z^\omega$ is a.s. solution of \eqref{eq:halfspace}.
\end{proposition}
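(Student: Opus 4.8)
The plan is to treat $Z$ as the Poisson integral of $\varphi$ for the half-space $\mathcal{B}^\infty_L$ and to rerun the argument of Lemma~\ref{Lemme_decompo_VT_alea} almost verbatim, the only structural difference being that the exponentially decaying kernel $\partial_{y_d}G_T$ of \eqref{eq:fonction_Laplace_Green_T} is replaced by $\partial_{y_d}\Gamma$ (with $\Gamma$ as in \eqref{fonction_Laplace_Green}), which decays only polynomially; this already explains why $Z$ lands in $\mathcal{W}(\mathcal{B}^\infty_L)$ and not in the stronger space $\mathcal{H}^1(\mathcal{B}^\infty_L)$. Concretely there are three points to settle: (a) a.s. $-\Delta Z^\omega=0$ in $\mathcal{B}^\infty_L$; (b) a.s. $Z^\omega|_{\Sigma_L}=\varphi^\omega$, and $Z\in L^2(\Omega,H^1_{loc}(\mathcal{B}^\infty_L))$; (c) $(\omega,\bsy_\shortparallel)\mapsto Z^\omega(\bsy_\shortparallel,y_d)$ is stationary for all $y_d>L$, and $\E\big[\int_{\square_1}\int_L^{+\infty}|\nabla Z^\omega|^2\,\dd\bsy\big]<+\infty$. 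Throughout I assume $\varphi\in\mathcal{W}^{\frac{1}{2}}(\Sigma_L)$, which holds for the trace of $\widehat U$ by Lemma~\ref{lem:weightedtrace} together with interior regularity.

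For (a) I would observe that for each fixed $\bsz_\shortparallel$ the function $(\bsy_\shortparallel,y_d)\mapsto\partial_{y_d}\Gamma(\bsy_\shortparallel-\bsz_\shortparallel,y_d-L)$ is smooth and harmonic on $\mathcal{B}^\infty_L$, and that differentiation under the integral sign is legitimate because, for $y_d>L$ in a compact set, the kernel and its derivatives decay like $|\bsz_\shortparallel|^{-d}$ at infinity, which against the weighted control of $\varphi$ recalled below makes all the integrals converge locally uniformly. For (b), I would fix $\chi\in\mathcal{C}^\infty_c(\R^{d-1})$ with $\chi\equiv1$ on $(-\tfrac{3}{2},\tfrac{3}{2})^{d-1}$ and split $\varphi^\omega=\chi\varphi^\omega+(1-\chi)\varphi^\omega$: since a.s.\ $\chi\varphi^\omega\in H^{\frac{1}{2}}(\Sigma_L)$, the Poisson integral of $\chi\varphi^\omega$ is a.s.\ in $H^1(\mathcal{B}^\infty_L)$ with trace $\chi\varphi^\omega$ on $\Sigma_L$ (classical potential theory, as already invoked in Lemma~\ref{Lemme_decompo_VT_alea}); for $\bsy_\shortparallel\in\square_1$ the $(1-\chi)$-part only involves the kernel at arguments bounded away from the origin, where $\partial_{y_d}\Gamma(\cdot,t)$ stays bounded and tends to $0$ as $t\to0^+$, hence it contributes nothing to the trace on $\square_1$. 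Repeating with translated cutoffs gives $Z^\omega|_{\Sigma_L}=\varphi^\omega$ a.s.; the same splitting, together with $\varphi\in\mathcal{W}^{\frac{1}{2}}(\Sigma_L)\subset L^2\big(\Omega,H^{\frac{1}{2}}_{loc}(\Sigma_L)\big)$, gives $Z\in L^2(\Omega,H^1_{loc}(\mathcal{B}^\infty_L))$.

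For (c), stationarity follows as in Lemma~\ref{Lemme_decompo_VT_alea} from the change of variables $\bsz_\shortparallel\mapsto\bsz_\shortparallel-\bsx_\shortparallel$ and the stationarity of $\varphi$. For the energy bound I would first record the scaling estimates $\int_{\R^{d-1}}|\partial_{y_d}\Gamma(\bsz_\shortparallel,t)|\,\dd\bsz_\shortparallel\lesssim1$ and $\int_{\R^{d-1}}|\nabla\partial_{y_d}\Gamma(\bsz_\shortparallel,t)|\,\dd\bsz_\shortparallel\lesssim t^{-1}$ (these follow by homogeneity, of degrees $-(d-1)$ and $-d$, from the fact that $\partial_{y_d}\Gamma(\cdot,1)$ and $\nabla\partial_{y_d}\Gamma(\cdot,1)$ decay like $|\bsz_\shortparallel|^{-d}$ and are thus integrable over $\R^{d-1}$ since $d>d-1$), and then reproduce the weighted Cauchy--Schwarz estimate of Lemma~\ref{Lemme_decompo_VT_alea}: split $\partial_{y_d}\Gamma=|\partial_{y_d}\Gamma|^{1/2}|\partial_{y_d}\Gamma|^{1/2}$ with the weights $\mu^\omega(\cdot,L)$ and $(\mu^\omega)^{-1}(\cdot,L)$ and use the stationarity of $\mu(\cdot,L)$ and $\varphi$, the bound $\E[\mu^{-1}(\cdot,L)]\lesssim L^2$ from \eqref{eq:Eh-1}, and $\mu^{\frac{1}{2}}_{|_{\Sigma_L}}\varphi\in\mathcal{L}^2(\Sigma_L)$ (Lemma~\ref{lem:weightedtrace}). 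For $y_d>L'>L$ this yields $\E\big[\int_{\square_1}\int_{L'}^{+\infty}|\nabla Z^\omega|^2\big]\lesssim\int_{L'}^{+\infty}(y_d-L)^{-2}\,\dd y_d\,\big\|\mu^{\frac{1}{2}}\varphi\big\|^2_{\mathcal{L}^2(\Sigma_L)}<+\infty$, and for $L<y_d<L'$ I would again use the $\chi$-splitting (the $\chi\varphi^\omega$-part is in $L^2(\Omega,H^1(\mathcal{B}^\infty_L))$, while the far part only involves the kernel on $\{\,|\bsy_\shortparallel-\bsz_\shortparallel|\ge\tfrac{3}{2}\,\}$, where it is bounded and $L^1$). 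Together with (a)--(b) this proves $Z\in\mathcal{W}(\mathcal{B}^\infty_L)$ is a.s.\ a solution of \eqref{eq:halfspace}; combined with the uniqueness in Proposition~\ref{prop:pbdemiespa_bienpose} it identifies $Z$ with $\widehat U|_{\mathcal{B}^\infty_L}$, i.e.\ yields \eqref{eq:repint_W10}.

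The step I expect to be the genuine obstacle is the tail estimate in (c): because $\varphi$ is controlled only through the weight $\mu$ (it need not lie in $\mathcal{L}^2(\Sigma_L)$), one cannot simply bound $\E[|\varphi|^2]$ and must carry the $\mu$/$\mu^{-1}$ Cauchy--Schwarz splitting throughout; and the decay rate $\int_{\R^{d-1}}|\nabla\partial_{y_d}\Gamma(\cdot,t)|\,\dd\bsz_\shortparallel\lesssim t^{-1}$ is exactly borderline, in that it makes $\int_{L'}^{+\infty}(y_d-L)^{-2}\,\dd y_d$ converge whereas the analogous integral $\int_{L'}^{+\infty}\dd y_d$ that would control $\E[|Z|^2]$ diverges --- which is precisely why one obtains membership in $\mathcal{W}(\mathcal{B}^\infty_L)$ and not in $\mathcal{H}^1(\mathcal{B}^\infty_L)$.
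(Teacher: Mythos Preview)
Your proposal is correct and follows essentially the same route as the paper: both rerun the proof of Lemma~\ref{Lemme_decompo_VT_alea} with $\partial_{y_d}\Gamma$ in place of $\partial_{y_d}G_T$, using the $\chi$-splitting for the strip $L<y_d<L'$ and, for $y_d>L'$, the weighted Cauchy--Schwarz with $\mu/\mu^{-1}$ together with the kernel bound $\int_{\R^{d-1}}|\nabla\partial_{y_d}\Gamma(\bsz_\shortparallel,t)|\,\dd\bsz_\shortparallel\lesssim t^{-1}$ (the paper states this via $\partial_{y_d}\nabla\Gamma(\bsz)=\mathcal{O}(|\bsz|^{-d})$ and the explicit integral $\int_{\R^{d-1}}(t^2+|\bsz_\shortparallel|^2)^{-d/2}\,\dd\bsz_\shortparallel\lesssim t^{-1}$). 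Your remark that this borderline rate is exactly what places $Z$ in $\mathcal{W}(\mathcal{B}^\infty_L)$ rather than $\mathcal{H}^1(\mathcal{B}^\infty_L)$ is a useful gloss not spelled out in the paper.
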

\begin{proof}[Proof of proposition \ref{prop:regir}]
We follow the same steps as in the proof of Lemma \ref{Lemme_decompo_VT_alea} to show that $Z$
 satisfies \eqref{eq:halfspace}, that a.s. it belongs to $H^1_\text{loc}( \mathcal{B}_L^\infty)$ and that $Z(\cdot,y_d)$ is stationary for $y_d>L$. To prove that 
 \begin{equation*}
    \mathbb{E}\Big[ \int_{\square_1}\int_{L}^{+\infty} |\nabla \widehat{U}^\omega |^2\,\dd\bsy \Big] <+\infty 
    \end{equation*} 
  the calculations are similar to the proof of Lemma \ref{Lemme_decompo_VT_alea}, provided one uses that
  \[
  \partial_{y_d}\nabla_{\bsz} \Gamma(\bsz)=\mathcal{O}\left(\frac{1}{\left((z_d)^2+|\bsz_\shortparallel|^2\right)^{d/2}}\right)\quad\text{as}\; |\bsz|\rightarrow+\infty
  \]
  and 
  \[
  \int_{\mathbb{R}^{d-1}} \frac{\mathrm{d}\bsz_\shortparallel}{\Big(\big(y_d-L\big)^2+|\bsz_\shortparallel|^2\Big)^{d/2} } \, \lesssim \frac{1}{y_d-L}\quad\text{for}\;y_d>L'\quad\text{and}\quad \int_{\mathbb{R}^{d-1}\setminus\square_1} \frac{\mathrm{d}\bsz_\shortparallel}{\Big(\big(y_d-L\big)^2+|\bsz_\shortparallel|^2\Big)^{d/2} } <+\infty.
  \]
 \end{proof}
 
 \section{$L^p$-regularity of the auxiliary problem}\label{sec:appaux}
 
 To prove Theorem \ref{thm:aux}, we adapt the proof of \cite[Theorem 3.1]{DuerinckxGloria2021} to our problem. To this effect we start by reproducing two technical lemmatas that are key in the proof of the theorem. The first one is Gehring's Lemma \cite{gehring1973p} in the form of \cite[proposition 5.1]{modica1979regularity}.
 \begin{lemma}[Gehring's Lemma]\label{lem:Gehring} Let $1<q<s$ and $Q_0$ be a reference cube in $\R^d$. Take $F\in L^s(2Q_0)$ and $G\in L^q(2Q_0)$ two non-negative functions. Suppose that for every cube $Q\subset Q_0$
 \begin{equation*}\left(\fint_Q G^q\right)^{\frac{1}{q}}\leq b\fint_{2Q} G+b\left(\fint_{2Q} F^q\right)^{\frac{1}{q}}+\theta \left(\fint_{2Q}G^q\right)^{\frac{1}{q}},\end{equation*}where $b>1$ and $\theta\geq 0$. There exists $\theta_0=\theta_0(q,s,d)$ and $\eta_0=\eta_0(b,d,q,s)>0$ such that if $\theta\leq\theta_0$, then for all $q\leq p\leq q+\eta_0$
  \begin{equation*}\left(\fint_{Q_0} G^p\right)^{\frac{1}{p}}\lesssim\left(\fint_{2Q_0} F^p\right)^{\frac{1}{p}}+ \fint_{2Q_0}G,\end{equation*}
 \end{lemma}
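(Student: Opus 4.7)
The plan is to follow the classical Calder\'on--Zygmund / good-$\lambda$ strategy that underlies all proofs of self-improving reverse H\"older inequalities, paying attention to the role of the absorption parameter $\theta$. Throughout, set $\lambda_0\coloneqq\bigl(\fint_{Q_0}G^q\bigr)^{1/q}+\bigl(\fint_{2Q_0}F^q\bigr)^{1/q}$ and work with levels $\lambda\geq\lambda_0$. I would also extend $F$ and $G$ by $0$ outside $2Q_0$ so that the level-set arguments make sense as ordinary Calder\'on--Zygmund decompositions with $Q_0$ as the starting cube.

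First I would perform a dyadic Calder\'on--Zygmund stopping-time argument on $G^q$ at level $\lambda^q$ (with $\lambda\geq\lambda_0$): the collection of maximal dyadic subcubes $\{Q_i\}$ of $Q_0$ with $\fint_{Q_i}G^q>\lambda^q$ satisfies $\fint_{\widehat{Q}_i}G^q\leq\lambda^q$ for the dyadic parent, and in particular $\fint_{2Q_i}G^q\lesssim\lambda^q$. The set $\{G>\lambda\}\cap Q_0$ lies (up to a null set, by Lebesgue differentiation) in $\bigcup_i Q_i$. On each $Q_i$, I would apply the hypothesis on the cube $Q=Q_i\subset Q_0$, split the $L^1$-average $\fint_{2Q_i}G$ as $\fint_{2Q_i}G\mathbb{1}_{G\leq\delta\lambda}+\fint_{2Q_i}G\mathbb{1}_{G>\delta\lambda}$ (and similarly for $F^q$), and choose $\delta$ small. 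The contributions from $\{G\leq\delta\lambda\}$ and $\{F\leq\delta\lambda\}$ are at most $C\delta\lambda$, which after raising to the power $q$ can be made $\leq\tfrac12\lambda^q$, so they are absorbed by the lower bound $\fint_{Q_i}G^q>\lambda^q$. This yields the good-$\lambda$ inequality
\begin{equation*}
\int_{\{G>\lambda\}\cap Q_0}G^q\lesssim\lambda^{q-1}\int_{\{G>\delta\lambda\}\cap 2Q_0}G+\int_{\{F>\delta\lambda\}\cap 2Q_0}F^q+\theta^q\int_{\{G>\delta\lambda\}\cap 2Q_0}G^q.
\end{equation*}

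Next I would multiply this inequality by $(p-q)\lambda^{p-q-1}$, integrate in $\lambda$ from $\lambda_0$ to $+\infty$, and apply Fubini's theorem (layer-cake) on each term. On the left this produces, up to lower-order terms controlled by $\lambda_0^{p-q}\int_{2Q_0}G^q$, the quantity $\int_{Q_0}G^p$; on the right the first term becomes $C(p-q)/(p-1)\cdot\delta^{-(p-1)}\int_{2Q_0}G^p$ (by H\"older's inequality applied after the $\lambda$-integration, or equivalently by using $\int G\cdot G^{p-1}$), the second becomes $C\delta^{-(p-q)}\int_{2Q_0}F^p$, and the third becomes $C\theta^q\delta^{-(p-q)}\int_{2Q_0}G^p$. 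The upshot is
\begin{equation*}
\int_{Q_0}G^p\leq\bigl(C_1(p-q)\delta^{-(p-1)}+C_2\theta^q\delta^{-(p-q)}\bigr)\int_{2Q_0}G^p+C_3\delta^{-(p-q)}\int_{2Q_0}F^p+C_4|Q_0|\bigl(\fint_{2Q_0}G\bigr)^p,
\end{equation*}
where the last term comes from handling the $\lambda<\lambda_0$ range via the basic energy estimate $\int_{Q_0}G^q\lesssim\lambda_0^q|Q_0|$ together with one final application of the hypothesis at $Q=Q_0$.

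The main obstacle is the absorption: one must ensure that the prefactor of $\int_{2Q_0}G^p$ is strictly less than $1$. I would first fix $\delta$ of order $1$ (depending on $b,d,q,s$), and then choose $\theta_0$ and $\eta_0$ so small that, for $0\leq\theta\leq\theta_0$ and $q\leq p\leq q+\eta_0$, both summands inside the parenthesis are at most $\tfrac14$. After absorbing and taking $p$-th roots, this gives exactly the inequality claimed in the lemma. A standard localisation device (covering $Q_0$ by a bounded number of cubes whose doubles stay inside $2Q_0$, or passing through the classical truncation of Giaquinta--Modica) allows one to replace $2Q_0$ on the left by $Q_0$ without loss, thereby closing the argument.
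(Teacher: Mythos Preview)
The paper does not prove this lemma at all: it is quoted as a known result, attributed to Gehring (1973) in the form given by Giaquinta--Modica (Proposition~5.1 in the cited reference), and used as a black box in the proof of Theorem~\ref{thm:aux}. Your sketch is the standard Calder\'on--Zygmund / good-$\lambda$ argument that underlies the cited references, so there is nothing to compare against; if you want to include a self-contained proof, your outline is the right one, though you should be a bit more careful in the absorption step (the $\int_{2Q_0}G^p$ on the right is over the doubled cube, not $Q_0$, so one needs the usual iteration over concentric cubes or the Giaquinta--Modica truncation you allude to at the end to actually close the inequality).
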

 
The second lemma corresponds to the dual version of Calder\'on-Zygmund Lemma due to Shen \cite[Theorem 3.2]{shen2007lp}.
 
  \begin{lemma}[Dual Calder\'on-Zygmund Lemma]\label{lem:DCZ}Let $1\leq p_0<p_1$, $C\in[\frac{1}{2}, 2]$ and $Q_0$ be a reference cube in $\R^d$. Take $F\in L^{p_1}(2Q_0)$ and $G\in L^{p_1}(2Q_0)$. Suppose that for every cube $Q\subset Q_0$, there exists two measurable functions $F_{Q,0}$ and $F_{Q,1}$ such that $|F|\leq |F_{Q,0}|+|F_{Q,1}|$ and $|F_{Q,1}|\leq |F|+|F_{Q,0}|$ on $2Q_0$
  \begin{align*}
  \left(\fint_{Q} |F_{Q,0}|^{p_0}\right)^{\frac{1}{p_0}}&\leq C_1 \left(\fint_{CQ} |G|^{p_0}\right)^{\frac{1}{p_0}},\\
  \left(\fint_{\frac{1}{C}Q} |F_{Q,1}|^{p_1}\right)^{\frac{1}{p_1}}&\leq C_2 \left(\fint_{Q} |F_{Q,1}|^{p_0}\right)^{\frac{1}{p_0}},
  \end{align*}with $C_1,C_2>0$.
 Then for all $p_0<p<p_1$\begin{equation*}
  \left(\fint_{Q_0} |F|^{p}\right)^{\frac{1}{p}}\lesssim \fint_{2Q_0} |F|+\left(\fint_{2Q_0} |G|^{p}\right)^{\frac{1}{p}}.\end{equation*}
 \end{lemma}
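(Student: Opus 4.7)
The plan is to prove this Shen-type criterion via a good-$\lambda$ inequality derived from a Calder\'on-Zygmund stopping-time decomposition and to conclude with Cavalieri's principle. The first hypothesis gives a ``bad-part'' $L^{p_0}$ bound for $F_{Q,0}$ in terms of $G$; the second hypothesis provides the crucial self-improvement, namely that $F_{Q,1}$ jumps from $L^{p_0}$ to $L^{p_1}$ on a slightly smaller cube. Interpolating these two via a stopping-time argument will yield the $L^p$ bound for every $p_0<p<p_1$.

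Step 1 (Stopping time). Fix $\lambda_0\geq \fint_{2Q_0}|F|^{p_0}$ and, for each $\lambda>\lambda_0$, perform a dyadic Calder\'on-Zygmund decomposition of $|F|^{p_0}$ on $Q_0$ at height $\lambda$: obtain a family of maximal disjoint dyadic cubes $\{Q_j\}$ with $\lambda<\fint_{Q_j}|F|^{p_0}\leq 2^d\lambda$ and $\{|F|^{p_0}>\lambda\}\subset\bigcup_j Q_j$ up to a null set. On each $Q_j$ apply the hypothesis to split $F=F_{Q_j,0}+F_{Q_j,1}$.

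Step 2 (Good-$\lambda$ inequality). The triangle bound $|F_{Q_j,1}|\leq |F|+|F_{Q_j,0}|$ together with the first estimate yields $\fint_{Q_j}|F_{Q_j,1}|^{p_0}\lesssim \lambda+\fint_{CQ_j}|G|^{p_0}$, and the self-improvement hypothesis then lifts this to
\[
\Bigl(\fint_{Q_j/C}|F_{Q_j,1}|^{p_1}\Bigr)^{1/p_1}\lesssim \lambda^{1/p_0}+\Bigl(\fint_{CQ_j}|G|^{p_0}\Bigr)^{1/p_0}.
\]
A Chebyshev inequality at level $A\lambda$ (for a large constant $A$ to be chosen), combined with the direct $L^{p_0}$-control of $F_{Q_j,0}$ by $G$, gives
\[
|\{|F|^{p_0}>A\lambda\}\cap Q_j|\;\lesssim\;A^{-p_1/p_0}|Q_j|\,+\,(A\lambda)^{-p_1/p_0}\int_{CQ_j}|G|^{p_1}.
\]
Summing over $j$ and invoking the bounded overlap of $\{CQ_j\}$ produces the good-$\lambda$ inequality
\[
|\{|F|^{p_0}>A\lambda\}\cap Q_0|\;\lesssim\;A^{-p_1/p_0}\,|\{|F|^{p_0}>\lambda\}\cap Q_0|+(A\lambda)^{-p_1/p_0}\|G\|_{L^{p_1}(2Q_0)}^{p_1}.
\]

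Step 3 (Cavalieri). Multiply both sides by $p\,\lambda^{p/p_0-1}$ and integrate in $\lambda\in(\lambda_0,\infty)$. After the change of variable $A\lambda\mapsto\lambda$ on the left, the first right-hand term produces a prefactor $A^{-(p_1-p)/p_0}$ in front of $\|F\|_{L^p(Q_0)}^p$, which is strictly less than one for $A$ large since $p<p_1$; this allows absorption into the left-hand side. The boundary contribution at $\lambda_0$ is bounded by $\lambda_0^{p/p_0}|Q_0|\lesssim \bigl(\fint_{2Q_0}|F|\bigr)^p|Q_0|$ after Jensen, and the $G$-term integrates to $\|G\|_{L^p(2Q_0)}^p$ because $p<p_1$, which gives precisely the announced inequality.

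The main technical obstacle is the dilation mismatch: the stopping cubes $Q_j$ are dyadic subcubes of $Q_0$, yet the hypotheses involve the dilates $CQ_j$ and $Q_j/C$ with $C\in[1/2,2]$. A Vitali-type selection ensures the enlarged cubes $\{CQ_j\}$ retain bounded overlap so that sums like $\sum_j\int_{CQ_j}|G|^{p_1}$ stay controlled by $\|G\|_{L^{p_1}(2Q_0)}^{p_1}$; likewise, passing from the level set on $Q_j/C$ to a level set on $Q_j$ requires a covering of $Q_j$ by translates of $Q_j/C$, contributing only a harmless combinatorial constant. The other delicate point is calibrating $A$ in Step 3: it must be large enough for the absorption (depending on $p_1-p$), which is exactly why the bound blows up as $p\uparrow p_1$.
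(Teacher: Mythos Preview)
The paper does not prove this lemma; it simply cites Shen \cite[Theorem 3.2]{shen2007lp}. So the comparison is not with the paper but with whether your argument actually goes through. Your overall architecture (Calder\'on--Zygmund stopping time, good-$\lambda$, Cavalieri) is the right one and is essentially Shen's; however, the good-$\lambda$ inequality you write down does not produce the stated conclusion.

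The main gap is in the $G$-term. Your inequality
\[
|\{|F|^{p_0}>A\lambda\}\cap Q_0|\;\lesssim\;A^{-p_1/p_0}\,|\{|F|^{p_0}>\lambda\}\cap Q_0|+(A\lambda)^{-p_1/p_0}\|G\|_{L^{p_1}(2Q_0)}^{p_1}
\]
contains an additive global $L^{p_1}$-norm of $G$. When you multiply by $\lambda^{p/p_0-1}$ and integrate, this term gives
\[
A^{-p_1/p_0}\|G\|_{L^{p_1}}^{p_1}\int_{\lambda_0}^{\infty}\lambda^{(p-p_1)/p_0-1}\,d\lambda\;\simeq\;\|G\|_{L^{p_1}}^{p_1}\,\lambda_0^{(p-p_1)/p_0},
\]
which is the $L^{p_1}$-norm of $G$ times a \emph{negative} power of $\lambda_0=\fint|F|^{p_0}$, not $\|G\|_{L^p}^p$. (Your sentence ``the $G$-term integrates to $\|G\|_{L^p}^p$ because $p<p_1$'' is incorrect.) Moreover, your display drops the contribution of the set $\{|F_{Q_j,0}|^{p_0}>cA\lambda\}$, for which only $L^{p_0}$-control via $G$ is available; Chebyshev there yields $(A\lambda)^{-1}\int_{CQ_j}|G|^{p_0}$, whose Cavalieri integral \emph{diverges} at infinity since $p>p_0$. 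The standard fix is to insert the maximal function: split the stopping cubes according to whether $\fint_{CQ_j}|G|^{p_0}\le\gamma\lambda$ or not. On ``good'' cubes both $G$-contributions are absorbed into $\epsilon|Q_j|$ (choosing $\gamma$ small relative to $A$); ``bad'' cubes satisfy $Q_j\subset\{M(|G|^{p_0})>c\gamma\lambda\}$, so one obtains the genuine good-$\lambda$ form
\[
|\{|F|^{p_0}>A\lambda\}|\;\le\;\epsilon\,|\{M(|F|^{p_0})>\lambda\}|+|\{M(|G|^{p_0})>c\gamma\lambda\}|,
\]
and the second term integrates to $\|G\|_{L^p}^p$ via the Hardy--Littlewood maximal theorem (applicable because $p>p_0$).

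Two smaller issues. First, your Jensen step for the boundary term is in the wrong direction: $\lambda_0^{p/p_0}=(\fint|F|^{p_0})^{p/p_0}\ge(\fint|F|)^p$, not $\le$; what one actually obtains is $(\fint_{2Q_0}|F|^{p_0})^{1/p_0}$ on the right (which is indeed what Shen states). Second, the phrase ``covering of $Q_j$ by translates of $Q_j/C$'' does not work as written, since $F_{Q,0},F_{Q,1}$ depend on the specific cube $Q$; the correct manoeuvre is to apply the hypothesis to $Q=CQ_j$ (using that the parent of the stopping cube controls $\fint_{CQ_j}|F|^{p_0}\lesssim\lambda$ when $C\le 2$), so that the $L^{p_1}$-bound holds directly on $Q_j$.
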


\begin{proof}[Proof of Theorem \ref{thm:aux}]
 
\noindent\textit{Step 1.} We prove that a.s. for all $2\leq p\leq 2+\frac{1}{C_0}$
\begin{equation}\label{eq:upperpf}
  \left(\int_{\R^{d-1}}\int_{\R^+} \mathbb{1}_{D^\omega}|\nabla  u_{f,\bsg}^\omega|^p\right)^{\frac{1}{p}}
  \lesssim\left(\int_{\R^{d-1}}\int_{\R^+} |\bsg|^p\right)^{\frac{1}{p}}+\left(\int_{\R^{d-1}}\mu^{-\frac{p}{2}} |f|^p\right)^{\frac{1}{p}}.\end{equation}

Let $R>0$. Take $\chi$ a smooth cut-off function such that $\chi|_{\square_R\times(0,R)}\equiv1$, $\chi\geq 0$, $\textrm{supp}\chi\subset\square_{2R}\times(0,2R)$ and $\vsu\left|\nabla \chi\right|\lesssim \dst\frac{1}{R}$. We test \eqref{eq:WFug} with $v\coloneqq \chi(u_{f,\bsg}-c)$ where $c$ is a constant. We obtain 
\begin{multline*}
  \int_{\square_R\times(0,R)}\mathbb{1}_{D^\omega}|\nabla  u_{f,\bsg}^\omega|^2\leq\int_{\square_{2R}\times(0,2R)}\mathbb{1}_{D^\omega}\,\bsg\cdot(\chi\nabla u_{f,\bsg}^\omega+(u_{f,\bsg}^\omega-c)\nabla \chi)
  \\+\mathbb{1}_{R\geq L}\int_{\square_{2R}\cap\Sigma_L}f\chi(u_{f,\bsg}^\omega-c)-\int_{\square_{2R}\times(0,2R)}\mathbb{1}_{D^\omega}\,\nabla u_{f,\bsg}^\omega\cdot(u_{f,\bsg}^\omega-c)\nabla \chi.
\end{multline*}
Using Young's inequality with $K\geq 1$ we get
\begin{multline*}
  \int_{\square_R\times(0,R)}\mathbb{1}_{D^\omega}|\nabla  u_{f,\bsg}^\omega|^2\lesssim\frac{1}{R^2}\left(\frac{K^2}{2}+\frac{1}{2K^2}\right)\int_{\square_{2R}\times(0,2R)}\mathbb{1}_{D^\omega}|u_{f,\bsg}^\omega-c|^2+\frac{1}{K^2}\int_{\square_{2R}\times(0,2R)}\mathbb{1}_{D^\omega}|\nabla  u_{f,\bsg}^\omega|^2
  \\+\frac{1}{2K^2}\int_{\square_{2R}\cap\Sigma_L}\mu|u_{f,\bsg}^\omega-c|^2+\frac{K^2}{2}\int_{\square_{2R}\times(0,2R)}|\bsg|^2+\frac{K^2}{2}\int_{\square_{2R}\cap\Sigma_L}\mu^{-1}|f|^2.
\end{multline*}
Choosing $c\coloneqq \dst\fint_{\square_{2R}\times(0,2R)} \mathbb{1}_{D^\omega}u_{f,\bsg}^\omega$, we apply Poincar\'e-Sobolev inequality to the first term and the trace Theorem to the third term of the right-hand side
\begin{multline*}
  \left(\int_{\square_{R}\times(0,R)} \mathbb{1}_{D^\omega}|\nabla  u_{f,\bsg}^\omega|^2\right)^{\frac{1}{2}}
  \lesssim K\left(\int_{\square_{2R}\times(0,2R)} \mathbb{1}_{D^\omega}|\nabla  u_{f,\bsg}^\omega|^{p^\star}\right)^{\frac{1}{p^\star}}+\frac{1}{K}\left(\int_{\square_{2R}\times(0,2R)} \mathbb{1}_{D^\omega}|\nabla  u_{f,\bsg}^\omega|^2\right)^{\frac{1}{2}}
  \\+K\left(\int_{\square_{2R}\times(0,2R)} \left(|\bsg|^2+ (2R\mu)^{-1}|f|^2\right)\right)^{\frac{1}{2}},
\end{multline*}
with $p^\star={2d}/{(d+2)}$ and where we have identified the function $\mu^{-1}|f|^2\in L^1(\square_{2R}\cap\Sigma_L)$ to a function in $L^1(\square_{2R}\times(0,2R))$
that is independent of $x_d$. We conclude by applying Gehring's Lemma (Lemma \ref{lem:Gehring})
with $q={2}/{p^\star}$, $F\coloneqq \left(|\bsg|^2+(2R\mu)^{-1}|f|^2\right)^{{p^\star}/{2}}$ 
and $G\coloneqq |\nabla u_{f,\bsg}^\omega|^{p^\star}$ that for all $2\leq p\leq 2+ {1}/{C_0}$
\begin{equation}\label{eq:upperp}
    \left(\fint_{\square_{R}\times(0,R)} \mathbb{1}_{D^\omega}|\nabla  u_{f,\bsg}^\omega|^p\right)^{\frac{1}{p}}\lesssim 
    \left(\fint_{\square_{2R}\times(0,2R)} \mathbb{1}_{D^\omega}|\nabla  u_{f,\bsg}^\omega|^{p^\star}\right)^{\frac{1}{p^\star}}+\left(\fint_{\square_{2R}\times(0,2R)} |\bsg|^p+\fint_{\square_{2R}} (2R\mu)^{-\frac{p}{2}}|f|^p\right)^{\frac{1}{p}},
  \end{equation}
where $C_0\coloneqq \eta_0p^\star$.
By Jensen's inequality and since $2>p^*$, \eqref{eq:upperp} yields
\begin{multline*}
  \left(\int_{\square_{R}\times(0,R)} \mathbb{1}_{D^\omega}|\nabla  u_{f,\bsg}^\omega|^p\right)^{\frac{1}{p}}\lesssim R^{(\frac{1}{p}-\frac{1}{2})d}\left(\int_{\square_{2R}\times(0,2R)} \mathbb{1}_{D^\omega}|\nabla  u_{f,\bsg}^\omega|^{2}\right)^{\frac{1}{2}}
  \\+\left(\int_{\square_{2R}\times(0,2R)} |\bsg|^p+\int_{\square_{2R}}  \mu^{-\frac{p}{2}}|f|^p\right)^{\frac{1}{p}}.
\end{multline*}
\eqref{eq:energufg} implies 
\begin{multline*}
  \left(\int_{\square_{R}\times(0,R)} \mathbb{1}_{D^\omega}|\nabla  u_{f,\bsg}^\omega|^p\right)^{\frac{1}{p}}\lesssim R^{(\frac{1}{p}-\frac{1}{2})d}\left(\left(\int_{\R^{d-1}\times\R^+} |\bsg|^2\right)^{\frac{1}{2}}+\left(\int_{\R^{d-1}}\mu^{-1} |f|^2\right)^{\frac{1}{2}}\right)
  \\+\left(\int_{\square_{2R}\times(0,2R)} |\bsg|^p+\int_{\square_{2R}} \mu^{-\frac{p}{2}}|f|^p\right)^{\frac{1}{p}}.
\end{multline*}
Next we let $R\to+\infty$ and obtain \eqref{eq:upperpf} for $2\leq p\leq 2+ \frac{1}{C_0}$. 

\noindent\textit{Step 2: Proof of \eqref{eq:momentuf} for $2\leq q<p\leq 2+\frac{1}{C_0}$} Let $2\leq p_0\leq p_1\leq 2+\frac{1}{C_0}$ and $R>0$. Let $u^{\omega, 0}_{f,\bsg}$ be the unique solution in $W_0(D^\omega)$ of 
\begin{equation*}
\left\{
\begin{array}{l}
-\Delta u_{f,\bsg}^{\omega,0} = \nabla\cdot(\bsg\mathbb{1}_{\square_R\times(0,R)})\; \text{in} \;D^\omega\setminus(\Sigma_H\cup\Sigma_L),  \\
  -\partial_{y_d} u_{f,\bsg}^{\omega,0}  = 0 \;\text{on} \,\,\Sigma_0, \quad\text{and}\quad 
  u_{f,\bsg}^{\omega,0} =0\;\text{on}\,\,\partial\Pa^\omega,\\
 \Big[u_{f,\bsg}^{\omega,0} \Big]_H  = 0 \quad \text{and}\quad\Big[ -\partial_{y_d} u_{f,\bsg}^{\omega,0} \Big]_H=0, \\ 
 \Big[u_{f,\bsg}^{\omega,0}\Big]_L  = 0 \quad \text{and}\quad\Big[ -\partial_{y_d} u_{f,\bsg}^{\omega,0} \Big]_L=f\mathbb{1}_{\square_R}. 
\end{array}
\right.
\end{equation*}
We apply \eqref{eq:upperpf} to $u^{\omega, 0}_{f,\bsg}$ 
\begin{align*}
\int_{\square_R}\int_{(0,R)}\E\left[ \mathbb{1}_{D^\omega}|\nabla  u_{f,\bsg}^{\omega,0}|^{p_0}\right]&\leq\E\left[\int_{\R^{d-1}}\int_{\R^+} \mathbb{1}_{D^\omega}|\nabla  u_{f,\bsg}^{\omega,0}|^{p_0}\right]\\&\lesssim\E\left[\int_{\square_R}\int_{(0,R)} |\bsg|^{p_0}+ \int_{\square_R} \mu^{-\frac{p_0}{2}}|f|^{p_0}\right]\\&\lesssim\E\left[\int_{\square_{2R}}\int_{(0,2R)} |\bsg|^{p_0}+\int_{\square_{2R}} \mu^{-\frac{p_0}{2}}|f|^{p_0}\right].\end{align*}
On the other hand $u^{\omega, 1}_{f,\bsg}\coloneqq u^{\omega}_{f,\bsg}-u^{\omega, 0}_{f,\bsg}$ verifies in $D^\omega_R\coloneqq \square_R\times(0,R)\setminus\overline{\Pa^\omega}$
\begin{equation*}
\left\{
\begin{array}{l}
-\Delta u_{f,\bsg}^{\omega,1} =0\; \text{in}\;D^\omega_{R,R}\setminus(\Sigma_H\cup\Sigma_L),  \\
  -\partial_{y_d} u_{f,\bsg}^{\omega,1} = 0 \;\text{on} \,\,\Sigma_0\cap\square_R,\quad\text{and}\quad 
  u_{f,\bsg}^{\omega,1}=0\;\text{on} \,\,\partial\Pa^\omega,\\
 \Big[u_{f,\bsg}^{\omega,1} \Big]_{\Sigma_H\cap\square_R}  = 0 \quad \text{and}\quad\Big[ -\partial_{y_d} u_{f,\bsg}^{\omega,1} \Big]_{\Sigma_H\cap\square_R}=0,\\ 
 \Big[u_{f,\bsg}^{\omega,1}\Big]_{\Sigma_L\cap\square_R}  = 0 \quad \text{and}\quad \Big[ -\partial_{y_d} u_{f,\bsg}^{\omega,1} \Big]_{\Sigma_L\cap\square_R}=0.
\end{array}
\right.
\end{equation*}
We first deduce from the triangle inequality in $L^{\frac{p_1}{p_0}}(\square_{\frac{R}{2}}\times(0,\frac{R}{2}))$ the following
\begin{equation*}
\left(\fint_{\square_{\frac{R}{2}}}\fint_{(0,\frac{R}{2})}\E\left[ \mathbb{1}_{D^\omega}|\nabla  u_{f,\bsg}^{\omega,1}|^{p_0}\right]^{\frac{p_1}{p_0}}\right)^{\frac{1}{p_1}}\leq\E\left[\left(\fint_{\square_{\frac{R}{2}}}\fint_{(0,\frac{R}{2})} \mathbb{1}_{D^\omega}|\nabla  u_{f,\bsg}^{\omega,1}|^{p_1}\right)^{\frac{p_0}{p_1}}\right]^{\frac{1}{p_0}}.\end{equation*}
From \eqref{eq:upperp} with $g=0$ on $\square_R\times(0,R)$ and $f=0$ on $\Sigma_L\cap\square_R$, we obtain
\begin{equation*}\left(\fint_{\square_{\frac{R}{2}}}\fint_{(0,\frac{R}{2})}\E\left[ \mathbb{1}_{D^\omega}|\nabla  u_{f,\bsg}^{\omega,1}|^{p_0}\right]^{\frac{p_1}{p_0}}\right)^{\frac{1}{p_1}}\lesssim \E\left[\left(\fint_{\square_{R}}\fint_{(0,R)} \mathbb{1}_{D^\omega}|\nabla  u_{f,\bsg}^{\omega,1}|^{p^\star}\right)^{\frac{p_0}{p^\star}}\right]^{\frac{1}{p_0}}.\end{equation*}
Finally with Jensen's inequality (since $p_0\geq p^\star$) we conclude that
\begin{equation*}\left(\fint_{\square_{\frac{R}{2}}}\fint_{(0,\frac{R}{2})}\E\left[ \mathbb{1}_{D^\omega}|\nabla  u_{f,\bsg}^{\omega,1}|^{p_0}\right]^{\frac{p_1}{p_0}}\right)^{\frac{1}{p_1}}\lesssim \E\left[\fint_{\square_{R}}\fint_{(0,R)} \mathbb{1}_{D^\omega}|\nabla  u_{f,\bsg}^{\omega,1}|^{p_0}\right]^{\frac{1}{p_0}}.\end{equation*}We are now in position to apply Lemma \ref{lem:DCZ} with $C=2$,
\begin{align*} F\coloneqq \E[|\nabla u_{f,\bsg}|^{p_0}|]^{\frac{1}{p_0}}, &\quad G\coloneqq \E[|\bsg|^{p_0}+\mu^{-\frac{p_0}{2}}|f|^{p_0}]^{\frac{1}{p_0}},\\
F^0\coloneqq  \E[|\nabla u_{f,\bsg}^0|^{p_0}|]^{\frac{1}{p_0}}, &\quad F^1\coloneqq  \E[|\nabla u_{f,\bsg}^1|^{p_0}|]^{\frac{1}{p_0}}.\end{align*}We obtain for all $p_0<p<p_1$, $R_0>0$
\begin{multline*}\left(\fint_{\square_{R_0}}\fint_{(0,R_0)}\E[|\nabla u_{f,\bsg}|^{p_0}|]^{\frac{p}{p_0}}\right)^{\frac{1}{p}}\lesssim\fint_{\square_{R_0}}\fint_{(0,R_0)}\E[|\nabla u_{f,\bsg}|^{p_0}|\\+\left(\fint_{\square_{2R_0}}\fint_{(0,2R_0)}\E[|\bsg|^{p_0}]^{\frac{p}{p_0}}+\fint_{\square_{2R_0}}\mu^{-\frac{p_0}{2}}|f|^{p_0}]^{\frac{p}{p_0}}\right)^{\frac{1}{p}}.\end{multline*}We let $R_0\to+\infty$ and obtain \eqref{eq:momentuf} for $2\leq p\leq 2+ \frac{1}{C_0}$. 

\noindent\textit{Step 3: Proof of \eqref{eq:momentuf} for $2-{1}/{2C_0}\leq p<q\leq 2$.}
We proceed by duality:
\begin{equation*}\|\nabla u_{f,\bsg}\|_{L^{p}(\R^{d-1}\times\R^+,L^{q}(\Omega))}=\sup_{\bsh\in L^{p'}(\R^{d-1}\times\R^+,L^{q'}(\Omega))}\left\{\E\left[\int_{\R^{d-1}\times\R^+} h\cdot\nabla u_{f,\bsg}\right]| \|h\|_{L^{p'}(\R^{d-1}\times\R^+,L^{q'}(\Omega))}=1\right\}.\end{equation*}Moreover testing the variational formulation verified by $u_{0,\bsh}$ with $v\coloneqq \overline{u_{f,\bsg}}$ yields\begin{equation}\label{eq:vfuh}\int_{\R^{d-1}\times \R^+}\mathbb{1}_{D^\omega} \nabla u_{0,\bsh}\cdot \nabla u_{f,\bsg}=\int_{\R^{d-1}\times\R^+} \bsh\cdot\nabla u_{f,\bsg}.\end{equation}Testing the variational formulation verified by $u_{f,\bsg}$ with $v\coloneqq \overline{u_{0,\bsh}}$ and subtracting \eqref{eq:vfuh} gives
\begin{equation*}\int_{\R^{d-1}\times\R^+} \bsh\cdot\nabla u_{f,\bsg}=\int_{\R^{d-1}\times\R^+}\bsg\cdot\nabla u_{0,\bsh}+\int_{\R^{d-1}} fu_{0,\bsh}.\end{equation*}Moreover \begin{multline*}\sup\left\{\E\left[\int_{\R^{d-1}\times\R^+} \bsg\cdot\nabla u_{0,\bsh}+\int_{\R^{d-1}}fu_{0,\bsh}\right]| \|h\|_{L^{p'}(\R^{d-1}\times\R^+,L^{q'}(\Omega))}=1\right\}\leq\left(\|\bsg\|_{L^p(\R^{d-1},L^q(\Omega))}\right.\\\left.+\|\mu^{-\frac{1}{2}}f\|_{L^p(\R^{d-1},L^q(\Omega))}\right)\sup\left\{\|\nabla u_{0,\bsh}\|_{ L^{p'}(\R^{d-1}\times\R^+,L^{q'}(\Omega))}| \|h\|_{L^{p'}(\R^{d-1}\times\R^+,L^{q'}(\Omega))}=1\right\}.\end{multline*}
Since $2\leq q'<p'\leq 2+\frac{1}{C_0}$ we conclude thanks to step 2.2. By interpolation estimate \eqref{eq:momentuf} holds then for all $|p-2|, |q-2|<\frac{1}{C_0'}$ with $C_0'=8C_0$.
\end{proof}

\section{Proof of propositions \ref{prop:fluctw1} and \ref{prop:localnorm}}\label{sec:appnw1}

  To prove Proposition \ref{prop:fluctw1}, we use the following corollary of Hypothesis \nameref{hyp:mix} \cite[ proposition 1.10]{duerinckx2017multiscale}.
  \begin{lemma}[Control of high moments]\label{lem:highmix} 
    If $\Pa$ satisfies Hypothesis \nameref{hyp:mix}, then for all $q<+\infty$
and  all $\sigma(\Pa)$- measurable random variable $F(\Pa)$
\begin{equation*} 
\E\left[|F(\Pa)-\E[F(\Pa)]|^q\right]^{\frac{1}{q}}\lesssim q^2\E\left[\int_1^{+\infty}\left(\int_{\R^{d-1}} \left(\partial^{osc}_{\Pa, \mathcal{L}_{\ell}(\bsxpar)} F(\Pa)\right)^2 \dd \bsxpar\right)^{\frac{q}{2}} \ell^{-\frac{(d-1)q}{2}} \pi(\ell-1) d\ell\right]^{\frac{1}{q}}.
\end{equation*}
 \end{lemma}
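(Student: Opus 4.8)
The final statement, Lemma~\ref{lem:highmix}, is the standard passage from the multiscale spectral-gap (variance) inequality of Hypothesis~\nameref{hyp:mix} to control of all polynomial moments of $F(\Pa)-\E[F(\Pa)]$; it is exactly \cite[Proposition 1.10]{duerinckx2017multiscale}, so one option is simply to quote it, and the plan for a self-contained argument is to reproduce that proof in our notation. First I would write $\tilde F:=F(\Pa)-\E[F(\Pa)]$ and abbreviate $\mathcal{D}_\ell(G):=\int_{\R^{d-1}}\big(\partial^{osc}_{\Pa,\mathcal{L}_\ell(\bsxpar)}G\big)^2\,\dd\bsxpar$ and $w_\ell:=\ell^{-(d-1)}\pi(\ell-1)$, so that Hypothesis~\nameref{hyp:mix} reads $\Var[G]\le\E\big[\int_1^{+\infty}\mathcal{D}_\ell(G)\,w_\ell\,\dd\ell\big]$ for every $\sigma(\Pa)$-measurable $G$, while the quantity on the right-hand side of the lemma is $q^2\,\E\big[\int_1^{+\infty}\mathcal{D}_\ell(F)^{q/2}\,\ell^{-(d-1)q/2}\pi(\ell-1)\,\dd\ell\big]^{1/q}$. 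It suffices to treat $q\ge2$, and by Hölder's inequality in $\Omega$ it is enough to prove the estimate for $q$ in each dyadic block $q\in(2^k,2^{k+1}]$ by induction on $k$; the base case $q\in[2,4]$ reduces essentially to Hypothesis~\nameref{hyp:mix} applied to $\tilde F$ (using $\partial^{osc}\tilde F=\partial^{osc}F$, then $\E[|\tilde F|^{q/2}]^2\le\E[\tilde F^2]^{q/2}\le\Var[F]^{q/2}$ together with a Jensen step in $\dd\ell$, which is licit because $\pi$ is super-algebraic and hence integrable).

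The engine of the induction is to apply the variance inequality not to $\tilde F$ but to the nonlinear functional $G:=\min(|\tilde F|,K)^{q/2}$ (the truncation $K$ being removed at the end by monotone convergence): this gives $\E[\,\min(|\tilde F|,K)^q\,]=\E[G^2]=\Var[G]+\E[G]^2\le\E\big[\int_1^{+\infty}\mathcal{D}_\ell(G)\,w_\ell\,\dd\ell\big]+\E[\,\min(|\tilde F|,K)^{q/2}\,]^2$. The key elementary ingredient is an oscillation chain rule: because the true configuration $\Pa$ lies in every class $\{\Pa':\Pa'\cap(\R^d\setminus\overline{\mathcal{L}_\ell(\bsxpar)})=\Pa\cap(\R^d\setminus\overline{\mathcal{L}_\ell(\bsxpar)})\}$, the supremum and infimum of $|\tilde F|$ over such a class differ from $|\tilde F(\Pa)|$ by at most $\partial^{osc}_{\Pa,\mathcal{L}_\ell(\bsxpar)}\tilde F$, and combining this with the pointwise bound $\big|\,|a|^{q/2}-|b|^{q/2}\,\big|\lesssim q\,\max(|a|,|b|)^{q/2-1}|a-b|$ yields $\partial^{osc}_{\Pa,\mathcal{L}_\ell(\bsxpar)}G\lesssim q\,\big(|\tilde F|+\partial^{osc}_{\Pa,\mathcal{L}_\ell(\bsxpar)}\tilde F\big)^{q/2-1}\,\partial^{osc}_{\Pa,\mathcal{L}_\ell(\bsxpar)}\tilde F$. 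Squaring, using $(a+b)^{q-2}\lesssim a^{q-2}+b^{q-2}$, integrating over $\bsxpar$ and $\ell$ and taking expectations, the leading cross term is $\lesssim q^2\,\E\big[|\tilde F|^{q-2}\int_1^{+\infty}\mathcal{D}_\ell(\tilde F)\,w_\ell\,\dd\ell\big]$, which by Hölder's inequality with exponents $\big(\frac{q}{q-2},\frac{q}{2}\big)$ and then Young's inequality is absorbed into $\frac12\E[|\tilde F|^q]$ at the cost of a power of $q$; the remaining contribution is exactly of the form $\mathcal{D}_\ell(F)^{q/2}$ against the correct scale weight, i.e. precisely the right-hand side of the lemma, and $\E[|\tilde F|^{q/2}]^2$ is controlled by the induction hypothesis on the previous dyadic block. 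Collecting the powers of $q$ lost at the Young step over the $O(\log q)$ dyadic levels produces the final prefactor $q^2$.

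The step I expect to be the main obstacle is this oscillation chain rule together with the bookkeeping of the $q$-dependent constants: one must check that $\partial^{osc}$ of a locally Lipschitz, polynomially growing function of $F$ is controlled uniformly over the equivalence class by the local Lipschitz constant times $\partial^{osc}F$, and then verify that the truncation argument commutes with all these manipulations so that no spurious term survives as $K\to+\infty$, while keeping the constant tight enough to land on $q^2$ rather than a worse power. An alternative route, avoiding the direct iteration, is to deduce from Hypothesis~\nameref{hyp:mix} a multiscale logarithmic Sobolev inequality (with the weight $\pi$ only mildly degraded, again thanks to its super-algebraic decay), then run Herbst's argument to obtain an annealed concentration estimate for $\tilde F$ in terms of $\int_1^{+\infty}\mathcal{D}_\ell(F)\,w_\ell\,\dd\ell$, and finally extract moments by integrating the tail; this is the path of \cite{duerinckx2017multiscale}, and the $q^2$ there emerges from the moment extraction against a random carré-du-champ.
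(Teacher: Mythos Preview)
The paper does not prove this lemma at all; it simply states it as a quotation of \cite[Proposition~1.10]{duerinckx2017multiscale}. You correctly identify this and then go further by sketching the iteration argument from that reference, so your proposal is at least as complete as the paper's treatment and follows the same source.
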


\begin{proof}[Proof of Proposition \ref{prop:fluctw1}]

\noindent\textit{Step 1: Sensitivity of the $\nabla W_1$} \vsd Let $\bsxpar\in \R^{d-1}$ and $\ell\geq 1$. In what follows we use the same notations as in the proof of Proposition \ref{prop:osc}. Let us first  prove that \begin{equation}\label{eq:oscg}\left|\int_{\widetilde{D}^\omega}\bsg\cdot\nabla Z^\omega\right|\lesssim(\ell^\omega_+(\bsxpar))^2\|\mathbb{1}_{D^\omega}\nabla u_{\bsg}^\omega \|_{L^2(\mathcal{L}_{\ell^\omega_+}(\bsxpar))}\|\mathbb{1}_{D^\omega}\nabla W_1^\omega\|_{L^2(\mathcal{L}_{\ell^\omega_+}(\bsxpar))}.\end{equation}

Let  $\bsg\in\mathcal{C}_c^{\infty}(\R^{d-1}\times \R^+, \R^d).$ We introduce the auxiliary problem $u_{\bsg}$ solution of \eqref{eq:uf} with $f\coloneqq 0.$
The weak formulation associated to \eqref{eq:uf} reads for all $\varphi\in W_0(\widetilde{D}^\omega)$
\begin{equation}\label{eq:weakugbis}
  \int_{\widetilde{D}^\omega} \mathbb{1}_{D^\omega}\nabla u_{\bsg}\cdot\overline{\nabla \varphi}=\int_{\widetilde{D}^\omega}  \mathbb{1}_{D^\omega}\bsg\cdot\overline{\nabla \varphi}+\int_{\partial\Pa_\ell^\omega}\mathbb{1}_{D^\omega}\nabla u_{\bsg}\cdot\bsn\, \overline{\varphi}.\end{equation}We evaluate \eqref{eq:weakz} for $\varphi=\overline{u_{\bsg}}$ and \eqref{eq:weakugbis} for $\varphi=\overline{Z}.$ After substracting both expressions we obtain 
\begin{equation*}\int_{\widetilde{D}^\omega} \bsg\cdot\nabla Z^\omega=-\int_{\partial\Pa_\ell^\omega}\nabla u_{\bsg}^\omega\cdot\bsn \,\mathbb{1}_{D'}W_1^{'}-\int_{\partial\Pa_\ell'}\nabla W_1^{'}\cdot\bsn\, \mathbb{1}_{D^\omega}u_{\bsg}^\omega.\end{equation*} 
Using the same arguments as in the proof of proposition \ref{prop:osc}, we can estimate by duality $\|\nabla u_{\bsg}\cdot\bsn\|_{H^{-\frac{1}{2}}(\partial B(\bsx_i^\omega))}$ 
and obtain,
\begin{equation}\label{eq:dugn}\|\nabla u_{\bsg}\cdot\bsn\|_{H^{-\frac{1}{2}}(\partial B(\bsx_i^\omega))}\leq\|\mathbb{1}_{D^\omega}\nabla u_{\bsg}^\omega\|_{L^2(\mathcal{L}_{\ell^\omega_+}(\bsxpar))}.\end{equation}
Combining \eqref{eq:tracelayer} and \eqref{eq:dugn} we conclude that
\begin{equation*}\left|\int_{\widetilde{D}^\omega}\bsg\cdot\nabla Z^\omega\right|\lesssim2(\ell^\omega_+(\bsxpar))\|\mathbb{1}_{D^\omega}\nabla u_{\bsg}^\omega \|_{L^2(\mathcal{L}_{\ell^\omega_+}(\bsxpar))}\|\mathbb{1}_{D'}\nabla W_1^{'}\|_{L^2(\mathcal{L}_{\ell^\omega_+}(\bsxpar))}.\end{equation*}
We get \eqref{eq:oscg} using \eqref{eq:claim1}.\vsd

\noindent\textit{Step 2:}
 Let $g\in\mathcal{C}_c^{\infty}(\R^{d-1}\times \R^+)$ and $2\leq q<+\infty$.
The second step consists in applying the higher-order version (Lemma \ref{lem:highmix}) of the mixing hypothesis (Hypothesis \nameref{hyp:mix}) to $\dst\int_0^{+\infty}\int_{\R^{d-1}}\mathbb{1}_{D^\omega}\bsg\cdot\nabla W_1(\bsy)\dd\bsy$.
\begin{align*}
  &\E\left[\left|\int_0^{+\infty}\int_{\R^{d-1}}\mathbb{1}_{D^\omega} \bsg\cdot\nabla W_1(\bsy)\dd\bsy\right|^q\right]^{\frac{2}{q}}
  \\&\leq q^2\E\left[\int_1^{+\infty}\left(\int_{\R^{d-1}} \left(\partial^{osc}_{\Pa, \mathcal{L}_{\ell}(\bsxpar)} \int_0^{+\infty}\int_{\R^{d-1}}\mathbb{1}_{D^\omega} \bsg\cdot\nabla W_1(\bsy)\dd\bsy\right)^2 \dd\bsxpar\right)^{\frac{q}{2}} \ell^{-\frac{q(d-1)}{2}} \pi(\ell-1) \dd\ell\right]^{\frac{2}{q}}
  \\&\lesssim q^2\E\left[\int_1^{+\infty}\left(\int_{\R^{d-1}} (\ell^\omega_+(\bsxpar))^4\|\mathbb{1}_{D^\omega}\nabla u_{\bsg}^\omega \|^2_{L^2(\mathcal{L}_{\ell^\omega_+}(\bsxpar))}\|\mathbb{1}_{D^\omega}\nabla W_1^\omega\|_{L^2(\mathcal{L}_{\ell^\omega_+}(\bsxpar))}^2 \dd\bsxpar\right)^{\frac{q}{2}} \right.
    \left.\ell^{-\frac{q(d-1)}{2}} \pi(\ell-1) \dd\ell\right]^{\frac{2}{q}}.
  \end{align*}
Next we use Hypothesis \nameref{hyp:Linfty} to bound a.s. and for a.e. $\bsxpar\in R^{d-1}$, $\ell^{\omega}_+(\bsxpar)$ by $\ell+R^+$. 
\begin{equation}\label{eq:f1}
  \begin{aligned}\begin{multlined}
    \E\left[\left|\int_0^{+\infty}\int_{\R^{d-1}}\mathbb{1}_{D^\omega} \bsg\cdot\nabla W_1(\bsy)\dd\bsy\right|^q\right]^{\frac{2}{q}}\lesssim q^2\E\left[\int_1^{+\infty}\left(\int_{\R^{d-1}}\left(\fint_{\square_{\ell+R^+}(\bsxpar)}\int_0^h\mathbb{1}_{D^\omega}|\nabla W_1^\omega|^2\right)\right.\right.
    \\\left.\left.\left(\fint_{\square_{\ell+R^+}(\bsxpar)}\int_0^h\mathbb{1}_{D^\omega}|\nabla u_{\bsg}^\omega|^2\right)\dd\bsxpar\right)^{\frac{q}{2}}(\ell+R^+)^{(d+1)q}\ell^{-\frac{q(d-1)}{2}} \pi(\ell-1) \dd\ell\right]^{\frac{2}{q}}
  \end{multlined}\\
  \begin{multlined}\hfill\lesssim q^2\left(\int_1^{+\infty}\E\left[\int_{\R^{d-1}}\left(\fint_{\square_{\ell+R^+}(\bsxpar)}\int_0^h\mathbb{1}_{D^\omega}|\nabla W_1^\omega|^2\right)\right.\right.\\\left.\left.\left(\fint_{\square_{\ell+R^+}(\bsxpar)}\int_0^h\mathbb{1}_{D^\omega}|\nabla u_{\bsg}^\omega|^2\right)\dd\bsxpar\right]^{\frac{q}{2}}(\ell+R^+)^{(d+1)q}\ell^{-\frac{q(d-1)}{2}} \pi(\ell-1) \dd\ell\right)^{\frac{2}{q}}
  \end{multlined}\end{aligned}\end{equation}
  By duality we can write
\begin{multline*}
  \E\left[\left(\int_{\R^{d-1}}\left(\fint_{\square_{\ell+R^+}(\bsxpar)}\int_0^h\mathbb{1}_{D^\omega}|\nabla W_1^\omega|^2\right)\left(\fint_{\square_{\ell+R^+}(\bsxpar)}\int_0^h\mathbb{1}_{D^\omega}|\nabla u_{\bsg}^\omega|^2\right)\dd\bsxpar\right)^{\frac{q}{2}}\right]^{\frac{2}{q}}
  \\=\sup_{\|X\|_{L^{\tilde{q}}(\Omega)}=1}\E\left[\int_{\R^{d-1}}\left(\fint_{\square_{\ell+R^+}(\bsxpar)}\int_0^h\mathbb{1}_{D^\omega}|\nabla W_1^\omega|^2\right)\left(\fint_{\square_{\ell+R^+}(\bsxpar)}\int_0^h\mathbb{1}_{D^\omega}|\nabla u_{\bsg}^\omega|^2\right)\dd\bsxpar X\right],
\end{multline*}
where $\tilde{q}=\frac{q}{q-2}$ and the supremum is computed over positive random variables $X$ independent of $\bsx$. We artificially introduce a spatial average over $\square_{R_1}$ for a given $R_1\geq 1$ by noticing that by Fubini-Tonelli's Theorem for $h\in L^1(\R^{d-1})$
\begin{equation}\label{eq:aver}
  \int_{\R^{d-1}}\fint_{\square_{\ell+R^+}(\bsxpar)}h(\bsypar)\dd\bsypar\dd\bsxpar=\int_{\R^{d-1}}h(\bsxpar)\dd\bsxpar.
\end{equation} 
We get
\begin{align*}
  &\sup_{\|X\|_{L^{\tilde{q}}(\Omega)}=1}\E\left[\int_{\R^{d-1}}\left(\fint_{\square_{\ell+R^+}(\bsxpar)}\int_0^h\mathbb{1}_{D^\omega}|\nabla W_1^\omega|^2\right)\left(\fint_{\square_{\ell+R^+}(\bsxpar)}\int_0^h\mathbb{1}_{D^\omega}|\nabla u_{\bsg}^\omega|^2\right)\dd\bsxpar X\right]
  \\&=\sup_{\|X\|_{L^{\tilde{q}}(\Omega)}=1}\int_{\R^{d-1}}\E\left[\fint_{\square_{R_1}(\bsypar)}\left(\fint_{\square_{\ell+R^+}(\bsxpar)}\int_0^h\mathbb{1}_{D^\omega}|\nabla W_1^\omega|^2\right)\left(\fint_{\square_{\ell+R^+}(\bsxpar)}\int_0^h\mathbb{1}_{D^\omega}|\nabla u_{\bsg}^\omega|^2\right)\dd\bsxpar\dd\bsypar X\right]
  \\&\begin{multlined}
    \leq\sup_{\|X\|_{L^{\tilde{q}}(\Omega)}=1}\int_{\R^{d-1}}\E\left[\left(\left(1+\frac{R_1}{\ell+R^+}\right)^{\frac{d-1}{2}}\fint_{\square_{\ell+R^++R_1}(\bsypar)}\int_0^h\mathbb{1}_{D^\omega}|\nabla W_1^\omega|^2\right)\right.
    \\\left.\left(\fint_{\square_{R_1}(\bsypar)}\fint_{\square_{\ell+R^+}(\bsxpar)}\int_0^h\mathbb{1}_{D^\omega}|\nabla u_{\bsg}^\omega|^2\dd\bsxpar\right)\dd\bsypar X\right]
  \end{multlined}\end{align*}
  By H\" older's inequality and the stationarity of $\nabla W_1$ we obtain then for $q\gg2$ such that $|\tilde{q}-1|\leq \frac{1}{2C_0}$ 
\begin{align*}&\E\left[\left(\int_{\R^{d-1}}\left(\fint_{\square_{\ell+R^+}(\bsxpar)}\int_0^h\mathbb{1}_{D^\omega}|\nabla W_1^\omega|^2\right)\left(\fint_{\square_{\ell+R^+}(\bsxpar)}\int_0^h\mathbb{1}_{D^\omega}|\nabla u_{\bsg}^\omega|^2\right)\dd\bsxpar\right)^{\frac{q}{2}}\right]^{\frac{2}{q}}\\&\begin{multlined}\lesssim\left(1+\frac{R_1}{\ell+R^+}\right)^{\frac{d-1}{2}}\E\left[\left(\fint_{\square_{\ell+R^++R_1}(0)}\int_0^h\mathbb{1}_{D^\omega}|\nabla W_1^\omega|^2\right)^{\frac{q}{2}}\right]^{\frac{2}{q}}\\\sup_{\|X\|_{L^{\tilde{q}}(\Omega)}=1}\int_{\R^{d-1}}\E\left[\left(\fint_{\square_{R_1}(\bsypar)}\fint_{\square_{\ell+R^+}(\bsxpar)}\int_0^h\mathbb{1}_{D^\omega}|\nabla u_{\bsg}^\omega|^2X\dd\bsxpar \right)^{\tilde{q}}\right]^{\frac{1}{\tilde{q}}}\dd\bsypar,.\end{multlined}\end{align*}
Since  $X\mapsto\E[|X|^{\frac{q}{2}}]^{\frac{1}{\frac{q}{2}}}$is a norm we have by Jensen's inequality and the stationarity of $\nabla W_1$
\begin{equation}\begin{aligned}\label{eq:statw1}&\frac{1}{\left(\ell+R^+\right)^{\frac{d-1}{2}}}\E\left[\left(\int_{\square_{\ell+R^++R_1}(0)}\int_0^h\mathbb{1}_{D^\omega}|\nabla W_1^\omega|^2\right)^{\frac{q}{2}}\right]^{\frac{2}{q}}\\&\leq\frac{1}{\left(\ell+R^+\right)^{\frac{d-1}{2}}}\E\left[\left(\int_{\square_{\ell+R^+}(0)}\int_{\square_{R_1}(\bsxpar)}\int_0^h\mathbb{1}_{D^\omega}|\nabla W_1^\omega|^2\dd\bsxpar\right)^{\frac{q}{2}}\right]^{\frac{2}{q}}\\&\leq\frac{1}{\left(\ell+R^+\right)^{\frac{d-1}{2}}}\int_{\square_{\ell+R^+}(0)}\E\left[\left(\int_{\square_{R_1}(0)}\int_0^h\mathbb{1}_{D^\omega}|\nabla W_1^\omega|^2\right)^{\frac{q}{2}}\right]^{\frac{2}{q}}\dd\bsxpar=\E\left[\left(\int_{\square_{R_1}(0)}\int_0^h\mathbb{1}_{D^\omega}|\nabla W_1^\omega|^2\right)^{\frac{q}{2}}\right]^{\frac{2}{q}}\end{aligned}\end{equation}
Since $X\mapsto\E[|X|^{\tilde{q}}]^{\frac{1}{\tilde{q}}}$ is a norm we get by Jensen's inequality
\begin{align*}&\sup_{\|X\|_{L^{\tilde{q}}(\Omega)}=1}\int_{\R^{d-1}}\E\left[\left(\fint_{\square_{R_1}(\bsypar)}\fint_{\square_{\ell+R^+}(\bsxpar)}\int_0^h\mathbb{1}_{D^\omega}|\nabla u_{\bsg}^\omega|^2X\dd\bsxpar \right)^{\tilde{q}}\right]^{\frac{1}{\tilde{q}}}\dd\bsypar\\&\leq\sup_{\|X\|_{L^{\tilde{q}}(\Omega)}=1}\int_{\R^{d-1}}\fint_{\square_{R_1}(\bsypar)}\fint_{\square_{\ell+R^+}(\bsxpar)}\int_0^h\E\left[\mathbb{1}_{D^\omega}|\nabla u_{\bsg}^\omega|^{2\tilde{q}}|X|^{\tilde{q}} \right]^{\frac{1}{\tilde{q}}}\dd\bsxpar\dd\bsypar\\&=\sup_{\|X\|_{L^{\tilde{q}}(\Omega)}=1}\int_{\R^{d-1}}\int_0^h\E\left[\mathbb{1}_{D^\omega}|\nabla u_{\bsg}^\omega|^{2\tilde{q}}|X|^{\tilde{q}} \right]^{\frac{1}{\tilde{q}}}\dd\bsxpar.\end{align*}In the last inequality we simplified the spatial averages in the right-hand side thanks to \eqref{eq:aver}. Moreover since the supremum is computed over positive $X$ we can reformulate it as follows
\begin{align*}\sup_{\|X\|_{L^{\tilde{q}}(\Omega)}=1}\int_{\R^{d-1}}\int_0^h\E\left[\mathbb{1}_{D^\omega}|\nabla u_{\bsg}^\omega|^{2\tilde{q}}|X|^{\tilde{q}} \right]^{\frac{1}{\tilde{q}}}\dd\bsxpar&=\sup_{\|Y\|_{L^{2\tilde{q}}(\Omega)}=1}\int_{\R^{d-1}}\int_0^h\E\left[\mathbb{1}_{D^\omega}|\nabla u_{\bsg}^\omega Y|^{2\tilde{q}} \right]^{\frac{1}{\tilde{q}}}\dd\bsxpar\\&=\sup_{\|Y\|_{L^{2\tilde{q}}(\Omega)}=1}\|\mathbb{1}_{D^\omega}\nabla u_{\bsg}^\omega Y\|_{L^2(\R^{d-1}\times (0,h), L^{2\tilde{q}}(\Omega))}^2\end{align*}
Appealing to Theorem \ref{thm:aux} we get
\begin{align*}\|\mathbb{1}_{D^\omega}\nabla u_{\bsg}^\omega Y\|_{L^2(\R^{d-1}\times (0,h), L^{2\tilde{q}}(\Omega))}=\|\mathbb{1}_{D^\omega}\nabla u_{Y\bsg}^\omega\|_{L^2(\R^{d-1}\times (0,h), L^{2\tilde{q}}(\Omega))}&\lesssim\|\bsg Y\|_{L^2(\R^{d-1}\times\R^+, L^{2\tilde{q}}(\Omega))}\\&=\|Y\|_{L^{2\tilde{q}}(\Omega)}\|\bsg\|_{L^2(\R^{d-1}\times\R^+)^d}.\end{align*}
Finally we proved that
\begin{equation}\begin{multlined}\label{eq:f2}
  \E\left[\left(\int_{\R^{d-1}}\left(\fint_{\square_{\ell+R^+}(\bsxpar)}\int_0^h\mathbb{1}_{D^\omega}|\nabla W_1^\omega|^2\right)\left(\fint_{\square_{\ell+R^+}(\bsxpar)}\int_0^h\mathbb{1}_{D^\omega}|\nabla u_{\bsg}^\omega|^2\right)\dd\bsxpar\right)^{\frac{q}{2}}\right]^{\frac{2}{q}}\\\lesssim\|\bsg\|_{L^2(\R^{d-1}\times\R^+)^d}^2\E\left[\left(\int_{\square_{R_1}}\int_0^h\mathbb{1}_{D^\omega}|\nabla W_1^\omega|^2\right)^{\frac{q}{2}}\right]^{\frac{2}{q}}.\end{multlined}\end{equation}We inject \eqref{eq:f2} into \eqref{eq:f1} and conclude thanks to the weight $\pi$ with super-algebraic decay.
\end{proof}

\begin{proof}[Proof of proposition \ref{prop:localnorm}]
Let $R\geq R^+$ and $L\geq R+H$. Take $\chi$ a smooth cut-off function such that $0\leq\chi\leq 1$, $\chi|_{\square_R\times(0,L)}\equiv1$, $\textrm{supp}\chi\subset\square_{2R}\times (0,2L)$ and $\vsu\left|\nabla \chi(\bsx)\right|\lesssim \dst\frac{1}{R}$ for all $\bsx\in\R^{d-1}\times\R$. We test \eqref{eq:w10} with $v\coloneqq \chi W_1$. We obtain 
\begin{equation*}
  \int_{\square_R}\int_0^{L}\mathbb{1}_{D^\omega}|\nabla  W_1^\omega|^2\leq-\int_{\square_{2R}}\int_0^{2L}\nabla W_1^\omega\cdot W_1^\omega\nabla \chi+\int_{\square_{2R}\cap\Sigma_H}\chi W_1^\omega.
\end{equation*}
Using Young's inequality with $K\geq 1$ we get
\begin{multline*}
  \int_{\square_R}\int_0^{L}\mathbb{1}_{D^\omega}|\nabla W_1^\omega|^2\lesssim\frac{1}{R^2}\frac{K^2}{2}\int_{\square_{2R}}\int_0^{2L}\mathbb{1}_{D^\omega}|W_1^\omega|^2+\frac{1}{2K^2}\int_{\square_{2R}}\int_0^{2L}\mathbb{1}_{D^\omega}|\nabla  W_1^\omega|^2\\+\frac{1}{2K^2}\int_{\square_{2R}\cap\Sigma_H}|W_1^\omega|^2+K^2R^{d-1}.
\end{multline*}
Next we apply the trace Theorem to the third term of the right-hand side 
  \begin{multline}\label{eq:init}
    \int_{\square_{R}}\int_0^{L} \mathbb{1}_{D^\omega}|\nabla  W_1^\omega|^2\lesssim K^2\left(R^{d-1}+\frac{1}{R^2}\int_{\square_{2R}}\int_0^{2L} \mathbb{1}_{D^\omega}|W_1^\omega|^2\right)\\+\frac{1}{K^2}\left(\int_{\square_{2R}}\int_0^{2L} \mathbb{1}_{D^\omega}|\nabla  W_1^\omega|^2\right).
  \end{multline}
  Recall that under Hypothesis \nameref{hyp:Linfty} Poincar\'e's inequality holds in boxes larger than $R^+$ with a constant linear in $R^+$. We decompose the second term of the right-hand side as follows
\begin{align*}
  \int_{\square_{2R}}\int_0^{2L} \mathbb{1}_{D^\omega}|W_1^\omega|^2&\leq \int_{\square_{2R}}\int_0^{2L} \mathbb{1}_{D^\omega}|W_1^\omega-\Lchi_r*W_1^\omega|^{2}+\int_{\square_{2R}}\int_0^{2L} \mathbb{1}_{D^\omega}|\Lchi_r*W_1^\omega|^{2}.\end{align*} 
where $\Lchi_r(\bsx)\coloneqq r^{-d}\Lchi\left(\frac{\bsx}{r}\right)$. On one hand, we obtain by Poincar\'e's inequality under Hypothesis \nameref{hyp:Linfty}
\begin{equation}\label{eq:cnw}
\int_{\square_{2R}}\int_0^{2L} \mathbb{1}_{D^\omega}|\Lchi_r*W_1^\omega|^{2}\lesssim (R^+)^2\int_{\square_{2R}}\int_0^{2L} \mathbb{1}_{D^\omega}|\Lchi_r*\nabla W_1^\omega|^{2}.
\end{equation}
On the other hand, we write
\begin{equation*}
  \mathbb{1}_{D^\omega}\left(W_1^\omega-\Lchi_r*W_1^\omega\right)(\bsxpar,x_d)=\int_0^r\int_{\square_r}\int_0^1\mathbb{1}_{D^\omega}\Lchi_r(\bsy)\nabla W_1^\omega(\bsx+t\bsy)\cdot \bsy\dd t\dd\bsypar\dd y_d.
\end{equation*}
Using Jensen's inequality, we get
\begin{equation}\begin{aligned}\label{eq:nw} 
  \int_{\square_{2R}}\int_{0}^{2L} \mathbb{1}_{D^\omega}|W_1^\omega-\Lchi_r*W_1^\omega|^{2}&\leq \int_{\square_{2R}}\int_{0}^{2L}\int_{0}^r\int_{\square_r}\int_0^1\mathbb{1}_{D^\omega}|\Lchi_r(\bsy)|^2|\nabla W_1^\omega(\bsx+t\bsy)|^2\,|\bsy|^2\,\dd t\dd\bsy\dd\bsx
  \\&\lesssim r^2\int_{\square_{2R+r}}\int_{0}^{2L+r}\mathbb{1}_{D^\omega}|\nabla W_1^\omega(\bsx)|^2\dd\bsx. 
\end{aligned}\end{equation} 
Plugging back \eqref{eq:cnw} and \eqref{eq:nw} into \eqref{eq:init} we finally proved the following 
\begin{multline}\label{eq:interm}
  \fint_{\square_{R}}\int_{0}^{L}\mathbb{1}_{D^\omega}|\nabla  W_1^\omega|^2\lesssim K^2\left(1+\fint_{\square_{2R}}\int_{0}^{2L} \mathbb{1}_{D^\omega}|\Lchi_r*\nabla W_1^\omega|^{2}\right)\\+\left(1+\frac{r}{2R}\right)^{d-1}\left(\frac{1}{K^2}+K^2 \frac{r^2}{R^2}\right)\fint_{\square_{2R+r}}\int_{0}^{2L+r} \mathbb{1}_{D^\omega}|\nabla  W_1^\omega|^2.
\end{multline}
Next we take the $L^{\frac{q}{2}}(\Omega)$ norm of both sides of \eqref{eq:interm} for $q\geq 2$. By stationarity and Jensen's inequality we have on one hand
\begin{align*}\E\left[\left(\fint_{\square_{2R}}\int_{0}^{2L} \mathbb{1}_{D^\omega}|\Lchi_r*\nabla W_1^\omega|^{2}\right)^{\frac{q}{2}}\right]^{\frac{2}{q}}&\leq\int_{0}^{2L}\E\left[ \mathbb{1}_{D^\omega}|\Lchi_r*\nabla W_1^\omega|^{q}\right]^{\frac{2}{q}}\\&\leq\int_0^{2L}\E\left[ \left|\int_{\R^{d-1}\times\R^+}\mathbb{1}_{D^\omega}\Lchi_r(\bsypar, x_d-y_d)\nabla W_1^\omega(\bsy)\dd\bsy\right|^{q}\right]^{\frac{2}{q}}\dd x_d,\end{align*}
and on the other hand similarly as in \eqref{eq:statw1}\begin{gather}\label{eq:aver2}\left(1+\frac{r}{2R}\right)^{d-1}\E\left[\left(\fint_{\square_{2R+r}}\int_{0}^{2L+r} \mathbb{1}_{D^\omega}|\nabla  W_1^\omega|^2\right)^{\frac{q}{2}}\right]^{\frac{2}{q}}\lesssim\E\left[\left(\fint_{\square_{R}}\int_{0}^{2L+r} \mathbb{1}_{D^\omega}|\nabla  W_1^\omega|^2\right)^{\frac{q}{2}}\right]^{\frac{2}{q}}.\end{gather}
To conclude we want to absorb the last term of the right-hand side of \eqref{eq:interm} into the left-hand side. 
In order to do so we decompose the integral in $y_d$ in two parts: $(0,L)$ and $(L, 2L+r)$. 
Thanks to \eqref{eq:aver2} choosing $K\gg 1$ and $r={R}^\alpha$ for some $\alpha\in(0,1)$ so that $\frac{1}{K^2}+K^2\frac{r^2}{R^2}$ is small enough we can absorb $\E\left[\left(\fint_{\square_{R}}\int_{0}^{L} \mathbb{1}_{D^\omega}|\nabla  W_1^\omega|^2\right)^{\frac{q}{2}}\right]^{\frac{2}{q}}$ into the left-hand side. 

Let us deal with the last term $\E\left[\left(\fint_{\square_{R}}\int_{L}^{2L+r} \mathbb{1}_{D^\omega}|\nabla  W_1^\omega|^2\right)^{\frac{q}{2}}\right]^{\frac{2}{q}}$. We plug in $W_1^{\omega}$'s integral representation \eqref{eq:repint_W10} and use Jensen's inequality
\begin{align*}
  &\E\left[\left(\fint_{\square_{R}}\int_{L}^{2L+r} \mathbb{1}_{D^\omega}|\nabla  W_1^\omega|^2\right)^{\frac{q}{2}}\right]^{\frac{2}{q}}\lesssim\fint_{\square_{R}}\int_{L}^{2L+r}\mathbb{1}_{D^\omega}\E\left[|\nabla  W_1^\omega|^q\right]^{\frac{2}{q}}\\
  &\lesssim\fint_{\square_{R}}\int_{L}^{2L+r}\mathbb{1}_{D^\omega}\left|\int_{\R^{d-1}}|\nabla \partial_d\Gamma(\bszpar, x_d-L)|\E\left[\left(\int_0^{\bsz_{\shortparallel}}|\varphi^\omega|^2(\bsxpar-\bsw)\dd\bsw\right)^{\frac{q}{2}}\right]^{\frac{1}{q}}\dd\bszpar\right|^2,\end{align*}
where $\varphi\coloneqq W_1^\omega|_{\Sigma_{L'}}$ with $L>L'$ and $\partial_d\Gamma$ is defined in \eqref{eq:der_Gamma}. By the trace Theorem we deduce
\begin{equation*}\begin{multlined}
  \fint_{\square_{R}}\int_{0}^{2L+l}\mathbb{1}_{D^\omega}\left|\int_{\R^{d-1}}|\nabla \partial_d\Gamma(\bszpar, x_d-L)|\E\left[\left(\int_0^{\bsz_{\shortparallel}}|\varphi^\omega|^2(\bsxpar-\bsw)\dd\bsw\right)^{\frac{q}{2}}\right]^{\frac{1}{q}}\dd\bszpar\right|^2\\\lesssim\fint_{\square_{R}}\int_{L}^{2L+l}\mathbb{1}_{D^\omega}\left|\int_{\R^{d-1}}|\nabla \partial_d\Gamma(\bszpar, x_d-L')|(2R)^{\frac{d-1}{2}}\E\left[\left(\fint_{\square_{2R}}\int_0^{L'}|\nabla W_1^\omega|^2(\bsw)\dd\bsw\right)^{\frac{q}{2}}\right]^{\frac{1}{q}}\dd\bszpar\right|^2,\end{multlined}\end{equation*}where the constant is proportional to $R^+L'$. We obtain finally
\begin{align*}
  &\fint_{\square_{R}}\int_{L}^{2L+l}\mathbb{1}_{D^\omega}\left|\int_{\R^{d-1}}|\nabla \partial_d\Gamma(\bszpar, x_d-L')|(2R)^{\frac{d-1}{2}}\E\left[\left(\fint_{\square_{2R}}\int_0^{L'}|\nabla W_1^\omega|^2(\bsw)\dd\bsw\right)^{\frac{q}{2}}\right]^{\frac{1}{q}}\dd\bszpar\right|^2\\&\lesssim(2R)^{d-1}\E\left[\left(\fint_{\square_{2R}}\int_0^{L}|\nabla W_1^\omega|^2(\bsw)\dd\bsw\right)^{\frac{q}{2}}\right]^{\frac{2}{q}}\int_L^{2L+l}\left|\int_{\square_R}|\nabla \partial_d\Gamma(\bszpar, x_d-L')|\dd\bszpar\right|^2.
\end{align*} 
We conclude once again by stationarity of $W_1$ and thanks to the integrability property of $\nabla \partial_d\Gamma$ that ensures that both in $2$ and $3$ dimensions
\begin{equation*}
  R^{d-1}\int_L^{2L+l}\left|\int_{\square_R}|\nabla \partial_d\Gamma(\bszpar, x_d-L')|\dd\bszpar\right|^2=o(R,L).\end{equation*}
\end{proof}
\end{document}